
\documentclass[12pt,reqno]{amsart}
\usepackage{amssymb}
\usepackage{amsfonts}
\usepackage{hyperref}

\setcounter{MaxMatrixCols}{10}

\def\equationautorefname~#1\null{(#1)\null}

\theoremstyle{plain}
\newtheorem*{acknowledgment}{Acknowledgment}

\newtheorem{algorithm}{Algorithm}[section]
\newtheorem{thm}{Thm}

\newtheorem{corollary}[algorithm]{Corollary}

\newtheorem{definition}[algorithm]{Definition}
\newtheorem*{Remark-delta}{Remark on all things $\protect\delta$}

\newtheorem{lemma}[algorithm]{Lemma}

\newtheorem{theorem} [algorithm] {Theorem}
\newtheorem{theoremlet}[thm]{Theorem}
\newtheorem{corollarylet}[thm]{Corollary}
\newtheorem{lemmalet}[thm]{Lemma}

\newtheorem*{theoremnonum}{Theorem}
\newtheorem*{TNST}{Tubular Neighborhood Stability Theorem}

\newtheorem{definitionlet}[thm]{Definition}
\newtheorem*{remarknonum}{Remark}

\newtheorem{keylemma}[algorithm]{Key Lemma}

\newtheorem{proposition}[algorithm]{Proposition}
\newtheorem{remark}[algorithm]{Remark}

\numberwithin{equation}{algorithm}

\textwidth=6.5in
\oddsidemargin=0in
\evensidemargin=0in
\textheight=8.5in
\topmargin=0.25in
\input{tcilatex}

\begin{document}
\title{Diffeomorphism Stability and Codimension Three}
\author{Curtis Pro}
\address{{Department of Mathematics, }California State University, Stanislaus%
}
\email{cpro@csustan.edu}
\author{Frederick Wilhelm}
\thanks{This work was supported by a grant from the Simons Foundation
(\#358068, Frederick Wilhelm)}
\address{{Department of Mathematics, University of California, Riverside}}
\email{fred@math.ucr.edu}
\urladdr{https://sites.google.com/site/frederickhwilhelmjr/home}
\subjclass{53C20}
\keywords{Diffeomorphism Stability, Alexandrov Geometry}

\begin{abstract}
Given $k\in \mathbb{R},$ $v,$ $D>0,$ and $n\in \mathbb{N},$ let $\left\{
M_{\alpha }\right\} _{\alpha =1}^{\infty }$ be a Gromov-Hausdorff convergent
sequence of Riemannian $n$--manifolds with sectional curvature $\geq k,$
volume $>v,$ and diameter $\leq D.$ Perelman's Stability Theorem implies
that all but finitely many of the $M_{\alpha }$s are homeomorphic. The
Diffeomorphism Stability Question asks whether all but finitely many of the $%
M_{\alpha }$s are diffeomorphic.

We answer this question affirmatively in the special case when all of the
singularities of the limit space occur along Riemannian manifolds of
codimension $\leq 3$. We then describe several applications. For instance,
if the limit space is an orbit space whose singular strata are of
codimension $\leq 3,$ then all but finitely many of the $M_{\alpha }$s are
diffeomorphic.
\end{abstract}

\maketitle

\pdfbookmark[1]{Introduction}{Introduction}

Let $\mathcal{M}_{k,v,d}^{K,V,D}\left( n\right) $ denote the class of closed
Riemannian $n$--manifolds $M$ with 
\begin{equation*}
\begin{array}{cclccc}
k & \leq & \sec \,M & \leq & K, &  \\ 
v & \leq & \mathrm{vol}\,M & \leq & V, & \mathrm{and} \\ 
d & \leq & \mathrm{diam}\,M & \leq & D, & 
\end{array}%
\end{equation*}%
where $\sec \,M$ is the sectional curvature of $M$, $\mathrm{vol}\,M$ is the
volume of $M$, and $\mathrm{diam}\,M$ is the diameter of $M$.

Let $\left\{ M_{\alpha }\right\} _{\alpha =1}^{\infty }$ $\subset \mathcal{M}%
_{k,v,0}^{\infty ,\infty ,D}\left( n\right) $ converge in the
Gromov-Hausdorff topology to $X.$ Perelman's Stability Theorem implies that
all but finitely many of the $M_{\alpha }$s are homeomorphic to $X$ (\cite%
{Perel1}, \cite{Kap2}). Motivated by this it is natural to ask the\vspace*{%
0.15in}

\noindent \textbf{Diffeomorphism Stability Question.} \emph{Given }$k\in 
\mathbb{R},$\emph{\ }$v,D>0,$\emph{\ and }$n\in N,$\emph{\ let }$\left\{
M_{\alpha }\right\} _{\alpha =1}^{\infty }\subset M_{k,v,0}^{\infty ,\infty
,D}\left( n\right) $\emph{\ be a convergent sequence. Are all but finitely
many of the }$M_{\alpha }$\emph{s diffeomorphic?}\vspace*{0.15in}

If $\left\{ M_{\alpha }\right\} _{\alpha =1}^{\infty }$ happens to lie in $%
\mathcal{M}_{k,v,0}^{K,\infty ,D}\left( n\right) $ for some $K\in \mathbb{R}%
, $ then by Gromov's Compactness Theorem, $X$ is a $C^{1,\alpha }$
Riemannian manifold, and all but finitely many of the $M_{\alpha }$s are $%
C^{1}$--diffeomorphic to $X$ (\cite{GLP}, \cite{Nik}).

An affirmative answer to the Diffeomorphism Stability Question would provide
a simultaneous generalization of the Finiteness Theorems of Cheeger (\cite%
{Cheeg2}) and Grove-Petersen-Wu (\cite{GrovPetWu}). In addition, Grove and
the second author proved the following.

\begin{theoremnonum}
(\cite{GrovWilh2}) If the answer to the Diffeomorphism Stability Question is
\textquotedblleft yes\textquotedblright , then every Riemannian $n$%
--manifold $M$ with $\sec \,M\geq 1$ and $\mathrm{diam}\,M>\frac{\pi }{2}$
is diffeomorphic to $S^{n}.$
\end{theoremnonum}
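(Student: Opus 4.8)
Since $\sec M\geq 1$ and $\mathrm{diam}\,M>\pi/2$, the Grove--Shiohama Diameter Sphere Theorem already shows that $M$ is homeomorphic to $S^{n}$, so the entire point is to promote the homeomorphism to a diffeomorphism, and the only tool at hand is the assumed affirmative answer to the Diffeomorphism Stability Question. The plan is therefore to manufacture a single Gromov--Hausdorff convergent sequence $\{N_{\alpha}\}_{\alpha=1}^{\infty}\subset\mathcal{M}_{k,v,0}^{\infty,\infty,D}(n)$, with $k,v,D$ depending only on $M$, in which $N_{\alpha}$ is diffeomorphic to $M$ for infinitely many $\alpha$ and diffeomorphic to the standard sphere $S^{n}$ for infinitely many $\alpha$; the affirmative answer then forces all but finitely many $N_{\alpha}$ to be mutually diffeomorphic, whence $M\cong_{\mathrm{diff}}S^{n}$.

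To build the sequence I would first normalize. Rescaling leaves all diffeomorphism types unchanged, so I may assume $\sec M\geq 1$ and $\mathrm{diam}\,M=d>\pi/2$, realized by a pair $p,q\in M$. By Grove--Shiohama critical point theory $\dist_{p}$ has no critical points on $M\setminus\{p,q\}$, and this regularity is uniform away from small balls about $p$ and $q$; smoothing $\dist_{p}$ and flowing along its gradient-like field displays $M$, outside a cap about $q$, as a product collar $S^{n-1}\times I$ capped off at the $p$--end, so that all of the ``exotic'' data of $M$ is squeezed into an arbitrarily small neighborhood of $q$. The crucial step is to turn this topological picture into a metric one: to produce, for each large $\alpha$, a Riemannian metric $g_{\alpha}$ on the fixed smooth manifold $M$ with $\sec g_{\alpha}\geq k_{0}>0$, $\mathrm{diam}(M,g_{\alpha})\leq D_{0}$ and $\mathrm{vol}(M,g_{\alpha})\geq v_{0}>0$ (constants independent of $\alpha$), such that $(M,g_{\alpha})$ converges in the Gromov--Hausdorff sense to a fixed Alexandrov space $X$ which is also the Gromov--Hausdorff limit of a sequence of $\sec\geq k_{0}$ metrics on the standard sphere. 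Concretely, one keeps the metric of $M$ untouched on $\bar B(p,R)$ for a suitable radius $R$ and, across the uniformly regular neck between $\bar B(p,R)$ and a cap about $q$, grafts on a model piece of revolution $dr^{2}+\varphi(r)^{2}ds_{n-1}^{2}$, choosing the warping function $\varphi$ by a Sturm--Liouville comparison so that every sectional curvature stays $\geq k_{0}$ and the cap about $q$ is progressively crushed as $\alpha\to\infty$. Since the standard sphere degenerates to the very same $X$ through an analogous rotationally symmetric family, interleaving the two families yields the required $\{N_{\alpha}\}$.

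The heart of the matter, and the step I expect to be the genuine obstacle, is precisely this curvature-controlled grafting. A lower sectional curvature bound is notoriously fragile under cutting and regluing, so one cannot merely interpolate between the metric of $M$ and the model of revolution: the splice has to be carried out across hypersurfaces with the correct sign of the second fundamental form, very much in the spirit of Perelman's doubling and gluing theorems for Alexandrov spaces, and the comparison geometry forced by $\sec M\geq 1$ together with $\mathrm{diam}\,M>\pi/2$ (Toponogov's theorem and Bishop--Gromov volume comparison) must be used both to fit the model with the right boundary behaviour and to keep the volume bounded below as the cap about $q$ shrinks. One must also observe, routinely but essentially, that grafting never alters the diffeomorphism type of $M$, since every $g_{\alpha}$ lives on the fixed manifold $M$; this is exactly what makes a positive answer to the Diffeomorphism Stability Question force $M\cong_{\mathrm{diff}}S^{n}$, and, conversely, makes an exotic $M$ with $\sec M\geq 1$ and $\mathrm{diam}\,M>\pi/2$ an honest counterexample to it. For $n\leq 3$ the statement is classical and no construction is needed.
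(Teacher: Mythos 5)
The overall strategy you propose---manufacture a single Gromov--Hausdorff convergent sequence in $\mathcal{M}_{k,v,0}^{\infty,\infty,D}(n)$ in which infinitely many terms have the diffeomorphism type of $M$ and infinitely many have that of $S^{n}$---is indeed the only way to use the hypothesis, and it is the framework in \cite{GrovWilh2}. But as written your argument has a genuine gap that goes beyond the one you flag. You claim that the family $(M,g_{\alpha})$ and an ``analogous rotationally symmetric family'' on $S^{n}$ converge to the same Alexandrov space $X$. Since your construction keeps the metric of $M$ untouched on $\bar B(p,R)$, the limit $X$ contains an isometric copy of that ball; for the sphere family to converge to the same $X$, one must therefore produce smooth metrics on the \emph{standard} $S^{n}$, with $\sec\geq k_{0}$, $\mathrm{vol}\geq v_{0}$, $\mathrm{diam}\leq D_{0}$, containing an isometric copy of $(\bar B(p,R),g_{M})$. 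This is a nontrivial ``filling'' problem: one must close up the convex disk $\bar B(p,R)\subset M$, whose boundary is convex but \emph{not} round, by a cap with curvature bounded below, glued with the trivial (rather than the exotic) identification. It is precisely this step, and not merely the Sturm--Liouville bookkeeping on a model of revolution, that carries the content of the Grove--Wilhelm theorem. A rotationally symmetric model cannot be glued smoothly to $\partial B(p,R)$ unless that hypersurface is intrinsically round, which it is not in general, so the warped-product interpolation you sketch does not by itself produce the required sphere metrics.

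You do acknowledge that the ``curvature-controlled grafting'' is the genuine obstacle, which is honest, but that acknowledgment leaves the proof unfinished at exactly the point where all the difficulty lives. In \cite{GrovWilh2} this is handled by a careful geometric construction that produces metrics with lower sectional curvature bounds on the standard sphere Gromov--Hausdorff close to $M$, using the convexity structure forced by $\sec\geq1$ and $\mathrm{diam}>\pi/2$; the crude ``keep $\bar B(p,R)$, splice on a surface of revolution, crush'' picture does not supply that. So while your framework is right and your identification of the obstacle is correct, what you have written is a plan, not a proof: the two crucial claims---that the $M$-side and $S^{n}$-side families have a common limit, and that the splice can be made while preserving a uniform lower curvature bound, a uniform diameter bound, and a uniform lower volume bound---are both unestablished.
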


We answer the Diffeomorphism Stability Question affirmatively in the special
case when all the singularities of $X$ occur along Riemannian manifolds of
codimension $\leq 3$. Before stating the result, we define the concept of a
space being diffeomorphically stable.

\begin{definitionlet}
A space $X\in \mathrm{closure}\left( \mathcal{M}_{k,v,0}^{\infty ,\infty
,D}\left( n\right) \right) $ is diffeomorphically stable if for any sequence 
$\left\{ M_{\alpha }\right\} _{\alpha =1}^{\infty }\subset \mathcal{M}%
_{k,v,0}^{\infty ,\infty ,D}\left( n\right) $ with $M_{\alpha
}\longrightarrow X,$ in the Gromov--Hausdorff topology, all but finitely
many of the $M_{\alpha }$s are diffeomorphic.
\end{definitionlet}

The definition of a non-singular point we use was introduced in \cite{BGP},
where it is called an \textquotedblleft $\left( n,\delta \right) $--burst
point\textquotedblright . Elsewhere in the literature, $\left( n,\delta
\right) $--burst points are called $\left( n,\delta \right) $-strained
points (see also Definition \autoref{strainer} below). Since Alexandrov
spaces have singular points, we define a notion of isometric embeddings that
generalizes the usual definition in Riemannian Geometry that is formulated
in terms of pull-backs.

\begin{definitionlet}
\label{Dnf sm and isom}Let $X$ be an Alexandrov space and $\left( S,g\right) 
$ a Riemannian manifold. Let $\mathrm{dist}^{X}$ be the distance of $X$ and $%
\mathrm{dist}^{S}$ the distance on $S$ induced by $g.$ An embedding $\iota
:\left( S,g\right) \hookrightarrow X$ is infinitesimally isometric if and
only if for every $\varepsilon >0$ there is a $\delta >0$ so that for
distinct $a,b\in S$ with $\mathrm{dist}^{S}\left( a,b\right) <\delta ,$ 
\begin{equation}
\left\vert \frac{\mathrm{dist}^{S}\left( a,b\right) }{\mathrm{dist}%
^{X}\left( \iota \left( a\right) ,\iota \left( b\right) \right) }%
-1\right\vert <\varepsilon .  \label{bi lip dfn}
\end{equation}
\end{definitionlet}

\begin{theoremlet}
\label{dif stab- dim 4}There is a $\delta \left( k,v,D,n\right) >0$ so that $%
X\in \mathrm{closure}\left( \mathcal{M}_{k,v,0}^{\infty ,\infty ,D}\left(
n\right) \right) $ is diffeomorphically stable provided $X$ contains a
finite collection $\mathcal{S}\equiv \left\{ S_{i}\right\} _{i\in I}$ of
infinitesimally isometrically embedded, pairwise disjoint, Riemannian
manifolds $S_{i}$ without boundary that have the following properties.

\begin{enumerate}
\item Every point of $X\setminus \left\{ \cup _{i\in I}S_{i}\right\} $ is $%
\left( n,\delta \right) $-strained$.$

\item No point of any $S\in \mathcal{S}$ is $\left( \dim \left( S\right)
+1,\delta \right) $-strained.

\item $\mathcal{S}$ is the union of two subcollections $\mathcal{K}$ and $%
\mathcal{N}.$

\item Elements of $\mathcal{K}$ are compact and have codimension $\leq 3.$

\item Elements of $\mathcal{N}$ are not compact and have codimension $\leq 2$%
.

\item The closure of an element of $N\in \mathcal{N}$ is a union of elements
of $\mathcal{S}.$
\end{enumerate}
\end{theoremlet}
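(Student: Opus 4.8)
\emph{Overview and setup.} It is enough to show that for all sufficiently large $\alpha,\beta$ the manifolds $M_\alpha$ and $M_\beta$ are diffeomorphic. The argument combines three tools, each developed before this point or standard in the subject: the \emph{smooth stability of the strained locus} --- over the set of $(n,\delta)$--strained points of $X$ the $M_\alpha$ are, for $\delta$ small and $\alpha$ large, diffeomorphic to one another by maps that are Gromov--Hausdorff close to the identity; the \emph{Tubular Neighborhood Stability Theorem}, which furnishes for large $\alpha$ a smooth submanifold $S^\alpha_i\subset M_\alpha$ converging to $S_i$ together with a normal disk bundle $T^\alpha_i$ of rank $c_i=\operatorname{codim}S_i$ whose image under the Perelman homeomorphism $M_\alpha\to X$ is a fixed fibered neighborhood of $S_i$, covering the natural identification along $S_i$ (hypothesis (2) is exactly what makes $S_i$, and hence $S^\alpha_i$, a well-defined singular locus); and low--dimensional bundle theory. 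Fix $\delta=\delta(k,v,D,n)$ small enough for the first tool to apply, and order the strata by $S_j\prec S_i$ when $S_j\subset\overline{S_i}$; by hypotheses (4)--(6) --- compact strata are closed, and the closure of each non-compact $N\in\mathcal N$ is a union of strata of strictly smaller dimension --- this order is well founded. The diffeomorphisms $M_\alpha\to M_\beta$ will be built by induction over $\prec$, handling the $\prec$--minimal strata first (these include all the compact ones, of codimension $\le 4$) and then working outward.

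\emph{The model near one stratum.} Fix $S=S_i$ of codimension $c=c_i$, with normal bundle $\nu^\alpha\to S^\alpha$, disk bundle $T^\alpha$, sphere bundle $\partial T^\alpha$. For $\alpha,\beta$ large the composite $M_\alpha\to X\to M_\beta$ restricts to a homeomorphism $T^\alpha\to T^\beta$ covering a diffeomorphism $S^\alpha\to S^\beta$; on boundaries this is an isomorphism of topological $S^{c-1}$--bundles over $S$. Now the single place the codimension hypothesis is used: since $c\le 4$ we have $c-1\le 3$, and for $m\le 3$ the inclusions $O(m+1)\hookrightarrow\operatorname{Diff}(S^m)\hookrightarrow\operatorname{Homeo}(S^m)$ are homotopy equivalences --- Smale for $m\le 2$, Hatcher's solution of the Smale conjecture for $m=3$ --- so $BO(c)\simeq B\operatorname{Diff}(S^{c-1})\simeq B\operatorname{Homeo}(S^{c-1})$. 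Hence the topological bundle isomorphism is realized by a linear one, $\nu^\alpha\cong\nu^\beta$, and $T^\alpha\to T^\beta$ may be taken to be a fiber--preserving diffeomorphism close to the Gromov--Hausdorff identity, since the bundles themselves converge. If $S=N\in\mathcal N$ then $c\le 3$ (only Smale is needed), but $N$ is non-compact; here the inductive hypothesis already gives the diffeomorphisms over the strata in $\overline N\setminus N$, and the bundle isomorphism over $N$ is extended from there.

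\emph{Gluing.} Enlarge the fibered neighborhoods slightly and split $T^\alpha=T^\alpha_{1/2}\cup\bigl(\partial T^\alpha_{1/2}\times[1/2,1]\bigr)$ into a subbundle and a collar. On $M_\alpha\setminus\bigcup_i\operatorname{int}T^\alpha_{i,1/2}$ every point is $(n,\delta)$--strained, so the first tool gives diffeomorphisms $\Phi^{\mathrm{reg}}_{\alpha\beta}$ there that are $C^1$--close to the identity; on each $T^\alpha_{i,1/2}$ the previous paragraph gives fiber--preserving diffeomorphisms $\Phi^i_{\alpha\beta}$, also $C^1$--close to the identity. On the overlap collars both maps are defined and $C^1$--close to \emph{each other}, so a standard isotopy/interpolation inside the collar deforms them to agree, producing a global diffeomorphism $\Phi_{\alpha\beta}\colon M_\alpha\to M_\beta$. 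Performing this stratum by stratum along $\prec$ finishes the proof.

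\emph{Main obstacle.} Granting the Tubular Neighborhood Stability Theorem and the smooth stability of the strained locus, the crux is the middle step: turning Perelman's purely topological stability into a \emph{bundle} homeomorphism of the normal neighborhoods, and then promoting it to a linear bundle isomorphism. This is precisely where codimension four is sharp: it is the top of the range $c-1\le 3$ in which $\operatorname{Diff}(S^{c-1})\simeq O(c)$ is known, the case $c-1=3$ being Hatcher's theorem; already in codimension five one would need control of $\operatorname{Diff}(S^4)$, which is unavailable, and in higher codimensions exotic spheres obstruct even the $\pi_0$--level statement. The remaining difficulty is bookkeeping for the non-compact strata: the gluing near $\overline N$ must be made compatible with the diffeomorphisms already constructed on the strata accumulating there, which is why the argument is organized as an induction over the stratification and why those strata must have codimension $\le 3$.
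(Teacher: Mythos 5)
Your outline correctly identifies the three ingredients --- the TNST, the Smale/Hatcher identification $\mathrm{Diff}(S^{m})\simeq O(m+1)$ for $m\le 3$, and gluing by induction over the stratification --- and the codimension count in your final paragraph is exactly right. But there are two genuine gaps, and they are precisely at the spot you yourself flag as the crux. First, you attribute to the TNST (or to Perelman's homeomorphism) something it does not provide: a homeomorphism $T^{\alpha}\to T^{\beta}$ of the tubular neighborhoods that is a bundle map on the boundary spheres. The TNST gives a disk-bundle structure $P^{S_i}_{\gamma}$ on each $\mathcal{U}^{S_i}_{\gamma}$ \emph{separately}, and a diffeomorphism $\Phi_{\beta,\alpha}$ defined only on the \emph{complement} $G_{\alpha}$, satisfying $P_{\alpha}=P_{\beta}\circ\Phi_{\beta,\alpha}$ on the overlap. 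It says nothing about a map between the tubes. The statement you need --- that $\Phi_{\beta,\alpha}$ carries the $\alpha$-annulus bundle $\mathcal{U}^{S_i}_{\alpha}(3)\setminus\mathcal{U}^{S_i}_{\alpha}(1)$ \emph{onto} the $\beta$-annulus bundle, so that its complement in $\mathcal{U}^{S_i}_{\beta}$ is still a disk bundle --- is the content of the (Step $a$)--Schoenflies Lemmas, proved in the paper via the transverse vector fields of Proposition \ref{Step a Sch T7}. Your outline collapses this entirely. Second, even granting a bundle homeomorphism $\partial T^{\alpha}\to\partial T^{\beta}$, bundle classification ($BO(c)\simeq B\mathrm{Diff}(S^{c-1})$) only gives an \emph{abstract} linear isomorphism $\nu^{\alpha}\cong\nu^{\beta}$; it does not say that the boundary restriction of $\Phi^{\mathrm{reg}}_{\alpha\beta}$ is isotopic, \emph{through fiber-preserving maps covering the identity on the base}, to such a linear isomorphism. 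What is actually needed is the stronger Bundle Extension Lemma: a fiber-respecting diffeomorphism of annulus bundles extends to one of the disk bundles, which requires the deformation retraction at the level of $\mathrm{Diff}(S^{c-1})$ applied section-by-section, not merely a coincidence of classifying maps.

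Two smaller points. Your inductive order is reversed relative to the paper's and does not anchor: you propose handling $\prec$-minimal strata first, but these are the deepest strata, near which you have no diffeomorphism to start from; the only diffeomorphism initially available is $\Phi_{\beta,\alpha}$ on the regular part, and the paper therefore inducts outward from the top stratum by ancestor number, with the Schoenflies step at stage $a$ feeding the Bundle Extension Lemma at stage $a+1$. And the gluing-by-interpolation step requires knowing that $\Phi^{i}_{\alpha\beta}$ maps $T^{\alpha}_{i,1/2}$ onto $T^{\beta}_{i,1/2}$ and $\Phi^{\mathrm{reg}}_{\alpha\beta}$ maps the complement onto the complement --- otherwise the interpolated map need not be surjective, let alone injective --- which is again the Schoenflies statement you have not supplied.
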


\begin{remarknonum}
Of course an isometric embedding $\iota :\left( S,g\right) \hookrightarrow M$
of Riemannian manifolds is an example of Definition \ref{Dnf sm and isom}.
In general, Definition \ref{Dnf sm and isom} implies that at points of $S$
the space of directions of $X$ contains a euclidean unit sphere of dimension 
$\mathrm{\dim }\left( S\right) -1$ (see Proposition \ref{isom embedd prop}
below). It also implies that the intrinsic metrics on $S$ induced by $%
\mathrm{dist}^{X}$ and $g$ coincide$,$ though the converse is false. For
example, the boundary of a square with the intrinsic metric induced from $%
\mathbb{R}^{2}$ does not satisfy (\ref{bi lip dfn}). On the other hand, if $%
X $\ is the $n$--dimensional cube $\left[ 0,1\right] ^{n},$ then the open
faces of $X$ are infinitesimally isometrically embedded submanifolds.
\end{remarknonum}

From here on we identify $S$ with $\iota \left( S\right) $ and write $%
\mathrm{dist}^{X}\left( \cdot ,\cdot \right) $ for $\iota ^{\ast }\mathrm{%
dist}^{X}.$ Adopting the language of orbit spaces, we call the elements of $%
\mathcal{S}$ the \textquotedblleft strata\textquotedblright\ of $X,$ and we
call $X\setminus \left\{ \cup _{i\in I}S_{i}\right\} $ the \textquotedblleft
top strata\textquotedblright . It was shown in \cite{BGP} that for all
sufficiently small $\delta >0,$ the set, $X_{n,\delta },$ of $\left(
n,\delta \right) $-strained points is a topological manifold that is open
and dense in $X.$ In general, $X\setminus X_{n,\delta }$ can be rather wild,
so the hypothesis that the singularities occur along Riemannian manifolds is
rather special. Nevertheless this special situation occurs in all orbit
spaces, so Theorem \ref{dif stab- dim 4} has the following corollary.

\begin{corollarylet}
If $X\in \mathrm{closure}\left( \mathcal{M}_{k,v,0}^{\infty ,\infty
,D}\left( n\right) \right) $ is the quotient of an isometric group action on
a Riemannian manifold, then $X$ is diffeomorphically stable provided all of
its singular strata have codimension $\leq 3.$
\end{corollarylet}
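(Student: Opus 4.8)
The plan is to realize the orbit-type stratification of $X$ as a collection $\mathcal{S}$ meeting hypotheses (1)--(6) of Theorem \ref{dif stab- dim 4} and then simply quote that theorem. Write $X=N/G$. Since $X\in\mathrm{closure}\!\left(\mathcal{M}_{k,v,0}^{\infty,\infty,D}(n)\right)$ is a compact Alexandrov space with $\sec\ge k$ and a uniform lower volume bound, it is non-collapsed, so $\dim X=n$; moreover one may take $G$ to be a closed group of isometries of $N$ acting properly, whence a compact orbit space has only finitely many orbit types. The slice theorem then decomposes $X$ into its orbit-type strata --- after subdividing into connected components, finitely many pairwise disjoint, smoothly and isometrically embedded Riemannian submanifolds without boundary --- one of which, the image $X_{\mathrm{prin}}$ of the principal orbits, is open, dense, and carries near each of its points the structure of a smooth Riemannian $n$-manifold; in addition the stratification is locally trivial, so the closure of any stratum is a union of strata. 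I would take $\mathcal{S}$ to be the collection of singular strata, i.e.\ all strata other than $X_{\mathrm{prin}}$, so that the ambient requirements of Theorem \ref{dif stab- dim 4} hold by construction.

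Conditions (1) and (2) are then checked directly. For (1): $X\setminus\bigcup_i S_i=X_{\mathrm{prin}}$, and near a principal orbit the slice representation is trivial, so $X$ is there a genuine Riemannian manifold; every point of a Riemannian $n$-manifold admits, at arbitrarily small scales, an $(n,\delta)$-strainer --- the endpoints of short geodesics issued in $n$ mutually orthogonal directions --- so (1) holds for the $\delta$ furnished by Theorem \ref{dif stab- dim 4}. For (2): at a point $\bar x$ of a singular stratum $S$ of dimension $j$, the slice theorem identifies a neighborhood of $\bar x$ in $X$ with a neighborhood of the cone point in $\mathbb{R}^{j}\times C(Y)$, where $Y$ is itself a quotient of a round sphere by a nontrivial closed group of isometries acting with no fixed point --- any direction normal to $S$ that is fixed by the isotropy would in fact be tangent to $S$. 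Hence $Y$ has diameter $<\pi$, is not a spherical suspension, and the cone factor splits off no line; by the structure theory of $(m,\delta)$-strained points in \cite{BGP}, a point whose space of directions has this form is not $(j+1,\delta)$-strained once $\delta$ is sufficiently small, so (2) holds provided the constant $\delta(k,v,D,n)$ of Theorem \ref{dif stab- dim 4} is below the relevant threshold.

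It remains to arrange (3)--(6). Let $\mathcal{K}$ consist of the compact members of $\mathcal{S}$ and $\mathcal{N}$ of the non-compact ones; then (3) is immediate, and (4) holds since every singular stratum has codimension $\le 4$ by hypothesis. The decisive bookkeeping point is that no member of $\mathcal{N}$ can have codimension $4$: if $N\in\mathcal{N}$ then $\overline N\setminus N$ is nonempty ($X$ being compact) and, by local triviality, is a union of strata, none of them $X_{\mathrm{prin}}$ (a dense stratum is not contained in the closure of a lower-dimensional one) and each of codimension strictly greater than that of $N$; since no singular stratum has codimension exceeding $4$, this forces $\mathrm{codim}\,N\le 3$, which is (5), and it exhibits $\overline N$ as a union of elements of $\mathcal{S}$, which is (6). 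With (1)--(6) in hand, Theorem \ref{dif stab- dim 4} applies and $X$ is diffeomorphically stable.

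The step I expect to be the real obstacle is not the stratification, which is standard, but the compatibility of the constants in condition (2): one must know that lying in a singular stratum of an $n$-dimensional orbit space obstructs being $(\dim S+1,\delta)$-strained already for the \emph{a priori} constant $\delta(k,v,D,n)$ of Theorem \ref{dif stab- dim 4}, and not merely for some smaller constant depending on $X$. This reduces to a uniform gap statement: a quotient $S^{m-1}/\Lambda$ of a round sphere by a nontrivial closed group of isometries with no fixed point has diameter bounded away from $\pi$ by an amount depending only on $m\le n-1$, hence is bounded away from the round $(m-1)$-sphere and from every spherical suspension, so that the spaces of directions along the singular strata of $X$ are uniformly far from containing an $S^{\dim S}$. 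Making this gap quantitative and matching it to the threshold produced in the proof of Theorem \ref{dif stab- dim 4} is the part of the argument that calls for genuine care.
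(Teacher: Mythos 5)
Your proposal is correct and follows the approach the paper (implicitly) intends: the corollary is stated without proof, and the argument required is exactly what you supply — realize the orbit-type strata of $X = N/G$ as the collection $\mathcal{S}$, observe that they are finitely many, pairwise disjoint, boundaryless, and totally geodesic (hence smoothly and isometrically embedded in the sense of Definition B, as the remark following it acknowledges), and then verify (1)--(6) of Theorem A. Your bookkeeping for (3)--(6) is right, and in particular the observation that a non-compact codimension-$4$ stratum would force a codimension-$\geq 5$ stratum in its frontier is precisely what rules out codimension $4$ in $\mathcal{N}$.

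You are also right to isolate the one genuinely delicate point: condition (2) must hold for the \emph{a priori} constant $\delta(k,v,D,n)$, not merely for a $\delta$ chosen after seeing $X$. Your proposed resolution via a uniform gap is correct and can be made quantitative quite cleanly. If $\Lambda \subset O(m)$ is nontrivial, closed, and acts on $S^{m-1}$ without fixed points, then no orbit $\Lambda q$ can lie in an open hemisphere: if it did, its (nonzero) Euclidean barycenter would be $\Lambda$-invariant and would normalize to a fixed point on $S^{m-1}$. Hence every orbit has diameter $\geq \pi/2$, and a short triangle-inequality argument then shows $\mathrm{diam}\,(S^{m-1}/\Lambda) \leq 3\pi/4$, with the gap $\pi/4$ independent of $m$ and of $\Lambda$. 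Since $\Sigma_{\bar x}X \cong S^{\dim S - 1} * (S^{\mathrm{codim}\,S - 1}/\Lambda)$, this diameter bound, combined with the BGP criterion for $(\ell,\delta)$-strained points, shows that points of $S$ fail to be $(\dim S + 1,\delta)$-strained for all $\delta$ below a universal threshold, which can be built into the $\delta(k,v,D,n)$ of Theorem A. So the subtlety you flag does not leave a gap; your sketch of its resolution is the right one.
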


Theorem \ref{dif stab- dim 4} generalizes Theorem 6.1 in \cite{KMS}, where
the same conclusion is obtained under the hypothesis that $\mathcal{S}%
=\emptyset .$ Theorem \ref{dif stab- dim 4} also provides an alternative
proof of the main theorems in \cite{PSW1} and \cite{PSW2}. The first author
has observed that another consequence of Theorem \ref{dif stab- dim 4} is
that Theorem 1 in \cite{Pro} holds with \textquotedblleft
homeomorphic\textquotedblright\ replaced with \textquotedblleft
diffeomorphic\textquotedblright $.$ In other words, the following is a
corollary of Theorem \ref{dif stab- dim 4} and Theorem 1 in \cite{Pro}.

\begin{theoremlet}
\label{sag thm}Let $\mathcal{S}_{k}^{n}$ be the complete, simply connected
Riemannian manifold with constant curvature $k.$ Given $k,h,r\in \mathbb{R}$
and $n\in \mathbb{N}$ with $h,r\in \left( 0,\frac{1}{2}\mathrm{diam}\mathcal{%
S}_{k}^{n}\right] $ and $h\leq r,$ there is an integer $c$ with the
following property.

If $M$ is a complete Riemannian $n$--manifold with 
\begin{eqnarray}
\mathrm{sec}M &\geq &k,  \notag \\
\mathrm{Radius}\left( M\right) &\leq &r,  \label{3 bnds inequal} \\
\mathrm{Sag}_{r}\left( M\right) &\leq &h,  \notag
\end{eqnarray}%
and almost maximal volume, then $M$ is diffeomorphic either to $\mathbb{S}%
^{n},$ to $\mathbb{R}P^{n},$ or to a Lens space $\mathbb{S}^{n}/\mathbb{Z}%
_{m}$ with $m\in \left\{ 3,4,\ldots ,c\right\} .$
\end{theoremlet}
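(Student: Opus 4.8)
The plan is to argue by contradiction, using Theorem 1 of \cite{Pro} to pin down the \emph{homeomorphism} type and Theorem \ref{dif stab- dim 4} to upgrade it to a \emph{diffeomorphism} type. So fix $k,h,r,n$ as in the statement and suppose no integer $c$ works. Then for every $j\in\mathbb{N}$ there is a complete Riemannian $n$--manifold $M_{j}$ satisfying \eqref{3 bnds inequal} and the almost--maximal--volume hypothesis that is diffeomorphic to none of $\mathbb{S}^{n}$, $\mathbb{R}P^{n}$, or $\mathbb{S}^{n}/\mathbb{Z}_{m}$ for $m\le j$. By Theorem 1 of \cite{Pro} there is an integer $c_{0}=c_{0}(k,h,r,n)$ so that every such manifold is \emph{homeomorphic} to one of the finitely many models $\mathbb{S}^{n}$, $\mathbb{R}P^{n}$, $\mathbb{S}^{n}/\mathbb{Z}_{m}$ with $3\le m\le c_{0}$. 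Since $\pi_{1}$ is a homotopy invariant, for $j\ge c_{0}$ the manifold $M_{j}$ is homeomorphic to one of these models but diffeomorphic to none of them; passing to a subsequence I may assume the $M_{j}$ are all homeomorphic to a single model $Y$.

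Next I would set up the convergence. Since $\mathrm{Radius}(M_{j})\le r$ forces $\mathrm{diam}\,M_{j}\le 2r$, and the almost--maximal--volume hypothesis provides a uniform positive lower volume bound $v=v(k,h,r,n)$ (the class being nonempty, its supremal volume is positive), the sequence lies in $\mathcal{M}_{k,v,0}^{\infty,\infty,2r}(n)$. By Gromov's compactness theorem, after passing to a further subsequence, $M_{j}\longrightarrow X$ in the Gromov--Hausdorff topology with $X\in\mathrm{closure}\bigl(\mathcal{M}_{k,v,0}^{\infty,\infty,2r}(n)\bigr)$; and by Perelman's Stability Theorem all but finitely many $M_{j}$ are homeomorphic to $X$, so $X$ is homeomorphic to $Y$.

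The geometric heart of the argument is to show that $X$ meets the hypotheses of Theorem \ref{dif stab- dim 4}. Here I would invoke the rigidity underlying Theorem 1 of \cite{Pro}: the almost--maximal--volume constraint forces $X$ to be a quotient $\mathcal{S}_{k'}^{n}/\Gamma$ of a round sphere by a finite group of isometries whose singular strata — the fixed--point components of elements of $\Gamma$ — are totally geodesic round subspheres of codimension at most $4$. Thus $X$ is an orbit space all of whose singular strata are smoothly and isometrically embedded submanifolds without boundary of codimension $\le 4$, so hypotheses (1)--(6) of Theorem \ref{dif stab- dim 4} hold with $\mathcal{N}=\emptyset$ and $\mathcal{K}$ this finite collection of subspheres (and their mutual intersections). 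By Theorem \ref{dif stab- dim 4}, equivalently by its orbit--space corollary, $X$ is then diffeomorphically stable, so all but finitely many of the $M_{j}$ are mutually diffeomorphic, say to a single smooth manifold $N$ that is homeomorphic to $Y$.

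It remains, and here I expect the main obstacle, to identify $N$ with the \emph{standard} smooth model $Y$ rather than with some exotic smoothing of $\mathbb{S}^{n}$, $\mathbb{R}P^{n}$, or a lens space that is still consistent with diffeomorphism stability. I would handle this by observing that the diffeomorphisms produced in the proof of Theorem \ref{dif stab- dim 4} are assembled from gluing maps supported in neighborhoods of, and isotopic across, the codimension-$\le 4$ strata of $X$; consequently each $M_{j}$ is realized as a smooth resolution of the \emph{fixed} orbifold $\mathcal{S}_{k'}^{n}/\Gamma$, which is by construction the standard $Y$. Equivalently, one reruns Pro's construction of the homeomorphism $M_{j}\to Y$, notes that away from the singular strata it is already a diffeomorphism, and uses Theorem \ref{dif stab- dim 4}'s control of the smooth structure along those strata to promote it to a global diffeomorphism $M_{j}\to Y$ for all large $j$. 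This contradicts the choice of the $M_{j}$ and proves the theorem. The compactness and the application of Theorem \ref{dif stab- dim 4} are routine; verifying that the limit's model carries its standard smooth structure is the delicate point.
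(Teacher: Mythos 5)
Your overall scheme — extract the limit $X$ via Gromov compactness, identify it as a finite isometric quotient of a round sphere using the rigidity in \cite{Pro}, feed $X$ into Theorem~\ref{dif stab- dim 4} (or its orbit--space corollary), and thereby promote "homeomorphic" to "diffeomorphic" — is the one the paper has in mind, and the first three paragraphs of your proposal execute it correctly. You are also right that the real issue is the last one you flag: Theorem~\ref{dif stab- dim 4} only tells you that, for any fixed convergent sequence $M_{j}\to X$, all but finitely many $M_{j}$ are diffeomorphic \emph{to each other}. It does not, by itself, identify the common diffeomorphism type with the standard $\mathbb{S}^{n}$, $\mathbb{R}P^{n}$, or lens space. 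Your proposed repair ("rerun Pro's construction and use Theorem~\ref{dif stab- dim 4}'s control of the smooth structure along the strata") is a gesture at the right phenomenon but is not an argument, and as written it leaves open the possibility that every $M_{j}$ near $X$ is an exotic smoothing of the topological model.

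The clean way to close this gap is to produce one \emph{standard} representative inside the convergent family. In \cite{Pro} the almost--maximal--volume bound is calibrated against an extremal model: the supremal volume subject to~\eqref{3 bnds inequal} is computed on the spherical space form (or spherical orbit space) $X=\mathcal{S}_{k'}^{n}/\Gamma$, and the standard smooth manifold $Y$ realizing the homeomorphism type of $X$ admits a one--parameter family of round (or smoothed round) metrics $g_{t}$ satisfying~\eqref{3 bnds inequal} with $\mathrm{vol}(Y,g_{t})\to\mathrm{vol}(X)$ and $(Y,g_{t})\to X$ in Gromov--Hausdorff. Inserting $(Y,g_{t_{j}})$ into the sequence $\{M_{j}\}$ and applying diffeomorphism stability of $X$ then forces $M_{j}$ to be diffeomorphic to $Y$ for all large $j$, which is exactly the contradiction you want. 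Without this insertion step the argument does not conclude.

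Two smaller points worth tightening. First, you assert $\mathcal{N}=\emptyset$, but in a quotient $\mathcal{S}_{k'}^{n}/\Gamma$ the strata are orbit--type components, and whenever two fixed--point subspheres meet, the complementary open pieces are non--compact; those belong in $\mathcal{N}$, and one must check that their codimension is $\le 3$ (not merely $\le 4$) as hypothesis~5 of Theorem~\ref{dif stab- dim 4} requires. Second, the passage "$X$ is homeomorphic to $Y$" should be justified by noting that Perelman stability applies because the convergence is non--collapsing (the uniform lower volume bound forces $\dim X=n$), which is what makes $X$ a topological $n$--manifold in the first place.
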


We refer the reader to \cite{Pro} for the definition of $\mathrm{Sag}%
_{r}\left( M\right) $ and the meaning of almost maximal volume with respect
to the bounds in \autoref{3 bnds inequal}.

Here are some examples that illustrate the smoothness condition and the
possibilities for the strata inclusions in Theorem \ref{dif stab- dim 4}.

\noindent \textbf{Examples. }\emph{Let }$\mathbb{D}^{n}$\emph{\ be a disk in 
}$\mathbb{R}^{n}$\emph{\ with boundary }$\mathbb{S}^{n-1}$ \emph{and
interior }$\mathbb{B}^{n}.$\emph{\ The double of }$\mathbb{D}^{n}\times 
\mathbb{D}^{p}\times \mathbb{D}^{r}$\emph{\ satisfies the hypotheses of
Theorem \ref{dif stab- dim 4} with }%
\begin{eqnarray*}
\mathcal{N} &=&\left\{ \mathbb{S}^{n-1}\times \mathbb{S}^{p-1}\times \mathbb{%
B}^{r},S^{n-1}\times \mathbb{B}^{p}\times \mathbb{S}^{r-1},\mathbb{B}%
^{n}\times \mathbb{S}^{p-1}\times \mathbb{S}^{r-1}\right\} \text{ and} \\
\mathcal{K} &\mathcal{=}&\left\{ \mathbb{S}^{n-1}\times \mathbb{S}%
^{p-1}\times \mathbb{S}^{r-1}\right\} .
\end{eqnarray*}%
\emph{Thus the double of }$\mathbb{D}^{n}\times \mathbb{D}^{p}\times \mathbb{%
D}^{r}$\emph{\ is diffeomorphically stable. For similar reasons, the double
of }$\mathbb{D}^{n}\times \mathbb{D}^{q}$ \emph{is diffeomorphically stable.
More generally, in all the above examples, we may replace any of the disks }$%
\mathbb{D}$\emph{\ by any closed, convex subset }$K$\emph{\ of any
Riemannian manifold, provided the boundary of }$K$\emph{\ is smooth and }$%
\dim \left( K\right) =\dim \left( \mathbb{D}\right) .$

To explain our strategy to prove Theorem \ref{dif stab- dim 4}, let $\left\{
M_{a}\right\} _{\alpha }\subset \mathcal{M}_{k,v,0}^{\infty ,\infty
,D}\left( n\right) $ converge to $X,$ and let $G$ be a precompact open
subset of the top stratum, $X\setminus \left\{ \cup _{i\in I}S_{i}\right\} .$
It follows from Theorem 6.1 in \cite{KMS} that for all sufficiently large $%
\alpha ,\beta ,$ there is an open $G_{\alpha }\subset M_{\alpha }$ that is
close to $G$ and admits a smooth embedding $\Phi _{\beta ,\alpha }:G_{\alpha
}\longrightarrow M_{\beta }$ that is also a Gromov-Hausdorff approximation.
The goal is to reconstruct $\Phi _{\beta ,\alpha }$ in a manner that extends
to a diffeomorphism $M_{a}\longrightarrow M_{\beta }$ . The next two results
are the tools that allow us to do this. The first is a consequence of the
fact that the diffeomorphism group of the $n$--sphere deformation retracts
to the orthogonal group when $n=1,2,$ or $3$ (see \cite{Hat}, \cite{Sm}).

\begin{lemmalet}
\label{bundle extension}\emph{(Bundle Extension Lemma, cf Lemma 3.18 in \cite%
{GrovWilh1})} Let $\pi _{1}:E_{1}{\longrightarrow }B$ and $\pi _{2}:E_{2}{%
\longrightarrow }B$ be bundles with fiber $\mathbb{D}^{n}$ and structure
group $\mathrm{Diff}\left( \mathbb{D}^{n}\right) ,$ where $\mathbb{D}^{n}$%
{\Huge \ }is the closed disk in{\Huge \ }$\mathbb{R}^{n}.$ Let $\pi
_{1}:S\left( E_{1}\right) {\longrightarrow }B$ and $\pi _{2}:S\left(
E_{2}\right) {\longrightarrow }B$ be sphere bundles obtained by removing the
interior of each fiber from $\pi _{1}:E_{1}{\longrightarrow }B$ and $\pi
_{2}:E_{2}{\longrightarrow }B$.

If 
\begin{equation*}
\Phi :S\left( E_{1}\right) \longrightarrow S\left( E_{2}\right)
\end{equation*}%
is a diffeomorphism so that 
\begin{equation}
\pi _{1}=\pi _{2}\circ \Phi ,  \label{resp pi eqn}
\end{equation}%
then $\Phi $ extends to a diffeomorphism%
\begin{equation*}
\hat{\Phi}:E_{1}\longrightarrow E_{2}
\end{equation*}%
so that $\pi _{1}=\pi _{2}\circ \hat{\Phi}$, provided $n\leq 3.$
\end{lemmalet}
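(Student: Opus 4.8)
The plan is to restrict $\Phi$ to the boundary sphere bundles, deform that restriction to a linear bundle isomorphism using the low dimensionality of the fibers, and then fill in the disk bundles radially. Write $m\le 4$ for the fiber dimension, fix the Euclidean metrics on $E_1,E_2$ used to define the disk bundles, and set $S(E_i)=\{v\in E_i:|v|=1\}$ and $D(E_i)=\{v\in E_i:|v|\le 1\}$, so that $E_i=D(E_i)\cup_{S(E_i)}A(E_i)$. Since $\Phi$ is a diffeomorphism of the manifolds-with-boundary $A(E_1)\to A(E_2)$ respecting the projections, it restricts to a fiber-preserving diffeomorphism $\phi:=\Phi|_{S(E_1)}\colon S(E_1)\to S(E_2)$ of the unit sphere bundles.

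The crux is to show that $\phi$ is homotopic, through fiber-preserving diffeomorphisms $S(E_1)\to S(E_2)$, to the restriction to unit spheres of an isometric vector bundle isomorphism $\psi\colon E_1\to E_2$; this is the only place the hypothesis $m\le 4$ is used. A fiber-preserving diffeomorphism $S(E_1)\to S(E_2)$ is the same as a section of the fiber bundle $\mathcal{D}\to B$ associated to the orthonormal frame bundles of $E_1$ and $E_2$, with fiber $\mathrm{Diff}(S^{m-1})$, and isometric bundle isomorphisms are the sections of the subbundle $\mathcal{L}\subset\mathcal{D}$ with fiber $O(m)$. Since $m-1\le 3$, the inclusion $O(m)\hookrightarrow\mathrm{Diff}(S^{m-1})$ is a homotopy equivalence: the case $m=1$ is trivial, and for $m=2,3,4$ it follows from the fact, cited above, that $\mathrm{Diff}(S^{m-1})$ deformation retracts onto $O(m)$ (Smale, Hatcher). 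Hence $\mathcal{L}\hookrightarrow\mathcal{D}$ is a fiberwise homotopy equivalence, so by Dold's theorem — a fiberwise homotopy equivalence of fiber bundles over a paracompact base is a fiber homotopy equivalence — it has a fiber-homotopy inverse $r\colon\mathcal{D}\to\mathcal{L}$. Then $r\circ\phi$ is a section of $\mathcal{L}$, which supplies $\psi$, and applying to $\phi$ a fiberwise homotopy from $\mathrm{id}_{\mathcal{D}}$ to the composite of $r$ with the inclusion supplies a path $\Phi_t$, $t\in[0,1]$, of fiber-preserving diffeomorphisms $S(E_1)\to S(E_2)$ with $\Phi_0=\phi$ and $\Phi_1=\psi|_{S(E_1)}$ (which we may take smooth in $t$). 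In particular $E_1\cong E_2$.

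It remains to extend $\Phi$ across $S(E_1)$, which I would do by a radial construction. The infinite-order jet of $\Phi$ along $S(E_1)$ is realized — by Seeley's (or the Whitney) extension theorem in the radial variable, fiberwise — by a fiber-preserving diffeomorphism $\Psi_0$ of a collar $\{1-\delta\le|v|\le 1\}$ of $S(E_1)$ in $D(E_1)$ onto a collar of $S(E_2)$ in $D(E_2)$; after a standard collar adjustment near its inner boundary one may assume $\Psi_0$ is radially separable there. On the remaining region $\{|v|\le 1-\delta\}$, write $v=r\omega$ and interpolate — using the isotopy $\Phi_t$ and a smooth radial cutoff — from $\Psi_0|_{\{|v|=1-\delta\}}$ down to the linear isometry $\psi$ on a small disk subbundle $\{|v|\le\tfrac{1}{4}\}$, where $\psi$, being linear on fibers, is automatically smooth across the zero section. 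A check in polar coordinates, using that each $\Phi_t$ preserves the unit sphere bundle (so the radial derivative stays positive), shows this yields a fiber-preserving diffeomorphism $\Psi\colon D(E_1)\to D(E_2)$ that agrees with $\Phi$ to infinite order along $S(E_1)$. Gluing $\Psi$ and $\Phi$ along $S(E_1)$ then gives a bijection $\hat\Phi\colon E_1\to E_2$ with $\pi_1=\pi_2\circ\hat\Phi$; it is a diffeomorphism because the two pieces agree to infinite order along $S(E_1)$.

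The main obstacle is the crux step: promoting the purely fiberwise fact that $\mathrm{Diff}(S^{m-1})$ deformation retracts onto $O(m)$ for $m\le 4$ to a statement valid over the entire base $B$. Dold's theorem is precisely what forces this globalization with no residual obstruction. That the dimension bound is genuinely needed is already visible over $B=\mathrm{pt}$: for $m\ge 7$ there is a diffeomorphism of $S^{m-1}$ that is not isotopic to any orthogonal transformation (detected by gluing two copies of $D^m$ along it to form an exotic sphere), and the corresponding fiber-preserving diffeomorphism of annulus bundles has no extension. The remaining differential-topology bookkeeping in the third paragraph — the jet extensions and the radial interpolation, including matching the radial scalings — is routine but must be carried out with care to ensure $\hat\Phi$ is a diffeomorphism.
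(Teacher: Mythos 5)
The paper does not include a proof of this lemma; it only remarks that the argument ``is very similar to Lemma 3.18 in \cite{GrovWilh1}'' and that the key input is the Smale--Hatcher theorem that $\mathrm{Diff}(S^{n})$ deformation retracts onto $O(n+1)$ for $n\le 3$. Your outline is built on exactly that input and, as far as I can check without \cite{GrovWilh1} in hand, it is correct: restrict $\Phi$ to the unit sphere bundles, use the homotopy equivalence $O(m)\hookrightarrow\mathrm{Diff}(S^{m-1})$ (valid precisely for fiber dimension $m\le 4$) together with Dold's fiber-homotopy-equivalence theorem to straighten the section of the diffeomorphism bundle $\mathcal{D}$ to a section of the isometry bundle $\mathcal{L}$, and then cone the resulting isotopy radially into the disk bundles after matching jets along $S(E_1)$. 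The exotic-sphere example for $m\ge 7$ correctly shows the dimension restriction is not an artifact.

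Two places deserve tightening if this is to be written in full. First, Dold's theorem produces a \emph{continuous} fiber homotopy $H$ and a continuous fiber-homotopy inverse $r$; so $\psi = r\circ\phi$ and the path $\Phi_t = H(\phi(\cdot),t)$ are a priori only continuous sections. Since $\mathcal{L}$ is a finite-dimensional smooth bundle, $r\circ\phi$ can be $C^\infty$-smoothed to a nearby smooth section, and the resulting (continuous) homotopy of sections of $\mathcal{D}$ can then be smoothed fiberwise in $(b,t)$; your parenthetical ``which we may take smooth in $t$'' is carrying real weight and should be spelled out, since the radial interpolation formula needs $\Phi_t(\omega)$ jointly smooth in $(t,\omega,b)$. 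Second, in the radial filling, after the Seeley/Whitney extension $\Psi_0$ and the ``collar adjustment'' to make $\Psi_0(r\omega)=\mu(r)\phi(\omega)$ near $r=1-\delta$, you should make the interpolation parameter $\sigma(r)$ (with $\Psi(r\omega)=\mu(r)\Phi_{\sigma(r)}(\omega)$) flat to all orders at $r=1-\delta$ so that $\Psi$ and $\Psi_0$ genuinely glue $C^\infty$; likewise the jet-matching of $\Psi_0$ with $\Phi$ along $S(E_1)$ and the positivity of the radial derivative of $\mu$ need to be stated. These are routine differential-topology adjustments and do not affect the validity of the argument, but they are the kind of thing that, if left implicit, makes it hard to verify that $\hat\Phi$ really is a diffeomorphism across the two seams.
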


\begin{proof}
The main theorems of \cite{Hat} and \cite{Sm} give homotopy equivalences%
\begin{equation*}
\mathrm{Diff}\left( \mathbb{D}^{n}\right) \simeq O\left( n\right) \simeq 
\mathrm{Diff}\left( S^{n-1}\right)
\end{equation*}%
provided $n\leq 3$ (see also \cite{Hat 3}). Moreover, $\mathrm{Diff}\left(
S^{n-1}\right) \simeq O\left( n\right) $ is realized by a deformation
retraction.

In particular, the structure groups of the $E_{i}$ reduce to $O\left(
n\right) ,$ and therefore the $E_{i}$ admit Euclidean metrics. Using
Equation (\ref{resp pi eqn}) we view $\Phi :S\left( E_{1}\right)
\longrightarrow S\left( E_{2}\right) $ as a family of diffeomorphisms of $%
S^{n-1}$, parameterized by $B.$ Let $A\left( E_{i}\right) $ be annulus
subbundles of $E_{i}$ whose outer boundary is $S\left( E_{i}\right) .$ Use
the deformation retraction of $\mathrm{Diff}\left( S^{n-1}\right) $ to $%
O\left( n\right) $ to extend $\Phi $ to a diffeomorphism 
\begin{equation*}
\tilde{\Phi}:A\left( E_{1}\right) \longrightarrow A\left( E_{2}\right) ,
\end{equation*}%
that satisfies $\pi _{1}=\pi _{2}\circ \Phi ,$ and which is orthogonal on
the inner boundary sphere bundles of the $A\left( E_{i}\right) .$ Since
orthogonal maps extend to $\mathbb{R}^{n},$ $\tilde{\Phi}$ extends to the
desired diffeomorphism 
\begin{equation*}
\hat{\Phi}:E_{1}\longrightarrow E_{2},
\end{equation*}%
which moreover satisfies, $\pi _{1}=\pi _{2}\circ \hat{\Phi}.$
\end{proof}

There are two main difficulties with the proposal to extend $\Phi _{\beta
,\alpha }$ over successively lower dimensional strata: We do not have any
canonical tubular neighborhoods around the strata to serve as the disk
bundles of the Bundle Extension Lemma, and, even granting the existence of
these disk bundles, we do not know that $\Phi _{\beta ,\alpha }$ satisfies %
\autoref{resp pi eqn}. We resolve these problems via the next result, which
is called the Tubular Neighborhoods Stability Theorem, or TNST, for short.

Before stating the TNST, we fix some terminology to describe how strata are
related to each other. Set $\mathcal{S}^{\mathrm{ext}}\equiv \mathcal{S}\cup
\left( X\setminus \cup _{S\in \mathcal{S}}S\right) ,$ and partially order
the $S\in \mathcal{S}^{\mathrm{ext}}$ by declaring that $S<S^{\prime }$ if $%
S\subsetneq \bar{S}^{\prime }$, where $\bar{S}^{\prime }$ is the closure of $%
S^{\prime }.$ We call $a\in \mathbb{N}$ the \label{ansc numb page}\emph{%
Ancestor Number} of $S\in \mathcal{S}^{\mathrm{ext}}$ if $a$ is the length
of the largest chain 
\begin{equation*}
S_{a}<\cdots <S_{1}<S_{0}
\end{equation*}%
with $S=S_{a}$ and $S_{0}=X\setminus \left\{ \cup _{i\in I}S_{i}\right\} $
(cf \cite{SearW}). We say that $N$ is a \emph{parent} of $S$ if $S\subset 
\bar{N},$ and if $T\in \mathcal{S}^{\mathrm{ext}}$ satisfies $S\subset \bar{T%
}\subset \bar{N},$ then either $T=S$ or $T=N.$ Thus if $X$ satisfies the
hypotheses of Theorem \ref{dif stab- dim 4}, and $N\in \mathcal{N},$ then
its only parent is the top stratum. If $S\in \mathcal{K},$ then either the
top stratum is its only parent or the parents of $S$ are a finite subset of $%
\mathcal{N}$.

\bigskip

\noindent \textbf{Example. }\emph{Let }$X$\emph{\ be the double of the }$5$%
\emph{--dimensional cube }$\left( \left[ 0,1\right] ^{5}\right) _{-}\amalg
_{\partial }\left( \left[ 0,1\right] ^{5}\right) _{+}.$\emph{\ Then the
strata and their Ancestor Numbers are given by the following table. }%
\begin{equation*}
\begin{tabular}{|l|c|c|}
\hline
Submanifold & \textrm{Ancestor Number} & \textrm{Strata Type} \\ \hline
Interiors of the cubes and their $4$--dimensional faces & 0 & \textrm{Top}
\\ \hline
Interiors of the $3$--dimensional faces & 1 & $\mathcal{N}$ \\ \hline
Interiors of the $2$--dimensional faces & 2 & $\mathcal{N}$ \\ \hline
Interiors of the $1$--dimensional faces & 3 & $\mathcal{N}$ \\ \hline
Vertices & 4 & $\mathcal{K}$ \\ \hline
\end{tabular}%
\end{equation*}

\bigskip

The fact that the $4$--dimensional faces in this example are part of the top
stratum illustrates a more general phenomenon: Since $\partial X=\emptyset $%
, it follows from Corollary 12.8 of \cite{BGP} that the $\left( n-1\right) $%
--strained points of $X$ are $n$--strained. If $X$ is as in Theorem \ref{dif
stab- dim 4}, then all of the $S_{i}$s are of codimension $\leq 3.$ It
follows that the only possible Ancestor Numbers are $0,$ $1,$ and $2.$

\begin{TNST}
\emph{(TNST)}Let $X,$ $\mathcal{S}\equiv \left\{ S_{i}\right\} _{i\in I},$ $%
\mathcal{K},$ and $\mathcal{N}$ be as in Theorem \ref{dif stab- dim 4}. Let $%
\left\{ M_{\gamma }\right\} _{\gamma =1}^{\infty }\subset $ $\mathcal{M}%
_{k,v,0}^{\infty ,\infty ,D}\left( n\right) $ converge to $X.$ For all but
finitely many $\gamma \in \mathbb{N}$, $M_{\gamma }$ has a finite open cover 
$\left\{ G_{\gamma },\mathrm{interior}\left( \mathcal{U}_{\gamma
}^{S_{i}}\right) \right\} _{i\in I}$ with the following properties.

\begin{enumerate}
\item \label{T1}For $i\neq j,$ $\mathcal{U}_{\gamma }^{S_{i}}\cap \mathcal{U}%
_{\gamma }^{S_{j}}=\emptyset $ unless $S_{i}\subset \bar{S}_{j}$ or $%
S_{j}\subset \bar{S}_{i}$.

\item \label{T2}There are $C^{1}$--disk bundles 
\begin{equation*}
P_{\gamma }^{S_{i}}:\mathcal{U}_{\gamma }^{S_{i}}\longrightarrow
O_{S_{i}}\subset S_{i}
\end{equation*}%
with $O_{S_{i}}=P_{\gamma }^{S_{i}}\left( \mathcal{U}_{\gamma
}^{S_{i}}\right) $ an open subset of $S_{i}.$ Moreover, if $S_{i}\in 
\mathcal{K},$ then $O_{S_{i}}=S_{i}.$

\item \label{T3}For $t\in \left[ 1,10\right] ,$ there is an extension of $%
P_{\gamma }^{S_{i}}$ to a $1$--parameter family of disk bundles $\left( 
\mathcal{U}_{\gamma }^{S_{i}}\left( t\right) ,P_{\gamma }^{S_{i}}\right) $
so that 
\begin{equation*}
\mathcal{U}_{\gamma }^{S_{i}}=\mathcal{U}_{\gamma }^{S_{i}}\left( 1\right) ,
\end{equation*}%
and for each $x\in O_{S_{i}}$ and $1\leq s<t\leq 10,$ 
\begin{equation*}
\left( P_{\gamma }^{S_{i}}\right) ^{-1}\left( x\right) \cap \mathcal{U}%
_{\gamma }^{S_{i}}\left( s\right) \subset \mathrm{interior}\left( \left(
P_{\gamma }^{S_{i}}\right) ^{-1}\left( x\right) \cap \mathcal{U}_{\gamma
}^{S_{i}}\left( t\right) \right) .
\end{equation*}

\item \label{T4}For all but finitely many $\alpha ,\beta \in \mathbb{N},$
there is a $C^{1}$--diffeomorphism 
\begin{equation*}
\Phi _{\beta ,\alpha }:G_{\alpha }\longrightarrow \Phi _{\beta ,\alpha
}\left( G_{\alpha }\right) \subset G_{\beta }
\end{equation*}%
so that for all $S\in \mathcal{S},$ 
\begin{equation}
P_{\alpha }^{S}=P_{\beta }^{S}\circ \Phi _{\beta ,\alpha }
\label{top stratum resp eqn}
\end{equation}%
on their common domains. Moreover, for all $S\in \mathcal{S}$ with ancestor
number $1,$%
\begin{equation}
\Phi _{\beta ,\alpha }\left( \partial \mathcal{U}_{\alpha }^{S}\left(
3\right) \right) =\partial \mathcal{U}_{\beta }^{S}\left( 3\right) ,
\label{containment}
\end{equation}%
and for all $S\in \mathcal{S}$ with ancestor number $2,$%
\begin{equation}
\Phi _{\beta ,\alpha }\left( \partial \mathcal{U}_{\alpha }^{S}\left(
3\right) \cap G_{\alpha }\right) =\partial \mathcal{U}_{\beta }^{S}\left(
3\right) \cap G_{\beta }.  \label{schn 2}
\end{equation}

\item \label{inter strata resp T6}Let $S\in \mathcal{S}$ have ancestor
number 2. For each parent $N$ of $S,$ there is a neighborhood $\mathcal{V}%
^{S}$ of $S$ in $\bar{N}$ and a $C^{1}$--submersion 
\begin{equation*}
Q^{S}:\mathcal{V}^{S}\setminus S\longrightarrow S
\end{equation*}%
so that 
\begin{equation}
P_{\gamma }^{S}=Q^{S}\circ P_{\gamma }^{N}  \label{gen resp
EEEEqn}
\end{equation}%
wherever both expressions are defined. Moreover, 
\begin{equation*}
\bar{N}\subset O_{N}\cup \dbigcup\limits_{S}\mathcal{V}^{S},
\end{equation*}%
where the union is over all $S\in \mathcal{S}\setminus \left\{ N\right\} $
with $S\subset \bar{N}.$

\item \label{T6}Let $S\in \mathcal{S}$ have ancestor number $2$. Let $N\in 
\mathrm{Parents}\left( S\right) .$ For $z\in \mathcal{U}_{\gamma }^{N}\cap
\partial \mathcal{U}_{\gamma }^{S}\left( 3\right) ,$ 
\begin{equation*}
\left( P_{\gamma }^{N}\right) ^{-1}\left( P_{\gamma }^{N}\left( z\right)
\right) \subset \partial \mathcal{U}_{\gamma }^{S}\left( 3\right) .
\end{equation*}
\end{enumerate}
\end{TNST}

We prove Theorem \ref{dif stab- dim 4} by successively extending the
diffeomorphism $\Phi _{\beta ,\alpha }:G_{\alpha }\longrightarrow \Phi
_{\beta ,\alpha }\left( G_{\alpha }\right) $ of the TNST to the lower
dimensional strata. This is done by combining the Bundle Extension Lemma
with the TNST.

Let $\mathcal{A}\left( 2\right) :=\left\{ \left. S\in \mathcal{S}\text{ }%
\right\vert \text{ the ancestor number of }S\text{ is }2\right\} .$ Set 
\begin{equation*}
G_{\alpha }^{\mathrm{1}}:=M_{\gamma }\setminus \left\{ \dbigcup\limits_{S\in 
\mathcal{A}\left( 2\right) }\mathcal{U}_{\alpha }^{S}\left( 1\right)
\right\} .
\end{equation*}%
Suppose $S\in \mathcal{S}$ has ancestor number $1.$ Equation (\ref%
{containment}) implies that $\Phi _{\beta ,\alpha }$ takes the boundary
sphere bundle $\mathcal{U}_{\alpha }^{S}\left( 3\right) $ to the boundary
sphere bundle of $\mathcal{U}_{\beta }^{S}\left( 3\right) .$ Equation (\ref%
{top stratum resp eqn}) gives us Equation \autoref{resp pi eqn} with $%
P_{\alpha }^{S}$, $P_{\beta }^{S},$ and $\Phi _{\beta ,\alpha }$ playing the
roles of $\pi _{1}$, $\pi _{2},$ and $\Phi ,$ respectively. Thus by the
Bundle Extension Lemma, $\Phi _{\beta ,\alpha }$ extends to an embedding 
\begin{equation*}
\Phi _{\beta ,\alpha }^{1}:G_{\alpha }^{\mathrm{1}}\longrightarrow M_{\beta
},
\end{equation*}%
so that 
\begin{equation}
P_{\alpha }^{S}=P_{\beta }^{S}\circ \Phi _{\beta ,\alpha }^{1}
\label{resp Phi^a+1 a+1 eqn}
\end{equation}%
for all $S$ with ancestor number $1.$

To check that Equation \autoref{resp Phi^a+1 a+1 eqn} holds when the
Ancestor Number of $S$ is $2,$ suppose that $N$ is a parent of $S.$ Then
Equation \autoref{resp Phi^a+1 a+1 eqn} holds for $N,$ so%
\begin{equation*}
P_{\alpha }^{N}=P_{\beta }^{N}\circ \Phi _{\beta ,\alpha }^{1}.
\end{equation*}%
Applying $Q^{S}$ to both sides of this equation and using Part \ref{inter
strata resp T6} of the TNST we get 
\begin{equation}
P_{\alpha }^{S}=Q^{S}\circ P_{\alpha }^{N}=Q^{S}\circ P_{\beta }^{N}\circ
\Phi _{\beta ,\alpha }^{1}=P_{\beta }^{S}\circ \Phi _{\beta ,\alpha }^{1}.
\label{resp step 2 eqn}
\end{equation}%
Thus Equation \autoref{resp Phi^a+1 a+1 eqn} holds for all $S\in \mathcal{S}%
. $

The final step is to extend $\Phi _{\beta ,\alpha }^{1}$ to each $\mathcal{U}%
_{\alpha }^{S}\left( 3\right) $ for which $S\in \mathcal{A}\left( 2\right) $%
. By combining Part \ref{T1} of the TNST with the fact that $M_{\gamma
}\subset G_{\gamma }\cup \dbigcup\limits_{i\in I}\mathcal{U}_{\gamma
}^{S_{i}}\left( 1\right) ,$ we see that for such $S$, 
\begin{equation*}
\partial \mathcal{U}_{\gamma }^{S}\left( 3\right) \subset G_{\gamma
}\dbigcup\limits_{N\in \mathrm{Parents}\left( S\right) }\mathcal{U}_{\gamma
}^{N}.
\end{equation*}%
Thus 
\begin{eqnarray*}
\partial \mathcal{U}_{\gamma }^{S}\left( 3\right) &=&\left( G_{\gamma }\cap
\partial \mathcal{U}_{\gamma }^{S}\left( 3\right) \right)
\dbigcup\limits_{N\in \mathrm{Parents}\left( S\right) }\mathcal{U}_{\gamma
}^{N}\cap \partial \mathcal{U}_{\gamma }^{S}\left( 3\right) \\
&\subset &\left( G_{\gamma }\cap \partial \mathcal{U}_{\gamma }^{S}\left(
3\right) \right) \dbigcup\limits_{N\in \mathrm{Parents}\left( S\right)
}\dbigcup\limits_{z\in \mathcal{U}_{\gamma }^{N}\cap \partial \mathcal{U}%
_{\gamma }^{S}\left( 3\right) }\left( P_{\gamma }^{N}\right) ^{-1}\left(
P_{\gamma }^{N}\left( z\right) \right) .
\end{eqnarray*}%
But by Part \ref{T6} of the TNST, if $N\in \mathrm{Parents}\left( S\right) $
and $z\in \mathcal{U}_{\gamma }^{N}\cap \partial \mathcal{U}_{\gamma
}^{S}\left( 3\right) ,$ then 
\begin{equation*}
\left( P_{\gamma }^{N}\right) ^{-1}\left( P_{\gamma }^{N}\left( z\right)
\right) \subset \partial \mathcal{U}_{\gamma }^{S}\left( 3\right) ,
\end{equation*}%
so 
\begin{equation}
\partial \mathcal{U}_{\gamma }^{S}\left( 3\right) =\left( G_{\gamma }\cap
\partial \mathcal{U}_{\gamma }^{S}\left( 3\right) \right)
\dbigcup\limits_{N\in \mathrm{Parents}\left( S\right) }\dbigcup\limits_{z\in 
\mathcal{U}_{\gamma }^{N}\cap \partial \mathcal{U}_{\gamma }^{S}\left(
3\right) }\left( P_{\gamma }^{N}\right) ^{-1}\left( P_{\gamma }^{N}\left(
z\right) \right) .  \label{boundary decomp}
\end{equation}%
It follows from (\ref{schn 2}) that $\Phi _{\beta ,\alpha }^{1}\left(
\partial \mathcal{U}_{\alpha }^{S}\left( 3\right) \cap G_{\alpha }\right)
=\partial \mathcal{U}_{\beta }^{S}\left( 3\right) \cap G_{\beta }.$
Combining this with $P_{\alpha }^{N}=P_{\beta }^{N}\circ \Phi _{\beta
,\alpha }^{1}\ $and (\ref{boundary decomp}) we see that 
\begin{equation*}
\Phi _{\beta ,\alpha }^{1}\left( \partial \mathcal{U}_{\alpha }^{S}\left(
3\right) \right) =\partial \mathcal{U}_{\beta }^{S}\left( 3\right) .
\end{equation*}

So $\Phi _{\beta ,\alpha }^{1}$ takes the boundary sphere bundle of $%
\mathcal{U}_{\alpha }^{S}\left( 3\right) $ to the boundary sphere bundle of $%
\mathcal{U}_{\beta }^{S}\left( 3\right) $ while mapping fibers of $\partial 
\mathcal{U}_{\alpha }^{S}\left( 3\right) $ to fibers of $\partial \mathcal{U}%
_{\beta }^{S}\left( 3\right) .$ Via a final application of the Bundle
Extension Lemma, we extend $\Phi _{\beta ,\alpha }^{1}$ to the desired
diffeomorphism 
\begin{equation*}
\Phi _{\beta ,\alpha }^{2}:M_{\alpha }\longrightarrow M_{\beta }.
\end{equation*}%
Thus Theorem \ref{dif stab- dim 4} follows from the TNST.

The table below lists the main milestones in the remainder of the paper and
their roles in the overall proof. \bigskip

\begin{tabular}{|l|p{10.5cm}|}
\hline
\textbf{Theorem \ref{magnum cover thm}} & constructs a cover of $X$ by
strained neighborhoods on which local Alexandrov models of the disk bundles
of the TNST are defined. \\ \hline
\textbf{Theorem \ref{Perel cap thm}} & constructs local approximate versions
of the $P_{\gamma }^{S_{i}}$s$,$ $Q^{S_{j}}$s, and $\Phi _{\alpha ,\beta }$s
that satisfy local versions of Equations \autoref{top stratum resp eqn} and %
\autoref{gen resp EEEEqn}. \\ \hline
\textbf{Propositions \ref{overlaps C^1 close prop} and \ref{generic cover
cor}} & show that the local maps from Theorem \ref{Perel cap thm} are $C^{1}$%
--close on their overlaps. \\ \hline
\textbf{Corollary \ref{gluing addendum}} & allows us to define the $%
P_{\gamma }^{S_{i}}$s$,$ $Q^{S_{j}}$s, and $\Phi _{\alpha ,\beta }$s by
gluing together the local approximate versions of the $P_{\gamma }^{S_{i}}$s$%
,$ $Q^{S_{j}}$s, and $\Phi _{\alpha ,\beta }$s in a manner that preserves
Inequailities \autoref{top stratum resp eqn} and \autoref{gen resp EEEEqn}.
\\ \hline
\textbf{Proposition \ref{submersions are glued prop}} & constructs the disk
bundles of the TNST. \\ \hline
\end{tabular}%
\vspace*{0.15in}

The first subsection of Section 1 reviews basic concepts of Alexandrov
geometry, and the second subsection uses these to derive several results
that we use to prove the TNST. The rest of the paper is devoted to proving
the TNST. The main project is the construction of the disk bundles of Part %
\ref{T2} of the TNST. For this we glue together locally defined disk bundles
whose projection mappings are $C^{1}$--close. We obtain Alexandrov models of
these local disk bundles in Section \ref{Riem vs Alex}, wherein we study
isometric embeddings of Riemannian manifolds in Alexandrov spaces in greater
detail. In Section \ref{Perelman Section}, we construct the local disk
bundles by combining strainers with Perelman's concavity construction.
Section \ref{tools to arrange section} shows that the locally defined
submersions from Section \ref{Perelman Section} are $C^{1}$--close$.$

The gluing result we use, Corollary\textbf{\ }\ref{gluing addendum}, is
stated in Section \ref{Gluing Section}. Since it is similar to other results
in the literature, we defer its proof to Appendix A (Section \ref{Appendix
section}). We complete the proof of the TNST in Section \ref{TNST Section}.
For the convenience of the reader, we list notations and conventions in
Appendix B (Section \ref{Not and con}).

\begin{remarknonum}
In light of the main theorem of \cite{Hat} (cf also \cite{BalmKlin}), it is
natural to ask if the hypotheses of Theorem \ref{dif stab- dim 4} can be
weakened to allow for strata of codimension 4. In fact, an earlier version
of this paper asserted the veracity of this result.

A serious hurdle must be cleared before the ideas in this paper can prove
the stronger result: the homotopy type of $\mathrm{Diff}\left( \mathbb{D}%
^{4}\right) $ is not known{\large . }Thus the reduction of the structure
groups of the disk bundles in the Bundle Extension Lemma is not possible
through any topological arguments that we are aware of.{\large \ }

A potential path around this difficulty would be to reduce the structure
groups of the disk bundles of Part \ref{T2} of the TNST to orthogonal groups
through geometric means. In particular, it follows from the TNST that $X\in 
\mathrm{closure}\left( \mathcal{M}_{k,v,0}^{\infty ,\infty ,D}\left(
n\right) \right) $ is diffeomorphically stable, provided its singularities
occur along smooth Riemannian manifolds of codimension $\leq 4$ and the
codimension $\ 4$ disk bundles of the TNST are trivial.

We are profoundly grateful to a referee for pointing out a gap in an earlier
draft that is related to this remark.
\end{remarknonum}

\begin{remarknonum}
While there are many examples of Alexandrov spaces whose singularities occur
along Riemannian manifolds as in Theorem \ref{dif stab- dim 4}, we imagine
that a generic Alexandrov space does not satisfy this condition. In fact,
there are constructions of Alexandrov spaces in \cite{Li} and \cite{LiNab}
whose singularities occur along cantor sets.
\end{remarknonum}

\begin{acknowledgment}
We are grateful to Paula Bergen for copy editing the manuscript, to Vitali
Kapovitch for several extensive conversations about this problem over the
years, to Catherine Searle and Maree Jaramillo for comments on the
manuscript, to Jim Kelliher for several discussions relevant to the proof of
Theorem \ref{submersion gluing}, to a referee of \cite{PSW1} for proposing a
form of the Tubular Neighborhood Stability Theorem, to Julie Bergner and
Pedro Sol\'{o}rzano for discussions on the classification of vector bundles,
and to Michael Sill and Nan Li for multiple discussions on and valuable
criticisms of this manuscript. Special thanks go to Notre Dame for hosting a
stay by the second author during which this work was completed. We are
profoundly grateful to the referees for valuable mathematical and expository
criticisms.
\end{acknowledgment}

\section{Basic Tools of Alexandrov Geometry}

The notion, from \cite{BGP}, of strainers in an Alexandrov space forms the
core of the calculus arguments we use. In the next subsection, we review
this notion and its relevant consequences. The exposition borrows freely
from \cite{PSW1} and \cite{PSW2}.

\addtocounter{algorithm}{1}

\subsection{Strainers and their Consequences{\protect\Huge \ }}

\begin{definition}
\label{strainer}Let $X$ be an Alexandrov space. A point $x\in X$ is said to
be $\left( n,\delta ,r\right) $--strained by the strainer $\left\{ \left(
a_{i},b_{i}\right) \right\} _{i=1}^{n}\subset X\times X$ provided that for
all $i\neq j$ we have%
\begin{equation*}
\begin{array}{ll}
\tilde{\sphericalangle}\left( a_{i},x,b_{i}\right) >\pi -\delta , & \tilde{%
\sphericalangle}\left( a_{i},x,b_{j}\right) >\frac{\pi }{2}-\delta , \\ 
\tilde{\sphericalangle}\left( b_{i},x,b_{j}\right) >\frac{\pi }{2}-\delta ,
& \tilde{\sphericalangle}\left( a_{i},x,a_{j}\right) >\frac{\pi }{2}-\delta ,%
\text{ and} \\ 
\multicolumn{2}{c}{\min_{i=1,\ldots ,n}\left\{ \mathrm{dist}%
(\{a_{i},b_{i}\},x)\right\} >r.}%
\end{array}%
\end{equation*}

We say $B\subset X$ is an $(n,\delta ,r)$--strained set with strainer $%
\{a_{i},b_{i}\}_{i=1}^{n}$ provided every point $x\in B$ is $(n,\delta ,r)$%
--strained by $\{a_{i},b_{i}\}_{i=1}^{n}$. When there is no need to specify, 
$r$ we say that $x$ is $\left( n,\delta \right) $--strained.
\end{definition}

Next we state a powerful lemma from \cite{BGP} which shows that for a $%
(1,\delta ,r)$ strained neighborhood, angle and comparison angle almost
coincide for geodesic hinges with one side in this neighborhood and the
other reaching a strainer.

\begin{lemma}
\label{5.6}(\cite{BGP}, Lemma $5.6$) Let $B\subset X$ be $\left( 1,\delta
,r\right) $--strained by $(a,b).$ For any $x,z\in B$,%
\begin{eqnarray}
\left\vert \sphericalangle \left( a,x,z\right) -\tilde{\sphericalangle}%
\left( a,x,z\right) \right\vert &<&\tau \left( \delta \right) +\tau \left( 
\mathrm{dist}\left( x,z\right) |r\right) ,\text{ and}  \notag \\
\left\vert \sphericalangle \left( b,x,z\right) -\tilde{\sphericalangle}%
\left( b,x,z\right) \right\vert &<&\tau \left( \delta \right) +\tau \left( 
\mathrm{dist}\left( x,z\right) |r\right) .  \label{5.6 baby}
\end{eqnarray}%
In addition, 
\begin{equation}
\left\vert \tilde{\sphericalangle}\left( a,x,z\right) +\tilde{\sphericalangle%
}\left( b,x,z\right) -\pi \right\vert <\tau \left( \delta \right) +\tau
\left( \mathrm{dist}\left( x,z\right) |r\right) .  \label{5.6 proof}
\end{equation}
\end{lemma}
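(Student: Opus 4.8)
The plan is to deduce all three estimates from Toponogov's theorem — used both in the form $\sphericalangle\ge\tilde{\sphericalangle}$ and at more than one vertex — plus one structural fact about spaces of directions. I would first record that if $p$ is $(1,\delta)$--strained by $(a,b)$ then for \emph{every} point $y$,
\[
\bigl|\sphericalangle(a,p,y)+\sphericalangle(b,p,y)-\pi\bigr|<\tau(\delta);
\]
call this $(\star)_p$. The lower bound is the triangle inequality in the space of directions $\Sigma_p$ applied to the directions $\uparrow_p^{a},\uparrow_p^{y},\uparrow_p^{b}$, together with $\sphericalangle(a,p,b)\ge\tilde{\sphericalangle}(a,p,b)>\pi-\delta$. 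For the upper bound, $\Sigma_p$ has curvature $\ge 1$ and, because it contains $\uparrow_p^{a},\uparrow_p^{b}$ at distance $>\pi-\delta$, diameter $>\pi-\delta$; for such a space one has $\dist(q,\xi)+\dist(\xi,q')<\pi+\tau(\delta)$ whenever $\dist(q,q')>\pi-\delta$, which is the near--maximal--diameter estimate of \cite{BGP}, and we apply it with $q=\uparrow_p^{a}$, $q'=\uparrow_p^{b}$, $\xi=\uparrow_p^{y}$.

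Next I would prove the comparison--angle identity \autoref{5.6 proof}. Its upper half is immediate:
\[
\tilde{\sphericalangle}(a,x,z)+\tilde{\sphericalangle}(b,x,z)\le\sphericalangle(a,x,z)+\sphericalangle(b,x,z)<\pi+\tau(\delta)
\]
by Toponogov at $x$ followed by $(\star)_x$ (with $y=z$). The lower half is the crux, and the idea is to pivot to the vertex $z$. Put $s=\dist(x,z)$ and work in the model surface $M_k^{2}$ of constant curvature $k$. In the comparison triangle for $(a,x,z)$ the side opposite the vertex corresponding to $a$ has length $s$ while the two sides at that vertex have length $>r$, since $\dist(a,x),\dist(a,z)>r$; hence the model angle at that vertex is $\le\tau(s|r)$ by the law of sines, and the triangle has area at most $sD$, so its angle sum differs from $\pi$ by at most $\tau(s)$. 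Therefore $\tilde{\sphericalangle}(a,z,x)\ge\pi-\tilde{\sphericalangle}(a,x,z)-\tau(s|r)$ and, likewise, $\tilde{\sphericalangle}(b,z,x)\ge\pi-\tilde{\sphericalangle}(b,x,z)-\tau(s|r)$. Applying Toponogov at $z$ and adding,
\[
\sphericalangle(a,z,x)+\sphericalangle(b,z,x)\ge 2\pi-\tilde{\sphericalangle}(a,x,z)-\tilde{\sphericalangle}(b,x,z)-\tau(s|r).
\]
Since $z\in B$ is $(1,\delta)$--strained by $(a,b)$, the estimate $(\star)_z$ bounds the left side above by $\pi+\tau(\delta)$; rearranging yields $\tilde{\sphericalangle}(a,x,z)+\tilde{\sphericalangle}(b,x,z)>\pi-\tau(\delta)-\tau(s|r)$, completing \autoref{5.6 proof}.

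The two estimates \autoref{5.6 baby} now follow by subtraction. By Toponogov at $x$, both $\sphericalangle(a,x,z)-\tilde{\sphericalangle}(a,x,z)$ and $\sphericalangle(b,x,z)-\tilde{\sphericalangle}(b,x,z)$ are nonnegative, and their sum equals
\[
\bigl(\sphericalangle(a,x,z)+\sphericalangle(b,x,z)\bigr)-\bigl(\tilde{\sphericalangle}(a,x,z)+\tilde{\sphericalangle}(b,x,z)\bigr)<\bigl(\pi+\tau(\delta)\bigr)-\bigl(\pi-\tau(\delta)-\tau(s|r)\bigr)
\]
by $(\star)_x$ and \autoref{5.6 proof}; hence each of the two nonnegative summands is itself $<\tau(\delta)+\tau(s|r)$, which with $\sphericalangle\ge\tilde{\sphericalangle}$ gives both inequalities. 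I expect the main effort to be bookkeeping rather than strategy: one must verify that the errors really separate in the form $\tau(\delta)+\tau(\dist(x,z)|r)$, with nothing of the shape $\tau(\delta)/\dist(x,z)$ ever appearing — pivoting to $z$ is precisely what prevents that — and that the $M_k^{2}$ trigonometry (law of sines, and $|\text{angle sum}-\pi|\le|k|\cdot\mathrm{Area}$) is estimated uniformly in terms of $k$, $D$ and the lower bound $r$ alone; when $k>0$, so that $M_k^{2}$ is a round sphere, this needs mild extra care but is routine. The only genuinely external input is the near--maximal--diameter estimate used for $(\star)_p$, which, if not quoted directly from \cite{BGP}, is a short consequence of the curvature $\ge 1$ comparison in $\Sigma_p$.
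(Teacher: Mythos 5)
The paper states this lemma by citation to \cite{BGP} without reproducing a proof, so there is nothing in the paper itself to compare against; what you have written is a correct reconstruction along what is essentially the BGP route. Your organization — first the two-vertex sum estimate $(\star)_p$, then the comparison-angle identity \autoref{5.6 proof} obtained by pivoting to $z$ and invoking $(\star)_z$, then splitting the two nonnegative defects $\sphericalangle-\tilde{\sphericalangle}$ to get \autoref{5.6 baby} — works, and the pivot to $z$ is indeed what makes the error term $\tau(\delta)+\tau(\dist(x,z)|r)$ rather than something that degenerates as $\dist(x,z)\to 0$. Two small remarks. The upper half of $(\star)_p$ does not require any near-maximal-diameter theorem: it is just the universal perimeter bound for triples in a space of curvature $\ge 1$, giving $\sphericalangle(a,p,y)+\sphericalangle(b,p,y)\le 2\pi-\sphericalangle(a,p,b)<\pi+\delta$ directly; this is exactly how the paper handles the same step in the proof of Lemma \ref{angle convergence}, citing \cite{BGP} 2.3(D). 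And the heuristic ``area at most $sD$'' is not literally correct in the model surface $M_k^2$; what saves the estimate is that $|ax|$ and $|az|$ are both $\ge r$ and differ by at most $s=\dist(x,z)$ (triangle inequality), so the comparison triangle is nearly isoceles with a short side $s$ and apex angle $\tau(s|r)$, whence $|k|\cdot\mathrm{Area}=\tau(s|r)$ by a direct model-surface computation. Your closing caveat that this trigonometry ``needs mild extra care but is routine'' is therefore apt, and the term does absorb into $\tau(\dist(x,z)|r)$ as required.
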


The importance of the previous result cannot be overstated. As we will see
next, Lemma $5.6$ of \cite{BGP} gives us two-sided bounds for both the angle
and the comparison angle of a strained point to its strainer. The tremendous
synergy this creates is due to the fact that comparison angles are
continuous and angles determine derivatives of distance functions.

\begin{lemma}
\label{angle convergence}Let $B\subset X$ be $\left( l,\delta ,r\right) $%
--strained by $\left\{ \left( a_{i},b_{i}\right) \right\} _{i=1}^{l}.$ For
any $x\in B$ and $i\neq j,$ 
\begin{equation*}
\begin{array}{ll}
\pi -\delta <\tilde{\sphericalangle}\left( a_{i},x,b_{i}\right) \leq \pi , & 
\frac{\pi }{2}-\delta <\tilde{\sphericalangle}\left( a_{i},x,b_{j}\right) <%
\frac{\pi }{2}+\tau \left( \delta \right) , \\ 
\frac{\pi }{2}-\delta <\tilde{\sphericalangle}\left( b_{i},x,b_{j}\right) <%
\frac{\pi }{2}+\tau \left( \delta \right) , & \frac{\pi }{2}-\delta <\tilde{%
\sphericalangle}\left( a_{i},x,a_{j}\right) <\frac{\pi }{2}+\tau \left(
\delta \right) ,%
\end{array}%
\end{equation*}%
\begin{equation*}
\begin{array}{ll}
\pi -\delta <\sphericalangle \left( a_{i},x,b_{i}\right) \leq \pi , & \frac{%
\pi }{2}-\delta <\sphericalangle \left( a_{i},x,b_{j}\right) <\frac{\pi }{2}%
+\tau \left( \delta \right) ,\text{ and} \\ 
\frac{\pi }{2}-\delta <\sphericalangle \left( b_{i},x,b_{j}\right) <\frac{%
\pi }{2}+\tau \left( \delta \right) , & \frac{\pi }{2}-\delta
<\sphericalangle \left( a_{i},x,a_{j}\right) <\frac{\pi }{2}+\tau \left(
\delta \right) .%
\end{array}%
\end{equation*}
\end{lemma}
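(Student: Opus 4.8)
The plan is to reduce the lemma to two standard facts about an Alexandrov space $X$ of curvature $\geq k$: that honest angles dominate comparison angles, $\sphericalangle\geq\tilde{\sphericalangle}$, and that the space of directions $\Sigma_{x}$ at any $x\in X$, being an Alexandrov space of curvature $\geq 1$, has the property that every triangle in it has perimeter at most $2\pi$ — equivalently, for any three minimal geodesics issuing from $x$ the pairwise angles sum to at most $2\pi$ (this is immediate from Toponogov globalization for curvature $\geq 1$, since a spherical comparison triangle exists only for triples of side lengths with sum $\leq 2\pi$). The lower bounds asserted in the lemma are then just restatements of the strainer axioms, and each upper bound comes out of a single application of the $2\pi$ perimeter bound to a well-chosen triangle in $\Sigma_x$.

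Concretely, fix $x\in B$ and $i\neq j$. The inequalities $\pi-\delta<\tilde{\sphericalangle}(a_i,x,b_i)$ and $\frac{\pi}{2}-\delta<\tilde{\sphericalangle}(a_i,x,b_j),\,\tilde{\sphericalangle}(b_i,x,b_j),\,\tilde{\sphericalangle}(a_i,x,a_j)$ are precisely Definition \ref{strainer} evaluated at $x$ — using also the $i\leftrightarrow j$ symmetry, which gives $\tilde{\sphericalangle}(a_j,x,b_i)>\frac{\pi}{2}-\delta$ — while $\tilde{\sphericalangle}(a_i,x,b_i)\leq\pi$ holds for every comparison angle. Since $\sphericalangle\geq\tilde{\sphericalangle}$ and $\sphericalangle\leq\pi$, the corresponding lower bounds for $\sphericalangle$, together with $\pi-\delta<\sphericalangle(a_i,x,b_i)\leq\pi$, follow immediately. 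For the upper bound on $\sphericalangle(a_i,x,a_j)$, choose minimal geodesics from $x$ to $a_i,b_i,a_j,b_j$ and apply the perimeter bound in $\Sigma_x$ to the triangle spanned by the directions of $a_i$, $a_j$ and $b_i$:
\[
\sphericalangle(a_i,x,a_j)+\sphericalangle(a_j,x,b_i)+\sphericalangle(b_i,x,a_i)\leq 2\pi .
\]
Because $\sphericalangle(a_j,x,b_i)\geq\tilde{\sphericalangle}(a_j,x,b_i)>\frac{\pi}{2}-\delta$ and $\sphericalangle(b_i,x,a_i)\geq\tilde{\sphericalangle}(a_i,x,b_i)>\pi-\delta$, this yields $\sphericalangle(a_i,x,a_j)<\frac{\pi}{2}+2\delta\leq\frac{\pi}{2}+\tau(\delta)$. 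The bounds on $\sphericalangle(a_i,x,b_j)$ and $\sphericalangle(b_i,x,b_j)$ come out the same way, from the triangles spanned by $a_i,b_j,b_i$ and by $b_i,b_j,a_i$ respectively: in each case, after dropping the side equal to the angle being bounded, the perimeter bound leaves one side exceeding $\pi-\delta$ and one exceeding $\frac{\pi}{2}-\delta$, giving the stated estimate. Finally, $\tilde{\sphericalangle}\leq\sphericalangle$ transports these three upper bounds to the matching comparison angles, completing all eight inequalities.

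I do not anticipate a genuine obstacle. The only non-bookkeeping input is the $2\pi$ perimeter bound in $\Sigma_x$, which is classical (see \cite{BGP}); the lone point deserving a remark is that $\Sigma_x$ may be low-dimensional — a circle of length $\leq 2\pi$, or, when $x\in\partial X$, an interval of length $\leq\pi$ — where the bound is checked by hand. If one prefers, and if $B$ is open, Lemma \ref{5.6} gives a sharper route: applying it to a point $z\in B$ on the minimal geodesic from $x$ toward $a_j$ and letting $z\to x$ produces the two-sided estimate $\bigl|\sphericalangle(a_i,x,a_j)+\sphericalangle(b_i,x,a_j)-\pi\bigr|<\tau(\delta)$, which again forces the upper bounds; but the perimeter argument requires no openness of $B$ and is cleaner.
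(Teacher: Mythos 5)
Your proof is correct and takes essentially the same route as the paper's: both reduce the eight inequalities to the three $\sphericalangle$ upper bounds via $\sphericalangle\geq\tilde{\sphericalangle}$, and both obtain those by combining the strainer lower bounds with the classical fact that any three angles of hinges at a point sum to at most $2\pi$ (the paper cites \cite{BGP} 2.3(D), while you phrase the same fact as the perimeter bound in $\Sigma_x$). The only cosmetic difference is that the paper extracts two of the three upper bounds from a single triangle; your per-angle choice of triangle is equally valid.
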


\begin{proof}
Since angles are bigger than comparison angles, it follows from the
definition of strainer that we need only prove the last three angle upper
bounds.

Since angles are limits of comparison angles, our lower curvature bound
gives us that%
\begin{equation*}
\sphericalangle \left( a_{i},x,b_{i}\right) +\sphericalangle \left(
b_{i},x,b_{j}\right) +\sphericalangle \left( b_{j},x,a_{i}\right) \leq 2\pi
\end{equation*}%
(see \cite{BGP}, 2.3(D)). Since angles are bigger than comparison angles,
the definition of strainer gives%
\begin{equation*}
\frac{\pi }{2}-\delta <\sphericalangle \left( b_{j},x,a_{i}\right) ,\frac{%
\pi }{2}-\delta <\sphericalangle \left( b_{i},x,b_{j}\right) ,\text{ and }%
\pi -\delta <\sphericalangle \left( a_{i},x,b_{i}\right) .
\end{equation*}%
Together, the previous two displays give 
\begin{equation*}
\sphericalangle \left( b_{j},x,a_{i}\right) \leq \frac{\pi }{2}+\tau \left(
\delta \right) \text{ and }\sphericalangle \left( b_{i},x,b_{j}\right) \leq 
\frac{\pi }{2}+\tau \left( \delta \right) ,
\end{equation*}%
and by a similar argument, $\sphericalangle \left( a_{i},x,a_{j}\right) <%
\frac{\pi }{2}+\tau \left( \delta \right) .$
\end{proof}

\begin{proposition}
\label{angggl conv prop}Suppose $\left\{ M_{\alpha }\right\} _{\alpha }$ is
a sequence of $n$--dimensional Alexandrov spaces with curvature $\geq k$
that converge in the Gromov-Hausdorff topology to $X.$ Suppose $\left\{
\left( a_{i},b_{i}\right) \right\} _{i=1}^{l}$ is an $\left( l,\delta
,r\right) $--strainer for $y\in X.$ Let $\left\{ \left( a_{i}^{\alpha
},b_{i}^{\alpha }\right) \right\} _{i=1}^{l}\subset M_{\alpha }\times
M_{\alpha }$ converge to $\left\{ \left( a_{i},b_{i}\right) \right\}
_{i=1}^{l},$ and let $c^{\alpha }\in M_{\alpha }$ converge to $c\in X.$

Then for $y^{\alpha }\in M^{\alpha }$ with $y^{\alpha }\rightarrow y,$ 
\begin{equation*}
\left\vert \sphericalangle \left( \Uparrow _{y^{\alpha }}^{a_{i}^{\alpha
}},\Uparrow _{y^{\alpha }}^{c^{\alpha }}\right) -\sphericalangle \left(
\Uparrow _{y}^{a_{i}},\Uparrow _{y}^{c}\right) \right\vert <\tau \left( 
\frac{1}{\alpha }|r\right) +\tau \left( \delta \right) .
\end{equation*}
\end{proposition}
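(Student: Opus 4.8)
The plan is to reduce both of the angles appearing in the statement to \emph{comparison} angles, which, being continuous functions of three pairwise distances, pass to Gromov--Hausdorff limits painlessly. The bridge between the two is Lemma \ref{5.6}, and this is precisely the step that consumes the hypothesis that $(a_i,b_i)$ strains $y$.

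The first step is to record the stability of the strainer. Since all of $\dist(a_i^\alpha,y^\alpha)$, $\dist(b_i^\alpha,y^\alpha)$, $\dist(a_i^\alpha,b_i^\alpha)$ and the analogous quantities converge to the corresponding distances in $X$, and since $\tilde{\sphericalangle}$ depends continuously on these, for all large $\alpha$ the pair $(a_i^\alpha,b_i^\alpha)$ is a $\left(1,\delta+\tau\left(\tfrac1\alpha\right),r-\tau\left(\tfrac1\alpha\right)\right)$--strainer for $y^\alpha$. The same continuity, now applied to points $x$ near $y$ or near $y^\alpha$, yields a radius $\rho_0=\rho_0(\delta,r)>0$, independent of $\alpha$, so that $B(y,\rho_0)$ is $\left(1,\tau(\delta),r/2\right)$--strained by $(a_i,b_i)$ and $B(y^\alpha,\rho_0)$ is $\left(1,\tau(\delta)+\tau\left(\tfrac1\alpha\right),r/2\right)$--strained by $(a_i^\alpha,b_i^\alpha)$. (As usual at a strained point the set of directions to a strainer point has diameter $<\tau(\delta)$, so the expressions $\sphericalangle(\Uparrow_y^{a_i},\,\cdot\,)$ and $\sphericalangle(\Uparrow_{y^\alpha}^{a_i^\alpha},\,\cdot\,)$ are well-defined up to $\tau(\delta)$ and $\tau(\delta)+\tau\left(\tfrac1\alpha\right)$.)

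Next, fix a small $\rho\in(0,\rho_0)$, choose minimal geodesics $[y,c]$ in $X$ and $[y^\alpha,c^\alpha]$ in $M_\alpha$, and let $c_\rho$, $c_\rho^\alpha$ be their points at distance $\rho$ from $y$, $y^\alpha$. Applying Lemma \ref{5.6} in $X$ with vertex $y$, the side $[y,a_i]$ reaching the strainer, and the short side $[y,c_\rho]$ gives
\[
\left|\,\sphericalangle\!\left(\Uparrow_y^{a_i},\Uparrow_y^{c}\right)-\tilde{\sphericalangle}(a_i,y,c_\rho)\,\right|<\tau(\delta)+\tau(\rho\,|\,r),
\]
because $\sphericalangle(a_i,y,c_\rho)=\sphericalangle(\Uparrow_y^{a_i},\uparrow_y^{c})$, the geodesic $[y,c_\rho]$ lying on $[y,c]$. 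Applying Lemma \ref{5.6} the same way in $M_\alpha$, using the strainer stability from the previous step, gives
\[
\left|\,\sphericalangle\!\left(\Uparrow_{y^\alpha}^{a_i^\alpha},\Uparrow_{y^\alpha}^{c^\alpha}\right)-\tilde{\sphericalangle}(a_i^\alpha,y^\alpha,c_\rho^\alpha)\,\right|<\tau(\delta)+\tau\!\left(\tfrac1\alpha\right)+\tau(\rho\,|\,r).
\]
Finally the two comparison angles are close: $\dist(a_i^\alpha,y^\alpha)\to\dist(a_i,y)$, $\dist(y^\alpha,c_\rho^\alpha)=\rho=\dist(y,c_\rho)$, and---after passing to a subsequence along which $[y^\alpha,c^\alpha]$ converges (Arzel\`a--Ascoli) to a minimal geodesic $[y,c]$, so that $c_\rho^\alpha\to c_\rho$---also $\dist(a_i^\alpha,c_\rho^\alpha)\to\dist(a_i,c_\rho)$, whence $\left|\tilde{\sphericalangle}(a_i^\alpha,y^\alpha,c_\rho^\alpha)-\tilde{\sphericalangle}(a_i,y,c_\rho)\right|<\tau\!\left(\tfrac1\alpha\,\middle|\,\rho\right)$ by continuity of $\tilde{\sphericalangle}$ on nondegenerate triangles. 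Combining the three displays, then choosing $\rho=\rho(\delta,r)$ small enough that $\tau(\rho\,|\,r)<\tau(\delta)$, produces the asserted bound $\tau\!\left(\tfrac1\alpha\,\middle|\,r\right)+\tau(\delta)$; since this bound is uniform and does not depend on the chosen geodesics, a routine argument removes the subsequence.

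The main obstacle is exactly the one Lemma \ref{5.6} is built to defeat: angles are only semicontinuous under Gromov--Hausdorff convergence and cannot be compared head-on, so they must be traded for comparison angles at the cost of $\tau(\delta)$, and that trade is legitimate only in a neighborhood of a point strained by $(a_i,b_i)$---hence the hypothesis on $y$. The secondary nuisance is that minimal geodesics from $y^\alpha$ to $c^\alpha$ need not converge to a prescribed geodesic from $y$ to $c$; this is handled by evaluating the comparison angles at the interior point $c_\rho$ at a fixed small scale and by the subsequence argument above, both harmless because every error term is uniform.
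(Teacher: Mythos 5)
Your proof is correct, but it takes a genuinely different route from the paper's. The paper's argument is a squeeze: it combines the lower-semicontinuity of angles (applied both to $a_i$ and to $b_i$) with the near-additivity $\sphericalangle(a_i,\cdot,c)+\sphericalangle(c,\cdot,b_i)\approx\pi$ coming from Inequality \autoref{5.6 proof}. The lower-semicontinuity of $\sphericalangle(\Uparrow_{y^\alpha}^{a_i^\alpha},\Uparrow_{y^\alpha}^{c^\alpha})$ directly gives one side of the estimate, and the lower-semicontinuity of $\sphericalangle(\Uparrow_{y^\alpha}^{b_i^\alpha},\Uparrow_{y^\alpha}^{c^\alpha})$ together with the $\pi$-bound on the sum gives the other. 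This bypasses any explicit choice of an intermediate scale $\rho$, any Arzel\`a--Ascoli step on the geodesics $[y^\alpha,c^\alpha]$, and any subsequence-removal argument. Your route instead trades each angle for a comparison angle at a fixed small scale $\rho$ via Inequality \autoref{5.6 baby}, then uses the continuity of comparison angles under Gromov--Hausdorff convergence, and cleans up at the end. The two approaches cost roughly the same in $\tau$'s; yours is more computational and requires two extra care points the paper's does not: (i) the subsequence along which $[y^\alpha,c^\alpha]$ converges must be removed by a contradiction argument rather than the ``routine'' remark you give, and (ii) after fixing $\rho=\rho(\delta,r)$, the error $\tau(\tfrac1\alpha\,|\,\rho)$ is really $\tau(\tfrac1\alpha\,|\,\delta,r)$, which is fine under the paper's $\tau$-conventions but slightly stronger dependence than the stated $\tau(\tfrac1\alpha\,|\,r)$. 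The paper's version buys a shorter and sharper statement; yours makes more explicit where Lemma \ref{5.6} and the strainer hypothesis enter.
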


\begin{proof}
In general, semi-continuity of angles gives 
\begin{eqnarray}
\lim_{\alpha \rightarrow \infty }\inf \sphericalangle \left( \Uparrow
_{y^{\alpha }}^{a_{i}^{\alpha }},\Uparrow _{y^{\alpha }}^{c^{\alpha
}}\right) &\geq &\sphericalangle \left( \Uparrow _{y}^{a_{i}},\Uparrow
_{y}^{c}\right) \text{ and }  \notag \\
\lim_{\alpha \rightarrow \infty }\inf \sphericalangle \left( \Uparrow
_{y^{\alpha }}^{b_{i}^{\alpha }},\Uparrow _{y^{\alpha }}^{c^{\alpha
}}\right) &\geq &\sphericalangle \left( \Uparrow _{y}^{b_{i}},\Uparrow
_{y}^{c}\right) .  \label{semi II}
\end{eqnarray}%
Since $\left\{ \left( a_{i}^{\alpha },b_{i}^{\alpha }\right) \right\}
_{i=1}^{l}$ and $\left\{ \left( a_{i},b_{i}\right) \right\} _{i=1}^{l}$ are
strainers, 
\begin{eqnarray}
\pi -\delta &<&\sphericalangle \left( \Uparrow _{y^{\alpha }}^{a_{i}^{\alpha
}},\Uparrow _{y^{\alpha }}^{b_{i}^{\alpha }}\right) \leq \sphericalangle
\left( \Uparrow _{y^{\alpha }}^{a_{i}^{\alpha }},\Uparrow _{y^{\alpha
}}^{c^{\alpha }}\right) +\sphericalangle \left( \Uparrow _{y^{\alpha
}}^{c^{\alpha }},\Uparrow _{y^{\alpha }}^{b_{i}^{\alpha }}\right) <\pi +\tau
\left( \delta \right) +\tau \left( \frac{1}{\alpha }|r\right) ,\text{ and} 
\notag \\
\pi -\delta &<&\sphericalangle \left( \Uparrow _{y}^{a_{i}},\Uparrow
_{y}^{b_{i}}\right) \leq \sphericalangle \left( \Uparrow
_{y}^{a_{i}},\Uparrow _{y}^{c}\right) +\sphericalangle \left( \Uparrow
_{y}^{c},\Uparrow _{y}^{b_{i}}\right) <\pi +\tau \left( \delta \right) ,
\label{5.6 MMMooreee}
\end{eqnarray}%
where the last upper bound on each line comes from Inequality \autoref{5.6
proof} and the fact that angles are limits of comparison angles.

Combining Inequalities \autoref{semi II} and \autoref{5.6 MMMooreee}, 
\begin{equation*}
\left\vert \sphericalangle \left( \Uparrow _{y^{\alpha }}^{a_{i}^{\alpha
}},\Uparrow _{y^{\alpha }}^{c^{\alpha }}\right) -\sphericalangle \left(
\Uparrow _{y}^{a_{i}},\Uparrow _{y}^{c}\right) \right\vert <\tau \left( 
\frac{1}{\alpha }|r\right) +\tau \left( \delta \right) .
\end{equation*}
\end{proof}

If $x$ is $\left( l,\delta ,r\right) $--strained by $\left\{ \left(
a_{i},b_{i}\right) \right\} _{i=1}^{l},$ we get an analogy with linear
algebra by thinking of $\left\{ \Uparrow _{x}^{a_{i}}\right\} _{i=1}^{l}$ as
an almost orthonormal subset in $\Sigma _{x}.$ This leads to

\begin{proposition}
\label{sim str cor}Suppose that $x\in X$ is $\left( l,\delta \right) $%
--strained by $\left\{ \left( a_{i},b_{i}\right) \right\} _{i=1}^{l}$ and $%
\left\{ \left( c_{i},d_{i}\right) \right\} _{i=1}^{l},$ and that $\tilde{x}%
\in \tilde{X}$ is $\left( l,\delta \right) $--strained by $\left\{ \left( 
\tilde{a}_{i},\tilde{b}_{i}\right) \right\} _{i=1}^{l}$ and $\left\{ \left( 
\tilde{c}_{i},\tilde{d}_{i}\right) \right\} _{i=1}^{l}.$ In addition,
suppose both sets of strainers \textquotedblleft almost span the same
subspace\textquotedblright ,\ in the sense that 
\begin{equation}
\left\vert \left\vert \det \left( \cos \sphericalangle \left( \Uparrow
_{x}^{a_{i}},\Uparrow _{x}^{c_{j}}\right) \right) _{i,j}\right\vert
-1\right\vert <\tau \left( \delta \right)  \label{area proj 1}
\end{equation}%
and 
\begin{equation}
\left\vert \left\vert \det \left( \cos \sphericalangle \left( \Uparrow _{%
\tilde{x}}^{\tilde{a}_{i}},\Uparrow _{\tilde{x}}^{\tilde{c}_{j}}\right)
\right) _{i,j}\right\vert -1\right\vert <\tau \left( \delta \right) .
\label{area proj 2}
\end{equation}%
Suppose further that in each space we have \textquotedblleft almost the same
change of basis matrix\textquotedblright ,\ in the sense that for all $i,j$
and for some $\varepsilon >0,$ 
\begin{equation}
\left\vert \sphericalangle \left( \Uparrow _{x}^{a_{i}},\Uparrow
_{x}^{c_{j}}\right) -\sphericalangle \left( \Uparrow _{\tilde{x}}^{\tilde{a}%
_{i}},\Uparrow _{\tilde{x}}^{\tilde{c}_{j}}\right) \right\vert <\varepsilon .
\label{ch bas ineq}
\end{equation}

Then given $W\in \Sigma _{x}\left( X\right) $ with 
\begin{equation}
\left\vert \dsum\limits_{i=1}^{l}\cos \sphericalangle \left( W,\Uparrow
_{x}^{a_{i}}\right) -1\right\vert <\tau \left( \delta \right) ,
\label{proj
of Y}
\end{equation}%
there is a $\tilde{W}\in \Sigma _{\tilde{x}}\left( \tilde{X}\right) $ so
that for all $i,$ 
\begin{equation}
\left\vert \sphericalangle \left( W,\Uparrow _{x}^{a_{i}}\right)
-\sphericalangle \left( \tilde{W},\Uparrow _{\tilde{x}}^{\tilde{a}%
_{i}}\right) \right\vert <\tau \left( \delta \right) +\tau \left(
\varepsilon \right)  \label{alm same coeffs
1}
\end{equation}%
and 
\begin{equation}
\left\vert \sphericalangle \left( W,\Uparrow _{x}^{c_{i}}\right)
-\sphericalangle \left( \tilde{W},\Uparrow _{\tilde{x}}^{\tilde{c}%
_{i}}\right) \right\vert <\tau \left( \delta \right) +\tau \left(
\varepsilon \right) .  \label{alm same coeff
2}
\end{equation}
\end{proposition}

\begin{proof}
When $\delta =0,$ the statement can be interpreted as a linear algebra fact.
Indeed, if $\delta =0,$ then $\left\{ \Uparrow _{x}^{a_{i}}\right\}
_{i=1}^{l}$ and $\left\{ \Uparrow _{x}^{c_{j}}\right\} _{i=1}^{l}$ lie in
subsets $V_{a}$ and $V_{c}$ of $T_{x}X$ that are isometric to $\mathbb{R}%
^{l},$ in which $\left\{ \Uparrow _{x}^{a_{i}}\right\} _{i=1}^{l}$ and $%
\left\{ \Uparrow _{x}^{c_{j}}\right\} _{i=1}^{l}$ are orthonormal bases.
Inequality \autoref{area proj 1} with $\delta =0,$ implies that $V_{a}$ and $%
V_{c}$ are the same, since the projection $V_{a}$ onto $V_{c}$ carries the
cube spanned by $\left\{ \Uparrow _{x}^{a_{i}}\right\} _{i=1}^{l}$ to a
parallelepiped of volume $1.$ Using Inequality \autoref{area proj 2}, the
analogous statement applies to $\left\{ \Uparrow _{\tilde{x}}^{\tilde{a}%
_{i}}\right\} _{i=1}^{l}$ and $\left\{ \Uparrow _{\tilde{x}}^{\tilde{c}%
_{j}}\right\} _{i=1}^{l}.$

Inequality \autoref{proj of Y} with $\delta =0$ implies that $W$ is in the
span of $\left\{ \Uparrow _{x}^{a_{i}}\right\} _{i=1}^{l}$. Given such a $W,$
there is a $\tilde{W}$ whose coefficients as a combination of $\left\{
\Uparrow _{\tilde{x}}^{\tilde{a}_{i}}\right\} _{i=1}^{l}$ are the same as
those of $W$ as a combination of $\left\{ \Uparrow _{x}^{a_{i}}\right\}
_{i=1}^{l}.$ That is, we get Inequality \autoref{alm same coeffs 1} when $%
\delta =\varepsilon =0.$ Inequality \autoref{ch bas ineq} with $\varepsilon
=0$ implies that the change of basis matrix that carries $\left\{ \Uparrow
_{x}^{a_{i}}\right\} _{i=1}^{l}$ to $\left\{ \Uparrow _{x}^{c_{j}}\right\}
_{i=1}^{l}$ also carries $\left\{ \Uparrow _{\tilde{x}}^{\tilde{a}%
_{i}}\right\} _{i=1}^{l}$ to $\left\{ \Uparrow _{\tilde{x}}^{\tilde{c}%
_{j}}\right\} _{i=1}^{l}.$ Thus Inequality \autoref{alm same coeff 2} with $%
\delta =\varepsilon =0$ follows from the $\delta =\varepsilon =0$ versions
of Inequalities \autoref{ch bas ineq} and \autoref{alm same coeffs 1}. By
continuity, we get the result for all sufficiently small positive $%
\varepsilon $ and $\delta .$
\end{proof}

\addtocounter{algorithm}{1}

\subsection{Spherical Sets and the Join Lemma}

When $x$ is $k$--strained, $\Sigma _{x}$ is Gromov--Hausdorff close to a
space of $\mathrm{curv}\geq 1$ that contains a metrically embedded copy of $%
\mathbb{S}^{k-1}$. The sense in which this embedding preserves metrics\ is
much stronger than for the infinitesimally isometric embeddings of
Definition \ref{Dnf sm and isom}. Specifically,

\begin{definition}
We say that an embedding $\iota :Y\hookrightarrow X$ of a metric space $Y$
into a metric space $X$ is \emph{metric} if and only if%
\begin{equation*}
\mathrm{dist}_{Y}\left( y_{1},y_{2}\right) =\mathrm{dist}_{X}\left( \iota
\left( y_{1}\right) ,\iota \left( y_{2}\right) \right) .
\end{equation*}
\end{definition}

The model space of directions for a point that is $\left( m+1\right) $%
--strained is given by the Join Lemma, which follows.

\begin{lemma}
(Join Lemma, \cite{GrovWilh1})\label{Join Lemma} Let $X$ be an $n$%
--dimensional Alexandrov space with $curv$ $\geq 1.$ If $X$ contains a
metrically embedded copy of the unit $m$--sphere, $\mathbb{S}^{m},$ then $%
E\equiv \left\{ x\in X|\mathrm{dist}\left( \mathbb{S}^{m},x\right) =\frac{%
\pi }{2}\right\} $ is a metrically embedded $\left( n-m-1\right) $%
--dimensional Alexandrov space with $\mathrm{curv}E\geq 1,$ and $X$ is
isometric to the spherical join $\mathbb{S}^{m}\ast E.$
\end{lemma}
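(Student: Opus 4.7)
The plan is to reduce to the classical suspension rigidity for Alexandrov spaces and then induct on $m$. The key input is the following \emph{suspension rigidity}: if an Alexandrov space $X$ with $\mathrm{curv} \geq 1$ contains two points $p, p^{\ast}$ at distance $\pi$, then the set $Y \equiv \{y \in X : \mathrm{dist}(y,p) = \pi/2\}$ coincides with $\{y \in X : \mathrm{dist}(y,p^{\ast}) = \pi/2\}$, is metrically embedded in $X$ with $\mathrm{curv}\,Y \geq 1$, and $X$ is isometric to the spherical suspension $\Sigma Y = \mathbb{S}^0 \ast Y$. This is standard and follows from the rigidity that a spherical comparison triangle with one side of length $\pi$ is degenerate (its other two sides sum to exactly $\pi$): combined with the triangle inequality, this forces $\mathrm{dist}(x,p)+\mathrm{dist}(x,p^{\ast})=\pi$ for every $x \in X$, so every minimizer from $p$ extends through $x$ to a length-$\pi$ minimizer to $p^{\ast}$, and the equidistant set $Y$ appears as the equatorial slice of the resulting suspension.

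I would then prove the Join Lemma by induction on $m$. The base case $m=0$ is immediate: a metrically embedded $\mathbb{S}^0 \subset X$ is an antipodal pair, and suspension rigidity gives $X = \Sigma E = \mathbb{S}^0 \ast E$. For the inductive step with $\mathbb{S}^{m+1} \hookrightarrow X$, choose any antipodal pair $p, p^{\ast} \in \mathbb{S}^{m+1}$ and apply suspension rigidity to get $X = \Sigma Y$. The equatorial subsphere $\mathbb{S}^m \subset \mathbb{S}^{m+1}$ consisting of points at $\mathbb{S}^{m+1}$-distance $\pi/2$ from $p$ lies in $Y$ and is metrically embedded there (its $Y$-distances agree with its $X$-distances, hence with its spherical distances inside the metrically embedded $\mathbb{S}^{m+1}$). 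Applying the inductive hypothesis to $\mathbb{S}^m \hookrightarrow Y$ gives $Y = \mathbb{S}^m \ast E_Y$, where $E_Y = \{y \in Y : \mathrm{dist}_Y(\mathbb{S}^m,y)=\pi/2\}$. Identifying $E_Y$ with $E$ and using associativity of the spherical join together with $\mathbb{S}^0 \ast \mathbb{S}^m = \mathbb{S}^{m+1}$ yields
\[ X = \Sigma Y = \mathbb{S}^0 \ast \mathbb{S}^m \ast E = \mathbb{S}^{m+1} \ast E. \]

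The remaining claims about $E$ then follow quickly: dimension is additive in spherical joins, giving $\dim E = n-m-1$; the spherical join of two spaces of $\mathrm{curv} \geq 1$ again has $\mathrm{curv} \geq 1$, so $\mathrm{curv}\,E \geq 1$; and $E$ is metrically embedded in $X$ because the join distance formula collapses to the $E$-distance on the $t=\pi/2$-slice. The main obstacle I anticipate is verifying the identification $E_Y = E$ at each inductive step, i.e., that the ``equidistant equator'' is invariant under the suspension-to-join reduction. One inclusion is trivial: a point of $X$ at distance $\pi/2$ from every point of $\mathbb{S}^{m+1}$ is in particular at distance $\pi/2$ from $p$, hence in $Y$, and its $Y$-distance to $\mathbb{S}^m$ is also $\pi/2$. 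The reverse inclusion requires the explicit suspension coordinates on $X = \Sigma Y$ together with the spherical law of cosines relating $X$-distances from a generic point of $\mathbb{S}^{m+1}$ to its decomposition as $\mathrm{dist}_Y$ on the equator and the suspension parameter along the $p p^{\ast}$-axis.
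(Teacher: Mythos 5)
The paper itself gives no proof of this lemma; it is quoted from \cite{GrovWilh1}, so there is no in-text argument to compare against. Your proposal is the natural route and is essentially correct: anchor at $m=0$ with the suspension (diameter $\pi$) rigidity theorem for $\mathrm{curv}\geq 1$, then induct by writing $X=\mathbb{S}^0\ast Y$ for an antipodal pair $p,p^{\ast}\in\mathbb{S}^{m+1}$, applying the inductive hypothesis to the equatorial $\mathbb{S}^m\subset Y$ (which is metrically embedded in $Y$ since $\mathrm{dist}_Y=\mathrm{dist}_X$ on $Y$, $\mathrm{diam}\,Y\leq\pi$), and invoking associativity of the join together with $\mathbb{S}^0\ast\mathbb{S}^m=\mathbb{S}^{m+1}$.

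Two points have to be nailed down, and you correctly flag the first. First, you must check that the factor $\mathbb{S}^0\ast\mathbb{S}^m$ inside $\Sigma Y$ is the originally embedded $\mathbb{S}^{m+1}$, not merely an abstract copy; otherwise the conclusion says nothing about $E$. This follows from the fact that in a spherical suspension the minimal geodesics joining the poles are exactly the meridians $t\mapsto(t,y)$: a half great circle of $\mathbb{S}^{m+1}$ through $p$, $x$, $p^{\ast}$ has length $\pi$, hence is an $X$-minimizer, hence is the meridian through its midpoint, and that midpoint lies on the equator; conversely the meridian through each equatorial point coincides with the spherical one, so $\mathbb{S}^{m+1}=\mathbb{S}^0\ast\mathbb{S}^m$ in suspension coordinates. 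Second, a small slip in your ``trivial inclusion'': in the statement $E$ is defined by $\mathrm{dist}(\mathbb{S}^{m},x)=\pi/2$, i.e.\ distance to the \emph{set} (an infimum), not distance $\pi/2$ to every point; so at the inductive step the containment $E\subset E_Y$ is exactly the nonobvious one --- a priori the infimum over $\mathbb{S}^{m+1}$ could be attained only at the poles while every equatorial point is farther than $\pi/2$. The clean fix is to reverse your order: first establish $X\cong\mathbb{S}^{m+1}\ast E_Y$ with the original sphere as join factor, using the previous point and the induction; then the join law of cosines, $\cos d\bigl((w,s,e),v\bigr)=\cos s\,\cos d_{\mathbb{S}^{m+1}}(w,v)$ for $v\in\mathbb{S}^{m+1}$, shows that $\mathrm{dist}(\mathbb{S}^{m+1},\cdot)$ equals the join parameter $s$, so its $\pi/2$-level set is precisely the $E_Y$-factor. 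This yields $E=E_Y$ and simultaneously the dimension count, the curvature bound, and the metric embedding of $E$. With these adjustments your argument is complete and is, in substance, the standard proof of the result cited from \cite{GrovWilh1}.
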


See \cite{GrovMark} for the definition of spherical join metrics.

\begin{definition}
As in \cite{BGP} and \cite{Yam2} we say an Alexandrov space $\Sigma $ with $%
\mathrm{curv\,}\Sigma \geq 1$ is globally $(m,\delta )$-strained by pairs of
subsets $\{A_{i},B_{i}\}_{i=1}^{m}$ provided 
\begin{equation*}
\begin{array}{ll}
|\mathrm{dist}(a_{i},b_{j})-\frac{\pi }{2}|<\delta , & \mathrm{dist}%
(a_{i},b_{i})>\pi -\delta , \\ 
|\mathrm{dist}(a_{i},a_{j})-\frac{\pi }{2}|<\delta , & |\mathrm{dist}%
(b_{i},b_{j})-\frac{\pi }{2}|<\delta%
\end{array}%
\end{equation*}%
for all $a_{i}\in A_{i}$ and $b_{i}\in B_{i}$ with $i\neq j$.
\end{definition}

We also consider a generalization of global strainers due to Plaut.

\begin{definition}
(Plaut, \cite{Pla}) A set of $2n$ points $x_{1},y_{1},\ldots ,x_{n},y_{n}$
in a metric space $Y$ is called spherical if $\mathrm{dist}(x_{i},y_{i})=\pi 
$ for all $i$ and $\mathrm{det}[\cos \mathrm{dist}(x_{i},x_{j})]>0$.
\end{definition}

\begin{remarknonum}
If $x_{1},\ldots ,x_{n}$ are points in $\mathbb{S}^{n+k}\subset \mathbb{R}%
^{n+k+1},$ then $\sqrt{\mathrm{det}[\cos \mathrm{dist}(x_{i},x_{j})]}$ is
the $n$--dimensional volume of the parallelepiped spanned by $\left\{
x_{1},\ldots ,x_{n}\right\} .$ So Plaut's condition should be viewed as a
quantification of linear independence.
\end{remarknonum}

\begin{theorem}
\label{Plaut}(Plaut, \cite{Pla}) If $X$ has curvature $\geq 1$ and contains
a spherical set $\Sigma $ of $2(n+1)$ points, then there is a subset $S$ of $%
X$ isometric to $\mathbb{S}^{n}$ such that $\Sigma \subset S$.
\end{theorem}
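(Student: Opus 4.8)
The plan is to proceed by induction on $n$, peeling off one antipodal pair at a time and invoking the Join Lemma (Lemma \ref{Join Lemma}) to split $X$ as a spherical suspension. The case $n=0$ is immediate: a spherical set of $2$ points is a pair $x_{1},y_{1}$ with $\mathrm{dist}(x_{1},y_{1})=\pi$ (the determinant condition being vacuous), and $S=\{x_{1},y_{1}\}$ with its induced metric is isometric to $\mathbb{S}^{0}$.

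For the inductive step I assume the statement for $n-1$ and take a spherical set $x_{1},y_{1},\ldots ,x_{n+1},y_{n+1}$ in $X$. Since $\mathrm{dist}(x_{n+1},y_{n+1})=\pi$, the pair $\{x_{n+1},y_{n+1}\}$ is a metrically embedded copy of $\mathbb{S}^{0}$, so the Join Lemma presents $X$ isometrically as the spherical suspension $\{x_{n+1},y_{n+1}\}\ast E$, where $E=\{z\in X:\mathrm{dist}(x_{n+1},z)=\mathrm{dist}(y_{n+1},z)=\pi/2\}$ has $\mathrm{curv}\,E\geq 1$; in particular $\mathrm{dist}(x_{n+1},z)+\mathrm{dist}(y_{n+1},z)=\pi$ for all $z\in X$. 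I would then use the determinant hypothesis to exclude degeneracies: if some $x_{i}$ with $i\leq n$ were equal to $x_{n+1}$ then two columns of $A:=(\cos\mathrm{dist}(x_{j},x_{k}))_{j,k=1}^{n+1}$ would coincide, and if it were equal to $y_{n+1}$ then, by the displayed identity, its $i$-th row would be $(-1)$ times the $(n+1)$-st row; either way $\det A=0$. The same reasoning shows no $y_{i}$ is a suspension point. Hence for $i\leq n$ each $x_{i}$ acquires suspension coordinates $(\phi_{i},e_{i})$, with $\phi_{i}=\mathrm{dist}(x_{n+1},x_{i})\in(0,\pi)$ and $e_{i}\in E$ its $E$-factor, and likewise $y_{i}=(\psi_{i},f_{i})$ with $\psi_{i}\in(0,\pi)$.

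The crux is to check that $\{e_{1},f_{1},\ldots ,e_{n},f_{n}\}$ is a spherical set of $2n$ points in $E$. The suspension distance formula gives $\cos\mathrm{dist}(x_{i},x_{j})=\cos\phi_{i}\cos\phi_{j}+\sin\phi_{i}\sin\phi_{j}\cos\mathrm{dist}_{E}(e_{i},e_{j})$ for $i,j\leq n$, while $\cos\mathrm{dist}(x_{i},x_{n+1})=\cos\phi_{i}$; writing $A$ in block form with its $(n+1)$-st row and column recording the $\cos\phi_{i}$, a Schur-complement computation yields $\det A=\big(\prod_{i=1}^{n}\sin\phi_{i}\big)^{2}\det\big((\cos\mathrm{dist}_{E}(e_{i},e_{j}))_{i,j=1}^{n}\big)$. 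Since $\det A>0$ and each $\sin\phi_{i}>0$, the last determinant is positive; and feeding $\mathrm{dist}(x_{i},y_{i})=\pi$ into the distance formula and using $\sin\phi_{i},\sin\psi_{i}>0$ forces $\phi_{i}+\psi_{i}=\pi$ and $\mathrm{dist}_{E}(e_{i},f_{i})=\pi$. By the inductive hypothesis there is a metrically embedded $S'\subseteq E$ with $S'\cong \mathbb{S}^{n-1}$ containing all the $e_{i}$ and $f_{i}$. Setting $S:=\{x_{n+1},y_{n+1}\}\ast S'\subseteq\{x_{n+1},y_{n+1}\}\ast E=X$, the join metric depends only on intrinsic distances in the factors, so $S$ is metrically embedded and isometric to $\mathbb{S}^{0}\ast \mathbb{S}^{n-1}=\mathbb{S}^{n}$; and $S$ contains the poles $x_{n+1},y_{n+1}$ together with $x_{i}=(\phi_{i},e_{i})$ and $y_{i}=(\pi-\phi_{i},f_{i})$ for $i\leq n$, since $e_{i},f_{i}\in S'$. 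This $S$ is the required sphere.

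I expect the main obstacle to be bookkeeping rather than geometry: the delicate points are confirming that antipodal pairs project to antipodal pairs of $E$ and that the determinant condition is inherited under the suspension splitting, whereas the one genuinely geometric ingredient — that two points at distance $\pi$ force a suspension — is already packaged in the Join Lemma.
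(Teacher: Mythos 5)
The paper cites this theorem from \cite{Pla} without reproducing a proof, so there is no in-paper argument to compare yours against. That said, your induction via the Join Lemma is correct and is the natural route given the paper's toolbox (it is also how Lemma \ref{Join Lemma} is deployed in the proof of Proposition \ref{almost Plauter}). The key steps check out: the Schur complement of the $(n+1,n+1)$ entry of $A=(\cos\mathrm{dist}(x_j,x_k))$ equals $\mathrm{diag}(\sin\phi_i)\,C\,\mathrm{diag}(\sin\phi_i)$ with $C_{ij}=\cos\mathrm{dist}_{E}(e_i,e_j)$, so $\det A=\bigl(\prod_{i}\sin\phi_i\bigr)^{2}\det C$ and positivity passes to $C$; rigidity in the suspension distance formula (the increasing dependence on $\cos\mathrm{dist}_E$ together with $\cos(\phi+\psi)\geq -1$) forces $\mathrm{dist}_{E}(e_i,f_i)=\pi$ and $\phi_i+\psi_i=\pi$; and the degeneracy analysis is right, since if $x_i$ or $y_i$ coincides with a pole the antipodal identity $\mathrm{dist}(p,z)+\mathrm{dist}(q,z)=\pi$ makes two rows of $A$ equal or opposite. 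The final assembly $S=\{x_{n+1},y_{n+1}\}\ast S'$ is metrically embedded because the join metric on the factors restricts correctly. One trivial slip: for $n=0$ the determinant condition is not vacuous but is the $1\times1$ condition $\det(\cos 0)=1>0$, automatically satisfied, so the base case is unaffected.
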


The following is a natural deformation of Plaut's condition.

\begin{definition}
A set of $2n$ points $x_{1},y_{1},\ldots ,x_{n},y_{n}$ in a metric space $Y$
is called $\left( \delta |d\right) $--almost spherical if $\mathrm{dist}%
(x_{i},y_{i})>\pi -\delta $ for all $i$ and $\mathrm{det}[\cos \mathrm{dist}%
(x_{i},x_{j})]>d>0$.
\end{definition}

Plaut's notion of spherical sets is related to strainers via the following
result.

\begin{proposition}
\label{almost Plauter}Let $X$ have curvature $\geq 1,$ dimension $n,$ and
contain a $\left( \delta |d\right) $--almost spherical set $S$ of $2(m+1)$
points, for $m<n-1.$

There is an $\left( m+1,\tau \left( \delta |d\right) \right) $--global
strainer $\left\{ \left( a_{i},b_{i}\right) \right\} _{i=1}^{m+1}$ for $X$
so that 
\begin{equation*}
\mathrm{dist}\left( a_{i},a_{j}\right) >\frac{\pi }{2}\text{ for }i\neq j.
\end{equation*}%
Moreover, for all $\kappa \in \left( 0,\frac{\pi }{4}\right) ,$ if $\delta $
is sufficiently small compared to $d$ and $\kappa ,$ there is a nonempty set 
$E\subset X$ so that for all $e\in E$ 
\begin{equation*}
\frac{\pi }{2}<\mathrm{dist}\left( e,a_{i}\right) <\frac{\pi }{2}+\kappa ,
\end{equation*}%
and 
\begin{equation*}
\left\vert \mathrm{dist}\left( e,b_{i}\right) -\frac{\pi }{2}\right\vert
<\kappa .
\end{equation*}
\end{proposition}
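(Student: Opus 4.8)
The plan is to produce the global strainer by a perturbation/continuity argument built on Plaut's Theorem \ref{Plaut}, and then to produce the set $E$ as a suitable intersection of metric balls around the strainer points, using a limiting (compactness) argument to pin down the size constant $\kappa$. First I would observe that the $(\delta|d)$--almost spherical hypothesis means the $2(m+1)$ points $x_1,y_1,\dots,x_{m+1},y_{m+1}$ in $S$ fail Plaut's exact spherical condition only by a controlled amount: $\mathrm{dist}(x_i,y_i)>\pi-\delta$ rather than $=\pi$. The standard trick is to look at the directions $\Uparrow_{x_i}^{y_i}$ inside $\Sigma_{x_i}X$, or better, to run along the geodesics and land at the ``antipode'' of $x_i$. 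Concretely, for $\delta$ small the point $y_i$ is within $\tau(\delta)$ of the actual cut point/antipodal locus of $x_i$, so one can find $b_i$ with $\mathrm{dist}(x_i,b_i)=\pi$ exactly (using that $\mathrm{curv}\ge 1$ forces the existence of a point at distance $\pi$ in the direction of a long geodesic, or more carefully, passing to $\Sigma_{x_i}$ which has curvature $\ge 1$ and applying the same reasoning inductively in lower dimension) and $\mathrm{dist}(b_i,y_i)<\tau(\delta)$. Set $a_i\equiv x_i$. Then the determinant condition $\det[\cos\mathrm{dist}(a_i,a_j)]=\det[\cos\mathrm{dist}(x_i,x_j)]>d>0$ is unchanged, and the pairwise angle conditions $|\mathrm{dist}(a_i,b_j)-\frac\pi2|<\tau(\delta|d)$, etc., follow because the $a_i$'s together with exact antipodes $b_i$ form a configuration that, by Plaut's theorem, lies on a metrically embedded $\mathbb{S}^m$ — on which $b_i$ is the genuine antipode of $a_i$, so all the required near-$\frac\pi2$ relations hold up to $\tau(\delta|d)$. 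The extra clause $\mathrm{dist}(a_i,a_j)>\frac\pi2$ should be arranged by first replacing the $a_i$ with slightly moved points along the $\mathbb{S}^m$ (a small rotation within the embedded sphere keeping linear independence), which is possible since $d>0$ gives room; this perturbation changes everything by $\tau(\delta|d)$ and can be done so that strict inequality $>\frac\pi2$ holds, e.g.\ by taking the $a_i$ to be vertices of a slightly ``obtuse'' simplex on $\mathbb{S}^m$.

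Next, for the set $E$: once we have the embedded $\mathbb{S}^m\subset X$ containing $\{a_i,b_i\}$, the Join Lemma (Lemma \ref{Join Lemma}) tells us $X$ is isometric to $\mathbb{S}^m * E_0$ where $E_0=\{x\in X:\mathrm{dist}(\mathbb{S}^m,x)=\frac\pi2\}$ is a nonempty Alexandrov space of dimension $n-m-1\ge 1$ with $\mathrm{curv}\ge1$. Any point $e_0\in E_0$ satisfies $\mathrm{dist}(e_0,a_i)=\mathrm{dist}(e_0,b_i)=\frac\pi2$ exactly. The point, however, is that our $a_i,b_i$ are only approximately on a round sphere — the embedded $\mathbb{S}^m$ we extracted via Plaut is exact, but the $a_i$'s were perturbed to be obtuse, so $\mathrm{dist}(e_0,a_i)$ will be slightly more than $\frac\pi2$ while $\mathrm{dist}(e_0,b_i)$ stays near $\frac\pi2$. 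I would take $E$ to be $E_0$ itself (or an appropriate small open piece of it): for $e\in E_0$ we get $\mathrm{dist}(e,a_i)\ge\frac\pi2$ with strict inequality coming from the obtuseness built into the $a_i$'s, and an upper bound $<\frac\pi2+\kappa$ and the two-sided bound $|\mathrm{dist}(e,b_i)-\frac\pi2|<\kappa$ follow because the perturbation of the $a_i,b_i$ away from a genuine join-decomposition was only by $\tau(\delta|d)$, which is $<\kappa$ once $\delta$ is small compared to $d$ and $\kappa$. This is where the quantifier ``for all $\kappa\in(0,\frac\pi4)$, if $\delta$ is sufficiently small compared to $d$ and $\kappa$'' enters: we simply choose the size of the obtuse perturbation and the bound on $\delta$ to be $\ll\kappa$.

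The cleanest way to make all the estimates uniform is a contradiction/compactness argument: if the Proposition failed, there would be a sequence $\delta_j\to0$ with almost spherical sets $S_j$ of $2(m+1)$ points in spaces $X_j$ (curvature $\ge1$, dimension $n$, determinant $>d$) for which no such strainer or no such $E$ exists with the $\kappa$-bounds; pass to a Gromov--Hausdorff limit $X_\infty$, where the limit points form a genuine spherical set (determinant still $\ge d>0$, distances $x_iy_i=\pi$), apply Theorem \ref{Plaut} to get an exact $\mathbb{S}^m\subset X_\infty$, apply the Join Lemma to get $E_\infty\ne\emptyset$, choose an obtuse simplex and an element of $E_\infty$ there, and then pull these choices back to $X_j$ for large $j$ using stability of distances under GH-convergence — contradicting the failure. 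The main obstacle, and the step I would spend the most care on, is the existence of the \emph{exact} antipodes $b_i$ with $\mathrm{dist}(a_i,b_i)=\pi$: in a general Alexandrov space with $\mathrm{curv}\ge1$ and $\mathrm{dist}(x_i,y_i)>\pi-\delta$ it is not automatic that a point at distance exactly $\pi$ exists nearby, and one must argue either via the structure of $\Sigma_{x_i}$ (curvature $\ge1$, lower dimension — but this needs $m<n-1$, which is exactly the hypothesis, to keep the inductive dimension positive) or by invoking the diameter/maximal-volume rigidity implicit in having many points mutually at distance $\approx\frac\pi2$. Managing this without circularity, and ensuring the perturbation to obtuse $a_i$'s is compatible with all the near-orthogonality and determinant bounds simultaneously, is the technical heart; once it is in place, the global-strainer conditions and the construction of $E$ are bookkeeping with $\tau$-functions.
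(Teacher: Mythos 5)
Your high-level strategy is the same as the paper's — reduce to a rigid case where $X$ contains an isometric copy of $\mathbb{S}^m$ (via Gromov--Hausdorff compactness and a contradiction argument), perturb an orthonormal basis on $\mathbb{S}^m$ to get an obtuse global strainer, and use the Join Lemma to decompose $X\cong\mathbb{S}^m\ast\tilde{E}$. But there is a genuine gap in your construction of $E$: you take $E$ to be (a piece of) the orthogonal complement $E_0=\tilde{E}=\{x:\mathrm{dist}(\mathbb{S}^m,x)=\frac{\pi}{2}\}$ itself, claiming that the obtuseness of the $a_i$'s forces $\mathrm{dist}(e_0,a_i)>\frac{\pi}{2}$. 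This is false. In the join $\mathbb{S}^m\ast\tilde{E}$, the spherical-join metric gives $\mathrm{dist}(e_0,a)=\frac{\pi}{2}$ \emph{exactly} for every $e_0\in\tilde{E}$ and every $a\in\mathbb{S}^m$, regardless of how the $a_i$'s are positioned on $\mathbb{S}^m$. Obtuseness among the $a_i$'s constrains their mutual distances within the sphere; it has no effect on their distance to the join complement. Consequently, when you pull a point of $E_\infty$ back to the approximating spaces $X_j$, you only get $\mathrm{dist}(e,a_i)\approx\frac{\pi}{2}$, with no control on the sign of the error — the strict inequality $\mathrm{dist}(e,a_i)>\frac{\pi}{2}$ need not survive.

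The idea you are missing is the auxiliary point $h$. In the rigid case, after choosing the obtuse strainer $\{a_i\}\subset\mathbb{S}^m$, the paper picks $h\in\mathbb{S}^m$ with $\mathrm{dist}(a_i,h)>\frac{\pi}{2}$ for all $i$, and then takes $E=\{\tilde{e}h(\frac{\kappa}{2}):\tilde{e}\in\tilde{E}\}$ — that is, it moves each point of $\tilde{E}$ a definite small distance \emph{off} $\tilde{E}$ toward $h$. The join formula then gives $\cos\mathrm{dist}(\tilde{e}h(\frac{\kappa}{2}),a_i)=\sin(\frac{\kappa}{2})\cos\mathrm{dist}(h,a_i)<0$, so the distances to the $a_i$'s are strictly greater than $\frac{\pi}{2}$ by a definite margin depending on $\kappa$ and the obtuseness of $h$, while remaining within $\kappa$ of $\frac{\pi}{2}$; the distances to the $b_i$'s stay within $\kappa$ of $\frac{\pi}{2}$ by the same formula. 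This definite margin is what survives the Gromov--Hausdorff perturbation back to $X_j$. Incidentally, the worry you flag about finding exact antipodes $b_i$ in the original $X$ is not an obstacle under the paper's route: in the limit space the given almost spherical set becomes a genuine spherical set, so Plaut's theorem applies there directly, and you never need to construct exact antipodes in the approximating spaces.
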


\begin{proof}
First we consider the rigid case when $X$ contains an isometric copy of $%
\mathbb{S}^{m}.$ Perturbing an orthonormal basis, one sees that $X$ contains
a global $\left( m+1,\delta \right) $--strainer $\left\{ \left(
a_{i},b_{i}\right) \right\} _{i=1}^{m+1}\subset \mathbb{S}^{m}$ so that 
\begin{equation*}
\mathrm{dist}\left( a_{i},a_{j}\right) >\frac{\pi }{2}\text{ for }i\neq j.
\end{equation*}%
We can also find a point $h\in \mathbb{S}^{m}$ with 
\begin{equation*}
\mathrm{dist}\left( a_{i},h\right) >\frac{\pi }{2}\text{ for all }i.
\end{equation*}%
By the Join Lemma, $\tilde{E}\equiv \left\{ x\in X|\mathrm{dist}\left( 
\mathbb{S}^{m},x\right) =\frac{\pi }{2}\right\} $ is a metrically embedded $%
\left( n-m-1\right) $--dimensional Alexandrov space with $\mathrm{curv}%
\tilde{E}\geq 1,$ and $X$ is isometric to the join $\mathbb{S}^{m}\ast 
\tilde{E}.$

Combining this with $\mathrm{dist}\left( a_{i},h\right) >\frac{\pi }{2}$, it
follows that for all $\tilde{e}\in \tilde{E},$ the interior of the segment $%
\tilde{e}h$ is further than $\frac{\pi }{2}$ from all the points $a_{i}.$
For any fixed $\kappa \in \left( 0,\frac{\pi }{4}\right) ,$ we set%
\begin{equation*}
E=\left\{ \left. \tilde{e}h\left( \frac{\kappa }{2}\right) \right\vert 
\tilde{e}\in \tilde{E}\right\} .
\end{equation*}%
This completes the proof in the rigid case. The general case follows from
the rigid case, Theorem \ref{Plaut}, Lemma \ref{Join Lemma}, and a proof by
contradiction.
\end{proof}

\addtocounter{algorithm}{1}

\subsection{Gromov Packing}

We make use a version of Gromov's Packing Lemma. Its closest relative in the
literature, as far as we know, is on page 230 of \cite{Zhu}. Before stating
it we make the following definition.

\begin{definition}
We say that a collection of sets $\mathcal{C}$ has first order $\leq 
\mathfrak{o}$ if and only if each $C\in \mathcal{C}$ intersects no more than 
$\mathfrak{o}-1$ other members of $\mathcal{C}.$
\end{definition}

\begin{lemma}
\label{Gromov Pack}(Gromov's Packing Lemma) Let $X$ be an $n$--dimensional
Alexandrov space with curvature $\geq k$ for some $k\in \mathbb{R}.$ There
are positive constants $\mathfrak{o}\left( n,k\right) $ and $r_{0}\left(
n,k\right) $ with the following property. For all $r\in \left(
0,r_{0}\right) ,$ any compact subset of $A\subset X$ contains a finite
subset $\left\{ a_{i}\right\} _{i\in I}$ so that

\begin{itemize}
\item $A\subset \cup _{i}B\left( a_{i},r\right) ,$ and

\item the first order of the cover $\left\{ B\left( a_{i},3r\right) \right\}
_{i}$ is $\leq \mathfrak{o}.$
\end{itemize}
\end{lemma}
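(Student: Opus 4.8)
The plan is to run the standard greedy (maximal separated set) construction and then control the multiplicity of the enlarged cover using the Bishop–Gromov-type volume comparison that holds in Alexandrov spaces with curvature $\geq k$. First I would fix $r_0(n,k)$ small enough that the relevant comparison inequalities are uniform on scale $r_0$; concretely, choose $r_0$ so that for all $r\in(0,r_0)$ the ratio $\mathrm{vol}\,B(p,7r)/\mathrm{vol}\,B(p,r)$ is bounded above by a constant depending only on $n$ and $k$ (this is the Bishop–Gromov inequality for Alexandrov spaces, which is available since $X$ has $\mathrm{curv}\geq k$ and dimension $n$; the factor $7=1+2\cdot 3$ is what the $3r$-balls will force on us). Then, given a compact $A\subset X$, take $\{a_i\}_{i\in I}\subset A$ to be a maximal $r$-separated subset, i.e. $\mathrm{dist}(a_i,a_j)\geq r$ for $i\neq j$ and no point of $A$ can be added while keeping this property. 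Finiteness of $I$ follows from compactness of $A$ together with the fact that the balls $B(a_i,r/2)$ are pairwise disjoint and each has volume bounded below (again by volume comparison, a ball of radius $r/2$ in an $n$-dimensional Alexandrov space with $\mathrm{curv}\geq k$ has volume at least $c(n,k)(r/2)^n$, and $A$ is contained in a fixed bounded region). Maximality gives $A\subset\bigcup_i B(a_i,r)$, which is the first bullet.

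For the second bullet I would estimate the order of the cover $\{B(a_i,3r)\}_i$. Suppose a point $x\in X$ lies in $N$ of these balls, say $x\in B(a_{i_1},3r)\cap\cdots\cap B(a_{i_N},3r)$. Then all the centers $a_{i_1},\dots,a_{i_N}$ lie in $B(x,3r)$, hence in $B(a_{i_1},6r)$, while the balls $B(a_{i_s},r/2)$ are pairwise disjoint and all contained in $B(a_{i_1},7r)$. Comparing volumes,
\begin{equation*}
N\cdot c(n,k)\Big(\tfrac{r}{2}\Big)^n\leq \sum_{s=1}^N\mathrm{vol}\,B\big(a_{i_s},\tfrac{r}{2}\big)\leq \mathrm{vol}\,B(a_{i_1},7r)\leq C(n,k)\cdot r^n,
\end{equation*}
where the last step uses the choice of $r_0$; here I am using that $\mathrm{vol}\,B(a_{i_1},7r)\leq 2^n\mathrm{vol}\,B(a_{i_1},7r)/\mathrm{vol}\,B(a_{i_1},r)\cdot\mathrm{vol}\,B(a_{i_1},r)$ and, more to the point, that the volume-ratio bound on scale $\leq r_0$ lets me pass from the radius-$7r$ ball to the radius-$(r/2)$ ball with a factor depending only on $n$ and $k$. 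Rearranging yields $N\leq \mathfrak{o}(n,k)$ with $\mathfrak{o}(n,k)=C(n,k)\cdot 2^n/c(n,k)$, a constant depending only on $n$ and $k$. Since $x$ was arbitrary, the order of $\{B(a_i,3r)\}_i$ is $\leq\mathfrak{o}$.

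The main obstacle — or rather the only point requiring care — is the volume comparison input. For a smooth manifold with $\sec\geq k$ one simply quotes Bishop–Gromov, but here $X$ is merely an Alexandrov space, so I would invoke the Alexandrov-space version of relative volume comparison (as in \cite{BGP}): for $\mathrm{curv}\,X\geq k$ and $\dim X=n$, the function $r\mapsto \mathrm{vol}\,B(p,r)/v_k^n(r)$ is nonincreasing, where $v_k^n(r)$ is the volume of an $r$-ball in the simply connected $n$-dimensional model space of curvature $k$. This immediately gives both the lower bound $\mathrm{vol}\,B(p,\rho)\geq c(n,k)\rho^n$ for $\rho\leq r_0$ and the ratio bound $\mathrm{vol}\,B(p,7r)\leq C(n,k)\mathrm{vol}\,B(p,r/2)$ once $r_0$ is fixed so that $v_k^n(7r)/v_k^n(r/2)$ is uniformly bounded on $(0,r_0]$ — which holds because near $r=0$ this ratio tends to the Euclidean value $14^n$. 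Everything else is the elementary greedy argument above, and the constants $\mathfrak{o}$ and $r_0$ come out depending only on $n$ and $k$ as claimed.
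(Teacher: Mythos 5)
Your overall strategy — take a maximal $r$-separated subset of $A$ and control the multiplicity of the dilated cover by Bishop--Gromov relative volume comparison for Alexandrov spaces — is exactly the one the paper alludes to when it says the argument of \cite{Zhu} goes through for rough volume. The paper itself gives no written proof, so the route is the same; but your execution has a genuine error in the volume comparison.

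You claim, and use twice (once for finiteness of the separated set, once in the main display), that an $n$-dimensional Alexandrov space with $\mathrm{curv}\geq k$ satisfies an absolute lower bound $\mathrm{vol}\,B(p,\rho)\geq c(n,k)\rho^{n}$. This is false and does not follow from relative volume comparison. Bishop--Gromov gives only the monotonicity of $\rho\mapsto\mathrm{vol}\,B(p,\rho)/v_k^n(\rho)$, which bounds $\mathrm{vol}\,B(p,\rho)$ from below \emph{in terms of the volume of a larger concentric ball}, never by a universal constant times $\rho^{n}$. A thin slab $\mathbb{R}^{n-1}\times[0,\varepsilon]$ has $\mathrm{curv}\geq 0$ and Hausdorff dimension $n$, but its unit balls have volume $\sim\varepsilon$, so no such $c(n,k)$ exists. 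Consequently your central display
\begin{equation*}
N\cdot c(n,k)\Big(\tfrac{r}{2}\Big)^n\leq\sum_{s=1}^{N}\mathrm{vol}\,B\big(a_{i_s},\tfrac{r}{2}\big)\leq\mathrm{vol}\,B(a_{i_1},7r)\leq C(n,k)\,r^{n}
\end{equation*}
is not justified as stated; the rightmost inequality (Bishop) is fine, but the leftmost is not.

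The repair is standard and you essentially gesture at it: work only with ratios, and handle the change of center by a containment. Since $a_{i_s}\in B(a_{i_1},6r)$ for every $s$, one has $B(a_{i_1},7r)\subset B(a_{i_s},13r)$, so Bishop--Gromov applied at the center $a_{i_s}$ gives
\begin{equation*}
\mathrm{vol}\,B\big(a_{i_s},\tfrac{r}{2}\big)\;\geq\;\frac{v_k^n(r/2)}{v_k^n(13r)}\,\mathrm{vol}\,B(a_{i_s},13r)\;\geq\;\frac{v_k^n(r/2)}{v_k^n(13r)}\,\mathrm{vol}\,B(a_{i_1},7r).
\end{equation*}
Summing over $s$ and dividing by $\mathrm{vol}\,B(a_{i_1},7r)>0$ yields $N\leq v_k^n(13r)/v_k^n(r/2)$, which for $r\leq r_0(n,k)$ is bounded by a constant $\mathfrak{o}(n,k)$ since the ratio tends to $26^{n}$ as $r\to0$. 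This removes both the false lower bound and the unaddressed change-of-center issue in your "pass from the $7r$-ball to the $r/2$-ball" step. Finiteness of the maximal $r$-separated set also needs no volume input: compactness of $A$ (total boundedness) already forbids an infinite $r$-separated subset. Finally, as the paper notes, the relevant measure is the rough volume of \cite{BGP} (equivalently, $n$-dimensional Hausdorff measure), for which the Alexandrov form of Bishop--Gromov applies.
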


In the Riemannian case, this follows from relative volume comparison, so one
only needs the corresponding lower bound on Ricci curvature. Since relative
volume comparison holds for rough volume in Alexandrov spaces, the proof in 
\cite{Zhu} yields, with minor modifications, Lemma \ref{Gromov Pack}.

\section{Riemannian Submanifolds of Alexandrov Spaces\label{Riem vs Alex}}

Here we establish several results that are relevant to infinitesimally
isometric embeddings of Riemannian manifolds into Alexandrov spaces. In the
first subsection, we show that the unit tangent sphere of each point $p\in S$
metrically embeds into the space of directions of $p$ in $X.$ In the second
subsection, we prove Theorem \ref{magnum cover thm}, which gives local
Alexandrov models of the vector bundles of the TNST.

\addtocounter{algorithm}{1}

\subsection{Riemannian versus Alexandrov Spaces of Directions}

\begin{definition}
\emph{(\cite{BGP}, page 48)} Let $c:\left[ -a,a\right] \longrightarrow 
\mathbb{R}$ be a unit speed curve in an Alexandrov space $X.$ The right and
left derivatives of $c$ at $0$ are%
\begin{equation*}
c_{+}^{\prime }\left( 0\right) \equiv \lim_{t\rightarrow 0^{+}}\Uparrow
_{c\left( 0\right) }^{c\left( t\right) }\text{ and }c_{-}^{\prime }\left(
0\right) \equiv \lim_{t\rightarrow 0^{-}}\Uparrow _{c\left( 0\right)
}^{c\left( t\right) },
\end{equation*}%
provided the limits exist and are single directions.
\end{definition}

\begin{proposition}
\label{isom embedd prop}Let $S$ be a Riemannian manifold that is
infinitesimally isometrically embedded in an Alexandrov space $X.$ For $p\in
S,$ let $T_{p}^{1}S$ be the Riemannian unit tangent sphere to $S$ at $p,$
and for $v\in T_{p}^{1}S,$ let $c_{v}\left( t\right) =\exp _{p}^{S}\left(
tv\right) .$ Then the map 
\begin{eqnarray*}
\iota &:&T_{p}^{1}S\longrightarrow \Sigma _{p}X \\
\iota &:&v\mapsto \left( c_{v}\right) _{+}^{\prime }\left( 0\right)
\end{eqnarray*}%
is a well-defined metric embedding. In particular, $\left( c_{v}\right)
_{+}^{\prime }\left( 0\right) $ exists, and for every geodesic $c$ of $S,$ 
\begin{equation*}
\sphericalangle _{X}\left( c_{+}^{\prime }\left( 0\right) ,c_{-}^{\prime
}\left( 0\right) \right) =\pi .
\end{equation*}
\end{proposition}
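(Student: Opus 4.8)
The plan is to read off the metric-embedding property directly from condition (2) of Definition~\ref{Dnf sm and isom}, using Propositions~\ref{deriv exists cor} and~\ref{X deriv along c prop} as the bridge between derivatives of $\mathrm{dist}^X$ on $S\times S$ and angles in $\Sigma_pX$; the identity $\sphericalangle_X(c_+'(0),c_-'(0))=\pi$ will then drop out as the case $w=-v$. Since the assertion only involves a neighborhood of $p$ and an Alexandrov space carries a uniform lower curvature bound, I may rescale the metric on $X$ (and on $S$) so that $\mathrm{curv}\,X\ge-1$; this does not affect $\Sigma_pX$, the angles, or the hypotheses of Definition~\ref{Dnf sm and isom}, but it lets me invoke Proposition~\ref{deriv exists cor} to conclude that for each $v\in T_p^1S$ the limit $\iota(v)=(c_v)_+'(0)=\lim_{t\to0^+}\Uparrow_p^{c_v(t)}$ exists as a single direction in $\Sigma_pX$. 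Thus $\iota$ is well defined.

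The main claim is that $\sphericalangle_X\big(\iota(v),\iota(w)\big)=\sphericalangle_{T_pS}(v,w)$ for all $v,w\in T_p^1S$. Put $\theta=\sphericalangle_{T_pS}(v,w)$, and recall $c_w(t)=\exp_p^S(tw)$ is defined for $|t|$ small. Fix a small $s>0$ and apply Proposition~\ref{X deriv along c prop} with $a=c_v(s)$ and the geodesic $c_w$: the function $\mathrm{dist}^X_{c_v(s)}\circ c_w$ is differentiable at $0$ with
\[
\big(\mathrm{dist}^X_{c_v(s)}\circ c_w\big)'(0)=-\cos\sphericalangle_X\!\big(\iota(w),\Uparrow_p^{c_v(s)}\big).
\]
But the left-hand side is precisely the directional derivative $D_V\,\mathrm{dist}^X(\cdot,\cdot)$ at the point $(c_v(s),p)\in S\times S$ in the unit direction $V=(0,w)$, a point that lies in $B(\Delta(S),\delta)\setminus\Delta(S)$ once $s$ is small. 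On the other hand, because $c_v$ is a unit-speed $S$-geodesic its initial velocity is $v$ and, for $s$ small, $c_v|_{[0,s]}$ is minimizing, so the first variation formula in $S$ gives $D_V\,\mathrm{dist}^S(\cdot,\cdot)\big|_{(c_v(s),p)}=-\cos\sphericalangle_{T_pS}(v,w)=-\cos\theta$. Hence, by condition (2) of Definition~\ref{Dnf sm and isom}, for every $\varepsilon>0$ there is $\delta>0$ so that
\[
\big|\cos\sphericalangle_X\!\big(\iota(w),\Uparrow_p^{c_v(s)}\big)-\cos\theta\big|<\varepsilon\qquad\text{whenever }0<s<\delta .
\]
Letting $s\to0^+$, using $\Uparrow_p^{c_v(s)}\to\iota(v)$ in $\Sigma_pX$ and the continuity of the distance function $\sphericalangle_X$ on $\Sigma_pX$, we obtain $\cos\sphericalangle_X\big(\iota(v),\iota(w)\big)=\cos\theta$, and therefore $\sphericalangle_X\big(\iota(v),\iota(w)\big)=\theta$ since both angles lie in $[0,\pi]$.

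Since the metric on $T_p^1S$ is the Riemannian angle $\sphericalangle_{T_pS}$ and the metric on $\Sigma_pX$ is $\sphericalangle_X$, the identity $\sphericalangle_X(\iota(v),\iota(w))=\sphericalangle_{T_pS}(v,w)$ says that $\iota$ is distance-preserving; in particular it is injective, so $\iota$ is a metric embedding. For the final assertion, let $c$ be a unit-speed geodesic of $S$ with $c(0)=p$ and set $v=c'(0)$. Then $c_+'(0)=\iota(v)$, and applying the same to $t\mapsto c(-t)$ (a unit-speed geodesic with initial velocity $-v$) gives $c_-'(0)=\iota(-v)$, whence $\sphericalangle_X\big(c_+'(0),c_-'(0)\big)=\sphericalangle_{T_pS}(v,-v)=\pi$.

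I expect the one genuinely non-mechanical step to be the recognition that condition (2) of Definition~\ref{Dnf sm and isom}, evaluated at the \emph{moving} base point $(c_v(s),p)$ with the \emph{fixed} direction $(0,w)$, is exactly what is needed to pass to the limit $s\to0$ inside Proposition~\ref{X deriv along c prop} and thereby express $\sphericalangle_X(\iota(v),\iota(w))$ in terms of $\theta$. Everything else — that $(c_v(s),p)$ lies in the region where Definition~\ref{Dnf sm and isom}(2) applies, that $\mathrm{dist}^S$ is smooth near it, and that the limit may be brought inside $\sphericalangle_X$ by continuity of the metric on the (compact) space $\Sigma_pX$ — is routine.
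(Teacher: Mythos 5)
Your proof is correct and follows essentially the same route as the paper's: both proofs combine Proposition \ref{X deriv along c prop}, Proposition \ref{deriv exists cor}, and condition (2) of Definition \ref{Dnf sm and isom}, the only difference being that you place the reference point $a$ on $c_v$ and differentiate along $c_w$, whereas the paper places $a$ on $c_w$ and differentiates along $c_v$ --- a trivial role-swap. Your write-up is somewhat more explicit about the rescaling needed to invoke Proposition \ref{deriv exists cor} and about the exact base point and direction at which Definition \ref{Dnf sm and isom}(2) is applied, which the paper leaves implicit.
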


\begin{proof}
Let $\left\{ e_{i}\right\} _{i=1}^{\mathrm{\dim }\left( S\right) }\subset
T_{p}S$ be an orthonormal basis. Then 
\begin{equation}
\left\{ \left( c_{e_{i}}\left( r\right) ,\text{ }c_{e_{i}}\left( -r\right)
\right) \right\} _{i=1}^{\mathrm{\dim }\left( S\right) }  \label{S strainer}
\end{equation}%
is a $\left( \mathrm{\dim }\left( S\right) ,\tau \left( r\right) ,r\right) $%
--strainer for $S$ at $p,$ and Definition \ref{Dnf sm and isom} gives us
that for all $v,w\in T_{p}^{1}S,$ 
\begin{equation}
\left\vert \tilde{\sphericalangle}_{S}\left( c_{v}\left( s\right)
,p,c_{w}\left( t\right) \right) -\tilde{\sphericalangle}_{X}\left(
c_{v}\left( s\right) ,p,c_{w}\left( t\right) \right) \right\vert <\tau
\left( s,t\right) .  \label{simialr triangles}
\end{equation}

Thus 
\begin{equation}
\left\{ \left( c_{e_{i}}\left( r\right) ,\text{ }c_{e_{i}}\left( -r\right)
\right) \right\} _{i=1}^{\mathrm{\dim }\left( S\right) }
\end{equation}%
is a $\left( \mathrm{\dim }\left( S\right) ,\tau \left( r\right) ,r\right) $%
--strainer for $X$ at $p.$

Let $\left\{ s_{k}\right\} _{k=1}^{\infty }\subset \left( 0,r\right) $
converge to $0.$ Since angles are larger than comparison angles, 
\begin{equation}
\sphericalangle \left( \left( \Uparrow _{p}^{c_{e_{i}}\left( s_{k}\right)
}\right) _{X},\left( \Uparrow _{p}^{c_{e_{i}}\left( -r\right) }\right)
_{X}\right) \geq \tilde{\sphericalangle}_{X}\left( c_{e_{i}}\left(
s_{k}\right) ,p,c_{e_{i}}\left( -r\right) \right) >\pi -\tau \left(
s_{k},r\right) .  \label{alm antipodal}
\end{equation}

Since $\left\{ \Uparrow _{p}^{c_{e_{i}}\left( s_{k}\right) }\right\}
_{k=1}^{\infty }$ is a sequence of compact subsets of the compact metric
space, $\Sigma _{p}X,$ it has a convergent subsequence. Let $\Uparrow \left(
e_{i}\left( r\right) \right) $ be a limit of such a subsequence. Since 
\textrm{curv}$\left( \Sigma _{p}X\right) \geq 1,$ Inequality (\ref{alm
antipodal}) implies that there is a unique direction $\uparrow _{A\left(
-e_{i}\right) _{r}}$ at maximal distance from $\left( \Uparrow
_{p}^{c_{e_{i}}\left( -r\right) }\right) _{X}.$ What's more, all of the
possible sets $\Uparrow \left( e_{i}\left( r\right) \right) $ lie in the $%
\tau \left( r\right) $--ball around $\uparrow _{A\left( -e_{i}\right) _{r}}.$
Now choose a sequence $r_{k}\rightarrow 0$ so that $\left( \Uparrow
_{p}^{c_{e_{i}}\left( -r_{k}\right) }\right) _{X}$ converges. Then $\left\{
\uparrow _{A\left( -e_{i}\right) _{r_{k}}}\right\} _{k}$ also converges, and 
$\Uparrow \left( e_{i}\left( r_{k}\right) \right) $ converges to a point.

Thus each intrinsic geodesic, $c,$ of $S$ has both a right and left
derivative, $c_{+}^{\prime }\left( 0\right) $ and $c_{-}^{\prime }\left(
0\right) ,$ inside of $X.$ In particular, our map 
\begin{eqnarray*}
\iota &:&T_{p}^{1}S\longrightarrow \Sigma _{p}X \\
\iota &:&v\mapsto \left( c_{v}\right) _{+}^{\prime }\left( 0\right)
\end{eqnarray*}%
is well-defined.

To see that $\iota $ is metric, take $v_{1},w\in T_{p}^{1}S$. Extend $v_{1}$
to an orthonormal basis $\left\{ v_{i}\right\} _{i=1}^{\mathrm{\dim }\left(
S\right) }$ for $T_{p}S$. As above, we have that for $r>0,$ $\left\{ \left(
c_{v_{i}}\left( r\right) ,\text{ }c_{v_{i}}\left( -r\right) \right) \right\}
_{i=1}^{\mathrm{\dim }\left( S\right) }$ is a $\left( \mathrm{\dim }\left(
S\right) ,\tau \left( r\right) ,r\right) $--strainer for both $S$ and $X.$
Thus by Lemma \ref{5.6}, 
\begin{eqnarray*}
\left\vert \sphericalangle _{S}\left( v_{1},w\right) -\tilde{\sphericalangle}%
_{S}\left( c_{v_{1}}\left( r\right) ,p,c_{w}\left( s\right) \right)
\right\vert &<&\tau \left( r\right) +\tau \left( s|r\right) \\
\left\vert \sphericalangle _{X}\left( \iota \left( v_{1}\right) ,\iota
\left( w\right) \right) -\tilde{\sphericalangle}_{X}\left( c_{v_{1}}\left(
r\right) ,p,c_{w}\left( s\right) \right) \right\vert &<&\tau \left( r\right)
+\tau \left( s|r\right) .
\end{eqnarray*}

Combining this with Inequality (\ref{simialr triangles}), we see that 
\begin{eqnarray*}
\left\vert \sphericalangle _{X}\left( \iota \left( v_{1}\right) ,\iota
\left( w\right) \right) -\sphericalangle _{S}\left( v_{1},w\right)
\right\vert &<&\tau \left( r\right) +\tau \left( s|r\right) +\tau \left(
s,r\right) \\
&=&\tau \left( r\right) +\tau \left( s|r\right) .
\end{eqnarray*}%
Since this holds for all small $s$ and $r,$ $\iota $ is a metric embedding.
\end{proof}

The metric embedding $\iota :T_{p}^{1}S\longrightarrow \Sigma _{p}X$ induces
a metric embedding $T_{p}S\longrightarrow T_{p}X.$ From here on, we will
make no notational distinction between $T_{p}^{1}S$ and $T_{p}S$ and their
images under these embeddings. For example, we set 
\begin{equation*}
\iota \left( T_{p}^{1}S\right) =\Sigma _{p}S\subset \Sigma _{p}X,
\end{equation*}%
and for $c_{v}\left( t\right) =\exp _{p}^{S}\left( tv\right) ,$ we have 
\begin{equation}
c_{v}^{\prime }\left( 0\right) =\lim_{t\rightarrow 0^{+}}\left( \Uparrow
_{c\left( 0\right) }^{c\left( t\right) }\right) _{X},
\label{c prime is a direction eqn}
\end{equation}%
where all vectors are directions in $\Sigma _{p}X.$

\addtocounter{algorithm}{1}

\subsection{How to cover $\mathbf{S\hookrightarrow X}\label{cover S subsect}$%
}

In the main result of this subsection, Theorem \ref{magnum cover thm}, we
construct a cover $\mathcal{O}$ of $X$ that decomposes, $\mathcal{O=}%
\dbigcup\limits_{S\in \mathcal{S}^{\mathrm{ext}}}\mathcal{O}^{S},$ into
subcollections $\mathcal{O}^{S}$---one for each element of $\mathcal{S}^{%
\mathrm{ext}}.$ The elements of $\mathcal{O}^{X}$ are $\left( n,\delta
\right) $--strained and are contained in the top stratum. A posteriori,
their union is a Gromov-Hausdorff approximation of the sets $G_{\gamma }$ of
Part \ref{T3} of the TNST. Similarly, the elements of each $\mathcal{O}%
^{S_{i}}$ are $\mathrm{\dim \,}S_{i}$--strained by points of $S_{i},$ and
their union will be Gromov-Hausdorff close to the sets $\mathcal{U}_{\gamma
}^{S_{i}}$ of Part \ref{T2} of the TNST. In fact, the strainers for these
sets will also give us local Alexandrov versions of the diffeomorphism of
Part \ref{T4} and the submersions of Part \ref{T2} of the TNST.

The statement of Theorem \ref{magnum cover thm} is rather technical, so we
prove a series of preliminary results, beginning with the following
application of Equation \autoref{c prime is a direction eqn}.

\begin{lemma}
\label{one strained neigh prop}Let $\left( S,g\right) $ be a Riemannian $k$%
--manifold that is infinitesimally isometrically embedded in an Alexandrov
space $X,$ and let $K$ be a compact subset of $S.$ Given $\varepsilon ,%
\tilde{\delta}>0$ there is an $r_{0}>0$ so that for all $r\in \left(
0,r_{0}\right) $ there is a $\rho >0$ with the following properties.

\noindent 1. For all $p\in K,$ $B(p,3\rho )$ is $\left( k,\tilde{\delta}%
,r\right) $--strained in $X$ by points $\left\{ \left( a_{i},b_{i}\right)
\right\} _{i=1}^{k}$ contained in $S\times S.$

\noindent 2. For all $i,$ and for all $x\in B(p,3\rho )\cap S,$%
\begin{equation*}
\sphericalangle \left( \left( \Uparrow _{x}^{a_{i}}\right)
_{X},T_{x}^{1}S\right) <\varepsilon .
\end{equation*}
\end{lemma}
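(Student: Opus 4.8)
The plan is to build the strainer for $B(p,3\rho)$ out of the intrinsic geometry of $S$ and then transfer it to $X$ using the smooth and isometric embedding hypothesis, specifically Propositions \ref{X deriv along c prop} and \ref{isom embedd prop}. Fix a point $p_0$ in the compact set $K$. Choose an orthonormal basis $e_1,\ldots,e_k$ of $T_{p_0}S$ and, for small $r$, take geodesics in $S$ issuing from $p_0$ in the directions $\pm e_i$; let $a_i^0 = \exp_{p_0}^S(r e_i)$ and $b_i^0 = \exp_{p_0}^S(-r e_i)$. By Proposition \ref{isom embedd prop}, the map $T_{p_0}^1 S \hookrightarrow \Sigma_{p_0} X$ is a metric embedding, so in $\Sigma_{p_0}X$ we have $\sphericalangle(\Uparrow_{p_0}^{a_i^0},\Uparrow_{p_0}^{b_i^0}) = \pi$, $\sphericalangle(\Uparrow_{p_0}^{a_i^0},\Uparrow_{p_0}^{a_j^0}) = \pi/2$, etc.; i.e. $\{(a_i^0,b_i^0)\}$ is an $(k,0,r)$-strainer for $p_0$ in $X$. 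I would then argue that this strainer strains an entire metric ball $B(p_0,3\rho)$ with a loss $\tilde\delta$ in the angle conditions, provided $\rho$ is small compared to $r$ and the geometry. The mechanism is continuity: comparison angles $\tilde\sphericalangle(a_i^0,x,b_j^0)$ depend continuously on $x$, and at $x = p_0$ they equal the genuine angles in $\Sigma_{p_0}X$ up to the controlled error coming from the fact that $a_i^0,b_i^0$ are at finite distance $r$ (this finite-distance error is the $\tau(\,\cdot\,|r)$ term, and it can be absorbed by first shrinking $r$, hence the order of quantifiers: $\varepsilon,\tilde\delta$ first, then $r_0$, then $\rho$).

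For part 1, the core estimate is: for $x\in B(p_0,3\rho)\cap$ (a neighborhood), $\tilde\sphericalangle(a_i^0,x,b_i^0) > \pi - \tilde\delta$ and the mixed comparison angles are within $\tilde\delta$ of $\pi/2$. Since comparison angles are continuous in all three arguments, it suffices to know the inequalities with a bit of room at $x = p_0$, and then take $\rho$ small. At $x=p_0$, the genuine angles are exactly $\pi$ or $\pi/2$ by Proposition \ref{isom embedd prop}; the comparison angles differ from these by at most $\tau(r \mid r')$-type terms where $r'$ is a lower injectivity/comparison scale — but here I should be careful: I want the comparison angle $\tilde\sphericalangle(a_i^0,p_0,b_i^0)$ to be close to $\pi$, and this follows because the three points $a_i^0, p_0, b_i^0$ lie (approximately) on a geodesic of $S$ of length $2r$ which, by Proposition \ref{dist diag prop}, is almost a geodesic in $X$: $\mathrm{dist}^X(a_i^0,b_i^0) \geq \mathrm{dist}^S(a_i^0,b_i^0) - o(r) = 2r - o(r)$ while $\mathrm{dist}^X(a_i^0,p_0) = r + o(r)$ and likewise for $b_i^0$, so the comparison triangle is nearly degenerate and $\tilde\sphericalangle(a_i^0,p_0,b_i^0) > \pi - \tau(r)/r$-ish. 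Shrinking $r_0$ makes this $> \pi - \tilde\delta/2$, then shrinking $\rho$ keeps it $> \pi - \tilde\delta$ on all of $B(p_0,3\rho)$. The mixed angle estimates are similar, using that in $S$ the relevant angles are exactly $\pi/2$ and transferring via Proposition \ref{dist diag prop} again (now the comparison triangle has two sides of length $\approx r$ and a nearly right angle, so the third side has the length forcing $\tilde\sphericalangle \approx \pi/2$). The distance condition $\mathrm{dist}(\{a_i^0,b_i^0\},x) > r$ is immediate after shrinking $\rho$, using $\mathrm{dist}^X(p_0,a_i^0)\geq r - o(r)$ and the triangle inequality, at the cost of slightly enlarging $r$ in the definition (or renaming constants).

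For part 2, I want $\sphericalangle((\Uparrow_x^{a_i})_X, T_x^1 S) < \varepsilon$ for $x\in B(p_0,3\rho)\cap S$. The point is that $a_i = a_i^0$ is a point of $S$, and the minimizing $X$-geodesic from $x$ to $a_i$ — while not a priori a curve in $S$ — has initial direction close to the intrinsic direction $\Uparrow_x^{a_i}$ computed inside $S$. Concretely: let $\sigma$ be the intrinsic geodesic in $S$ from $x$ to $a_i$; its initial direction $\sigma_+'(0)\in\Sigma_x S \subset \Sigma_x X$ by Proposition \ref{isom embedd prop}. By Proposition \ref{X deriv along c prop}, $D_{\sigma_+'(0)}(\mathrm{dist}_{a_i}^X) = -\cos\sphericalangle(\sigma_+'(0), (\Uparrow_x^{a_i})_X)$; on the other hand, since $S \hookrightarrow X$ is smooth and isometric, $\mathrm{dist}_{a_i}^X$ restricted along $\sigma$ has derivative $\approx -1$ at $0$ (because $\mathrm{dist}^S_{a_i}\circ\sigma$ has derivative exactly $-1$ and Definition \ref{Dnf sm and isom} controls the difference), hence $\cos\sphericalangle(\sigma_+'(0),(\Uparrow_x^{a_i})_X) > 1 - \tau$, i.e. the angle is small. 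Since $\sigma_+'(0) \in T_x^1 S$, this gives $\sphericalangle((\Uparrow_x^{a_i})_X, T_x^1 S) < \varepsilon$ once $\rho$ (hence $\mathrm{dist}^S(x,a_i)$) is small enough relative to the $\delta(\varepsilon)$ of Definition \ref{Dnf sm and isom}. Finally, since $K$ is compact, cover it by finitely many such balls $B(p_0,3\rho)$ and take the minimum of the resulting $r_0$'s and $\rho$'s (with the strainers varying with the ball, which is allowed — the statement only asserts existence of \emph{some} strainer for each $B(p,3\rho)$).

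\textbf{Main obstacle.} The delicate point is the interaction between the two length scales $r$ (the distance to the strainer points) and $\rho$ (the radius of the strained ball), together with the a priori gap between intrinsic $S$-geometry and extrinsic $X$-geometry. Proposition \ref{dist diag prop} only gives $\mathrm{dist}^S \approx \mathrm{dist}^X$ with multiplicative error $\varepsilon$ that itself requires $\mathrm{dist}^S < \delta(\varepsilon)$; so the "degeneracy of comparison triangles" argument producing angles near $\pi$ or $\pi/2$ must be run carefully to see that the error terms are of the form $\tau(r \mid \ldots) + \tau(\rho \mid r)$ and can be killed in the right order. I expect this bookkeeping — getting all the $\tau$'s to depend only on the allowed quantities and to vanish in the stated order $\varepsilon,\tilde\delta \leadsto r_0 \leadsto \rho$ — to be the part requiring the most care, whereas the conceptual input (Propositions \ref{X deriv along c prop} and \ref{isom embedd prop}) is already in place.
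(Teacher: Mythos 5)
Your overall strategy matches the paper's: build the strainer out of an orthonormal frame of $T_{p_0}S$ via intrinsic $S$-geodesics, transfer it to $X$ through the metric embedding $T^1_{p_0}S\hookrightarrow\Sigma_{p_0}X$ of Proposition \ref{isom embedd prop}, and get Part 2 from first-variation combined with Definition \ref{Dnf sm and isom}. There are, however, two slips worth flagging, plus one small divergence from the paper's route.

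The assertion that $\sphericalangle(\Uparrow_{p_0}^{a_i^0},\Uparrow_{p_0}^{b_i^0})=\pi$ in $\Sigma_{p_0}X$, and that this makes $\{(a_i^0,b_i^0)\}$ a $(k,0,r)$-strainer at $p_0$, is incorrect on two counts. First, $\Uparrow_{p_0}^{a_i^0}$ is the direction of an $X$-geodesic to a point at finite distance $r$; it is not the $S$-direction $v_i=(c_{v_i})'_+(0)$ but only approaches it as $r\to 0$ by Equation \autoref{c prime is a direction eqn} (this is exactly what the paper's Inequality \autoref{close to S Inequality} says). Second, even granting the exact angle, a strainer is defined by \emph{comparison} angle inequalities, and in an Alexandrov space with curvature bounded below the genuine angle dominates the comparison angle, so $\sphericalangle=\pi$ only gives the useless upper bound $\tilde\sphericalangle\leq\pi$ rather than the needed lower bound $\tilde\sphericalangle>\pi-\tilde\delta$. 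Your subsequent near-degenerate comparison-triangle argument via Proposition \ref{dist diag prop} is the right mechanism and works purely with distances, so the offending sentence should simply be deleted rather than repaired. The second slip is quantitative: when you invoke Part 2 of Definition \ref{Dnf sm and isom} to estimate $(\mathrm{dist}^X_{a_i}\circ\sigma)'(0)$, the relevant point of $S\times S$ is $(x,a_i)$, whose distance to the diagonal is $\mathrm{dist}^S(x,a_i)\approx r$, not $\rho$; the condition that makes the definition applicable is therefore $r<\delta(\varepsilon)$, consistent with the quantifier order you describe, but your parenthetical mislabels the small quantity as $\rho$.

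On where you diverge from the paper: for Part 2 you apply Part 2 of Definition \ref{Dnf sm and isom} directly at each $x\in B(p,3\rho)\cap S$, whereas the paper first establishes $D_{v_i}\mathrm{dist}^X_{a_i}\approx -1$ at $p$ via Equation \autoref{c prime is a direction eqn} and then propagates this to nearby $x$ using the $C^1$-regularity in Part 1 of the definition. Both are valid; yours is arguably more economical once the quantifier slip above is corrected. Finally, for the uniformity over the compact $K$ the paper runs a contradiction argument exploiting stability of the two properties, while you take a finite subcover and minimize constants; the latter works provided you cover $K$ by the balls $B(p_\alpha,\rho_\alpha/10)$ rather than $B(p_\alpha,3\rho_\alpha)$, so that $B(p,3\rho)\subset B(p_\alpha,3\rho_\alpha)$ for the relevant $\alpha$ after taking $\rho=\min_\alpha\rho_\alpha/10$, ensuring the already-constructed strainer for $B(p_\alpha,3\rho_\alpha)$ also strains $B(p,3\rho)$.
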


\begin{proof}
First we prove the existence of $r_{0}$ for a single point $p\in S.$ Take $%
\left\{ v_{i}\right\} _{i=1}^{k}\subset T_{p}S$ to be an orthonormal basis.
Let $c_{v_{i}}$ be the geodesic in $S$ with $\left( c_{v_{i}}\right)
^{\prime }\left( 0\right) =v_{i}.$ Choose $r\in \left( 0,\frac{1}{4}\mathrm{%
inj}_{p}\left( S\right) \right) $, and set $a_{i}=c_{v_{i}}\left( 4r\right) $
and $b_{i}=c_{v_{i}}\left( -4r\right) .$

Since $\left\{ v_{i}\right\} _{i=1}^{k}$ is an orthonormal basis for $T_{p}S$%
, it follows from Proposition \ref{isom embedd prop} that $\left\{
v_{i}\right\} _{i=1}^{k}$ is an orthonormal subset of $\Sigma _{p}X.$ So if $%
r$ is sufficiently small, then 
\begin{equation}
\left\{ \left( a_{i},b_{i}\right) \right\} _{i=1}^{k}\text{ is a }\left( k,%
\tilde{\delta},r\right) \text{--strainer for a neighborhood }N_{X}\text{ of }%
p\text{ in }X\text{,}  \label{k strainer}
\end{equation}%
giving us Property 1 at $p$.

By Equation \autoref{c prime is a direction eqn}, given $\eta \in \left(
0,\varepsilon \right) ,$ 
\begin{equation}
\sphericalangle \left( \left( \Uparrow _{p}^{a_{i}}\right) _{X},v_{i}\right)
<\eta ,  \label{close to S Inequality}
\end{equation}%
if $r$ is small enough.

Inequality \autoref{close to S Inequality} implies 
\begin{equation}
-1\leq D_{v_{i}}\mathrm{dist}_{a_{i}}^{X}\left( \cdot \right) =-\cos \left(
\sphericalangle \left( \left( \Uparrow _{p}^{a_{i}}\right) _{X},v_{i}\right)
\right) \leq -1+\tau \left( \eta \right) .  \label{cloooose
at p}
\end{equation}%
Set 
\begin{equation*}
V_{i}\left( x\right) =\left( \uparrow _{x}^{a_{i}}\right) _{S}.
\end{equation*}%
Part 1 of the definition of an infinitesimally, isometric embedding gives
that $D_{V_{i}\left( x\right) }\mathrm{dist}_{a_{i}}^{X}\left( \cdot \right) 
$ is close to $D_{v_{i}}\mathrm{dist}_{a_{i}}^{X}\left( \cdot \right) $ if $%
x $ is close to $p$. Combining this with Inequality \autoref{cloooose at p}
gives%
\begin{equation*}
D_{V_{i}\left( x\right) }\mathrm{dist}_{a_{i}}^{X}\left( \cdot \right)
=-1+\tau \left( \eta \right)
\end{equation*}%
for all $x$ in a neighborhood of $p.$ A direction $w\in \Sigma _{x}X$ for
which $D_{w}\mathrm{dist}_{a_{i}}^{X}\left( \cdot \right) =-1+\tau \left(
\eta \right) $ must be within $\tau \left( \eta \right) $ of $\left(
\Uparrow _{x}^{a_{i}}\right) _{X}.$ So viewing $V_{i}\left( x\right) \in
\Sigma _{x}X,$ it follows that 
\begin{equation*}
\sphericalangle \left( \left( \Uparrow _{x}^{a_{i}}\right) _{X},V_{i}\left(
x\right) \right) <\tau \left( \eta \right) .
\end{equation*}%
Since we also have $V_{i}\left( x\right) \in T_{x}S$,%
\begin{equation*}
\sphericalangle \left( \left( \Uparrow _{x}^{a_{i}}\right)
_{X},T_{x}^{1}S\right) <\tau \left( \eta \right) ,
\end{equation*}%
giving us Property 2 at $p$.

The existence of an $r_{0}$ that works uniformly throughout a compact subset 
$K$ of $S$ follows from the stability of Properties $1$ and $2.$ Indeed, if $%
\left\{ p_{i}\right\} _{i=1}^{\infty }\subset K$ converges to $p_{\infty
}\in K,$ then we have shown that Properties 1 and 2 hold for $p_{\infty }$.
It follows that they also hold for all but finitely many of the $\left\{
p_{i}\right\} _{i}^{\infty }$s with the corresponding constants divided by $%
2.$ The existence of a uniform $r_{0}$ follows from this and a contradiction
argument.
\end{proof}

Applying Lemmas \ref{Gromov Pack} and \ref{one strained neigh prop} to a
precompact open subset of $S,$ we get the following corollary.

\begin{corollary}
\label{k-strained cover thm}Let $\left( S,g\right) $ be a Riemannian $k$%
--manifold that is infinitesimally isometrically embedded in an Alexandrov
space $X.$ Let $O\subset S$ be a precompact open subset of $S.$ There is an $%
\mathfrak{o}>0$ so that given $\varepsilon ,\tilde{\delta}>0$ there is an $%
r>0,$ a $\rho _{0}\in \left( 0,r\right) ,$ and, for all $\rho \in \left(
0,\rho _{0}\right) ,$ a finite open cover $\mathcal{O}\equiv \left\{
B_{j}(\rho )\right\} _{j}$ of $O$ by $\rho $--balls of $X$ for which the
corresponding $3\rho $--balls have the following properties.

\noindent 1. Each $B_{j}(3\rho )$ is $\left( k,\tilde{\delta},r\right) $%
--strained in $X$ by $\left\{ \left( a_{i}^{j},b_{i}^{j}\right) \right\}
_{i=1}^{k}$ with $a_{i}^{j},b_{i}^{j}\in S.$

\noindent 2. For all $i,j$ and for all $x\in B_{j}(3\rho )\cap S,$%
\begin{equation}
\sphericalangle \left( \left( \Uparrow _{x}^{a_{i}^{j}}\right)
_{X},T_{x}S\right) <\varepsilon .  \label{alm tang to S inequal}
\end{equation}

\noindent 3. The first order of $\left\{ B_{j}(3\rho )\right\} _{j}$ is $%
\leq \mathfrak{o.}$
\end{corollary}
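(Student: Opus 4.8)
The plan is to deduce this directly from Lemma \ref{one strained neigh prop} and Gromov's Packing Lemma (Lemma \ref{Gromov Pack}), as already announced. Let $K$ be the closure of $O$ in $S$; since $O$ is precompact in $S$, the set $K$ is a compact subset of $S$, and hence also a compact subset of $X$. Let $\mathfrak{o}$ and $r_{\mathrm{pack}}$ be the constants provided by Lemma \ref{Gromov Pack} for $X$; since they depend only on $\dim X$ and on $k$—and in particular not on $\varepsilon$ or $\tilde{\delta}$—the quantity $\mathfrak{o}$ may legitimately be declared before $\varepsilon$ and $\tilde{\delta}$ are specified, as the statement requires.

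Given $\varepsilon,\tilde{\delta}>0$, I would apply Lemma \ref{one strained neigh prop} with the compact set $K$ to obtain $r_{\mathrm{str}}>0$, fix any $r\in(0,r_{\mathrm{str}})$, and let $\rho_{\mathrm{str}}>0$ be the radius the lemma then produces. Thus for every $p\in K$ the ball $B(p,3\rho_{\mathrm{str}})$ is $(k,\tilde{\delta},r)$--strained in $X$ by a strainer lying in $S\times S$, and each strainer direction $(\Uparrow_x^{a_i})_X$ is within $\varepsilon$ of $T_x S$ at every $x\in B(p,3\rho_{\mathrm{str}})\cap S$. I would then set $\rho_0=\min\{\rho_{\mathrm{str}},r_{\mathrm{pack}},r\}$.

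Now fix $\rho\in(0,\rho_0)$. Since $\rho<r_{\mathrm{pack}}$, Lemma \ref{Gromov Pack} applied to $K\subset X$ with packing radius $\rho$ yields a finite set $\{p_j\}_j\subset K$ with $K\subset\bigcup_j B(p_j,\rho)$ such that the order of $\{B(p_j,3\rho)\}_j$ is at most $\mathfrak{o}$. As $O\subset K$, the collection $\mathcal{O}\equiv\{B_j(\rho)\}_j$ with $B_j(\rho)=B(p_j,\rho)$ is a finite open cover of $O$ by $\rho$--balls of $X$, and Property 3 holds by construction. For Properties 1 and 2, each $p_j$ lies in $K$, so Lemma \ref{one strained neigh prop} supplies a strainer $\{(a_i^j,b_i^j)\}_{i=1}^k\subset S\times S$ for $B(p_j,3\rho_{\mathrm{str}})$ with the stated near-tangency. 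Since $\rho\leq\rho_{\mathrm{str}}$ gives $B_j(3\rho)\subset B(p_j,3\rho_{\mathrm{str}})$, and since every point of a $(k,\tilde{\delta},r)$--strained set is $(k,\tilde{\delta},r)$--strained by the same strainer (cf.\ Definition \ref{strainer}), Property 1 follows for $B_j(3\rho)$; restricting the near-tangency estimate to $B_j(3\rho)\cap S\subset B(p_j,3\rho_{\mathrm{str}})\cap S$ gives Property 2.

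The substantive content of the corollary is entirely carried by the two cited lemmas, so I expect no genuine obstacle. The one point that asks for attention is the discrepancy between the single radius $\rho_{\mathrm{str}}$ output by Lemma \ref{one strained neigh prop} and the whole interval $(0,\rho_0)$ of admissible radii the corollary demands; this is bridged by the elementary monotonicity observation that passing to a smaller concentric ball destroys neither the straining nor the estimate on the strainer directions. One must also keep the packing radius below Gromov's threshold, which is exactly why $\rho_0$ is taken no larger than $r_{\mathrm{pack}}$.
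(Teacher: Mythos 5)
Your proof is correct and takes exactly the same route the paper gestures at: the paper offers no detailed argument, simply remarking that the corollary follows from Lemmas \ref{Gromov Pack} and \ref{one strained neigh prop}, and you have filled in that combination faithfully, including the small but necessary observation that shrinking a ball preserves both the straining condition and the near-tangency estimate \autoref{alm tang to S inequal}.
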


\begin{lemma}
\label{dist S is regular}Let $\left( S,g\right) $ be a Riemannian $k$%
--manifold that is infinitesimally isometrically embedded in an Alexandrov
space $X.$ Given any $p\in S$ and $\varepsilon ,\tilde{\delta}>0,$ let $%
\left\{ \left( a_{i},b_{i}\right) \right\} _{i=1}^{k}$ be as in the previous
lemma. There is an $\eta >0$ so that $\mathrm{dist}^{X}\left( S,\cdot
\right) $ is $\left( 1-\varepsilon \right) $--regular on $B\left( p,2\eta
\right) \setminus S.$

In fact, for all $x\in B\left( p,2\eta \right) \setminus S,$ there is a $%
V^{S}\in \Sigma _{x}$ so that%
\begin{equation}
D_{V^{S}}\mathrm{dist}^{X}\left( S,\cdot \right) >1-\varepsilon ,
\label{dist
S is rgg}
\end{equation}%
and 
\begin{equation}
\left\vert D_{V^{S}}\mathrm{dist}_{a_{i}}^{X}\right\vert \leq \tau \left( 
\tilde{\delta}\right) +\tau \left( \rho |r\right) .
\label{V is alm vertical}
\end{equation}
\end{lemma}

\begin{proof}
By Proposition \ref{isom embedd prop}, for all $p\in S,$ we have that $%
\Sigma _{p}S$ is a metric copy of $\mathbb{S}^{\mathrm{\dim }\left( S\right)
-1}\subset \Sigma _{p}X.$ By the Join Lemma \ref{Join Lemma}, $\Sigma _{p}X$
is isometric to $\mathbb{S}^{\mathrm{\dim }\left( S\right) -1}\ast E,$ where 
$E$ is a compact Alexandrov space of curvature $\geq 1.$ It follows that $%
T_{p}X$ splits orthogonally as 
\begin{equation*}
T_{p}X=T_{p}S\oplus C\left( E\right) .
\end{equation*}%
Under the convergence $\lim_{\lambda \longrightarrow \infty }\left( \lambda
X,p\right) =\left( T_{p}X,\ast \right) ,$ we have $\lim_{\lambda
\longrightarrow \infty }\left( \lambda S,p\right) =\left( T_{p}S,\ast
\right) .$ So the result holds with \thinspace $X,S,$ and $\left\{ \left(
a_{i},b_{i}\right) \right\} _{i=1}^{k}$ replaced by $T_{p}X,$ $T_{p}S,$ and $%
\left\{ \left( \uparrow _{p}^{a_{i}},\uparrow _{p}^{b_{i}}\right) \right\}
_{i=1}^{k}.$ The stability of regular points gives us that for all $x\in
B\left( p,2\eta \right) \setminus S,$ there is a $V^{S}\in \Sigma _{x}$ so
that 
\begin{equation*}
D_{V^{S}}\mathrm{dist}^{X}\left( S,\cdot \right) >1-\varepsilon .
\end{equation*}

Since $\uparrow _{p}^{a_{i}}=\lim_{\lambda \rightarrow \infty }pa_{i}\left( 
\frac{1}{\lambda }\right) $ and $\uparrow _{p}^{b_{i}}=\lim_{\lambda
\rightarrow \infty }pb_{i}\left( \frac{1}{\lambda }\right) ,$ it follows
that \autoref{V is alm vertical} holds with $\left\{ \left(
a_{i},b_{i}\right) \right\} _{i=1}^{k}$ replaced with $\left\{ \left( \tilde{%
a}_{i},\tilde{b}_{i}\right) \right\} _{i=1}^{k},$ where $\tilde{a}_{i}\equiv
pa_{i}\left( \frac{1}{\lambda }\right) $ and $\tilde{b}_{i}\equiv
pb_{i}\left( \frac{1}{\lambda }\right) .$ Since the directional derivatives
of $\mathrm{dist}_{a_{i}}$ and $\mathrm{dist}_{\tilde{a}_{i}}$ are nearly
the same at $p,$ \autoref{V is alm vertical} also holds.
\end{proof}

\begin{lemma}
\label{resp cover lemma}Let $N$ be an element of $\mathcal{S},$ and let $%
S\in \mathcal{S}$ be contained in $\bar{N}$ and not equal to $N.$ Given $%
p\in S$ and $\varepsilon ,\tilde{\delta}>0,$ let $\left\{ \left(
a_{i},b_{i}\right) \right\} _{i=1}^{\mathrm{\dim }\left( S\right) }$ be as
in Lemma \ref{one strained neigh prop}. If $\eta $ is sufficiently small,
then for all $\tilde{p}\in B\left( p,2\eta \right) \cap N,$ the following
hold.

\noindent 1. $\tilde{p}$ is $\left( \dim \left( N\right) ,\tau \left( \tilde{%
\delta},\eta \right) \right) $--strained in $X$ by $\left\{ \left(
a_{i},b_{i}\right) \right\} _{i=1}^{\mathrm{\dim }\left( S\right) }$ and $%
\left( \dim \left( N\right) -\mathrm{\dim }\left( S\right) \right) $ pairs
of points of $N,$ $\left\{ \left( a_{j}^{\tilde{p}},b_{j}^{\tilde{p}}\right)
\right\} _{j=\mathrm{\dim }\left( S\right) +1}^{\dim \left( N\right) }$.

\noindent 2. At every $x\in N$ that is close enough to $\tilde{p},$ 
\begin{equation}
\sphericalangle \left( \left( \Uparrow _{x}^{a_{i}}\right)
_{X},T_{x}N\right) <\varepsilon ,  \label{closer N NEW}
\end{equation}

and%
\begin{equation}
\sphericalangle \left( \left( \Uparrow _{x}^{a_{j}^{\tilde{p}}}\right)
_{X},T_{x}N\right) <\varepsilon .  \label{ext str close to N}
\end{equation}%
\noindent 3. For $V^{S}$ as in Lemma \ref{dist S is regular}, 
\begin{equation}
\sphericalangle \left( \left( \Uparrow _{x}^{a_{\mathrm{\dim }\left(
S\right) +1}^{\tilde{p}}}\right) _{X},V^{S}\right) <\tau \left( \tilde{\delta%
},\varepsilon \right) +\tau \left( \rho |r\right) ,  \label{V
close to N}
\end{equation}%
where $x\in B\left( \tilde{p},\rho \right) ,$ and $B\left( \tilde{p},\rho
\right) $ is $\left( \dim \left( N\right) ,\tau \left( \tilde{\delta},\eta
\right) ,r\right) $--strained in $X$ by 
\begin{equation*}
\left\{ \left\{ \left( a_{i},b_{i}\right) \right\} _{i=1}^{\mathrm{\dim }%
\left( S\right) },\left\{ \left( a_{j}^{\tilde{p}},b_{j}^{\tilde{p}}\right)
\right\} _{j=\mathrm{\dim }\left( S\right) +1}^{\dim \left( N\right)
}\right\} .
\end{equation*}

\noindent 4. If $N$ is the top stratum, that is, if $N=X\setminus \cup
_{S\in \mathcal{S}}S,$ then these same assertions hold except that in
Inequality \autoref{V close to N} we replace $\tau \left( \tilde{\delta}%
,\varepsilon \right) $ with $\tau \left( \delta \right) ,$ and in Part 1, $%
\tilde{p}$ is only $\left( \dim \left( N\right) ,\delta \right) $--strained.
\end{lemma}

\begin{Remark-delta}
The distinction between $\tau \left( \tilde{\delta},\varepsilon \right) ,$ $%
\tau \left( \delta \right) $ and $\delta $ in Part 4 is not merely academic.
In fact, $\tilde{\delta},\varepsilon $ and $\rho $ can be arbitrarily small
in Corollary \ref{k-strained cover thm}, whereas the $\delta $ such that all
points of our top stratum are $\left( n,\delta \right) $--strained is
determined by $X,$ and is therefore fixed.
\end{Remark-delta}

\begin{proof}
Since strainers are stable, every point $\tilde{p}\in B\left( p,\eta \right)
\cap N$ is $\left( \mathrm{\dim }\left( S\right) ,\tau \left( \tilde{\delta}%
,\eta \right) \right) $--strained in $X$ by $\left\{ \left(
a_{i},b_{i}\right) \right\} _{i=1}^{\mathrm{\dim }\left( S\right) }.$
Combining this with Lemma \ref{one strained neigh prop} and the fact that
every point of $N$ is $\left( \mathrm{\dim }\left( N\right) ,0\right) $%
--strained and not $\left( \mathrm{\dim }\left( N\right) +1,\delta \right) $%
--strained gives us Inequality \autoref{closer N NEW}, if we choose $\max
\left\{ \tilde{\delta},\eta ,\varepsilon \right\} <<\delta $.

The existence of $\left\{ \left( a_{j}^{\tilde{p}},b_{j}^{\tilde{p}}\right)
\right\} _{j=1}^{\dim \left( N\right) -\mathrm{\dim }\left( S\right) }$
follows from the fact that every point of $N$ is $\left( \dim \left(
N\right) ,0\right) $--strained, and the proof of Lemma \ref{one strained
neigh prop} gives us Inequality \autoref{ext str close to N}.

It follows from Inequalities \autoref{dist S is rgg} and \autoref{V is alm
vertical} that $\left( a_{\mathrm{\dim }\left( S\right) +1}^{\tilde{p}},b_{%
\mathrm{\dim }\left( S\right) +1}^{\tilde{p}}\right) $ can be chosen so that 
$\sphericalangle \left( \left( \Uparrow _{x}^{a_{\mathrm{\dim }\left(
S\right) +1}^{\tilde{p}}}\right) _{X},V^{S}\right) <\tau \left( \tilde{\delta%
},\varepsilon \right) +\tau \left( \rho |r\right) $, provided is $\eta $
small enough so that Lemma \ref{dist S is regular} holds.
\end{proof}

Let $X$ and $\mathcal{S}$ be as in Theorem \ref{dif stab- dim 4}. Recall
that 
\begin{equation*}
\mathcal{S}^{\mathrm{ext}}\equiv \mathcal{S}\cup \left( X\setminus \cup
_{S\in \mathcal{S}}S\right) .
\end{equation*}%
For an element $S\in \mathcal{S}^{\mathrm{ext}}$ , we write $\bar{S}$ for
the closure of $S$ and set 
\begin{equation*}
Bd\left( S\right) \equiv \bar{S}\setminus S.
\end{equation*}%
Note that $\mathrm{dim}\left( Bd\left( S\right) \right) $ can be $\leq 
\mathrm{dim}\left( S\right) -2;$ in particular, $\bar{S}$ need not be a
manifold with boundary.

From this point we fix a metric on $\left( \amalg _{\alpha }M_{\alpha
}\right) \amalg X$ that realizes the Gromov--Hausdorff convergence, and we
let $B\left( S,\nu \right) $ be the $\nu $--neighborhood of $S$ with respect
to this metric.

\begin{theorem}
\label{magnum cover thm}Let $X,$ $\mathcal{K},$ and $\mathcal{N}$ satisfy
the hypotheses of Theorem \ref{dif stab- dim 4}. Given $\varepsilon ,\tilde{%
\delta}>0,$ there are $\rho _{0}^{X},\rho _{0}^{S_{i}}>0,$ and, for all $%
\rho ^{X}\in \left( 0,\rho _{0}^{X}\right) $ and $\rho ^{S_{i}}\in \left(
0,\rho _{0}^{S_{i}}\right) ,$ there are collections of open sets $\mathcal{O}%
^{X}\equiv \left\{ B_{k}^{X}(\rho ^{X})\right\} _{k}$ and $\left\{ \mathcal{O%
}^{S_{i}}\right\} _{i}\equiv \left\{ \left\{ B_{j}^{S_{i}}(\rho
^{S_{i}})\right\} _{j\in I_{S_{i}}}\right\} _{i}$ where each $B_{k}^{X}(\rho
^{X})$ is a metric $\rho ^{X}$--ball of $X$ and each $B_{j}^{S_{i}}(\rho
^{S_{i}})$ is a metric $\rho ^{S_{i}}$--ball of $X$ with the following
properties.

\noindent 1. Set $O_{i}\equiv \cup _{j\in I_{S_{i}}}B_{j}^{^{S_{i}}}(\rho
^{^{S_{i}}})\cap S_{i}.$ Corollary \ref{k-strained cover thm} holds for each 
$O_{i}.$

\noindent 2. There is an $r>0$ so that each $B_{k}^{X}(3\rho ^{X})$ is $%
\left( n,\delta ,r\right) $--strained.

\noindent 3. If $S_{i}\in \mathcal{K},$ then $\mathcal{O}^{^{S_{i}}}$ is a
cover of $S_{i}$.

\noindent 4. For $N\in \mathcal{S}^{\mathrm{ext}},$ if $Bd\left( N\right)
=\cup _{n_{i}}S_{n_{i}},$ then $\mathcal{O}^{N}$ together with the union of
the $\mathcal{O}^{S_{n_{i}}}$ is a cover of $N.$

\noindent 5. For $S\in \mathcal{S}$ and $j\in I_{S},$ let $\left\{ \left(
B_{j}^{S}\right) _{\alpha }(3\rho ^{S})\right\} _{\alpha }$ be a sequence of
balls so that $\left( B_{j}^{S}\right) _{\alpha }(3\rho ^{S})\longrightarrow
B_{j}^{S}(3\rho ^{S})$ as $\alpha \rightarrow \infty ,$ and set 
\begin{equation*}
\widetilde{\mathcal{U}}_{\alpha }^{S}:=\cup _{j}\left( B_{j}^{S}\right)
_{\alpha }(3\rho ^{S}).
\end{equation*}%
If $\frac{1}{\alpha }$ and $\rho ^{S}$ are sufficiently small, then there is
a $\nu \in \left( 0,\frac{\rho ^{S}}{100}\right) $ and a smooth 
\begin{equation*}
d_{\alpha }^{S}:\widetilde{\mathcal{U}}_{\alpha }^{S}\setminus B\left( S,\nu
\right) \longrightarrow \mathbb{R}
\end{equation*}%
so that 
\begin{equation}
1-\varepsilon <\left\vert \nabla d_{\alpha }^{S}\right\vert <1+\varepsilon
\label{alms dist inequ}
\end{equation}%
and%
\begin{equation}
\left\vert D_{\nabla d_{\alpha }^{S}}\mathrm{dist}_{a_{i}^{\alpha
}}\right\vert <\tau \left( \tilde{\delta}\right) +\tau \left( \rho
^{S}|r\right) ,  \label{alm vert gradient}
\end{equation}%
where $a_{i}^{\alpha }\rightarrow a_{i},$ and $a_{i}$ is part of a strainer
for $S$ as in Corollary \ref{k-strained cover thm}.

\noindent 6. Let $S$ and $N$ be elements of $\mathcal{S}^{\mathrm{ext}}$,
and let $S$ be a subset of $Bd\left( N\right) .$ Then there is a $\nu \in
\left( 0,\frac{\rho ^{S}}{100}\right) $ so that for all $x\in \left( \cup 
\mathcal{O}^{N}\right) \cap \left( \cup \mathcal{O}^{S}\setminus B\left(
S,\nu \right) \right) ,$ there is a $B_{k}^{N}\left( \rho ^{N}\right) \in 
\mathcal{O}^{N}$ and a $B_{j\left( k\right) }^{S}\left( \rho ^{S}\right) \in 
\mathcal{O}^{S}$ so that%
\begin{eqnarray}
\text{ }x &\in &B_{k}^{N}\left( \rho ^{N}\right) ,  \notag \\
B_{k}^{N}\left( 3\rho ^{N}\right) &\Subset &B_{j\left( k\right) }^{S}\left(
\rho ^{S}\right) ,  \label{initialresp eqn}
\end{eqnarray}%
and if $B_{\alpha }^{N}\left( \rho ^{N}\right) $ is an approximation of $%
B^{N}\left( \rho ^{N}\right) $, then for all $x_{\alpha }\in $ $\widetilde{%
\mathcal{U}}_{\alpha }^{S}\setminus B\left( S,\nu \right) ,$ 
\begin{equation}
\sphericalangle \left( \Uparrow _{x_{\alpha }}^{a_{\mathrm{\dim }\left(
S\right) +1}^{\alpha }},\nabla d_{\alpha }^{S}\right) <\left\{ 
\begin{array}{ll}
\tau \left( \delta \right) +\tau \left( \rho ^{S}|r\right) , & \text{if }N%
\text{ is the top stratum} \\ 
\tau \left( \tilde{\delta},\varepsilon \right) +\tau \left( \rho
^{S}|r\right) & \text{otherwise},%
\end{array}%
\right.  \label{alm str ineqaul}
\end{equation}%
where $a_{\mathrm{\dim }\left( S\right) +1}^{\alpha }$ is an approximation
of the $\left( \mathrm{\dim }\left( S\right) +1\right) ^{st}$ strainer for $%
B^{N}\left( \rho ^{N}\right) $ constructed from Lemma \ref{resp cover lemma}.

\noindent 7. Let $S$ and $N$ be elements of $\mathcal{S}$, and let $S$ be a
subset of $Bd\left( N\right) .$ There is a $\nu \in \left( 0,\frac{\rho ^{S}%
}{100}\right) $ and a smooth function $d^{S}$ on $\left( \cup \mathcal{O}%
^{N}\right) \cap \left( \cup \mathcal{O}^{S}\setminus B\left( S,\nu \right)
\right) \cap N$ so that for all $x\in \left( \cup \mathcal{O}^{N}\right)
\cap \left( \cup \mathcal{O}^{S}\setminus B\left( S,\nu \right) \right) \cap
N,$ 
\begin{eqnarray*}
1-\varepsilon &<&\left\vert \nabla d^{S}\right\vert <1+\varepsilon , \\
\left\vert D_{\nabla d^{S}}\mathrm{dist}_{a_{i}}\right\vert &<&\tau \left( 
\tilde{\delta}\right) +\tau \left( \rho ^{S}|r\right) ,\text{ and} \\
\sphericalangle \left( \Uparrow _{x}^{a_{\mathrm{\dim }\left( S\right)
+1}},\nabla d^{S}\right) &<&\tau \left( \tilde{\delta},\varepsilon \right)
+\tau \left( \rho ^{S}|r\right) ,
\end{eqnarray*}%
where $a_{i}$ is part of a strainer for $S$ as in Lemma \ref{dist S is
regular} and $a_{\mathrm{\dim }\left( S\right) +1}$ is one of the strainers
for one of the $B^{N}\left( \rho ^{N}\right) $ that is constructed from
Lemma \ref{resp cover lemma}.
\end{theorem}

Next we define the \emph{Generation Number} of each $S\in \mathcal{S}.$ It
is dual to the concept of Ancestor Number that appears on page \pageref*%
{ansc numb page}. Recall that we partially ordered the $S\in \mathcal{S}^{%
\mathrm{ext}}$ by declaring that $S_{1}<S_{2}$ if $S_{1}\subsetneq \bar{S}%
_{2}$, where $\bar{S}_{2}$ is the closure of $S_{2}.$ We call the number$,$ $%
a,$ the \emph{Generation Number} of $S\in \mathcal{S}^{\mathrm{ext}}$ if $a$
is the length of the largest chain 
\begin{equation*}
S_{0}<S_{1}<\cdots <S_{a}
\end{equation*}%
with $S=S_{a}$ and $S_{0}=\bar{S}_{0}.$ Let $\mathcal{S}_{j}$ be the
collection of all $S\in \mathcal{S}^{\mathrm{ext}}$ that have generation
number $j.$

\begin{proof}[Proof of Theorem \protect\ref{magnum cover thm}]
The construction of $\mathcal{O}^{X}$ and the $\mathcal{O}^{^{S_{i}}}$ is by
induction on the Generation Number. If $S\in \mathcal{S}$ has generation
number $0,$ then we get the desired cover $\mathcal{O}^{S}$ from Corollary %
\ref{k-strained cover thm}.

Suppose by induction that we have constructed the desired cover $\mathcal{O}%
\left( k\right) $ of the union of the elements of $\cup _{j=0}^{k}\mathcal{S}%
_{j},$ and $3\mathcal{O}\left( k\right) $ is the corresponding cover by
balls with three times the radius. For $N\in \mathcal{S}_{k+1},$ let 
\begin{equation*}
J_{N}\equiv \left\{ \left. j\in I\text{ }\right\vert \text{ }S_{j}\subset
Bd\left( N\right) \text{ and }S_{j}\in \mathcal{S}\right\} .
\end{equation*}%
Given $\nu >0,$ we apply Lemma \ref{resp cover lemma} to obtain a cover $%
\mathcal{O}^{N,\mathrm{pre}}$ of 
\begin{equation*}
\left\{ \left( \cup 3\mathcal{O}\left( k\right) \right) \setminus \cup
_{j\in J_{N}}B\left( S_{j},\nu \right) \right\} \cap N
\end{equation*}%
that satisfies (\ref{initialresp eqn}). Since $\left\{ \left( \cup 3\mathcal{%
O}\left( k\right) \right) \setminus \cup _{j\in J_{N}}B\left( S_{j},\nu
\right) \right\} \cap N$ is precompact in $N,$ we can take $\mathcal{O}^{N,%
\mathrm{pre}}$ to be a finite cover. We then apply Corollary \ref{k-strained
cover thm} with $O=N\setminus \cup _{j\in J_{N_{l}}}B\left( S_{j},\nu
\right) $ to get the desired cover of $N.$ Since there are only finitely
many $N\in \mathcal{S}_{k+1},$ this completes the induction step, and hence
the proofs of Parts 1--4.

To construct the function $d_{\alpha }^{S}$ that appears in Parts 5 and 6,
let $h_{\alpha }:X\longrightarrow M_{\alpha }$ be a $\tau \left( 1/\alpha
\right) $--homeomorphism constructed via Perelman's Stability Theorem. Since
the conclusion of Lemma \ref{dist S is regular} is Gromov--Hausdorff stable,
given any $\varepsilon >0,$ there is a $\nu >0$ and a unit vector field $%
V_{\alpha }$ on $\widetilde{\mathcal{U}}_{\alpha }^{S}\setminus B\left(
S,\nu \right) $ with 
\begin{equation}
D_{V_{\alpha }}\mathrm{dist}\left( h_{\alpha }\left( S\right) ,\cdot \right)
>1-\varepsilon \text{.}  \label{dist_S reg}
\end{equation}%
Under the hypotheses of Part 6, Parts 3 and 4 of Lemma \ref{resp cover lemma}
give us that 
\begin{equation}
\sphericalangle \left( \Uparrow _{x_{\alpha }}^{a_{\mathrm{\dim }\left(
S\right) +1}^{\alpha }},V_{\alpha }\right) <\left\{ 
\begin{array}{ll}
\tau \left( \delta \right) +\tau \left( \rho ^{S}|r\right) , & \text{if }N%
\text{ is the top stratum} \\ 
\tau \left( \tilde{\delta},\varepsilon \right) +\tau \left( \rho
^{S}|r\right) & \text{otherwise}.%
\end{array}%
\right.  \label{close to strainer}
\end{equation}

We apply the Riemannian convolution method of \cite{GrovShio} to $\mathrm{%
dist}\left( h_{\alpha }\left( S\right) ,\cdot \right) .$ Since Riemannian
convolutions preserve regularity, it follows from \autoref{dist_S reg} and (%
\ref{close to strainer}) that for an appropriate convolution $d_{\alpha },$ 
\begin{eqnarray*}
D_{V_{\alpha }}d_{\alpha }^{S} &>&1-\varepsilon \text{ , } \\
1-\varepsilon &<&\left\vert \nabla d_{\alpha }^{S}\right\vert <1+\varepsilon
,\text{ and } \\
\text{ }\sphericalangle \left( \Uparrow _{x_{\alpha }}^{a_{\mathrm{\dim }%
\left( S\right) +1}^{\alpha }},\nabla d_{\alpha }^{S}\right) &<&\left\{ 
\begin{array}{ll}
\tau \left( \delta \right) +\tau \left( \rho ^{S}|r\right) , & \text{if }N%
\text{ is the top stratum} \\ 
\tau \left( \tilde{\delta},\varepsilon \right) +\tau \left( \rho
^{S}|r\right) & \text{otherwise}%
\end{array}%
\right.
\end{eqnarray*}%
on $\widetilde{\mathcal{U}}_{\alpha }^{S}\setminus B\left( S,\nu \right) ,$
where $d_{\alpha }$ is $C^{\infty }$ and as close as we please to $\mathrm{%
dist}\left( h_{\alpha }\left( S\right) ,\cdot \right) $ in the $C^{0}$%
--topology.

The function $d^{S}$ in Part 7 is constructed through a completely analogous
argument.
\end{proof}

For $a\in X$ and $\eta >0,$ we define $g_{a}:X\longrightarrow \mathbb{R}$ by 
\begin{equation}
g_{a}(y)=\frac{1}{\mathrm{vol}(B(a,\eta ))}\int_{z\in B(a,\eta )}\mathrm{dist%
}(y,z).  \label{average}
\end{equation}

Differentiation under the integral and the directional differentiability of
distance functions gives the following.

\begin{proposition}
\label{Diredtional derivatives}If $X$ is a Riemannian manifold, then $g_{a}$
is $C^{1},$ and in general, for any $v\in T_{y}X,$%
\begin{equation*}
D_{v}\left( g_{a}\right) =\frac{1}{\mathrm{vol}(B(a,\eta ))}\int_{z\in
B(a,\eta )}D_{v}\left( \mathrm{dist}(\cdot ,z)\right) .
\end{equation*}
\end{proposition}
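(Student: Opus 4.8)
The plan is to prove both statements by differentiating under the integral sign, with the interchange of limit and integral justified each time by the Dominated Convergence Theorem, the integrable majorant coming from the fact that every distance function $\mathrm{dist}(\cdot,z)$ is $1$--Lipschitz. For the general formula, fix $y\in X$ and a direction $v$, and let $\sigma$ be a curve of $X$ with $\sigma(0)=y$ and $\sigma_{+}^{\prime}(0)=v$. For small $t>0$,
\[
\frac{g_{a}(\sigma(t))-g_{a}(y)}{t}=\frac{1}{\mathrm{vol}(B(a,\eta))}\int_{z\in B(a,\eta)}\frac{\mathrm{dist}(\sigma(t),z)-\mathrm{dist}(y,z)}{t}\,dz .
\]
By the triangle inequality each integrand is bounded in absolute value by $\mathrm{dist}(\sigma(t),y)/t$, hence ultimately by a constant, and constants are integrable over the finite--measure set $B(a,\eta)$. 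For every fixed $z\neq y$ the function $\mathrm{dist}(\cdot,z)$ is a distance function of $X$ and is therefore directionally differentiable, so the integrand converges pointwise, as $t\to 0^{+}$, to $D_{v}(\mathrm{dist}(\cdot,z))$; since $\{y\}$ is $\mathrm{vol}$--null this is convergence for almost every $z\in B(a,\eta)$, and $z\mapsto D_{v}(\mathrm{dist}(\cdot,z))$ is measurable as a pointwise limit, along any sequence $t_{i}\downarrow 0$, of the continuous functions $z\mapsto[\mathrm{dist}(\sigma(t_{i}),z)-\mathrm{dist}(y,z)]/t_{i}$. The Dominated Convergence Theorem now gives that $D_{v}(g_{a})$ exists and equals $\frac{1}{\mathrm{vol}(B(a,\eta))}\int_{z\in B(a,\eta)}D_{v}(\mathrm{dist}(\cdot,z))\,dz$, which is the asserted identity.

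For the Riemannian case I deduce that $g_{a}\in C^{1}$ from the continuity of its partial derivatives in charts. When $X$ is Riemannian, for a fixed $z$ the function $\mathrm{dist}(\cdot,z)$ is $C^{\infty}$ on $X\setminus(\{z\}\cup\mathrm{Cut}(z))$, where $D_{v}(\mathrm{dist}(\cdot,z))=\langle\nabla\mathrm{dist}(\cdot,z),v\rangle$ is linear in $v$. For fixed $y$, the symmetry of the cut locus, together with the fact that a cut locus has measure zero, shows that the set of $z\in B(a,\eta)$ with $y\in\{z\}\cup\mathrm{Cut}(z)$ is contained in the $\mathrm{vol}$--null set $(\{y\}\cup\mathrm{Cut}(y))\cap B(a,\eta)$. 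Applying the formula just proved with $v$ a coordinate vector field $\partial_{i}$, I get that $\partial g_{a}/\partial x^{i}$ exists at $y$ and equals $\frac{1}{\mathrm{vol}(B(a,\eta))}\int_{z}\partial_{i}(\mathrm{dist}(\cdot,z))(y)\,dz$, the integrand being, for almost every $z$, the genuine partial derivative (in particular $v\mapsto D_{v}(g_{a})$ is linear). A third application of the Dominated Convergence Theorem, with the components of $d(\mathrm{dist}(\cdot,z))$ bounded by $1$ and — for $z$ outside the above null set — continuous in $y$, shows that $y\mapsto\partial g_{a}/\partial x^{i}(y)$ is continuous; since $g_{a}$ has continuous first partials in every chart, $g_{a}$ is $C^{1}$.

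The arithmetic is routine; the one point I will be careful about is the ``almost every $z$'' assertions in the Riemannian step — that, with $y$ fixed, the set of base points $z$ for which $\mathrm{dist}(\cdot,z)$ fails to be smooth near $y$ is $\mathrm{vol}$--null — which I handle via the symmetry of the cut locus and the fact that a cut locus has measure zero. Everything else is differentiation under the integral sign against the constant majorant supplied by the $1$--Lipschitz bound.
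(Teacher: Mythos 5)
Your proof is correct and follows the same route as the paper, which disposes of the proposition in one sentence (``Differentiation under the integral and the directional differentiability of distance functions gives the following''). You supply the details the paper omits: the $1$--Lipschitz bound as the dominating function for the general directional--derivative identity, and, for the Riemannian $C^{1}$ claim, the observation that (by symmetry and measure--zero of the cut locus) the set of base points $z$ whose distance function fails to be smooth near a fixed $y$ is $\mathrm{vol}$--null, so the partials are given by an integral of continuous-in-$y$ integrands and DCT yields continuity. One small point worth tightening: when computing $D_{v}(g_{a})$ you take ``a curve $\sigma$ with $\sigma_{+}'(0)=v$''; in the Alexandrov setting the directional derivative is defined along the geodesic $\gamma_{v}$, and it is for that choice that the bound $\mathrm{dist}(\sigma(t),y)/t\le|v|$ is immediate, so you should take $\sigma=\gamma_{v}$ explicitly.
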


Let $B(x,\rho )$ be $(l,\delta ,r)$--strained by $\left\{ \left(
a_{i},b_{i}\right) \right\} _{i=1}^{l}$. If $B(x,\rho ^{N})=B_{k}^{N}\left(
\rho ^{N}\right) \in \mathcal{O}^{N}$ is very near an $S\in \mathcal{S}$ as
in Part 6 of Theorem \ref{magnum cover thm} and $\left( B_{k}^{N}\right)
_{\alpha }\left( \rho ^{N}\right) \subset M_{\alpha }$ is an approximation
of $B_{k}^{N}\left( \rho ^{N}\right) ,$ we define $p_{\mathrm{rel}}^{\alpha
}:\left( B_{k}^{N}\right) \left( \rho ^{N}\right) \rightarrow \mathbb{R}^{l}$
by 
\begin{equation}
p_{\mathrm{rel}}^{\alpha }(y)\equiv (g_{a_{1}^{\alpha }}(y),\ldots ,g_{a_{%
\mathrm{\dim }\left( S\right) }^{\alpha }}(y),d_{\alpha }^{S},\ldots
,g_{a_{l}^{\alpha }}(y)).  \label{alt dfn of p}
\end{equation}%
Otherwise, we define $p_{\mathrm{conv}}^{\alpha }:B(x,\sigma )\rightarrow 
\mathbb{R}^{l}$ by 
\begin{equation}
p_{\mathrm{conv}}^{\alpha }(y)\equiv (g_{a_{1}^{\alpha }}(y),\ldots
,g_{a_{l}^{\alpha }}(y)).  \label{dfn of p}
\end{equation}

The distinction between $p_{\mathrm{rel}}^{\alpha }$ and $p_{\mathrm{conv}%
}^{\alpha }$ will mostly be irrelevant, and most statements about them will
be true of both types of maps. For such statements, we use a plain $%
``p^{\alpha }$\textquotedblright\ to stand for either map. Note that all of
the $p_{\mathrm{rel}}^{\alpha }$s are $C^{1}$--close to some $p_{\mathrm{conv%
}}^{\alpha }.$

It is of course true that $p^{\alpha }$ depends on $\eta ;$ however, we
adopt the convention that all assertions about the maps $p^{\alpha }$
defined in (\ref{alt dfn of p}) and (\ref{dfn of p}) have the added implicit
assumption that $\eta $ is sufficiently small.

\begin{corollary}
\label{old part 5 cor}For $N\in \mathcal{S},$ let $S$ be a subset of $%
Bd\left( N\right) .$ Let $B_{k}^{N}\left( \rho ^{N}\right) \in \mathcal{O}%
^{N}$ and $B_{j\left( k\right) }^{S}\left( \rho ^{S}\right) \in \mathcal{O}%
^{S}$ be as in (\ref{initialresp eqn}), that is, $B_{k}^{N}\left( 3\rho
^{N}\right) \Subset B_{j\left( k\right) }^{S}\left( \rho ^{S}\right) $. Then 
\begin{equation}
\pi _{\mathrm{\dim }\left( S\right) }\circ \left( p_{k}^{N}\right) ^{\alpha
}=\left( p_{j\left( k\right) }^{S}\right) ^{\alpha },  \label{resp near stra}
\end{equation}%
where $\left( p_{k}^{N}\right) ^{\alpha }:\left( B_{k}\right) _{\alpha
}(\rho ^{N})\longrightarrow \mathbb{R}^{\mathrm{\dim }\left( N\right) }$ and 
$\left( p_{j\left( k\right) }^{S}\right) ^{\alpha }:\left( B_{j\left(
k\right) }^{S}\right) _{\alpha }(\rho ^{S})\longrightarrow \mathbb{R}^{%
\mathrm{\dim }\left( S\right) }$ are defined as in (\ref{alt dfn of p}) and (%
\ref{dfn of p}), and $\pi _{\mathrm{\dim }\left( S\right) }:\mathbb{R}^{\dim
\left( N\right) }\longrightarrow \mathbb{R}^{\mathrm{\dim }\left( S\right) }$
is projection onto the first $\mathrm{\dim }\left( S\right) $ factors.
\end{corollary}

\section{Local Strain and Convex Structure of Alexandrov Spaces\label%
{Perelman Section}}

The{\Huge \ }main result of this section is Theorem \ref{Perel cap thm}. It
provides local versions of the vector bundles of Part \ref{T2} of the TNST
over each member of the open cover of Theorem \ref{magnum cover thm}. In the
next section we show that the projections of our local vector bundles are $%
C^{1}$--close on their intersections, and in Section \ref{Gluing Section} we
state a theorem about gluing together $C^{1}$--close submersions.

Theorem \ref{Perel cap thm} is proven by combining Theorem \ref{magnum cover
thm} with Perelman's remarkable concavity construction. We start with a
review of Perelman Concavity.

\begin{proposition}
\label{Universial Perelman}(Perelman Concavity, \cite{Perel2}) Let $X$ be an 
$n$--dimensional Alexandrov space of curvature $\geq -1$. Suppose $q,p\in X$
satisfy $\mathrm{dist}\left( q,p\right) =d,$ and for some $\eta ,v>0,$ $%
\mathrm{vol}\left( B\left( p,\eta \right) \right) \geq v.$ Then there is a $%
\delta >0$ and a smooth increasing function $\psi :\mathbb{R}\longrightarrow 
\mathbb{R}$ so that 
\begin{equation*}
f_{p}\left( x\right) =\frac{1}{\mathrm{vol}\left( B\left( p,\eta \right)
\right) }\int_{z\in B\left( p,\eta \right) }\psi \circ \mathrm{dist}\left(
x,z\right)
\end{equation*}%
is strictly $-1$--concave on $B\left( q,\delta \right) .$

Moreover, if $\psi $ satisfies $\frac{1}{2}<\psi ^{\prime }\leq 2,$ then $%
f_{p}$ is directionally differentiable and satisfies 
\begin{equation}
\left\vert D_{V}f_{p}\right\vert \leq 2  \label{grad f is bonded}
\end{equation}%
for all directions $v.$
\end{proposition}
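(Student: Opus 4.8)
This is Perelman's concavity lemma, so the plan is to follow Perelman's construction. Since strict $-1$--concavity is a local condition, I would fix a unit--speed geodesic $\gamma $ through an arbitrary $x\in B\left( q,\delta \right) $ and bound $\left( f_{p}\circ \gamma \right) ^{\prime \prime }$ from above, in the barrier sense, at $0$. Differentiating under the integral sign, exactly as in Proposition \ref{Diredtional derivatives}, reduces this to a uniform upper bound for $\left( \psi \circ \mathrm{dist}\left( \cdot ,z\right) \circ \gamma \right) ^{\prime \prime }\left( 0\right) $ over $z\in B\left( p,\eta \right) $, to be obtained from the curvature bound one $z$ at a time.

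For the per--$z$ bound I would apply the hinge comparison for curvature $\geq -1$ (cf.\ \cite{BGP}) to the triangle with vertices $z$, $\gamma \left( 0\right) $, $\gamma \left( t\right) $: this produces an upper barrier for $\mathrm{dist}\left( \cdot ,z\right) \circ \gamma $ at $0$ given by the corresponding hyperbolic function, equivalently it says that $\cosh \circ \mathrm{dist}\left( \cdot ,z\right) $ is $\left( \cosh \circ \mathrm{dist}\left( \cdot ,z\right) \right) $--concave along $\gamma $. Writing $r_{z}=\mathrm{dist}\left( \cdot ,z\right) \circ \gamma $ and $\theta _{z}=\sphericalangle \left( \gamma ^{\prime }\left( 0\right) ,\Uparrow _{\gamma \left( 0\right) }^{z}\right) $, composing this barrier with the increasing function $\psi $ yields
\begin{equation*}
\left( \psi \circ r_{z}\right) ^{\prime \prime }\left( 0\right) \leq \psi ^{\prime \prime }\left( r_{z}\left( 0\right) \right) \cos ^{2}\theta _{z}+\psi ^{\prime }\left( r_{z}\left( 0\right) \right) \coth \left( r_{z}\left( 0\right) \right) \sin ^{2}\theta _{z}.
\end{equation*}
I would choose $\psi $ smooth and increasing, with $\psi ^{\prime \prime }$ sufficiently negative on the compact interval $J=\left[ d-\eta -\delta _{0},d+\eta +\delta _{0}\right] $, where $\delta _{0}>0$ is fixed in advance and $0<\delta <\delta _{0}$; this is possible with $\psi ^{\prime }$ positive (though not bounded) because $J$ has fixed length. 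For $x\in B\left( q,\delta \right) $ and $z\in B\left( p,\eta \right) $ we have $r_{z}\left( 0\right) \in J$, so averaging the displayed inequality over $z\in B\left( p,\eta \right) $ bounds $\left( f_{p}\circ \gamma \right) ^{\prime \prime }\left( 0\right) $ in terms of the $B\left( p,\eta \right) $--averages of $\cos ^{2}\theta _{z}$ and $\sin ^{2}\theta _{z}$.

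The crux — and where the hypothesis $\mathrm{vol}\left( B\left( p,\eta \right) \right) \geq v$ enters — is the geodesic--spreading estimate: the directions $\Uparrow _{x}^{z}$, $z\in B\left( p,\eta \right) $, occupy a definite solid angle in $\Sigma _{x}$, so the average of $\cos ^{2}\theta _{z}$ over $B\left( p,\eta \right) $ is large enough, relative to the size of $\coth $ on $J$, that the concave $\psi ^{\prime \prime }$--term dominates the averaged bound and drives $\left( f_{p}\circ \gamma \right) ^{\prime \prime }\left( 0\right) $ below $-1$, provided $\psi $ is chosen concave enough on $J$ and $\delta \leq \delta _{0}$ is small. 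Establishing this estimate uniformly over the base point $x\in B\left( q,\delta \right) $ and the direction $\gamma ^{\prime }\left( 0\right) \in \Sigma _{x}$, in the barrier sense appropriate to Alexandrov spaces, is the step I expect to be the main obstacle. The ``moreover'', by contrast, is immediate and requires nothing of the first part: for any increasing $\psi $ with $\tfrac{1}{2}<\psi ^{\prime }\leq 2$, differentiation under the integral sign together with the directional differentiability of distance functions gives $D_{V}f_{p}=\frac{1}{\mathrm{vol}\left( B\left( p,\eta \right) \right) }\int_{B\left( p,\eta \right) }\psi ^{\prime }\left( \mathrm{dist}\left( \cdot ,z\right) \right) D_{V}\left( \mathrm{dist}\left( \cdot ,z\right) \right) \,dz$, and since distance functions are $1$--Lipschitz we have $\left\vert D_{V}\mathrm{dist}\left( \cdot ,z\right) \right\vert \leq 1$, whence $\left\vert D_{V}f_{p}\right\vert \leq \sup \psi ^{\prime }\leq 2$.
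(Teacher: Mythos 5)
Your proposal follows the same strategy as the paper's proof: differentiate under the integral, use hinge comparison for $\mathrm{curv}\geq -1$ to obtain the barrier bound $(\psi\circ r_z)''\leq\psi''\cos^2\theta_z+\psi'\coth(r_z)\sin^2\theta_z$, choose $\psi$ with $\psi''$ very negative on a compact interval, and invoke the volume lower bound via a Bishop--Gromov-type estimate to control the measure of $z$ whose direction is nearly perpendicular to $\gamma'(0)$. The paper implements the "geodesic-spreading" step you flag as the crux by showing that the fraction of $\log_x B(p,\eta)$ lying in the spherical cap $\{u:|\sphericalangle(w,u)-\tfrac{\pi}{2}|\leq\tau(1/|\psi''|\,|\,d)\}$ is at most $\frac{C}{v}\tau(1/|\psi''|\,|\,d)\,\tau(\eta)$, which is your statement that $\overline{\cos^2\theta_z}$ is bounded below independently of $\psi''$; your "moreover" argument coincides with the paper's use of bounded convergence and $1$-Lipschitzness.
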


\begin{proof}
The idea is to choose $\psi $ to have a very negative second derivative and
so that $\frac{1}{2}<\psi ^{\prime }\leq 2$ on a very small interval around
the number $\mathrm{dist}\left( p,q\right) .$

Indeed, the lower curvature bound gives us a $\lambda >0$ so that for any $%
z\in B\left( p,\eta \right) ,$ $x$ near $q,$ and a direction $w\in \Sigma
_{x}$, 
\begin{equation}
\psi \circ \mathrm{dist}\left( \gamma _{w}\left( t\right) ,z\right) \text{
is }\lambda \text{--concave}.  \label{not so badd}
\end{equation}%
But for most $z\in B\left( p,\eta \right) ,$ we can do much better. In fact,
since $\psi ^{\prime \prime }<<-2,$ 
\begin{equation}
\psi \circ \mathrm{dist}\left( \gamma _{w}\left( t\right) ,z\right) \text{
is }\left( -2\right) \text{--concave},  \label{yeah baby yeah}
\end{equation}%
unless $\left\vert \sphericalangle \left( w,\Uparrow _{x}^{z}\right) -\frac{%
\pi }{2}\right\vert \leq \tau \left( \left. \frac{1}{\left\vert \psi
^{\prime \prime }\right\vert }\right\vert d\right) .$

Next set 
\begin{equation*}
\log _{x}B\left( p,\eta \right) \equiv \left\{ \left. u\in T_{x}X\text{ }%
\right\vert \text{ }\gamma _{u}\text{ is a segment from }x\text{ to }\gamma
_{u}\left( 1\right) \in B\left( p,\eta \right) \right\} .
\end{equation*}%
Then for some $C>0$ (that depends only on $d$), we have 
\begin{equation}
C\cdot \mathrm{vol}\left( \log _{x}B\left( p,\eta \right) \right) \geq 
\mathrm{vol}\left( B\left( p,\eta \right) \right) >v>0.  \label{Bish-Gromov}
\end{equation}%
Given $w\in \Sigma _{x},$ the set of \textquotedblleft bad
directions\textquotedblright\ for $w$, 
\begin{equation*}
\mathrm{B}\left( w\right) \equiv \left\{ \left. u\in \Sigma _{x}\right\vert
\left\vert \sphericalangle \left( w,u\right) -\frac{\pi }{2}\right\vert \leq
\tau \left( \left. \frac{1}{\left\vert \psi ^{\prime \prime }\right\vert }%
\right\vert d\right) \right\} ,
\end{equation*}%
has $\left( n-1\right) $--dimensional volume 
\begin{equation*}
\mathrm{vol}_{n-1}\left( \mathrm{B}\left( w\right) \right) \leq \tau \left(
\left. \frac{1}{\left\vert \psi ^{\prime \prime }\right\vert }\right\vert
d\right) .
\end{equation*}%
So 
\begin{equation*}
\mathrm{vol}_{n}\left( \log _{x}B\left( p,\eta \right) \cap \left\{ \left.
u\in T_{x}X\text{ }\right\vert \text{ }\frac{u}{\left\vert u\right\vert }\in 
\mathrm{B}\left( w\right) \right\} \right) \leq \tau \left( \left. \frac{1}{%
\left\vert \psi ^{\prime \prime }\right\vert }\right\vert d\right) \tau
\left( \eta \right) ,
\end{equation*}%
and using Inequality \autoref{Bish-Gromov}, 
\begin{equation*}
\frac{\mathrm{vol}_{n}\left( \log _{x}B\left( p,\eta \right) \cap \left\{
\left. u\in T_{x}X\text{ }\right\vert \text{ }\frac{u}{\left\vert
u\right\vert }\in \mathrm{B}\left( w\right) \right\} \right) }{\mathrm{vol}%
_{n}\left( \log _{x}B\left( p,\eta \right) \right) }\leq \frac{C}{v}\tau
\left( \left. \frac{1}{\left\vert \psi ^{\prime \prime }\right\vert }%
\right\vert d\right) \tau \left( \eta \right) .
\end{equation*}%
By combining this with \autoref{not so badd} and \autoref{yeah baby yeah},
we can force $f_{p}$ to be strictly $-1$--concave on $B\left( q,\delta
\right) $ with appropriate choices of $\psi $ and $\delta .$

Since $\frac{1}{2}<\psi ^{\prime }\leq 2$ and $\mathrm{dist}\left( \cdot
,z\right) $ is directionally differentiable and $1$--Lipschitz, we apply the
Bounded Convergence Theorem to differentiate under the integral and conclude
that $f_{p}$ is directionally differentiable and satisfies \autoref{grad f
is bonded}.
\end{proof}

A Gram-Schmidt argument as in \cite{Wilh} or \cite{GrovWilh2} gives us the
following.

\begin{lemma}
\label{Gram-Schnidt lemma}Let 
\begin{equation*}
p:U\longrightarrow \mathbb{R}^{k}
\end{equation*}%
be a submersion from an open subset $U$ of a Riemannian manifold. Suppose
that the component functions $g_{i}$ of $p$ are concave down and their
gradients satisfy%
\begin{equation*}
\sphericalangle \left( \nabla g_{i},\nabla g_{j}\right) >\frac{\pi }{2}
\end{equation*}%
for all $i\neq j.$ Let $f:U\longrightarrow \mathbb{R}^{k}$ be a strictly
concave down function so that for all $i,$ 
\begin{equation*}
\sphericalangle \left( \nabla f,\nabla g_{i}\right) >\frac{\pi }{2}.
\end{equation*}%
Then the restrictions of $f$ to the fibers of $p$ are strictly concave down.
\end{lemma}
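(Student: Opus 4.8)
The plan is to test strict concavity of $f$ directly along the intrinsic geodesics of each fiber. Fix a value $c$ in the image of $p$, set $N=p^{-1}(c)\subset U$, and give $N$ the induced Riemannian metric; since $p$ is a submersion, $N$ is an embedded submanifold, its normal space at each $x$ is precisely $\mathrm{span}\{\nabla g_{1}(x),\dots,\nabla g_{k}(x)\}$, and these gradients are linearly independent there. Let $\gamma$ be any unit-speed geodesic of $N$, and write $\gamma''=\nabla_{\dot\gamma}\dot\gamma$ for its \emph{ambient} acceleration; because $\gamma$ is a geodesic of $N$, the vector $\gamma''$ is normal to $N$. For any sufficiently smooth function $h$ on $U$ one has the elementary identity
\[
(h\circ\gamma)''=\mathrm{Hess}\,h(\dot\gamma,\dot\gamma)+\langle\nabla h,\gamma''\rangle ,
\]
and since $f\circ\gamma$ and $(f|_{N})\circ\gamma$ are literally the same function, it suffices to show $(f\circ\gamma)''<0$.

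First I would pin down where $\gamma''$ lives. Applying the identity to $h=g_{i}$ and using that $g_{i}$ is constant on $N$ gives $(g_{i}\circ\gamma)''=0$, hence $\langle\nabla g_{i},\gamma''\rangle=-\mathrm{Hess}\,g_{i}(\dot\gamma,\dot\gamma)\ge 0$, where the inequality is the concavity of $g_{i}$. Expanding $\gamma''=\sum_{j}\lambda_{j}\nabla g_{j}$ (possible since the $\nabla g_{j}$ span the normal space), these inequalities say $G\lambda\ge 0$ componentwise, $G=(\langle\nabla g_{i},\nabla g_{j}\rangle)_{i,j}$ being the Gram matrix of the $\nabla g_{i}$. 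The hypothesis $\sphericalangle(\nabla g_{i},\nabla g_{j})>\pi/2$ for $i\ne j$ makes the off-diagonal entries of $G$ negative, so $G$ is a symmetric positive-definite matrix with non-positive off-diagonal entries; such a matrix is an M-matrix, so $G^{-1}$ has non-negative entries, and therefore $\lambda=G^{-1}(G\lambda)$ satisfies $\lambda_{j}\ge 0$ for all $j$. (This is the Gram--Schmidt step: equivalently one runs Gram--Schmidt on $\nabla g_{1},\dots,\nabla g_{k}$ and tracks the signs of the coefficients, as in \cite{Wilh} and \cite{GrovWilh2}.)

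To finish, apply the identity to $h=f$:
\[
(f\circ\gamma)''=\mathrm{Hess}\,f(\dot\gamma,\dot\gamma)+\sum_{j}\lambda_{j}\langle\nabla f,\nabla g_{j}\rangle .
\]
The first term is strictly negative since $f$ is strictly concave down, while every term of the sum is $\le 0$ because $\lambda_{j}\ge 0$ and $\langle\nabla f,\nabla g_{j}\rangle<0$ (the angle exceeds $\pi/2$); hence $(f\circ\gamma)''<0$. Since $\gamma$ and $N$ were arbitrary, $f$ restricts to a strictly concave down function on every fiber of $p$. The one non-routine ingredient is the sign assertion $\lambda_{j}\ge 0$, i.e.\ that the acceleration of a fiber geodesic lies in the cone \emph{positively} spanned by the $\nabla g_{i}$ rather than merely in their linear span; this is the only place the obtuse-angle conditions among the $\nabla g_{i}$ enter, and it is exactly the Gram--Schmidt/M-matrix observation. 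If one needs the lemma for the merely semiconcave functions produced by Perelman's construction, the Hessian identities above should be read via smooth lower support functions, and the same chain of inequalities holds barrier-wise.
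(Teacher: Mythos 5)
Your argument is correct. Note that the paper does not actually supply its own proof of Lemma~\ref{Gram-Schnidt lemma}; it only says that ``a Gram-Schmidt argument as in \cite{Wilh} or \cite{GrovWilh2} gives'' the result, so there is nothing to compare line-by-line. What you wrote is a clean and complete version of that argument. The key step — that the ambient acceleration $\gamma''$ of a fiber geodesic is a \emph{non-negative} combination of the $\nabla g_j$ — is exactly where the obtuse-angle hypothesis on the $\nabla g_i$ enters, and your route to the sign conclusion (Gram matrix $G$ is symmetric positive definite with non-positive off-diagonal entries, hence a Stieltjes/M-matrix, hence $G^{-1}\ge 0$ entrywise, hence $\lambda=G^{-1}(G\lambda)\ge 0$) is a valid and tidy linear-algebraic packaging of the Gram--Schmidt sign-tracking the paper has in mind; you acknowledge the equivalence yourself. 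The identity $(h\circ\gamma)''=\mathrm{Hess}\,h(\dot\gamma,\dot\gamma)+\langle\nabla h,\gamma''\rangle$, the normality of $\gamma''$, the linear independence of the $\nabla g_i$ from the submersion hypothesis, and the final strict inequality from the strict concavity of $f$ are all handled correctly. Your closing remark that the argument can be read barrier-wise for merely semiconcave functions is a sensible caveat for the setting in which the lemma is actually applied (Perelman's concavity construction produces semiconcave rather than smooth functions), though in Lemma~\ref{k-strained Stability} the lemma is applied on the smooth manifolds $M_{\alpha}$, where smoothness can be arranged.
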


In the context of a $k$--strained point, we combine the previous two results
to get the following.

\begin{lemma}
\label{k-strained Stability}Let $M_{\alpha }$ be a sequence of Riemannian $n$%
--manifolds with curvature $\geq -1$ that converges to an $n$--dimensional
Alexandrov space $X.$ Suppose $q\in X$ is $\left( k,\delta ,r\right) $%
--strained by $\left\{ \left( a_{i},b_{i}\right) \right\} _{i=1}^{k}$ and $%
q_{\alpha }\in M_{\alpha }$ converge to $q.$

\noindent 1. $\left( \text{cf \cite{GrovWilh2}, \cite{Kap1}}\right) $ There
is a convex neighborhood $C$ of $q$ and, for all but finitely many $\alpha ,$
convex neighborhoods $C^{\alpha }$ of $q_{\alpha }$ so that%
\begin{equation*}
C^{\alpha }\longrightarrow C.
\end{equation*}

\noindent 2. For all but finitely many $\alpha $, there is a $\left( \tau
\left( \delta \right) +\tau \left( 1/\alpha \text{ }|\text{ }r\right)
\right) $--almost Riemannian submersion 
\begin{equation*}
p_{\mathrm{conv}}^{\alpha }:C^{\alpha }\longrightarrow \mathbb{R}^{k}
\end{equation*}%
and a $\left( -1\right) $--concave function 
\begin{equation*}
f_{C^{\alpha }}^{\alpha }:C^{\alpha }\longrightarrow \mathbb{R}
\end{equation*}%
so that the restriction of $f_{C^{\alpha }}^{\alpha }$ to each fiber of $p_{%
\mathrm{conv}}^{\alpha }$ is strictly concave and has a unique interior
maximum. Moreover, $\left( \mathrm{int}\left( C^{\alpha }\right) ,p_{\mathrm{%
conv}}^{\alpha }\right) $ is a vector bundle, and \textrm{int}$\left(
C^{\alpha }\right) $ is diffeomorphic to $\left( 0,1\right) ^{n}$ via a
diffeomorphism $\mu ^{\alpha }$ that coincides with $p_{\mathrm{conv}%
}^{\alpha }$ on the first $k$ factors.
\end{lemma}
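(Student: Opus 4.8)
The plan is to assemble $p^{\alpha}$ from the strainer coordinate functions of (\ref{average}), to produce $f_{C^{\alpha}}^{\alpha}$ from Perelman's concavity construction (Proposition \ref{Universial Perelman}), to make $f_{C^{\alpha}}^{\alpha}$ fiberwise strictly concave by the Gram--Schmidt Lemma \ref{Gram-Schnidt lemma}, and then to read off the vector bundle structure from a fiberwise gradient flow.

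For Part 1, recall that every point of an Alexandrov space of curvature $\geq k$ has arbitrarily small convex neighborhoods -- concretely, the superlevel sets $\{\min_{j}f_{p_{j}}\geq c\}$ of minima of the strictly $(-1)$--concave functions $f_{p_{j}}$ furnished by Proposition \ref{Universial Perelman} for enough base points $p_{j}$ near $q$ -- and that this construction is stable under noncollapsed Gromov--Hausdorff convergence (Perelman; cf.\ \cite{GrovWilh2}, \cite{Kap1}). Taking $p_{j}^{\alpha}\to p_{j}$ and running the same construction on $M_{\alpha}$ gives convex neighborhoods $C^{\alpha}$ of $q_{\alpha}$ with $C^{\alpha}\to C$. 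Below we shrink $C^{\alpha}$ further so that it is adapted to $p^{\alpha}$ and to $f_{C^{\alpha}}^{\alpha}$.

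For Part 2, let $a_{i}^{\alpha},b_{i}^{\alpha}\to a_{i},b_{i}$ and set $p^{\alpha}\equiv(g_{a_{1}^{\alpha}},\dots,g_{a_{k}^{\alpha}})$ with the $g$'s as in (\ref{average}). For $\alpha$ large and $C^{\alpha}$ small, every point of $C^{\alpha}$ is $(k,\tau(\delta)+\tau(1/\alpha|r),r)$--strained by $\{(a_{i}^{\alpha},b_{i}^{\alpha})\}$, so Lemmas \ref{5.6} and \ref{angle convergence} and Proposition \ref{angggl conv prop} force the gradients $\nabla g_{a_{i}^{\alpha}}$ to be within $\tau(\delta)+\tau(1/\alpha|r)$ of an orthonormal frame; hence $p^{\alpha}$ is a $(\tau(\delta)+\tau(1/\alpha|r))$--almost Riemannian submersion. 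A Gram--Schmidt modification of these functions -- adjusting by small linear combinations and then applying Proposition \ref{Universial Perelman} to restore strict concavity, as in \cite{Wilh}, \cite{GrovWilh2} -- lets us assume the components of $p^{\alpha}$ are concave with pairwise gradient angles $>\frac{\pi}{2}$, without changing its fibers or the estimate. Now take $f_{C^{\alpha}}^{\alpha}$ to be a strictly $(-1)$--concave function built as a sum of Perelman functions based at points in directions spanning the complement of $\mathrm{span}\{\nabla g_{a_{i}^{\alpha}}\}$ -- so that $f_{C^{\alpha}}^{\alpha}$ is concave along the fibers of $p^{\alpha}$ and has a nondegenerate maximum transverse to them near $q_{\alpha}$ -- together with a fixed small multiple of a Perelman function based near the barycenter of the $b_{i}^{\alpha}$'s, which tilts $\nabla f_{C^{\alpha}}^{\alpha}$ so that $\sphericalangle(\nabla f_{C^{\alpha}}^{\alpha},\nabla g_{a_{i}^{\alpha}})>\frac{\pi}{2}$ for every $i$; this last step is possible because $\delta$, hence the strain error, can be taken small compared with the $n$--dependent margin $\frac{\pi}{2}-\arccos(1/\sqrt{n})$. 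By Lemma \ref{Gram-Schnidt lemma}, $f_{C^{\alpha}}^{\alpha}$ then restricts to a strictly concave function on each fiber of $p^{\alpha}$; replacing $C^{\alpha}$ by a convex neighborhood of the form $\{f_{C^{\alpha}}^{\alpha}\geq c\}\cap(p^{\alpha})^{-1}(B)$ with $B$ a small ball in $\mathbb{R}^{k}$ -- still convex, still converging to its limit, and with $p^{\alpha}(\mathrm{int}\,C^{\alpha})=B$ -- forces the fiberwise maximum of $f_{C^{\alpha}}^{\alpha}$ to be attained at a value $>c$, hence at an interior point, so it is unique and interior.

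Finally, the fiberwise maxima form a $C^{1}$ section of $p^{\alpha}|_{\mathrm{int}\,C^{\alpha}}$, and the fiberwise gradient flow of $f_{C^{\alpha}}^{\alpha}$ deformation retracts each fiber onto this section and identifies it with $\mathbb{R}^{n-k}$, so $p^{\alpha}|_{\mathrm{int}\,C^{\alpha}}$ is a $C^{1}$ vector bundle over the convex open set $B\subset\mathbb{R}^{k}$; as $B$ is diffeomorphic to $(0,1)^{k}$ and the bundle is trivial, composing with a fiberwise diffeomorphism $\mathbb{R}^{n-k}\cong(0,1)^{n-k}$ yields a diffeomorphism $\mu^{\alpha}:\mathrm{int}(C^{\alpha})\to(0,1)^{n}$ coinciding with $p^{\alpha}$ on the first $k$ factors. (When $k=n$ the fibers are points, the Perelman terms defining $f_{C^{\alpha}}^{\alpha}$ are absent, and $\mu^{\alpha}=p^{\alpha}$ works directly.) The step I expect to be the main obstacle is the joint, $\alpha$--uniform verification of the hypotheses of Lemma \ref{Gram-Schnidt lemma} together with interiority of the fiberwise maximum: the strainer, the Perelman base points, and the weight functions $\psi$ must all be chosen so that the concavity angles, the transversality $\sphericalangle(\nabla f_{C^{\alpha}}^{\alpha},\nabla g_{a_{i}^{\alpha}})>\frac{\pi}{2}$, and the position of the maximum are simultaneously controlled by $\tau(\delta)+\tau(1/\alpha|r)$ -- exactly the place where a small strain parameter $\delta$ has to be weighed against fixed $n$--dependent margins.
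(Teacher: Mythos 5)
Your high-level plan—build $p^\alpha$ from the $g_{a_i^\alpha}$'s, get $f^\alpha_{C^\alpha}$ from Perelman concavity, force fiberwise concavity through Lemma~\ref{Gram-Schnidt lemma}, then read the vector bundle off the fiberwise gradient flow—is the paper's plan. But the device that makes the plan work in the paper is one you never invoke: Proposition~\ref{almost Plauter}. The paper applies it at $\Sigma_q X$ to produce (after exponentiating) a \emph{replacement strainer} $\{(a_i,b_i)\}$ whose directions already satisfy $\sphericalangle(\Uparrow_x^{a_i},\Uparrow_x^{a_j})>\tfrac{\pi}{2}$, \emph{together with} a nonempty set $E$ of directions $e$ satisfying the one-sided bound $\mathrm{dist}(e,v_i)>\tfrac{\pi}{2}$. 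The Perelman-concavity functions based at the $a_i$'s and at $\exp_q(de_j)$ for a $\tfrac{\pi}{4}$--net $\{e_j\}\subset E$ then satisfy the angle hypotheses of Lemma~\ref{Gram-Schnidt lemma} for free, simultaneously for all large $\alpha$, and $f_{C^\alpha}^\alpha=\min_j f^\alpha_{e_j}$ inherits fiberwise strict concavity. That single proposition dissolves exactly the ``joint, $\alpha$--uniform verification of the Gram--Schmidt hypotheses'' you flagged as the main obstacle.

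Your substitutes for this step do not hold up as written. First, ``adjusting by small linear combinations and then applying Proposition~\ref{Universial Perelman} to restore strict concavity'' is not a well-defined operation: Perelman's construction is applied to a base point, and a linear combination of such functions is neither an averaged distance function of a single point nor obviously eligible for a second pass through the construction. The paper never modifies the functions; it modifies the \emph{strainer}. Second, your tilt by a Perelman function at ``the barycenter of the $b_i^\alpha$'s'' only gives $\sphericalangle(\nabla f,\nabla g_{a_i})\approx\tfrac{\pi}{2}$ without a quantitative one-sided margin, whereas Lemma~\ref{Gram-Schnidt lemma} needs a strict, $\alpha$-uniform inequality; the set $E$ of Proposition~\ref{almost Plauter} supplies precisely this margin ($\mathrm{dist}(e,v_i)>\tfrac{\pi}{2}$, not $\geq$). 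Third, replacing $C^\alpha$ by $\{f^\alpha_{C^\alpha}\geq c\}\cap(p^\alpha)^{-1}(B)$ breaks convexity: $(p^\alpha)^{-1}(B)$ need not be convex. The paper instead sets $C^\alpha$ to be a superlevel set of $h^\alpha=\min\{f^\alpha_{e_j},g^\alpha_{a_i},g^\alpha_{b_i}\}$; the inclusion of the $g^\alpha_{a_i}$ and $g^\alpha_{b_i}$ terms in the minimum is what keeps $C^\alpha$ convex and, at the same time, bounds each fiber so that the fiberwise maximum of $f^\alpha_{C^\alpha}$ is interior. Your Part~1 and your closing paragraph on the bundle structure agree with the paper.
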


\begin{proof}
We apply Proposition \ref{almost Plauter} and conclude that $\Sigma _{q}X$
has a global $\left( k,\tau \left( \delta \right) \right) $--strainer $%
\left\{ \left( v_{i},w_{i}\right) \right\} _{i=1}^{k}$ so that 
\begin{equation}
\frac{\pi }{2}<\mathrm{dist}\left( v_{i},v_{j}\right) \text{ for }i\neq j.
\label{helper-1}
\end{equation}%
Moreover, for all $\kappa \in \left( 0,\frac{\pi }{4}\right) ,$ if $\delta $
is sufficiently small compared to $\kappa ,$ there is a nonempty set $%
E\subset \Sigma _{q}X$ so that for all $e\in E,$ 
\begin{equation*}
\frac{\pi }{2}<\mathrm{dist}\left( e,v_{i}\right) <\frac{\pi }{2}+\kappa
\end{equation*}%
and%
\begin{equation*}
\left\vert \mathrm{dist}\left( e,w_{i}\right) -\frac{\pi }{2}\right\vert
<\kappa .
\end{equation*}%
Take $E\subset \Sigma _{q}X$ to be the set of all directions that satisfy
these inequalities.

By exponentiating approximations of these directions, it follows that there
is a neighborhood $N$ of $q$ that is $\left( k,\tau \left( \delta \right) ,%
\frac{r}{2}\right) $--strained by a strainer $\left\{ \left(
a_{i},b_{i}\right) \right\} _{i=1}^{k}$ that satisfies 
\begin{equation}
\frac{\pi }{2}<\sphericalangle \left( \Uparrow _{x}^{a_{i}},\Uparrow
_{x}^{a_{j}}\right)  \label{<(a_i, a_j)}
\end{equation}%
for all $x\in N$ and $i\neq j.$ Using Lemma $\ref{5.6}$, for some $d>0,$ we
also have 
\begin{equation}
\tilde{\sphericalangle}\left( a_{i},q,\exp _{q}\left( de\right) \right) >%
\frac{\pi }{2},  \label{<(a_i, e)}
\end{equation}%
and 
\begin{equation*}
\left\vert \tilde{\sphericalangle}\left( b_{i},q,\exp _{q}\left( de\right)
\right) -\frac{\pi }{2}\right\vert <\tau \left( \left. \delta ,d,\kappa
\right\vert r\right)
\end{equation*}%
for all $e\in E$ for which $\exp _{q}\left( de\right) $ is defined. Since
the last two inequalities are for comparison angles, $q$ can be replaced by
any $x\in N,$ provided $N$ is sufficiently small.

Let $\left\{ e_{j}\right\} $ be a $\frac{\pi }{4}$--net in $E$ for which $%
\exp _{q}\left( de_{j}\right) $ is defined$.$ Apply the Perelman Concavity
construction to $\exp _{q}\left( de_{j}\right) $ and each of the strainer
points to get strictly $-1$--concave functions $\left\{ f_{e_{j}}\right\}
,\left\{ g_{a_{i}}\right\} ,\left\{ g_{b_{i}}\right\} $ defined in a
possibly smaller neighborhood $U$ of $q,$ and set%
\begin{equation*}
h=\min_{i,j}\left\{ f_{e_{j}},g_{a_{i}},g_{b_{i}}\right\} .
\end{equation*}

For some $\varepsilon >0,$%
\begin{equation}
\left\{ \Uparrow _{q}^{\tilde{a}_{i}}\right\} \cup \left\{ \Uparrow _{q}^{%
\tilde{b}_{i}}\right\} \cup \left\{ \tilde{e}_{j}\right\} \text{ is a }%
\left( \frac{\pi }{2}-\varepsilon \right) \text{--net in }\Sigma _{q}X,
\label{pi over 2 net}
\end{equation}%
provided $\tilde{a}_{i},$ $\tilde{b}_{i},$ and $\tilde{e}_{j}$ are
sufficiently close to $a_{i},$ $b_{i},$ and $e_{j}.$ By adding constants to
the $f_{e_{j}}$s$,$ $g_{a_{i}}$s, and $g_{b_{i}}$s, we can arrange that%
\begin{equation}
f_{e_{j}}\left( q\right) =g_{a_{i}}\left( q\right) =g_{b_{i}}\left( q\right)
\label{all equal}
\end{equation}%
for all $i$ and $j.$ Combining (\ref{pi over 2 net}) and (\ref{all equal})
with the fact that $h$ is strictly $-1$--concave on $U$, it follows that $q$
is the unique maximum of $h$ on $U.$ Let $C$ be a superlevel set of $h$ that
is contained in $U.$

Let $M_{\alpha }$ be sufficiently close to $X.$ The universality of
Perelman's construction implies, in particular, that it is stable under
Gromov--Hausdorff approximation, so each of $h,$ $C,$ and the $f_{e_{j}}$s, $%
g_{a_{i}}$s$,$ and $g_{b_{i}}$s have approximations in $M_{\alpha }.$ Call
these approximations $h^{\alpha },$ $C^{\alpha },$ $f_{e_{j}}^{\alpha }$, $%
g_{a_{i}}^{\alpha },$and $g_{b_{i}}^{\alpha }$. If $\alpha $ is sufficiently
large, the $f_{e_{j}}^{\alpha }$s, $g_{a_{i}}^{\alpha }$s$,$and $%
g_{b_{i}}^{\alpha }$s are strictly $-1$--concave, $C^{\alpha }$ is convex,
and the maximum of $h^{\alpha }$ is in the interior of $C^{\alpha }.$ So $%
C^{\alpha }$ is diffeomorphic to an $n$--disk.

Set 
\begin{eqnarray*}
p_{\mathrm{conv}}^{\alpha } &:&C^{\alpha }\longrightarrow \mathbb{R}^{k} \\
p_{\mathrm{conv}}^{\alpha } &=&\left( g_{a_{1}}^{\alpha },g_{a_{2}}^{\alpha
},\ldots ,g_{a_{k}}^{\alpha }\right) .
\end{eqnarray*}

Since $C^{\alpha }$ is $\left( k,\tau \left( \delta \right) +\tau \left(
1/\alpha \text{ }|\text{ }r\right) ,r\right) $--strained, it follows from
Lemma \ref{angle convergence} that $p_{\mathrm{conv}}^{\alpha }$ is a $%
\left( \tau \left( \delta \right) +\tau \left( 1/\alpha \text{ }|\text{ }%
r\right) \right) $--almost Riemannian submersion.

Proposition \ref{Diredtional derivatives} and inequalities \autoref{<(a_i,
a_j)} and \autoref{<(a_i, e)} give us 
\begin{eqnarray}
\sphericalangle \left( \nabla g_{a_{i}}^{\alpha },\nabla g_{a_{j}}^{\alpha
}\right) &>&\frac{\pi }{2}\,\ \,\text{and}  \notag \\
\sphericalangle \left( \nabla g_{a_{i}}^{\alpha },\nabla f_{e_{j}}^{\alpha
}\right) &>&\frac{\pi }{2},  \label{helper}
\end{eqnarray}%
for $\alpha $ sufficiently large. {\LARGE \ }Combining this with Lemma \ref%
{Gram-Schnidt lemma}, it follows that the restriction of each $%
f_{e_{j}}^{\alpha }$ to the fibers of $p_{\mathrm{conv}}^{\alpha }$ is
concave down. Set 
\begin{equation*}
f_{C^{\alpha }}^{\alpha }\equiv \min_{i}\left\{ f_{e_{i}}^{\alpha }\right\} .
\end{equation*}%
It follows that the restriction of $f_{C}^{\alpha }$ to each fiber $\left(
p_{\mathrm{conv}}^{\alpha }\right) ^{-1}\left( p_{\mathrm{conv}}^{\alpha
}\left( x\right) \right) $ of $p_{\mathrm{conv}}^{\alpha }$ is strictly
concave, and, after possibly restricting the base of $p_{\mathrm{conv}%
}^{\alpha },$ that each $f_{C}^{\alpha }|_{\left( p_{\mathrm{conv}}^{\alpha
}\right) ^{-1}\left( p_{\mathrm{conv}}^{\alpha }\left( x\right) \right) }$
has a unique interior maximum. In particular, each fiber of $p_{\mathrm{conv}%
}^{\alpha }$ is a disk, so there is a diffeomorphism $\mu ^{\alpha
}:C^{\alpha }\longrightarrow I^{n}$ whose first $k$ coordinate functions are 
$p_{\mathrm{conv}}^{\alpha }=\left( g_{a_{1}}^{\alpha },g_{a_{2}}^{\alpha
},\ldots ,g_{a_{k}}^{\alpha }\right) .$

To see that $\left( C^{\alpha },p_{\mathrm{conv}}^{\alpha }\right) $ is a
vector bundle, let $s_{x}^{\alpha }$ be the unique maximum of $f_{C^{\alpha
}}^{\alpha }$ restricted to $\left( p_{\mathrm{conv}}^{\alpha }\right)
^{-1}\left( p_{\mathrm{conv}}^{\alpha }\left( x\right) \right) .$ The
collection 
\begin{equation*}
S^{\alpha }\equiv \left\{ s_{x}^{\alpha }\right\} _{x\in C_{j}^{\alpha }}
\end{equation*}%
forms a $\dim \left( S\right) $--dimensional submanifold of $C^{\alpha }.$
The gradients of $f^{\alpha }$ restricted to the fibers of $p_{\mathrm{conv}%
}^{\alpha }$ allow us to identify the fibers of $p_{\mathrm{conv}}^{\alpha }$
with the normal bundle of $S^{\alpha },$ thus giving $\left( C^{\alpha },p_{%
\mathrm{conv}}^{\alpha }\right) $ the structure of a trivial vector bundle.
\end{proof}

Recall that in Theorem \ref{magnum cover thm} we constructed a cover of $X$
by subcollections, $\mathcal{O}^{X}\equiv \left\{ B_{j}^{X}(\rho
^{X})\right\} _{j}$ and $\left\{ \mathcal{O}^{S_{i}}\right\} _{i}\equiv
\left\{ \left\{ B_{j}^{S_{i}}(\rho ^{S_{i}})\right\} _{j}\right\} _{i}.$ To
simplify notation, we will refer to a $B_{j}^{S_{i}}(\rho ^{S_{i}})$ or to a 
$B_{j}^{X}(\rho ^{X})$ as simply $B_{j}(\rho _{j}),$ and let $p_{j}$ be the
map $B_{j}\left( \rho _{j}\right) \longrightarrow \mathbb{R}^{\mathrm{\dim }%
\left( S_{i}\right) }$ from (\ref{alt dfn of p}) or (\ref{dfn of p}). We
write $S_{j}$ for the element of $\mathcal{S}^{\mathrm{ext}}$ associated to $%
B_{j}(\rho _{j}).$ Thus for $S\in \mathcal{S}$ and $B_{j}(\rho _{j})\in 
\mathcal{O}^{S},$ we have $S_{j}=S.$ Of course, $S_{j}$ might be our top
stratum, $\left( X\setminus \cup _{S\in \mathcal{S}}S\right) ,$ and, with
this notation, many of the $S_{j}$s are likely to be equal to each other.

\begin{theorem}
\label{Perel cap thm}Let $X$ and $\left\{ M_{\alpha }\right\} _{\alpha }$ be
as in the TNST. Given $\varepsilon >0,$ let $\left\{ B_{j}(\rho
_{j})\right\} _{j}$ be the open cover of $X$ from Theorem \ref{magnum cover
thm}. If the $\rho _{j}$s are sufficiently small, then the following hold.

\noindent 1. For all but finitely many $\alpha $ and for all $j$ for which $%
S_{j}$ is not the top stratum, there is a $3\rho _{j}$--ball $B_{j}^{\alpha
}(3\rho _{j})\subset M_{\alpha }$ so that%
\begin{equation*}
B_{j}^{\alpha }(3\rho _{j})\longrightarrow B_{j}(3\rho _{j})
\end{equation*}%
as $\alpha \longrightarrow \infty .$ Moreover, there are $\varepsilon $%
--almost Riemannian submersions 
\begin{eqnarray*}
p_{j}^{\alpha } &:&B_{j}^{\alpha }(3\rho _{j})\longrightarrow \mathbb{R}^{%
\mathrm{\dim S}_{j}} \\
\mu _{j} &:&B_{j}(3\rho _{j})\cap S_{j}\longrightarrow \mathbb{R}^{\mathrm{%
\dim S}_{j}}
\end{eqnarray*}%
so that the $\mu _{j}$s are embeddings, and 
\begin{equation*}
p_{j}^{\alpha }\longrightarrow p_{j}
\end{equation*}%
as $\alpha \rightarrow \infty .$

\noindent 2. If $S_{j}$ is the top stratum, then Part 1 holds except that
the $p_{j}^{\alpha }$s are embeddings that are $\tau \left( \delta \right) $%
--almost Riemannian submersions rather than $\varepsilon $--almost
Riemannian submersions.

\noindent 3. Let $S$ be a subset of $Bd\left( N\right) .$ Let $%
B_{k}^{N}\left( \rho ^{N}\right) \in \mathcal{O}^{N}$ and $B_{j\left(
k\right) }^{S}\left( \rho ^{S}\right) \in \mathcal{O}^{S},$ be as in (\ref%
{initialresp eqn}), that is, $B_{k}^{N}\left( 3\rho ^{N}\right) \Subset
B_{j\left( k\right) }^{S}\left( \rho ^{S}\right) $. Then the $\left( \mathrm{%
dim}\left( S\right) +1\right) ^{\mathrm{st}}$ coordinate functions of $\mu
_{k}^{N}$ and $\left( p_{k}^{N}\right) ^{\alpha }$ are the functions $d^{S}$
and $d_{\alpha }^{S}$ from Parts $5$ and $7$ of Theorem \ref{magnum cover
thm}.
\end{theorem}

\begin{remark}
\label{name change}Since $p_{j}^{\alpha }$ is an embedding when $S_{j}$ is
the top stratum, we will write $\mu _{j}^{\alpha }$ for $p_{j}^{\alpha }$ in
this case.
\end{remark}

\begin{proof}
We apply Lemma \ref{k-strained Stability} to the center of each ball of the
open cover of Theorem \ref{magnum cover thm}. By Lemma \ref{k-strained
Stability}, if $\rho $ is sufficiently small, then each $B_{j}(3\rho _{j})$
is contained in a convex set $C_{j}$ of $X,$ and for each $j$ and all but
finitely many $\alpha ,$ there is a convex set $C_{j}^{\alpha }$ with%
\begin{equation*}
C_{j}^{\alpha }\longrightarrow C_{j}.
\end{equation*}

For each $j$ and all but finitely many $\alpha ,$ Part 2 of Lemma \ref%
{k-strained Stability} and its proof give us%
\begin{equation*}
p_{j}^{\alpha }:C_{j}^{\alpha }\longrightarrow \mathbb{R}^{\mathrm{\dim }%
S_{j}}\text{ and }p_{j}:C_{j}\longrightarrow \mathbb{R}^{\mathrm{\dim }%
S_{j}},\text{ with }p_{j}^{\alpha }\longrightarrow p_{j}\text{ as }\alpha
\rightarrow \infty .
\end{equation*}%
By defining $\mu _{j}\equiv p_{j}|_{S},$ we have the desired maps. If $S_{j}$
is not the top stratum, then it follows from Part 2 of Lemma \ref{k-strained
Stability} that $p_{j}^{\alpha }$ and $\mu _{j}$ are $\tau \left( \tilde{%
\delta}\right) +\tau \left( 1/\alpha \text{ }|r\right) $--almost Riemannian
submersions. Since $\tilde{\delta}$ and $1/\alpha $ can be arbitrarily small$%
,$ we can ensure that $p_{j}^{\alpha }$ and $\mu _{j}$ are $\varepsilon $%
--almost Riemannian submersions. By the proof of Theorem 5.4 of \cite{BGP},
the $\mu _{j}$s are embeddings, provided $\rho _{j}$ is also sufficiently
small, establishing Part 1.

The proof of Part 2 is the same, except that we have not assumed that the
top stratum is a Riemannian manifold. Rather we have only assumed that every
point in the top stratum is $\left( n,\delta \right) $--strained. Thus $%
\delta $ cannot be taken to be arbitrarily small, and we can only conclude,
using Lemma \ref{angle convergence}, that $p_{j}^{\alpha }$ and $\mu _{j}$
are $\tau \left( \delta \right) $--almost Riemannian submersions.

To prove Part 3, simply replace the $\left( \mathrm{dim}\left( S\right)
+1\right) ^{\mathrm{st}}$ coordinate functions of $\mu _{k}^{N}$ and $\left(
p_{k}^{N}\right) ^{\alpha }$ with the functions $d^{S}$ and $d_{\alpha }^{S}$
from Parts $5$ and $7$ of Theorem \ref{magnum cover thm}. Since $d^{S}$ and $%
d_{\alpha }^{S}$ are $C^{1}$ close to the functions that they are replacing,
the statements of Parts 1 and 2 continue to hold.
\end{proof}

\begin{remarknonum}
\label{tau to eps rem}In the proof of Part 1 of the previous result, we
exploited the fact that both $\frac{1}{\alpha }$ and the quantity $\tilde{%
\delta}$ from Corollary \ref{k-strained cover thm} can be arbitrarily small.
Using this we replaced each of $\tau \left( \frac{1}{\alpha }|r\right) ,$ $%
\tau \left( \tilde{\delta}\right) ,$ and $\tau \left( \frac{1}{\alpha }%
|r\right) +\tau \left( \tilde{\delta}\right) $ by an arbitrarily small
positive number $\varepsilon .$ For similar reasons, we replaced $\tau
\left( \frac{1}{\alpha }|\rho ,r\right) +\tau \left( \delta \right) $ with $%
\tau \left( \delta \right) $ in the proof of Part 3. The quantities $\tau
\left( \frac{1}{\alpha }|\rho ,r\right) $ and $\tau \left( \frac{1}{\alpha }%
|r\right) $ will appear in the sequel, but only when they are needed to
clarify a link between results that appear prior to and subsequent to this
remark. Whenever such a clarification is not needed, to simplify notation,
we will make the substitutions of the previous proof, that is,%
\begin{eqnarray*}
&&\tau \left( \frac{1}{\alpha }|r\right) +\tau \left( \tilde{\delta}\right) 
\text{ is replaced by }\varepsilon ,\text{ and} \\
&&\tau \left( \frac{1}{\alpha }|\rho ,r\right) +\tau \left( \delta \right) 
\text{ is replaced by }\tau \left( \delta \right)
\end{eqnarray*}%
For the remainder of the paper, $\varepsilon $ is the number from Theorem %
\ref{magnum cover thm}.
\end{remarknonum}

\section{Submersions of Nearby Convex Sets\label{tools to arrange section}}

In this section, we prove Proposition \ref{overlaps C^1 close prop}, which
says that the submersions of Theorem \ref{Perel cap thm} are $C^{1}$--close
on their overlaps. We then prove the analogous result for the top stratum in
Proposition \ref{generic cover cor} (below). Ultimately, these results will
allow us to glue the locally defined maps together via Theorem \ref%
{submersion gluing}.

We start by showing that the submersions of neighboring balls have nearly
the same horizontal spaces.

\begin{lemma}
\label{line up}Let $X$ and $\left\{ M_{\alpha }\right\} _{\alpha }$ be as in
the TNST. For $S\in \mathcal{S},$ let 
\begin{eqnarray*}
p_{s}^{\alpha } &:&B_{s}^{\alpha }(3\rho )\longrightarrow \mathbb{R}^{%
\mathrm{\dim S}}\text{ and } \\
p_{t}^{\alpha } &:&B_{t}^{\alpha }(3\rho )\longrightarrow \mathbb{R}^{%
\mathrm{\dim S}}
\end{eqnarray*}%
be two of the $\varepsilon $--almost Riemannian submersions from Part 1 of
Theorem \ref{Perel cap thm}. At all points of $B_{s}^{\alpha }(3\rho )\cap
B_{t}^{\alpha }(3\rho ),$ the unit spheres in the horizontal spaces of $%
p_{s}^{\alpha }$ and $p_{t}^{\alpha }$ are within $\tau \left( \varepsilon
\right) $ of each other.
\end{lemma}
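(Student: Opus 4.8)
The plan is to identify, up to $\tau (\varepsilon )$, the horizontal space of each of $p_s^{\alpha }$ and $p_t^{\alpha }$ with the span of its strainer directions, and then to show that both of these $k$--planes, $k=\dim S$, sit $\tau (\varepsilon )$--close to a tangent plane of $S$. Recall from Theorem \ref{Perel cap thm} and its proof via Lemma \ref{k-strained Stability} that $p_s^{\alpha }=(g^{\alpha }_{a^{s}_{1}},\dots ,g^{\alpha }_{a^{s}_{k}})$ and $p_t^{\alpha }=(g^{\alpha }_{a^{t}_{1}},\dots ,g^{\alpha }_{a^{t}_{k}})$, where each $g^{\alpha }_{a}$ is a Perelman average of $\mathrm{dist}_{a}$ over a tiny ball and the strainer points $a^{s}_{i},a^{t}_{i}$ lie on $S$, as furnished by Corollary \ref{k-strained cover thm}. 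Since $p_s^{\alpha }$ is an $\varepsilon $--almost Riemannian submersion on the Riemannian manifold $M_{\alpha }$, its horizontal space at a point $x$ is $\mathrm{span}\{\nabla g^{\alpha }_{a^{s}_{i}}(x)\}_{i=1}^{k}$; and by Proposition \ref{Diredtional derivatives} together with the strained--neighborhood argument in the proof of Lemma \ref{one strained neigh prop} (a unit direction along which $Dg^{\alpha }_{a^{s}_{i}}$ is nearly extreme must lie within $\tau (\varepsilon )$ of the $i$--th strainer axis), this horizontal $k$--plane is $\tau (\varepsilon )$--close to $\mathrm{span}\{\uparrow ^{a^{s}_{i}}_{x}\}_{i=1}^{k}$. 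The same holds for $p_t^{\alpha }$. Hence it suffices to prove that $\mathrm{span}\{\uparrow ^{a^{s}_{i}}_{x}\}$ and $\mathrm{span}\{\uparrow ^{a^{t}_{i}}_{x}\}$ agree up to $\tau (\varepsilon )$ at every $x\in B_s^{\alpha }(3\rho )\cap B_t^{\alpha }(3\rho )$.

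By Proposition \ref{angggl conv prop}, the pairwise angles among these strainer directions in $M_{\alpha }$ differ by at most $\tau (\tilde{\delta })+\tau (1/\alpha \,|\,r)$ from the corresponding angles in $X$ at the limit point $x\in B_s(3\rho )\cap B_t(3\rho )$, so it is enough to prove the plane comparison in $X$. There, $B_s(3\rho )$ and $B_t(3\rho )$ are metric $3\rho $--balls centered at points $p_s,p_t\in S$, and a nonempty intersection forces $\mathrm{dist}(p_s,p_t)<6\rho $. Since every strained point lies at distance $>r$ from its strainer, and since Corollary \ref{k-strained cover thm} lets us shrink $\rho $ as much as we like relative to the fixed $r$, hinge comparison (Lemma \ref{5.6}) shows that the strainer directions of $p_s$ are ``nearly parallel'' over all of $B_s(3\rho )$, so that $\mathrm{span}\{\uparrow ^{a^{s}_{i}}_{x}\}$ differs by at most $\tau (\rho \,|\,r)$ from $\mathrm{span}\{\uparrow ^{a^{s}_{i}}_{p_s}\}$. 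At the center $p_s\in B_s(3\rho )\cap S$, Inequality \autoref{alm tang to S inequal} of Corollary \ref{k-strained cover thm} gives $\sphericalangle (\uparrow ^{a^{s}_{i}}_{p_s},T_{p_s}S)<\varepsilon $, while Proposition \ref{isom embedd prop} realizes $T_{p_s}S$ as a metrically embedded $\mathbb{S}^{k-1}$ in $\Sigma _{p_s}X$; so $\{\uparrow ^{a^{s}_{i}}_{p_s}\}$ is $\tau (\varepsilon )$--close to an orthonormal basis of the single $k$--plane $T_{p_s}S$, whence the horizontal plane of $p_s$ at every point of $B_s(3\rho )$ lies within $\tau (\varepsilon )+\tau (\rho \,|\,r)$ of $T_{p_s}S$. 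The identical argument gives that the horizontal plane of $p_t$ on $B_t(3\rho )$ lies within $\tau (\varepsilon )+\tau (\rho \,|\,r)$ of $T_{p_t}S$.

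Since $S$ is a fixed smoothly embedded submanifold and $\mathrm{dist}(p_s,p_t)<6\rho $, the planes $T_{p_s}S$ and $T_{p_t}S$ are $\tau (\rho )$--close, by continuity (indeed Lipschitz dependence) of $p\mapsto T_{p}S$. Chaining the three estimates, the horizontal planes of $p_s$ and $p_t$ at a common point of $B_s(3\rho )\cap B_t(3\rho )$ are within $\tau (\varepsilon )+\tau (\rho \,|\,r)+\tau (\rho )$ of one another; transferring this back to $M_{\alpha }$ and absorbing all error terms into a single $\tau (\varepsilon )$ via the freedom noted in Remark \ref{tau to eps rem} to take $\tilde{\delta }$, $1/\alpha $, and $\rho $ small relative to $\varepsilon $ and $r$, we conclude that the unit spheres in the horizontal spaces of $p_s^{\alpha }$ and $p_t^{\alpha }$ are $\tau (\varepsilon )$--close on $B_s^{\alpha }(3\rho )\cap B_t^{\alpha }(3\rho )$. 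Once the determinant estimate $\left\vert \left\vert \det (\cos \sphericalangle (\uparrow ^{a^{s}_{i}}_{x},\uparrow ^{a^{t}_{j}}_{x}))_{i,j}\right\vert -1\right\vert <\tau (\varepsilon )$ has been obtained in this way, one could alternatively invoke Proposition \ref{sim str cor} directly to match the two horizontal frames.

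I expect the main obstacle to be the ``nearly parallel'' step in the second paragraph: making precise, with an honest $\tau (\rho \,|\,r)$ bound, that the directions $\uparrow ^{a^{s}_{i}}_{x}$ hardly change as $x$ ranges over $B_s(3\rho )$. This is a standard Toponogov/hinge estimate in curvature $\geq k$, but it compares directions based at different points, must be applied uniformly over the finitely many overlapping balls of the cover, and must survive the Gromov--Hausdorff passage $M_{\alpha }\to X$; everything else is bookkeeping of error terms of the forms $\tau (\tilde{\delta })$, $\tau (1/\alpha \,|\,r)$, $\tau (\rho \,|\,r)$, and $\tau (\rho )$.
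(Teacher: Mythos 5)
Your proof is broadly in the right spirit and lands on essentially the same determinant criterion that the paper uses, but the route you take is noticeably longer and, in one respect, slightly ill-posed. The paper's proof works entirely at a single point: it reduces at once (via the orthogonal--projection formula) to showing the determinant estimate
\[
\left\vert \left\vert \det \left( \cos \sphericalangle \left( \Uparrow _{y^{\alpha }}^{a_{i}^{\alpha }},\Uparrow _{y^{\alpha }}^{c_{j}^{\alpha }}\right) \right) _{i,j}\right\vert -1\right\vert <\varepsilon ,
\]
uses Proposition \ref{angggl conv prop} to transfer all the relevant angles at $y^{\alpha}$ in $M_{\alpha}$ to the corresponding angles at a nearby $y\in X$, and then invokes Inequality \autoref{alm tang to S inequal} at $y$ to show that both families $\{\Uparrow_y^{a_i}\}$ and $\{\Uparrow_y^{c_j}\}$ lie within $\varepsilon$ of the same $k$--plane $T_yS$, which gives the determinant estimate by linear algebra. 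That is the entire argument.

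By contrast, you route the argument through the two ball centers $p_s$ and $p_t$. This forces you to introduce a ``nearly parallel'' step and then a continuity-of-$T_pS$ step, both of which compare $k$--planes based at different points of the Alexandrov space: $\mathrm{span}\{\uparrow^{a_i^s}_x\}\subset T_xX$ against $\mathrm{span}\{\uparrow^{a_i^s}_{p_s}\}\subset T_{p_s}X$, and $T_{p_s}S$ against $T_{p_t}S$. As stated, these plane comparisons across distinct spaces of directions are not well defined; they have to be re-encoded as statements about angles to pairs of fixed reference points (exactly what Lemma \ref{5.6} and Proposition \ref{sim str cor} are for), at which point you are effectively reconstructing the angle/determinant framework the paper applies directly at the single point $y$. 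You correctly identify this as the weak step, and you correctly note at the end that one could instead go directly for the determinant estimate and invoke Proposition \ref{sim str cor}---that alternative is essentially the paper's proof, and it is the cleaner path. The detour through $p_s$ and $p_t$ buys nothing here and adds a real technical burden (defining the plane comparison at different base points and controlling the transport error), so I would recommend replacing it with the direct single-point argument.

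One small additional caveat for both arguments: Inequality \autoref{alm tang to S inequal} is only stated for $x\in B_j(3\rho)\cap S$, while the limit point $y$ of an arbitrary $y^{\alpha}$ in the overlap need not lie on $S$. Since the balls are centered on $S$ and have radius $3\rho$, there is a point of $S$ within $3\rho$ of $y$, and Lemma \ref{5.6} bounds the resulting change in angles by $\tau(\rho\,|\,r)$, which you are already treating as absorbable. The paper elides this step; it is worth keeping in mind that it is there.
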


\begin{proof}
Let the $\left( \mathrm{\dim }\left( S\right) ,\tilde{\delta},r\right) $%
--strainers of $B_{s}(3\rho )$ and $B_{t}(3\rho )$ be $\left\{ \left(
a_{i},b_{i}\right) \right\} _{i=1}^{\mathrm{\dim S}}$ and $\left\{ \left(
c_{i},d_{i}\right) \right\} _{j=1}^{\mathrm{\dim S}},$ respectively. Let $%
\left\{ \left( a_{i}^{\alpha },b_{i}^{\alpha }\right) \right\} _{i=1}^{%
\mathrm{\dim S}}$ and $\left\{ \left( c_{i}^{\alpha },d_{i}^{\alpha }\right)
\right\} _{j=1}^{\mathrm{\dim S}}$ converge to $\left\{ \left(
a_{i},b_{i}\right) \right\} _{i=1}^{\mathrm{\dim S}}$ and $\left\{ \left(
c_{i},d_{i}\right) \right\} _{j=1}^{\mathrm{\dim S}}.$ By considering the
formula for orthogonal projection with respect to an orthonormal basis, we
see that it suffices to show that for $y^{\alpha }\in B_{s}^{\alpha }(3\rho
)\cap B_{t}^{\alpha }(3\rho ),$%
\begin{equation}
\left\vert \left\vert \det \left( \cos \sphericalangle \left( \Uparrow
_{y^{\alpha }}^{a_{i}^{\alpha }},\Uparrow _{y^{\alpha }}^{c_{j}^{\alpha
}}\right) \right) _{i,j}\right\vert -1\right\vert <\varepsilon .
\label{line up in M_ alpha}
\end{equation}

By Proposition \ref{angggl conv prop}, 
\begin{equation}
\left\vert \sphericalangle \left( \Uparrow _{y^{\alpha }}^{a_{i}^{\alpha
}},\Uparrow _{y^{\alpha }}^{c_{j}^{\alpha }}\right) -\sphericalangle \left(
\Uparrow _{y}^{a_{i}},\Uparrow _{y}^{c_{j}}\right) \right\vert <\varepsilon .
\label{change
of basis inequal}
\end{equation}

On the other hand, by Inequality \autoref{alm tang to S inequal}, both $%
\left\{ \Uparrow _{y}^{a_{i}}\right\} _{i=1}^{\mathrm{\dim }\left( S\right)
} $ and $\left\{ \Uparrow _{y}^{c_{j}}\right\} _{j=1}^{\mathrm{\dim }\left(
S\right) }$ are within $\varepsilon $ of $T_{y}S,$ so 
\begin{equation*}
\left\vert \left\vert \det \left( \cos \sphericalangle \left( \Uparrow
_{y}^{a_{i}},\Uparrow _{y}^{c_{j}}\right) \right) _{i,j}\right\vert
-1\right\vert <\tau \left( \varepsilon \right) .
\end{equation*}

The result follows by combining the previous two displays.
\end{proof}

\begin{proposition}
\label{overlaps C^1 close prop}Let $X$ and $\left\{ M_{\alpha }\right\}
_{\alpha }$ be as in the TNST. For $S\in \mathcal{S},$ let $\mathcal{O}^{S}$
be as in Theorem \ref{magnum cover thm}. Let $B\left( S,2\nu \right) $ be
the $2\nu $--neighborhood of $S$ with respect to a fixed metric on $\left(
\amalg _{\alpha }M_{\alpha }\right) \amalg X$ that realizes the
Gromov--Hausdorff convergence. Let 
\begin{eqnarray*}
p_{j}^{\alpha } &:&B_{j}^{\alpha }(3\rho )\longrightarrow \mathbb{R}^{%
\mathrm{\dim S}} \\
\mu _{j} &:&B_{j}(3\rho )\cap S\longrightarrow \mathbb{R}^{\mathrm{\dim S}}
\end{eqnarray*}%
be the $\varepsilon $--almost Riemannian submersions from Theorem \ref{Perel
cap thm}.

Then on $B_{j}^{\alpha }(3\rho )\cap B_{k}^{\alpha }(3\rho )\cap $ $B\left(
S,2\nu \right) ,$ 
\begin{equation}
\left\vert p_{k}^{\alpha }-\mu _{k}\circ \mu _{j}^{-1}\circ p_{j}^{\alpha
}\right\vert _{C^{0}}\leq \tau \left( \frac{1}{\alpha },\nu \right) ,
\label{C-0 convergence}
\end{equation}%
and 
\begin{equation}
\left\vert p_{k}^{\alpha }-\mu _{k}\circ \mu _{j}^{-1}\circ p_{j}^{\alpha
}\right\vert _{C^{1}}\leq \tau \left( \varepsilon \right) .
\label{C-1 convergence}
\end{equation}
\end{proposition}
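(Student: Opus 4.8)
The plan is to establish the $C^0$ estimate first, and then bootstrap to the $C^1$ estimate using the nearly Riemannian submersion property together with Lemma \ref{line up}. For \autoref{C-0 convergence}, I would compare $p_k^\alpha$ and $\mu_k\circ\mu_j^{-1}\circ p_j^\alpha$ by passing to the Gromov--Hausdorff limit $X$. Each $p_j^\alpha$ converges to $p_j=(g_{a_1},\ldots,g_{a_{\dim S}})$, and on $S$ we have $\mu_j=p_j|_S$. So in the limit the two maps in question become $p_k$ and $\mu_k\circ\mu_j^{-1}\circ p_j$. The point is that on a neighborhood $B(S,2\nu)\cap S$ of $S$ in $X$, the map $\mu_k\circ\mu_j^{-1}$ is exactly the overlap transition of the embeddings $\mu_j,\mu_k$ of $S$, so $\mu_k\circ\mu_j^{-1}\circ p_j$ and $p_k$ agree on $S$ and are both continuous; the remaining discrepancy away from $S$, but within $B(S,2\nu)$, is controlled by $\nu$ since on $S$ the fibers of $p_j$ and $p_k$ are transverse to $S$ with nearly the same (namely, nearly $T_xS$) horizontal spaces by Inequality \autoref{alm tang to S inequal}. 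The convergence $p_j^\alpha\to p_j$ then contributes the $\tau(1/\alpha)$ term, giving \autoref{C-0 convergence}.

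For \autoref{C-1 convergence}, the idea is that a $C^0$-small difference between two submersions whose horizontal distributions are $\tau(\varepsilon)$-close and which are both $\varepsilon$--almost Riemannian must in fact be $C^1$-small up to $\tau(\varepsilon)$. First, I would observe that $d(\mu_k\circ\mu_j^{-1}\circ p_j^\alpha)$ and $dp_k^\alpha$ both have kernel equal (up to $\tau(\varepsilon)$) to the vertical space of $p_j^\alpha$ and $p_k^\alpha$ respectively; by Lemma \ref{line up} these vertical spaces are within $\tau(\varepsilon)$ of each other at every point of the overlap. Second, on the horizontal spaces both differentials are, up to $\tau(\varepsilon)$, linear isometries onto $\mathbb{R}^{\dim S}$ (this is precisely the $\varepsilon$--almost Riemannian submersion condition, applied to $p_k^\alpha$ and to $p_j^\alpha$, with $d(\mu_k\circ\mu_j^{-1})$ itself being $\tau(\varepsilon)$-close to an isometry since $\mu_j,\mu_k$ are $\varepsilon$--almost Riemannian submersions of the same manifold $S$). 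So $dp_k^\alpha$ and $d(\mu_k\circ\mu_j^{-1}\circ p_j^\alpha)$ are two linear maps $T_yM_\alpha\to\mathbb{R}^{\dim S}$ with $\tau(\varepsilon)$-close kernels and which are both $\tau(\varepsilon)$-close to isometries on the orthogonal complement; it remains only to pin down that they induce the same (up to $\tau(\varepsilon)$) identification of the horizontal space with $\mathbb{R}^{\dim S}$. That is forced by the $C^0$ estimate \autoref{C-0 convergence}: two maps that are $C^0$-close and whose differentials have the stated structure cannot differ at first order by more than $\tau(\varepsilon,1/\alpha,\nu)$, and absorbing all the small parameters into $\tau(\varepsilon)$ (using that $1/\alpha$ and $\nu$ can be made small, as in the Remark after Theorem \ref{Perel cap thm}) gives \autoref{C-1 convergence}.

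The main obstacle I anticipate is making the last step — the passage from $C^0$-closeness plus matching kernels to genuine $C^1$-closeness — rigorous in the Alexandrov / nearly-smooth setting, since $p_j^\alpha$ is only a $C^1$ (in fact only Lipschitz-with-directional-derivatives) submersion and the functions $g_{a_i}$ are built from averages of distance functions via Proposition \ref{Diredtional derivatives}. The safe route is to phrase everything at the level of first-order behavior of the directional derivatives $D_V g_{a_i}$, using that these are $\tau(\varepsilon)$-close to $-\cos\sphericalangle(V,\Uparrow^{a_i})$ (Lemma \ref{angle convergence}, Proposition \ref{Diredtional derivatives}), so that the "differential" of each $p_j^\alpha$ is read off from the angles $\sphericalangle(\Uparrow_y^{a_i},\cdot)$, and then comparing these angle data for the $s$-strainer and the $t$-strainer via Proposition \ref{angggl conv prop} exactly as in the proof of Lemma \ref{line up}. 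A secondary technical point is keeping track of where $\mu_j^{-1}$ is defined: one must restrict to the part of the overlap that lies over $\mu_j(B_j(3\rho)\cap S)$, but since we have also intersected with $B(S,2\nu)$ and the $\mu_j$ are embeddings of $S$, this causes no trouble for $\nu$ small.
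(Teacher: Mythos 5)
Your $C^0$ argument is essentially the paper's: both $\mu_j^{-1}\circ p_j^{\alpha}(y^{\alpha})$ and $\mu_k^{-1}\circ p_k^{\alpha}(y^{\alpha})$ are $\tau(1/\alpha,\nu)$--close to the limit point $y\in S$, hence $\tau(1/\alpha,\nu)$--close to each other, and post-composing with the bilipschitz map $\mu_k$ yields \autoref{C-0 convergence}. That part is fine.

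Your primary route to \autoref{C-1 convergence} has a genuine gap. The claim that ``two maps that are $C^0$--close and whose differentials have the stated structure cannot differ at first order by more than $\tau(\varepsilon,1/\alpha,\nu)$'' is not true as stated: matching kernels plus both being near-isometries on horizontals still leaves a full $O(\dim S)$ ambiguity, and a $C^0$ bound of size $\xi$ only forces this ambiguity to be on the order of $\xi/\rho$, not $\tau(\varepsilon)$ (and even that step requires controlling possible small-scale oscillation, not just a pointwise count). You would thus be proving the weaker $\left\vert\,\cdot\,\right\vert_{C^1}\leq\tau(\varepsilon)+\tau(\xi\,|\,\rho)$, with $\rho$ appearing explicitly, and you would then have to justify absorbing $\tau(\xi\,|\,\rho)$ into $\tau(\varepsilon)$. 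The paper avoids all of this: its $C^1$ estimate is entirely independent of the $C^0$ one. It is a purely infinitesimal comparison, using Inequalities \autoref{line up in M_ alpha} and \autoref{change of basis inequal} (from the proof of Lemma \ref{line up}) to verify the hypotheses of Proposition \ref{sim str cor}, which then matches, direction by direction, the angles $\sphericalangle(Y^{\alpha},\Uparrow_{x^{\alpha}}^{a_i^{\alpha}})$ and $\sphericalangle(Y^{\alpha},\Uparrow_{x^{\alpha}}^{c_i^{\alpha}})$ to the corresponding angles to $T_xS$ at the limit point, giving $\left\vert D(\mu_a)_x(Y)-D(p_a^{\alpha})_{x^{\alpha}}(Y^{\alpha})\right\vert<\tau(\varepsilon)$ and its $c$-analogue, from which \autoref{C-1 convergence} follows after composing with the bilipschitz $\mu_c\circ\mu_a^{-1}$. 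Your ``safe route'' is pointing in exactly this direction, but Proposition \ref{angggl conv prov} alone does not close the argument; the missing ingredient you should name is Proposition \ref{sim str cor}, which is what lets you transfer a direction $Y^{\alpha}$ (spanned by one strainer family) to a direction $Y\in T_xS$ with nearly the same coefficients with respect to \emph{both} strainer families simultaneously.
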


\begin{proof}
Suppose $y\in B_{j}(3\rho )\cap B_{k}(3\rho )\cap B\left( S,2\nu \right) $, $%
y^{\alpha }\in B_{j}^{\alpha }(3\rho )\cap B_{k}^{\alpha }(3\rho ),$ and $%
y^{\alpha }\rightarrow y.$ Then%
\begin{equation*}
\mathrm{dist}\left( \mu _{j}^{-1}\circ p_{j}^{\alpha }\left( y^{\alpha
}\right) ,y\right) <\tau \left( \frac{1}{\alpha },\nu \right)
\end{equation*}%
and%
\begin{equation*}
\mathrm{dist}\left( \mu _{k}^{-1}\circ p_{k}^{\alpha }\left( y^{\alpha
}\right) ,y\right) <\tau \left( \frac{1}{\alpha },\nu \right) ,
\end{equation*}%
so 
\begin{equation*}
\mathrm{dist}\left( \mu _{j}^{-1}\circ p_{j}^{\alpha }\left( y^{\alpha
}\right) ,\text{ }\mu _{k}^{-1}\circ p_{k}^{\alpha }\left( y^{\alpha
}\right) \right) <\tau \left( \frac{1}{\alpha },\nu \right) .
\end{equation*}%
Since $\mu _{k}$ is $\left( 1+\varepsilon \right) $--bilipschitz, Inequality %
\autoref{C-0 convergence} follows from the previous display.

To make the proof of Inequality \autoref{C-1 convergence} easier to follow,
we change the indices \textquotedblleft $j$\textquotedblright\ and
\textquotedblleft $k$\textquotedblright\ to \textquotedblleft $a$%
\textquotedblright\ and \textquotedblleft $c$\textquotedblright , and prove %
\autoref{C-1 convergence} for submersions $p_{a}^{\alpha }$ and $%
p_{c}^{\alpha }$ and embeddings $\mu _{a}$ and $\mu _{c},$ whose defining
strainers are $\left\{ \left( a_{i}^{\alpha },b_{i}^{\alpha }\right)
\right\} _{i=1}^{n},$ $\left\{ \left( c_{i}^{\alpha },d_{i}^{\alpha }\right)
\right\} _{i=1}^{n},$ $\left\{ \left( a_{i},b_{i}\right) \right\}
_{i=1}^{n}, $ and $\left\{ \left( c_{i},d_{i}\right) \right\} _{i=1}^{n},$
respectively.

We suppose that for all $i,$ 
\begin{eqnarray*}
\mathrm{dist}\left( a_{i},a_{i}^{\alpha }\right) &<&\varepsilon ,\mathrm{dist%
}\left( b_{i},b_{i}^{\alpha }\right) <\varepsilon , \\
\mathrm{dist}\left( c_{i},c_{i}^{\alpha }\right) &<&\varepsilon ,\text{ and }%
\mathrm{dist}\left( d_{i},d_{i}^{\alpha }\right) <\varepsilon .
\end{eqnarray*}

Let $x^{\alpha }$ be any point in the domains of $p_{a}^{\alpha }$ and $%
p_{c}^{\alpha }.$ Let $x\in X$ satisfy $\mathrm{dist}\left( x,x^{\alpha
}\right) <\varepsilon .$

Inequalities \autoref{line up in M_ alpha} and \autoref{change of basis
inequal} give us the hypotheses of Proposition \ref{sim str cor}. Thus given
a unit%
\begin{equation*}
W^{\alpha }\in \mathrm{span}\left\{ \uparrow _{x^{\alpha }}^{a_{i}^{\alpha
}}\right\} _{i=1}^{\dim S},\text{ }
\end{equation*}%
there is a $Y\in T_{x}S$ so that for all $i,$ 
\begin{equation}
\left\vert \sphericalangle \left( W,\uparrow _{x}^{a_{i}}\right)
-\sphericalangle \left( W^{\alpha },\uparrow _{x^{\alpha }}^{a_{i}^{\alpha
}}\right) \right\vert <\tau \left( \varepsilon \right) \text{\label{same
coord 1}}
\end{equation}

and%
\begin{equation}
\left\vert \sphericalangle \left( W,\uparrow _{x}^{c_{i}}\right)
-\sphericalangle \left( W^{\alpha },\uparrow _{x^{\alpha }}^{c_{i}^{\alpha
}}\right) \right\vert <\tau \left( \varepsilon \right) .  \label{sma coord 2}
\end{equation}

Inequality \autoref{same coord 1} gives us%
\begin{equation}
\text{ }\left\vert D\left( \mu _{a}\right) _{x}\left( W\right) -\text{ }%
D\left( p_{a}^{\alpha }\right) _{x^{\alpha }}\left( W^{\alpha }\right)
\right\vert <\tau \left( \varepsilon \right) ,\text{ \label{Dmu vs dP}}
\end{equation}%
and Inequality \autoref{sma coord 2} gives us 
\begin{equation*}
\left\vert D\left( \mu _{c}\right) _{x}\left( W\right) -\text{ }D\left(
p_{c}^{\alpha }\right) _{x^{\alpha }}\left( W^{\alpha }\right) \right\vert
<\tau \left( \varepsilon \right) .
\end{equation*}

Since $D\left( \mu _{c}\circ \mu _{a}^{-1}\right) $ is $\left( 1+\tau \left( 
\tilde{\delta}\right) \right) $--bilipschitz, Inequality \autoref{Dmu vs dP}
gives us 
\begin{equation*}
\text{ }\left\vert D\left( \mu _{c}\right) _{x}\left( W\right) -\text{ }%
D\left( \mu _{c}\circ \mu _{a}^{-1}\circ p_{a}^{\alpha }\right) _{x^{\alpha
}}\left( W^{\alpha }\right) \right\vert <\tau \left( \varepsilon \right) .
\end{equation*}

Inequality \autoref{C-1 convergence} follows by combining the previous two
displays.
\end{proof}

For the top stratum the analogous result is

\begin{proposition}
\label{generic cover cor}Let $X$ and $\left\{ M_{\alpha }\right\} _{\alpha
\in \mathbb{N}}$ be as in Theorem \ref{dif stab- dim 4}. Let $\mathcal{O}%
^{X}=\left\{ B_{j}(3\rho )\right\} _{j}$ be as in Theorem \ref{magnum cover
thm}. The $\tau \left( \delta \right) $--almost Riemannian submersions 
\begin{equation*}
\mu _{j}^{\alpha }:B_{j}^{\alpha }(3\rho )\longrightarrow \mathbb{R}^{n}
\end{equation*}%
of Part 2 of Theorem \ref{Perel cap thm} have the following property.

For $\beta ,\sigma \in \mathbb{N}$ with $\sigma \leq \beta $ and for all $%
j,k,$%
\begin{equation}
\left\vert \mu _{k}^{\sigma }-\mu _{k}^{\beta }\circ \left( \mu _{j}^{\beta
}\right) ^{-1}\circ \mu _{j}^{\sigma }\right\vert _{C^{1}}\leq \tau \left(
\delta \right)  \label{C^1 close gen inequal}
\end{equation}%
and%
\begin{equation}
\left\vert \mu _{k}^{\sigma }-\mu _{k}^{\beta }\circ \left( \mu _{j}^{\beta
}\right) ^{-1}\circ \mu _{j}^{\sigma }\right\vert _{C^{0}}\leq \tau \left( 
\frac{1}{\sigma }|\text{ }r\right)  \label{c0
close gen inequal}
\end{equation}%
on $B_{j}^{\sigma }(3\rho )\cap B_{k}^{\sigma }(3\rho ).$
\end{proposition}

\begin{proof}
Suppose $y\in B_{j}(3\rho )\cap B_{k}(3\rho )$, $y^{\sigma }\in
B_{j}^{\sigma }(3\rho )\cap B_{k}^{\sigma }(3\rho ),$ $y^{\beta }\in
B_{j}^{\beta }(3\rho )\cap B_{k}^{\beta }(3\rho )$, $\mathrm{dist}\left(
y^{\sigma },y\right) <\tau \left( \frac{1}{\sigma }|\text{ }r\right) ,$ and $%
\mathrm{dist}\left( y^{\beta },y\right) <\tau \left( \frac{1}{\sigma }|\text{
}r\right) .$ Then%
\begin{eqnarray}
\left\vert \mu _{j}^{\beta }\left( y^{\beta }\right) -\mu _{j}^{\sigma
}\left( y^{\sigma }\right) \right\vert &<&\tau \left( \frac{1}{\sigma }|%
\text{ }r\right) \text{ and\label{singa C-0 ineqaul}} \\
\left\vert \mu _{k}^{\beta }\left( y^{\beta }\right) -\mu _{k}^{\sigma
}\left( y^{\sigma }\right) \right\vert &<&\tau \left( \frac{1}{\sigma }|%
\text{ }r\right) .  \label{lambda c-0 inequal}
\end{eqnarray}

Since $\mu _{k}^{\beta }\circ \left( \mu _{j}^{\beta }\right) ^{-1}$ is $%
\left( 1+\tau \left( \delta \right) \right) $--Lipschitz, Inequality \autoref%
{singa C-0 ineqaul} gives 
\begin{equation*}
\left\vert \mu _{k}^{\beta }\left( y^{\beta }\right) -\mu _{k}^{\beta }\circ
\left( \mu _{j}^{\beta }\right) ^{-1}\circ \mu _{j}^{\sigma }\left(
y^{\sigma }\right) \right\vert <\tau \left( \frac{1}{\sigma }|\text{ }%
r\right) ,
\end{equation*}%
which, together with Inequality \autoref{lambda c-0 inequal}, gives
Inequality \autoref{c0 close gen inequal}.

Suppose $M,\tilde{M}\in \left\{ M_{\alpha }\right\} _{\alpha \geq \sigma }.$
To make the proof of Inequality \autoref{C^1 close gen inequal} easier to
follow, we change the indices \textquotedblleft $j$\textquotedblright\ and
\textquotedblleft $k$\textquotedblright\ to \textquotedblleft $a$%
\textquotedblright\ and \textquotedblleft $c$\textquotedblright , and prove %
\autoref{C^1 close gen inequal} for coordinate charts $\mu _{a}$ and $\mu
_{c}$ of $M$ and $\tilde{\mu}_{a}$ and $\tilde{\mu}_{c}$ of $\tilde{M}$,
whose defining strainers are $\left\{ \left( a_{i},b_{i}\right) \right\}
_{i=1}^{n},$ $\left\{ \left( c_{i},d_{i}\right) \right\} _{i=1}^{n},$ $%
\left\{ \left( \tilde{a}_{i},\tilde{b}_{i}\right) \right\} _{i=1}^{n}$ , and 
$\left\{ \left( \tilde{c}_{i},\tilde{d}_{i}\right) \right\} _{i=1}^{n},$
respectively.

Suppose that for all $i,$ 
\begin{eqnarray}
\mathrm{dist}\left( a_{i},\tilde{a}_{i}\right) &<&\tau \left( \frac{1}{%
\sigma }|\text{ }r\right) ,\mathrm{dist}\left( b_{i},\tilde{b}_{i}\right)
<\tau \left( \frac{1}{\sigma }|\text{ }r\right) ,  \notag \\
\mathrm{dist}\left( c_{i},\tilde{c}_{i}\right) &<&\tau \left( \frac{1}{%
\sigma }|\text{ }r\right) ,\text{ and }\mathrm{dist}\left( d_{i},\tilde{d}%
_{i}\right) <\tau \left( \frac{1}{\sigma }|\text{ }r\right) .
\label{dtrs
close}
\end{eqnarray}

Suppose also that $y\in M$ is in the domains of both $\mu _{a}$ and $\mu
_{c},$ that $\tilde{y}\in \tilde{M}$ is in the domains of both $\tilde{\mu}%
_{a}$ and $\tilde{\mu}_{c}$, and that $\mathrm{dist}\left( y,\tilde{y}%
\right) <\tau \left( \frac{1}{\sigma }|\text{ }r\right) .$

Proposition \ref{angggl conv prop} and the inequalities in \autoref{dtrs
close} give us the hypotheses of Proposition \ref{sim str cor}. So given a
unit%
\begin{equation*}
W\in \Sigma _{y},
\end{equation*}%
there is a unit 
\begin{equation*}
\tilde{W}\in \Sigma _{\tilde{y}}
\end{equation*}%
so that for all $i,$%
\begin{equation*}
\left\vert \sphericalangle \left( W,\Uparrow _{y}^{a_{i}}\right)
-\sphericalangle \left( \tilde{W},\Uparrow _{\tilde{y}}^{\tilde{a}%
_{i}}\right) \right\vert <\tau \left( \frac{1}{\sigma }|r\right) +\tau
\left( \delta \right)
\end{equation*}%
and%
\begin{equation*}
\left\vert \sphericalangle \left( W,\Uparrow _{y}^{c_{i}}\right)
-\sphericalangle \left( \tilde{W},\Uparrow _{\tilde{y}}^{\tilde{c}%
_{i}}\right) \right\vert <\tau \left( \frac{1}{\sigma }|r\right) +\tau
\left( \delta \right) .
\end{equation*}%
Combining this with the definitions of the $\mu $s, 
\begin{equation}
\left\vert D\tilde{\mu}_{a}\left( \tilde{W}\right) -D\mu _{a}\left( W\right)
\right\vert \leq \tau \left( \delta \right) +\tau \left( \frac{1}{\sigma }%
\text{ }|\text{ }r\right)  \label{singma
Inequal}
\end{equation}%
and%
\begin{equation}
\left\vert D\tilde{\mu}_{c}\left( \tilde{W}\right) -D\mu _{c}\left( W\right)
\right\vert \leq \tau \left( \delta \right) +\tau \left( \frac{1}{\sigma }%
\text{ }|\text{ }r\right) .  \label{tau inequal}
\end{equation}

Since $D\left( \tilde{\mu}_{c}\circ \tilde{\mu}_{a}^{-1}\right) $ is $\left(
1+\tau \left( \delta \right) \right) $--bilipschitz, Inequality \autoref%
{singma Inequal} gives%
\begin{equation*}
\left\vert \left( D\tilde{\mu}_{c}\right) \left( \tilde{W}\right) -D\left( 
\tilde{\mu}_{c}\circ \tilde{\mu}_{a}^{-1}\circ \mu _{a}\right) \left(
W\right) \right\vert \leq \tau \left( \delta \right) +\tau \left( \frac{1}{%
\sigma }\text{ }|\text{ }r\right) .
\end{equation*}

Combined with Inequality \autoref{tau inequal}, this gives%
\begin{equation*}
\left\vert D\mu _{c}\left( W\right) -D\left( \tilde{\mu}_{c}\circ \tilde{\mu}%
_{a}^{-1}\circ \mu _{a}\right) \left( W\right) \right\vert \leq \tau \left(
\delta \right) +\tau \left( \frac{1}{\sigma }\text{ }|\text{ }r\right) .
\end{equation*}

Inequality \autoref{C^1 close gen inequal} follows by recalling that $\tau
\left( \frac{1}{\sigma }\text{ }|\text{ }r\right) $ can be arbitrarily small.
\end{proof}

\section{Gluing $C^{1}$--Close Submersions\label{Gluing Section}%
{\protect\Huge \ }}

In this section we state Theorem \ref{submersion gluing}, an abstract gluing
theorem for submersions, which, together with Proposition \ref{overlaps C^1
close prop}, will allow us to glue together the locally defined submersions
of Theorem \ref{Perel cap thm}. It is based on the principle that a space of
submersions is locally contractible in the $C^{1}$--topology. Since there
are somewhat similar results elsewhere in the literature (cf \cite{Cheeg2}, 
\cite{Kap2}, \cite{Perel1}), we defer the proof of Theorem \ref{submersion
gluing} to the appendix (\ref{Appendix section}). Before stating Theorem \ref%
{submersion gluing}, we establish some background definitions and hypotheses.

\begin{definition}
We say that two collections of sets $\left\{ C_{i}\right\} _{i\in I}$ and $%
\left\{ T_{i}\right\} _{i\in I}$ have the same intersection pattern provided 
$C_{i}\cap C_{j}\neq \emptyset $ if and only if $T_{i}\cap T_{j}\neq
\emptyset .$
\end{definition}

\begin{definition}
If $\mathcal{C}\equiv \left\{ C_{i}\right\} _{i\in I}$ is a collection of
subsets of a space $X,$ we let $\mathrm{cl}\left( \mathcal{C}\right) \equiv
\left\{ \bar{C}_{i}\right\} _{i\in I}$ be the collection of their closures.
\end{definition}

Throughout this section, we assume the following:

\noindent 1. The collection $\mathcal{\tilde{C}}\equiv \left\{ \tilde{B}%
_{i}\left( 3\rho \right) \right\} _{i=1}^{m_{l}}$ of $3\rho $--balls in the
Riemannian $n$--manifold $M$ has first order $\leq \mathfrak{o}$ and
satisfies $\mathrm{dist}\left( \overline{\tilde{B}_{i}\left( \rho \right) },%
\overline{\tilde{B}_{i}\left( 3\rho \right) \setminus \tilde{B}_{i}\left(
2\rho \right) }\right) =\rho .$

\noindent 2. For $\eta \in \left( 0,1\right) $ and $l\geq 1,$ 
\begin{equation*}
\tilde{p}_{i}:\tilde{B}_{i}\left( 3\rho \right) \longrightarrow \mathbb{R}%
^{l}
\end{equation*}%
are $\eta $--almost Riemannian submersions.

\noindent 3. $\mathcal{C}=\left\{ B_{i}\left( \rho \right) \right\}
_{i=1}^{m_{l}}$ is a collection of $\rho $--balls in a Riemannian $l$%
--manifold $S.$

\noindent 4. There are coordinate charts 
\begin{equation*}
\mu _{i}:B_{i}\left( 3\rho \right) \longrightarrow \mathbb{R}^{l}
\end{equation*}%
that are are $\eta $--almost Riemannian submersions.

\noindent 5.\textrm{\ }The collections $\mathcal{C},$ $\mathcal{\tilde{C}},$ 
$\mathrm{cl}\left( \mathcal{C}\right) ,$ and $\mathrm{cl}\left( \mathcal{%
\tilde{C}}\right) $ have the same intersection pattern.

\begin{theorem}
\label{submersion gluing}(Submersion Gluing Theorem) Assume that $M$ and $S$
satisfy Hypotheses 1--5, above.

There are $\xi _{0}\left( \mathfrak{o},l\right) >0,$ $\eta \left( l\right)
>0,$ and $\varepsilon _{0}(l)>0$ with the following property: Suppose that
for all $i,$%
\begin{equation}
\mathrm{dist}_{\mathrm{Haus}}\left( \tilde{p}_{i}\left( \tilde{B}_{i}\left(
\rho \right) \right) ,\mu _{i}\left( B_{i}\left( \rho \right) \right)
\right) <\xi _{0},  \label{Huas inequl}
\end{equation}%
and, for all pairs $\left( i,j\right) ,$ 
\begin{equation}
\left\vert \tilde{p}_{i}-\mu _{i}\circ \mu _{j}^{-1}\circ \tilde{p}%
_{j}\right\vert _{C^{0}}<\xi \leq \xi _{0}  \label{C-0 close inequal}
\end{equation}%
and%
\begin{equation}
\left\vert \tilde{p}_{i}-\mu _{i}\circ \mu _{j}^{-1}\circ \tilde{p}%
_{j}\right\vert _{C^{1}}<\varepsilon \leq \varepsilon _{0}
\label{C-1 close subm inequality}
\end{equation}%
on $\tilde{B}_{i}\left( 3\rho \right) \cap \tilde{B}_{j}\left( 3\rho \right)
.$

Then there is a submersion $P:\cup _{i=1}^{m_{l}}\tilde{B}_{i}\left( \rho
\right) \longrightarrow P\left( \cup _{i=1}^{m_{l}}\tilde{B}_{i}\left( \rho
\right) \right) \subset S$ so that 
\begin{equation}
P|_{\tilde{B}_{m_{l}}\left( \rho \right) }=\mu _{_{m_{l}}}^{-1}\circ \tilde{p%
}_{m_{l}},  \label{last ball eq}
\end{equation}%
and, on each $\tilde{B}_{i}\left( \rho \right) ,$%
\begin{equation}
\left\vert \mu _{i}\circ P-\tilde{p}_{i}\right\vert _{C^{0}}<\tau \left( \xi
\right)  \label{still C-0 close}
\end{equation}%
and 
\begin{equation}
\left\vert \mu _{i}\circ P-\tilde{p}_{i}\right\vert _{C^{1}}<\tau \left(
\varepsilon \right) +\tau \left( \xi |\rho \right) .  \label{geom inequl}
\end{equation}
\end{theorem}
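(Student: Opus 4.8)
The plan is to prove this by a one-ball-at-a-time induction, built on the classical principle — used in Cheeger's finiteness theorem and in the work of Perelman and Kapovitch — that the space of submersions is locally contractible in the $C^1$--topology, so that two sufficiently $C^1$--close submersions can be interpolated through submersions. First I would replace the $\tilde p_i$ by the maps $q_i \equiv \mu_i^{-1}\circ\tilde p_i:\tilde B_i(3\rho)\to S$. Since each $\mu_i$ is an $\eta$--almost Riemannian submersion from an $l$--manifold to $\mathbb{R}^l$, it is a chart whose differential is $(1+\tau(\eta))$--close to a linear isometry; hence each $q_i$ is again a submersion whose smallest singular value (co-norm) is bounded below by an absolute $c>0$, the transition maps $\mu_i\circ\mu_j^{-1}$ are $(1+\tau(\eta))$--bi-Lipschitz with differentials $\tau(\eta)$--close to isometries, and Hypotheses \autoref{C-0 close inequal} and \autoref{C-1 close subm inequality} become $\mathrm{dist}_{C^0}(q_i,q_j)<\tau(\xi)$ and $\mathrm{dist}_{C^1}(q_i,q_j)<\tau(\varepsilon)$ on each overlap $\tilde B_i(3\rho)\cap\tilde B_j(3\rho)$, while \autoref{Huas inequl} forces the relevant pieces of $q_i(\tilde B_i(\rho))$ to lie in a common region of $S$ that is $\tau(\eta)$--close to Euclidean. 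The target then reduces to producing a single submersion $Q:\bigcup_i\tilde B_i(\rho)\to S$ with $Q=q_{m_l}$ on $\tilde B_{m_l}(\rho)$, $\mathrm{dist}_{C^0}(Q,q_i)<\tau(\xi)$ and $\mathrm{dist}_{C^1}(Q,q_i)<\tau(\varepsilon)+\tau(\xi\,|\,\rho)$ on each $\tilde B_i(\rho)$; setting $P:=Q$ and composing with $\mu_i$ then gives \autoref{last ball eq}, \autoref{still C-0 close}, and \autoref{geom inequl}.

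The engine is an interpolation lemma. If $f,g:U\to S$ are submersions with co-norm $\ge c$, with $\mathrm{dist}_{C^0}(f,g)<\xi$ and $\mathrm{dist}_{C^1}(f,g)<\varepsilon$, and $\chi:M\to[0,1]$ is a smooth cutoff with $|\nabla\chi|\le\kappa/\rho$, then reading $f$ and $g$ in a fixed $\mu$--chart and forming the affine combination $h:=\mu^{-1}\bigl(\chi\,(\mu\circ f)+(1-\chi)\,(\mu\circ g)\bigr)$ produces a map that equals $f$ where $\chi=1$, equals $g$ where $\chi=0$, satisfies $\mathrm{dist}_{C^0}(h,g)\le\xi$, and — because $Dh$ differs from the convex combination $\chi\,Df+(1-\chi)\,Dg$ of two surjective linear maps that are $\varepsilon$--close only by a term of size $\lesssim|\nabla\chi|\,\mathrm{dist}(f,g)\lesssim\kappa\xi/\rho$, plus curvature corrections of size $\tau(\xi)$ — remains a submersion with co-norm $\ge c/2$ and $\mathrm{dist}_{C^1}(h,g)\le\tau(\varepsilon)+\tau(\xi\,|\,\rho)$, provided $\varepsilon$ and $\xi/\rho$ are small enough in a sense depending only on $c$ and $\kappa$.

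With this in hand I would construct $Q$ inductively. Order the balls so that $\tilde B_{m_l}$ is treated last; put $Q_1:=q_1$ near $\tilde B_1(\rho)$; and, given $Q_{k-1}$ defined near $\bigcup_{i\le k-1}\tilde B_i(\rho)$ and $C^1$--close to every $q_i$ whose ball it meets, choose a cutoff $\chi_k$ equal to $1$ on $\tilde B_k(\rho)$, equal to $0$ off $\tilde B_k(2\rho)$, with $|\nabla\chi_k|\lesssim 1/\rho$, and set $Q_k$ to be the interpolation of $q_k$ (weight $\chi_k$) with $Q_{k-1}$ (weight $1-\chi_k$) on their common domain, equal to $q_k$ where $Q_{k-1}$ is not yet defined and to $Q_{k-1}$ where $\chi_k\equiv0$. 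The order bound $\mathfrak o$ is exactly what makes the estimates uniform in $m_l$: at each point at most $\mathfrak o$ of the $\chi_k$ are nonzero, so only $\le\mathfrak o$ interpolations ever affect a given value, and the accumulated errors stay of size $\tau(\xi)$ in $C^0$ and $\tau(\varepsilon)+\tau(\xi\,|\,\rho)$ in $C^1$ with implied constants depending only on $\mathfrak o$ and $l$; the fact that $\mathcal C$, $\mathcal{\tilde C}$, $\mathrm{cl}(\mathcal C)$, $\mathrm{cl}(\mathcal{\tilde C})$ share an intersection pattern guarantees that, whenever $Q_{k-1}$ and $q_k$ must be compared, there is a chain of genuinely overlapping balls along which Hypotheses \autoref{C-0 close inequal}--\autoref{C-1 close subm inequality} can be chained. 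Since $\chi_{m_l}\equiv1$ on $\tilde B_{m_l}(\rho)$, the final map $Q=Q_{m_l}$ equals $q_{m_l}$ there, which is \autoref{last ball eq}.

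I expect the main obstacle to be the domain bookkeeping in this induction: organizing the three radii $\rho<2\rho<3\rho$ — here Hypothesis 1, which provides the genuine collar $\mathrm{dist}(\overline{\tilde B_i(\rho)},\overline{\tilde B_i(3\rho)\setminus\tilde B_i(2\rho)})=\rho$, is precisely what is needed — so that at each stage $Q_{k-1}$ is defined, and remains a submersion, exactly on the set where the new interpolation must read it, so that the successive $Q_k$ patch smoothly across the seams, and so that the final $Q$ is defined on all of $\bigcup_i\tilde B_i(\rho)$; and, alongside this, verifying that the lower bound on the co-norm survives all $\le\mathfrak o$ successive interpolations. These points are routine but fiddly, which is why the proof is deferred to the appendix; the geometric content lies entirely in the one-step interpolation lemma above.
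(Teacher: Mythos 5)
Your proposal rests on the same two pillars as the paper's proof: the one-step interpolation lemma for $C^1$--close submersions built from a cutoff with $|\nabla\chi|\lesssim 1/\rho$ (the paper's Lemmas \ref{Rank Isotopy Lemma}--\ref{Gluing, Abstract}), and an induction in which the order bound $\mathfrak{o}$ caps the accumulated error. The genuine difference is in how the induction is organized. You glue one ball at a time, $k=1,\dots,m_l$, and control error growth by observing that at any point only $\leq\mathfrak{o}$ of the cutoffs $\chi_k$ are nonzero, so only $\leq\mathfrak{o}$ interpolations ever touch a given value. The paper instead first partitions $\{\tilde B_i(3\rho)\}$ into $\mathfrak{o}$ subcollections $\mathcal{\tilde B}_1,\ldots,\mathcal{\tilde B}_{\mathfrak{o}}$ of pairwise disjoint balls (this is possible precisely because the order is $\leq\mathfrak{o}$), assembles the submersions within each disjoint subcollection into a single map $\hat p_j$ for free, and then does a $\mathfrak{o}$--step induction over the subcollections; the error after step $k$ is then the explicit quantity $\mathcal{E}_k=(1+\eta)^{2(k-1)}\varepsilon+\frac{2}{\rho}\xi(k-1)(1+\eta)^{2(k-1)}$ of Equation \autoref{dfn of E_k}, and the total is manifestly bounded since there are only $\mathfrak{o}$ steps. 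Your per-ball version is correct, but pushes the order bound into a more delicate ``at most $\mathfrak{o}$ interpolations affect any point'' locality argument that has to be threaded through the $C^1$--estimates at every stage, whereas the paper's partition trick front-loads the combinatorics and makes the error accounting a finite sum by fiat. Both routes give the same final bound, depending only on $\mathfrak{o}$, $l$, $\eta$, and $\rho$; the paper's is somewhat cleaner to verify, yours arguably closer to the naive ``glue ball by ball'' picture. One small point of care in your version: the inductive hypothesis must be stated not as ``$Q_{k-1}$ is $C^1$--close to $q_i$'' for a single $i$, but as ``$Q_{k-1}$ is $C^1$--close to $q_i$ simultaneously for every $i$ whose ball $\tilde B_i(3\rho)$ it meets'' — exactly the universally-quantified form of the paper's hypotheses \autoref{C-0 induct hyp} and \autoref{C^1 Ind hyp Inequal} — otherwise the chaining you invoke through the common intersection pattern does not close. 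You gesture at this, but it is the load-bearing part of the bookkeeping and is where Parts 2--5 of the paper's Key Lemma \ref{Gluing, Abstract} earn their keep.
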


\begin{remark}
In the proof of Theorem \ref{submersion gluing}, we show that the functions $%
\tau $ on the right hand sides of Inequalities (\ref{still C-0 close}) and (%
\ref{geom inequl}) can be taken to be%
\begin{equation*}
\tau \left( \xi \right) =\left( 1+\eta \right) ^{2\mathfrak{o}}\xi \text{ and%
}
\end{equation*}%
\begin{equation*}
\tau \left( \varepsilon \right) +\tau \left( \xi |\rho \right) =\left(
1+\eta \right) ^{2\left( \mathfrak{o}-1\right) }\varepsilon +\frac{2}{\rho }%
\xi \left( \mathfrak{o}-1\right) \left( 1+\eta \right) ^{2\left( \mathfrak{o}%
-1\right) }.
\end{equation*}%
The reader might be more comfortable calling these functions $\tau \left(
\xi |\eta ,\mathfrak{o}\right) $ and $\tau \left( \varepsilon ,\eta |%
\mathfrak{o}\right) +\tau \left( \xi |\eta ,\mathfrak{o,}\rho \right) .$ In
our applications, $\eta $ is small, $\xi <<\eta ,$ and $\mathfrak{o}$ is a
fixed constant that only depends on $X,$ so for simpler notation, we have
chosen to write them as in Theorem \ref{submersion gluing}.
\end{remark}

While Theorem \ref{submersion gluing} is the main abstract gluing tool used
to construct the bundle maps of the TNST, we will also need the following
corollaries to establish Properties $\ref{inter strata resp T6}$ and \ref{T6}
of the TNST.

\begin{corollary}
\label{respt other cor}Let $M$, $S,$ and $P$ be as in Theorem \ref%
{submersion gluing}. Suppose that for some $I_{R}\subset \left\{ 1,2,\ldots
,m_{l}\right\} $, all $i\in I_{R},$ and some $j\in \left\{ 1,\ldots
,l\right\} ,$ the $j^{th}$--coordinate functions of the functions $\tilde{p}%
_{i}$ and $\mu _{i}$ are each respectively given by 
\begin{equation*}
\tilde{d}:\cup _{i\in I_{R}}\tilde{B}_{i}\left( 3\rho _{R}\right)
\longrightarrow \mathbb{R}\text{ and }d:\cup _{i\in I_{R}}B_{i}\left( 3\rho
_{R}\right) \longrightarrow \mathbb{R}\text{ .}
\end{equation*}%
Then we can choose the submersion $P$ from the conclusion of Theorem \ref%
{submersion gluing} so that for all $i\in I_{R},$ the $j^{th}$--coordinate
function of $\mu _{i}\circ P|_{\tilde{B}_{i}\left( 3\rho _{R}\right) }$ is $%
\tilde{d}$.
\end{corollary}

\begin{corollary}
\label{gluing addendum}Let $M,$ $N,$ and $S$ be compact Riemannian manifolds
of dimensions $n\geq k\geq l,$ respectively. Suppose the hypotheses of
Theorem \ref{submersion gluing} hold for $M$ and $S,$ and that for some $%
\rho _{R}>0,$ $\left\{ B_{i}\left( \rho _{R}\right) \right\} _{i=1}^{m_{R}}$
is a collection of $\rho _{R}$ balls in $M,$ so that 
\begin{eqnarray*}
\mathrm{dist}\left( \overline{B_{i}\left( \rho _{R}\right) },\overline{%
B_{i}\left( 3\rho _{R}\right) \setminus B_{i}\left( 2\rho _{R}\right) }%
\right) &=&\rho _{R}\text{ and } \\
\cup _{i\in I_{R}}B_{i}\left( 3\rho _{R}\right) &\subset &\cup _{i=1}^{m_{l}}%
\tilde{B}_{i}\left( \rho \right) ,
\end{eqnarray*}%
where $I_{R}$ is some subset of $\left\{ 1,2,\ldots ,m_{R}\right\} $ for
which the first order of $\left\{ B_{i}\left( 3\rho _{R}\right) \right\}
_{i\in I_{R}}$ is $\leq \mathfrak{o}.$ Then there are $\xi _{0}\left( l,k,%
\mathfrak{o}\right) >0,$ $\eta \left( l,k\right) >0,$ and $\varepsilon
_{0}(l,k)>0$ with the following property.

Suppose that 
\begin{eqnarray*}
R &:&\cup _{i=1}^{m_{R}}B_{i}\left( 3\rho _{R}\right) \longrightarrow N\text{
and} \\
Q &:&N\longrightarrow S
\end{eqnarray*}%
are $\eta $--almost Riemannian submersions so that for each $i=1,2,\ldots
,m_{l},$ on $\cup _{i\in I_{R}}B_{i}\left( 3\rho _{R}\right) \cap \tilde{B}%
_{i}\left( 3\rho \right) ,$ we have 
\begin{equation}
\left\vert \tilde{p}_{i}-\mu _{i}\circ Q\circ R\right\vert _{C^{0}}<\xi \leq
\xi _{0}  \label{resp c-0 close inequal}
\end{equation}%
and%
\begin{equation}
\left\vert \tilde{p}_{i}-\mu _{i}\circ Q\circ R\right\vert
_{C^{1}}<\varepsilon \leq \varepsilon _{0}.  \label{resp c-1 close ineqaul}
\end{equation}%
Then there is a submersion $P:\cup _{i=1}^{m_{l}}\tilde{B}_{i}\left( \rho
\right) \longrightarrow P\left( \cup _{i=1}^{m_{l}}\tilde{B}_{i}\left( \rho
\right) \right) \subset S$ so that on $\cup _{i\in I_{R}}B_{i}\left( \rho
_{R}\right) $ 
\begin{equation}
P=Q\circ R,  \label{respeqn}
\end{equation}%
and, on each $\tilde{B}_{i}\left( \rho \right) ,$%
\begin{equation}
\left\vert \mu _{i}\circ P-\tilde{p}_{i}\right\vert _{C^{0}}<\tau \left( \xi
\right)  \label{resp still C-0 close inequal}
\end{equation}%
and%
\begin{equation}
\left\vert \mu _{i}\circ P-\tilde{p}_{i}\right\vert _{C^{1}}<\tau \left(
\varepsilon \right) +\tau \left( \xi |\rho \right) .
\label{respp stilll C1 cls}
\end{equation}
\end{corollary}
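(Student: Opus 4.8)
The plan is to reduce Corollary \ref{gluing addendum} to the Submersion Gluing Theorem, Theorem \ref{submersion gluing}, by treating $Q\circ R$ as one extra local model adjoined to the collection $\{\tilde{p}_{i}\}$ as the piece that is preserved verbatim. The first step is to observe that $Q\circ R:\cup_{i\in I_{R}}B_{i}(3\rho _{R})\longrightarrow S$ is itself a $\tau(\eta )$--almost Riemannian submersion: a composition of two $\eta$--almost Riemannian submersions is again almost Riemannian, because each differential is within $\tau(\eta )$ of an isometry composed with an orthogonal projection and these compose, the resulting defect $\tau (\eta |k)$ being controlled by $\eta$ and $k=\dim N$. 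This is where $k$ enters the constants $\xi _{0}(l,k,\mathfrak{o}),\eta (l,k),\varepsilon _{0}(l,k)$; since $\eta$ may be shrunk, I may assume $\tau (\eta |k)$ lies below the threshold $\eta (l)$ that Theorem \ref{submersion gluing} demands of its local submersions.

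Next I would verify that the union $\Omega ^{(3)}:=\cup _{i\in I_{R}}B_{i}(3\rho _{R})$, together with its dilations $\Omega ^{(t)}:=\cup _{i\in I_{R}}B_{i}(t\rho _{R})$, behaves like a single ``ball'' in the sense of Hypothesis 1 of Theorem \ref{submersion gluing}. From $\mathrm{dist}(\overline{B_{i}(\rho _{R})},\overline{B_{i}(3\rho _{R})\setminus B_{i}(2\rho _{R})})=\rho _{R}$ for each $i\in I_{R}$ one gets $\mathrm{dist}(\overline{\Omega ^{(1)}},\overline{\Omega ^{(3)}\setminus \Omega ^{(2)}})\geq \rho _{R}$, since a point lying in some $B_{j}(3\rho _{R})$ but outside $B_{i}(2\rho _{R})$ is at distance $\geq 2\rho _{R}-\rho _{R}=\rho _{R}$ from any point of $B_{i}(\rho _{R})$. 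The hypothesis $\cup _{i\in I_{R}}B_{i}(3\rho _{R})\subset \cup _{i=1}^{m_{l}}\tilde{B}_{i}(\rho )$ gives $\Omega ^{(t)}\subset \cup _{i}\tilde{B}_{i}(\rho )$, so adjoining $\Omega ^{(3)}$ does not enlarge the domain of the glued map; the combined collection $\{\tilde{B}_{i}(3\rho )\}_{i=1}^{m_{l}}\cup \{\Omega ^{(3)}\}$ has order $\leq \mathfrak{o}+1$ (hence the $\mathfrak{o}$--dependence of $\xi _{0}$); and the intersection-pattern hypotheses are arranged using the $C^{0}$--estimate (\ref{resp c-0 close inequal}). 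Finally, the overlap compatibility conditions (\ref{C-0 close inequal}) and (\ref{C-1 close subm inequality}) of Theorem \ref{submersion gluing} on the new overlaps $\tilde{B}_{i}(3\rho )\cap \Omega ^{(3)}$ are exactly Hypotheses (\ref{resp c-0 close inequal}) and (\ref{resp c-1 close ineqaul}) of the corollary, once one precomposes $Q\circ R$ with $\mu _{i}$ and uses that $\mu _{i}$ is $(1+\tau (\eta ))$--bilipschitz, so that $C^{1}$--closeness of $\tilde{p}_{i}$ to $\mu _{i}\circ Q\circ R$ is equivalent to $C^{1}$--closeness of $\mu _{i}^{-1}\circ \tilde{p}_{i}$ to $Q\circ R$; the overlaps among the original $\tilde{B}_{i}$'s are covered by the assumption that the hypotheses of Theorem \ref{submersion gluing} already hold for $M$ and $S$.

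With these checks in place I would run the inductive gluing of Theorem \ref{submersion gluing} verbatim, ordering the pieces so that $\Omega ^{(3)}$ is processed last and plays the role of $\tilde{B}_{m_{l}}$, with local model $Q\circ R$ itself in place of some $\mu ^{-1}\circ \tilde{p}$. The conclusion Equation (\ref{last ball eq}) then becomes $P|_{\Omega ^{(1)}}=Q\circ R$, i.e. $P=Q\circ R$ on $\cup _{i\in I_{R}}B_{i}(\rho _{R})$, which is (\ref{respeqn}); the estimates (\ref{still C-0 close}) and (\ref{geom inequl}) become (\ref{resp still C-0 close inequal}) and (\ref{respp stilll C1 cls}) on each $\tilde{B}_{i}(\rho )$, with the same $\tau$'s now depending on the order $\mathfrak{o}+1$; and $P$ is a submersion on all of $\cup _{i}\tilde{B}_{i}(\rho )$ because the last step of the gluing modifies values only in the collar $\Omega ^{(2)}\setminus \Omega ^{(1)}$.

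The main obstacle I anticipate is the conceptually delicate point just described: justifying that $\Omega ^{(3)}$ together with $Q\circ R$ may legitimately be fed into the machinery of Theorem \ref{submersion gluing} as a single ``ball.'' This requires inspecting the proof in Appendix \ref{Appendix section} to confirm that its inductive step uses only the collar separation between $\overline{\Omega ^{(1)}}$ and $\overline{\Omega ^{(3)}\setminus \Omega ^{(2)}}$ together with the $C^{1}$--closeness on overlaps, and not any property special to genuine metric balls (such as geodesic convexity) or to the factored form $\mu ^{-1}\circ \tilde{p}$ of the local models; and it requires tracking the composition estimate for $Q\circ R$ carefully enough that the extra defect $\tau (\eta |k)$ it introduces, together with the unit increase in the order, stays within the quantitative thresholds $\eta (l)$ and $\varepsilon _{0}(l)$ of Theorem \ref{submersion gluing}. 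Everything else is routine bookkeeping parallel to the proof of that theorem.
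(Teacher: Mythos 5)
Your reduction to Theorem~\ref{submersion gluing}, with $Q\circ R$ adjoined as an extra local model to be preserved verbatim, is the correct high-level idea, and you have also correctly located the delicate spot; unfortunately the concern you flag at the end turns out to be fatal rather than something the appendix can be trusted to skirt. The inductive engine of Theorem~\ref{submersion gluing} is Key Lemma~\ref{Gluing, Abstract}, whose deformation (via the Submersion Deformation Lemma~\ref{const rank def}) takes place in a euclidean chart: one interpolates convexly, $\psi=\pi+\omega\cdot(p-\pi)$, where both $\pi=\mu\circ p_G$ and $p=\mu\circ p_O$ are $\mathbb{R}^{l}$--valued. In the proof of Theorem~\ref{submersion gluing} the chart $\hat{\mu}_{j}$ is defined piecewise over a \emph{pairwise disjoint} subcollection $\mathcal{\tilde{B}}_{j}(3\rho)$, and disjointness is exactly what makes the piecewise definition coherent. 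Your candidate ``extra ball'' $\Omega^{(3)}\equiv\cup_{i\in I_{R}}B_{i}(3\rho_{R})$ is emphatically not a disjoint union --- the hypothesis only bounds the order of $\{B_{i}(3\rho_{R})\}_{i\in I_{R}}$ by $\mathfrak{o}$ --- so there is no coherent, even piecewise, coordinate chart over it, and the interpolation step has no well-defined target. The factored form $\mu^{-1}\circ\tilde{p}$ of the local models, which you were hoping to verify is dispensable, is precisely what the machinery requires, and there is no such $\mu$ for $\Omega^{(3)}$.

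The paper resolves this in the only way available: it keeps the factored form by decomposing $\{B_{i}(3\rho_{R})\}_{i\in I_{R}}$ into $\mathfrak{o}$ pairwise disjoint subcollections $\mathcal{B}_{j}(3\rho_{R})$, exactly as in the proof of Theorem~\ref{submersion gluing}. Each $\mathcal{B}_{j}^{u}(3\rho_{R})$ admits a coherent piecewise chart $\hat{\mu}_{j}$ and a coordinate representative $p_{j}=\hat{\mu}_{j}\circ Q\circ R$ of the incoming submersion. One first constructs $\tilde{P}$ by Theorem~\ref{submersion gluing} and then performs $\mathfrak{o}$ further deformations, one per $\mathcal{B}_{j}$, applying Key Lemma~\ref{Gluing, Abstract} each time with $p_{G}=Q\circ R$ and $p_{O}$ the most recent deformation. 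Crucially --- and this is the observation you would need in any correct version --- the $p_{j}$ are all coordinate representations of the \emph{same} map $Q\circ R$, so Inequalities~(\ref{C-0 close inequal}) and~(\ref{C-1 close subm inequality}) hold among them with $\xi=\varepsilon=0$, and Part~4 of Key Lemma~\ref{Gluing, Abstract} then locks in $P=Q\circ R$ wherever it has already been achieved. (Incidentally, the resulting procedure is controlled by an order of roughly $2\mathfrak{o}$, not the $\mathfrak{o}+1$ you wrote; this changes only the constants.) Your argument would go through if you replaced the singleton piece $\Omega^{(3)}$ by these $\mathfrak{o}$ disjoint subcollections; as written it has a genuine gap.
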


Since Theorem \ref{submersion gluing} and Corollary \ref{gluing addendum}
are similar to other results in the literature, we defer their proofs to the
appendix (\ref{Appendix section}).

\section{Establishing the Tubular Neighborhood Stability Theorem\label{TNST
Section}}

In this section, we complete the proof of Theorem \ref{dif stab- dim 4} by
proving the TNST. Parts \ref{T1}--\ref{T3}, \ref{inter strata resp T6} and %
\ref{T6} are established in Subsection \ref{vector bundles}. Part \ref{T4}
is proven in Subsection \ref{embedding subsect}.

\addtocounter{algorithm}{1}

\subsection{The Disk Bundles of the TNST\label{vector bundles}}

Part \ref{T2} of the TNST is a consequence of the following result.

\begin{proposition}
\label{submersions are glued prop}Let $X$ and $\left\{ M_{\alpha }\right\}
_{\alpha }$ be as in the TNST. Given $\varepsilon >0$ and $S\in \mathcal{S}$
let $\mathcal{O}^{S}=\left\{ B_{j}(\rho )\right\} _{j}$ be the open sets
from Theorem \ref{magnum cover thm}, and let 
\begin{eqnarray*}
p_{j}^{\alpha } &:&B_{j}^{\alpha }(3\rho )\longrightarrow \mathbb{R}^{%
\mathrm{\dim S}}\text{ and} \\
\mu _{j} &:&B_{j}(3\rho )\cap S\longrightarrow \mathbb{R}^{\mathrm{\dim S}}
\end{eqnarray*}%
be the $\varepsilon $--almost Riemannian submersions from Theorem \ref{Perel
cap thm}. If $\frac{1}{\alpha }$ and $\rho $ are sufficiently small, then

\noindent 1. There is a $\mathcal{U}_{\alpha }^{j}\subset M_{\alpha }$ and a
surjective $C^{1}$--disk bundle 
\begin{equation*}
P_{\alpha }^{S}:\mathcal{U}_{\alpha }^{S}\longrightarrow O\subset S
\end{equation*}%
whose fibers have dimension $n\,-\,\dim \left( S\right) $ and which is also
an $\varepsilon $--almost Riemannian submersion$.$ Here $O$ is as in Part 1
of Theorem \ref{magnum cover thm}.

\noindent 2. For $B_{j}(\rho _{j})\in \mathcal{O}^{S},$ 
\begin{equation}
\left\vert \mu _{j}\circ P_{\alpha }^{S}-p_{j}^{\alpha }\right\vert
_{C^{1}}<\tau \left( \varepsilon \right)
\label{C^1 close in k-th step Hypoth}
\end{equation}%
on $B_{j}^{\alpha }(\rho )\cap \mathcal{U}_{\alpha }^{S}.$
\end{proposition}

\begin{proof}
By combining Proposition \ref{overlaps C^1 close prop} with Theorems \ref%
{Perel cap thm} and \ref{submersion gluing}, we get the existence of $%
\widetilde{\mathcal{U}}_{\alpha }^{S}\subset M_{\alpha },$ with 
\begin{equation*}
\mathrm{dist}_{GH}\left( \widetilde{\mathcal{U}}_{\alpha },O\right) <\tau
\left( \frac{1}{\alpha },\nu \right)
\end{equation*}%
and a $\tau \left( \varepsilon \right) $--almost Riemannian submersion%
\begin{equation*}
P_{\alpha }:\widetilde{\mathcal{U}}_{\alpha }\longrightarrow O\subset S
\end{equation*}%
that satisfies Equation \autoref{C^1 close in k-th step Hypoth}. Here $\nu $
is as in Proposition \ref{overlaps C^1 close prop}.

Let $d_{\alpha }^{S}$ be as in Parts 5 and 6 of Theorem \ref{magnum cover
thm}, and set $\mathcal{U}_{\alpha }\equiv \widetilde{\mathcal{U}}_{\alpha
}\cap \left( d_{\alpha }^{S}\right) ^{-1}\left[ 0,10\nu \right] ,$ where $%
\nu $ is as in Part 5 of Theorem \ref{magnum cover thm}. It follows from (%
\ref{alms dist inequ}), (\ref{alm vert gradient}), and (\ref{C^1 close in
k-th step Hypoth}) that the restriction of $P_{\alpha }$ to $\mathcal{U}%
_{\alpha }$ is a submersion. Since a proper submersion is a fiber bundle, $%
\left. P_{\alpha }\right\vert _{\mathcal{U}_{\alpha }}$ is a fiber bundle.

If the $\rho $s are small enough, then some of our local submersions are the
restriction of the maps $p_{\mathrm{conv}}^{\alpha }$ from Part 2 of Lemma %
\ref{k-strained Stability}. In particular, these local submersions have disk
fibers. Now order the $B_{j}^{\alpha }(3\rho )$ so that for the last ball,
the corresponding submersion $p_{\mathrm{last}}$ has disk fibers. It follows
from Equation \autoref{last ball eq} that the fibers of $P_{\alpha }$ agree
with those of $p_{\mathrm{last}}$ on this last ball. Hence a fiber of $%
\left. P_{\alpha }\right\vert _{\mathcal{U}_{\alpha }}$ is a disk. Thus $%
P_{\alpha }|_{\mathcal{U}_{\alpha }}$ is a fiber bundle with fiber $\mathbb{D%
}^{n-l},$ where $l=\dim \left( S\right) .$
\end{proof}

\begin{proof}[Proof of Part \protect\ref{T1} of the TNST]
Combine the construction of the $\mathcal{U}_{\gamma }^{S}$s with the
hypothesis that the elements of $\mathcal{S}$ are pairwise disjoint and the
fact that Theorem \ref{magnum cover thm} holds for all sufficiently small $%
\rho $.
\end{proof}

\begin{proof}[Proof of Part \protect\ref{T3} of the TNST]
Set 
\begin{equation*}
\mathcal{U}_{\alpha }^{S}\left( t\right) \equiv \left( d_{\alpha
}^{S}\right) ^{-1}\left[ 0,t\nu \right]
\end{equation*}%
and appeal to the proof of Proposition \ref{submersions are glued prop}.
\end{proof}

\begin{proof}[Proof of Part \protect\ref{inter strata resp T6} of the TNST]
Via an argument nearly identical to the proof of Proposition \ref%
{submersions are glued prop}, we construct the submersions 
\begin{equation*}
Q^{S_{j}}:\mathcal{V}^{S_{j}}\setminus S_{j}\longrightarrow S_{j}.
\end{equation*}%
To get Equation \autoref{gen resp EEEEqn}, we combine Corollaries \ref{old
part 5 cor} and \ref{gluing addendum}.
\end{proof}

\begin{proof}[Proof of Part \protect\ref{T6} of the TNST]
Suppose that $S$ has Ancestor Number 2 and is in $Bd\left( N\right) .$ Then
by Part 3 of Theorem \ref{Perel cap thm}, on $\cup \mathcal{O}^{N}\cap
\left( \cup \mathcal{O}^{S}\setminus B\left( S,\nu \right) \right) ,$ the $%
\left( \mathrm{\dim }\left( S\right) +1\right) ^{st}$--coordinate functions
of all of the $\left( p_{j}^{N}\right) ^{\alpha }$ is the function $%
d_{\alpha }^{S}$ from Part 5 of Theorem \ref{magnum cover thm}.
Additionally, the $\left( \mathrm{\dim }\left( S\right) +1\right) ^{st}$%
--coordinate function of all of the $\mu _{k}^{N}$ is the function $d^{S}$
from Part 7 of Theorem \ref{magnum cover thm}. So by Corollary \ref{respt
other cor}, the $\left( \mathrm{\dim }\left( S\right) +1\right) ^{st}$%
--coordinate function of $\mu _{k}^{N}\circ P_{\alpha }^{N}$ is $d_{\alpha
}^{S}$. Part \ref{T6} of the TNST follows from this and the fact that $%
\mathcal{U}_{\alpha }^{S}\left( 3\right) \equiv \left( d_{\alpha
}^{S}\right) ^{-1}\left[ 0,3\nu \right] .$
\end{proof}

\addtocounter{algorithm}{1}

\subsection{The Embeddings of the TNST\label{embedding subsect}}

Part \ref{T4} of the TNST follows from the next result, wherein we construct
the embedding $\Phi _{\beta ,\alpha }:G_{\alpha }\longrightarrow M_{\beta }$
of the Tubular Neighborhood Stability Theorem. The existence of an embedding 
$G_{\alpha }\longrightarrow M_{\beta }$ is a consequence of Theorem 6.1 in 
\cite{KMS}. To prove Part 4 of the TNST, we also need to show that $\Phi
_{\beta ,\alpha }$ satisfies Equations \autoref{top stratum resp eqn}, (\ref%
{containment}), and (\ref{schn 2}). This is achieved via an appeal to
Corollaries \ref{old part 5 cor} and \ref{gluing addendum}.

\begin{proposition}
\label{generic embedding prop}Let $X$ and $\left\{ M_{\alpha }\right\}
_{\alpha }$ be as in the TNST.

\noindent 1. Set 
\begin{equation*}
G_{\alpha }\equiv M_{\alpha }\setminus \cup _{S\in \boldsymbol{S}}\mathcal{U}%
_{\alpha }^{S}\left( 1\right) .
\end{equation*}%
There is a $C^{1},$ $\tau \left( \frac{1}{\alpha },\frac{1}{\beta }\right) $%
--embedding 
\begin{equation*}
\Phi _{\beta ,\alpha }:G_{\alpha }\longrightarrow M_{\beta }
\end{equation*}%
so that for all $S\in \mathcal{S},$%
\begin{equation}
P_{\alpha }^{S}=P_{\beta }^{S}\circ \Phi _{\beta ,\alpha },
\label{respect baby}
\end{equation}%
wherever both expressions are defined.

\noindent 2. In addition, we may choose $\Phi _{\beta ,\alpha }$ so that for
all $S\in \mathcal{S},$ 
\begin{equation*}
\Phi _{\beta ,\alpha }\left( \partial \mathcal{U}_{\alpha }^{S}\left(
3\right) \cap G_{\alpha }\right) =\partial \mathcal{U}_{\beta }^{S}\left(
3\right) \cap G_{\beta }.
\end{equation*}
\end{proposition}

Note that if $N\in \mathcal{S}$ has ancestor number $1,$ then $\partial 
\mathcal{U}_{\alpha }^{N}\left( 3\right) \subset G_{\alpha }.$ Thus (\ref%
{containment}) and (\ref{schn 2}) follow from Part 2 of the previous result.

\begin{proof}
Using Corollaries \ref{old part 5 cor} and \ref{gluing addendum}, we glue
the embeddings $\left( \mu _{j}^{\beta }\right) ^{-1}\circ \mu _{j}^{\alpha
} $ of Proposition \ref{generic cover cor} to get an immersion 
\begin{equation*}
\Phi _{\beta ,\alpha }:G_{\alpha }\longrightarrow \Phi _{\beta ,\alpha
}\left( G_{\alpha }\right) \subset M_{\beta }
\end{equation*}%
so that for all $S\in \mathcal{S},$ 
\begin{equation*}
P_{\alpha }^{S}=P_{\beta }^{S}\circ \Phi _{\beta ,\alpha },
\end{equation*}%
wherever both expressions are defined.

It follows from Inequalities \autoref{c0 close gen inequal} and \autoref%
{resp still C-0 close inequal} that $\Phi _{\beta ,\alpha }$ is also a $\tau
\left( \frac{1}{\alpha },\frac{1}{\beta }\right) $--Hausdorff approximation.
From Inequalities \autoref{C^1 close gen inequal}, \autoref{c0 close gen
inequal}, and \autoref{respp stilll C1 cls}, it follows that on $%
B_{j}^{\alpha }(\rho ),$ 
\begin{equation}
\left\vert \mu _{j}^{\beta }\circ \Phi _{\beta ,\alpha }-\mu _{j}^{\alpha
}\right\vert _{C^{1}}\leq \tau \left( \delta \right) .
\label{not much change}
\end{equation}

Combining this with the fact that $\mu _{j}^{\alpha }$ and $\mu _{j}^{\beta
} $ are $\left( \tau \left( \delta \right) \right) $--almost Riemannian
embeddings, we see that $\left. \Phi _{\beta ,\alpha }\right\vert
_{B_{j}^{\alpha }(\rho )}$ is one-to-one. Because $\Phi _{\beta ,\alpha }$
is also a $\tau \left( \frac{1}{\alpha },\frac{1}{\beta }\right) $%
--Hausdorff approximation, it is one-to-one if $\alpha $ and $\beta $ are
sufficiently large. Since $\dim \left( M^{\alpha }\right) =\dim \left(
M^{\beta }\right) ,$ $\Phi _{\beta ,\alpha }$ is an embedding.

To prove Part 2, for $S\in \mathcal{S}$ and $\gamma =\alpha $ or $\beta ,$
let $d_{\gamma }^{S}$ be the smooth function from Part 5 of Theorem \ref%
{magnum cover thm}. It follows from (\ref{not much change}) that the $k^{th}$%
--coordinate functions of the $\mu _{j}^{\gamma }$s satisfy 
\begin{equation*}
\left\vert d\Phi _{\beta ,\alpha }\left( \nabla \left( \mu _{j}^{\alpha
}\right) ^{k}\right) -\nabla \left( \mu _{j}^{\beta }\right) ^{k}\right\vert
\leq \tau \left( \delta \right) .
\end{equation*}%
Combining this with Part 3 of Theorem \ref{Perel cap thm}, 
\begin{equation*}
\left\vert d\Phi _{\beta ,\alpha }\left( \nabla d_{\alpha }^{S}\right)
-\nabla d_{\beta }^{S}\right\vert \leq \tau \left( \delta \right) .
\end{equation*}%
Together with (\ref{alms dist inequ}), this gives us that $\nabla d_{\alpha
}^{S}$ is gradient-like for $d_{\beta }.$ It follows that there is a
nonvanishing vector field $W$ on 
\begin{equation*}
\mathcal{U}_{\beta }^{S}\left( 10\right) \setminus \mathcal{U}_{\beta
}^{S}\left( 1\right)
\end{equation*}%
so that 
\begin{equation*}
W=\left\{ 
\begin{array}{ll}
d\Phi _{\beta ,\alpha }\left( \nabla d_{\alpha }^{S}\right) & \text{ near
the boundary of }\Phi _{\beta ,\alpha }\left( M_{\alpha }\setminus \left\{
\cup _{i}\mathcal{U}_{\alpha }^{i}\left( 3\right) \right\} \right) \\ 
\nabla d_{\beta }^{S} & \text{near the boundary of }M_{\beta }\setminus
\left\{ \cup _{i}\mathcal{U}_{\beta }^{i}\left( 2\right) \right\} .%
\end{array}%
\right.
\end{equation*}

Since $\mathcal{U}_{\gamma }^{S}\left( t\right) \equiv \left( d_{\gamma
}^{S}\right) ^{-1}\left[ 0,t\nu \right] ,$ $W$ is transverse to the
boundaries of $\Phi _{\beta ,\alpha }\left( M_{\alpha }\setminus \left\{
\cup _{i}\mathcal{U}_{\alpha }^{i}\left( 3\right) \right\} \right) $ and $%
M_{\beta }\setminus \left\{ \cup _{i}\mathcal{U}_{\beta }^{i}\left( 2\right)
\right\} .$ It follows from (\ref{alm str ineqaul}) and (\ref{C^1 close in
k-th step Hypoth}) that $\nabla d_{\alpha }^{S}$ and $\nabla d_{\beta }^{S}$
are nearly vertical for $P_{\alpha }^{S_{i}}$ and $P_{\beta }^{S_{i}}.$
Combined with (\ref{respect baby}) and (\ref{not much change}) it follows
that $d\Phi _{\beta ,\alpha }\left( \nabla d_{\alpha }^{S}\right) $ is
nearly vertical for $P_{\beta }^{S_{i}}.$ Thus $W$ and its $P_{\beta
}^{S_{i}}$--vertical component, $W^{V},$ are nearly the same field. So $%
W^{V} $ is transverse to the boundaries of $\Phi _{\beta ,\alpha }\left(
M_{\alpha }\setminus \left\{ \cup _{i}\mathcal{U}_{\alpha }^{i}\left(
3\right) \right\} \right) $ and $M_{\beta }\setminus \left\{ \cup _{i}%
\mathcal{U}_{\beta }^{i}\left( 2\right) \right\} .$ Since $\Phi _{\beta
,\alpha }$ is a $\tau \left( \frac{1}{\alpha },\frac{1}{\beta }\right) $%
--Hausdorff approximation,%
\begin{equation*}
\Phi _{\beta ,\alpha }\left( M_{\alpha }\setminus \cup _{i}\mathcal{U}%
_{\alpha }^{S_{i}}\left( 3\right) \right) \subset M_{\beta }\setminus \cup
_{i}\mathcal{U}_{\beta }^{S_{i}}\left( 2\right) .
\end{equation*}%
Using a reparameterization of the flow of $W^{V},$ we construct a
diffeomorphism $\Upsilon $ that carries $\Phi _{\beta ,\alpha }\left(
M_{\alpha }\setminus \left\{ \cup _{i}\mathcal{U}_{\alpha }^{i}\left(
3\right) \right\} \right) $ to $M_{\beta }\setminus \left\{ \cup _{i}%
\mathcal{U}_{\beta }^{i}\left( 2\right) \right\} .$ We abuse notation and
call $\Upsilon \circ \Phi _{\beta ,\alpha },$ $\Phi _{\beta ,\alpha }$. It
follows that $\Phi _{\beta ,\alpha }\left( M_{\alpha }\setminus \cup _{i}%
\mathcal{U}_{\alpha }^{S_{i}}\left( 3\right) \right) =M_{\beta }\setminus
\cup _{i}\mathcal{U}_{\beta }^{S_{i}}\left( 2\right) ,$ and, after modifying
the parameterization of our disk bundles, 
\begin{equation*}
\Phi _{\beta ,\alpha }\left( M_{\alpha }\setminus \cup _{i}\mathcal{U}%
_{\alpha }^{S_{i}}\left( 3\right) \right) =M_{\beta }\setminus \cup _{i}%
\mathcal{U}_{\beta }^{S_{i}}\left( 3\right) ,
\end{equation*}%
so Part 2 holds. Since $W^{V}$ is vertical for $P_{\beta }^{S},$ $\Phi
_{\beta ,\alpha }$ continues to satisfy Equation (\ref{respect baby}).
\end{proof}

This completes the proof of Theorem \ref{dif stab- dim 4}, modulo the proofs
of Theorem \ref{submersion gluing} and Corollaries \ref{respt other cor} and %
\ref{gluing addendum}.

\section{Appendix A: How to Glue $C^{1}$--Close Submersions\label{Appendix
section}}

In this section we prove Theorem \ref{submersion gluing} and Corollary \ref%
{gluing addendum}. Before doing so we establish several inductive gluing
tools in Subsubsection \ref{ind gluing tools}, and we prove a result about
stability of intersection patterns in Subsubsection \ref{inter stab subsub}.

\addtocounter{algorithm}{1}

\subsection{Tools to \label{ind gluing tools}Glue $C^{1}$--Close Submersions}

In this subsection we prove Key Lemma \ref{Gluing, Abstract}, the main
inductive gluing lemma that will allow us to prove Theorem \ref{submersion
gluing}. First we establish several preliminary results.

\begin{lemma}
\label{Rank Isotopy Lemma}(Submersion Isotopy Lemma) Let $G\subset M$ be an
open subset of a Riemannian $n$--manifold $M$. Let $\pi :G\rightarrow 
\mathbb{R}^{l}$ be an $\eta $--almost Riemannian submersion, and let $%
p:G\rightarrow \mathbb{R}^{l}$ be any submersion with%
\begin{equation*}
\left\vert p-\pi \right\vert _{C^{1}}<\varepsilon .
\end{equation*}%
There are positive numbers $\eta _{1}$ and $\varepsilon _{1}$ that only
depend on $l$ so that if $\eta \in \left( 0,\eta _{1}\right) $ and $%
\varepsilon \in \left( 0,\varepsilon _{1}\right) ,$ then the homotopy $%
H:G\times \lbrack 0,1]\rightarrow \mathbb{R}^{l},$%
\begin{equation*}
H_{t}\equiv \pi +t(p-\pi ),
\end{equation*}%
from $p$ to $\pi $ has the following properties.

\begin{enumerate}
\item $H_{t}$ is a submersion.

\item $\left\vert H_{t}-\pi \right\vert _{C^{1}}<\varepsilon $ and $%
\left\vert H_{t}-p\right\vert _{C^{1}}<\varepsilon .$

\item $|H_{t}-\pi |_{C^{0}}\leq |p-\pi |_{C^{0}}$ and $|H_{t}-p|_{C^{0}}\leq
|p-\pi |_{C^{0}}.$

\item If $Z\subset G$ is open and $q:Z\rightarrow \mathbb{R}^{l}$ is a
submersion with $\left\vert q-\pi \right\vert _{C^{1}}<\varepsilon $ and $%
\left\vert q-p\right\vert _{C^{1}}<\varepsilon ,$ then $\left\vert
H_{t}-q\right\vert _{C^{1}}<\varepsilon .$

\item If $Z\subset G$ is open and $q:Z\rightarrow \mathbb{R}^{l}$ is a
submersion with $\left\vert q-\pi \right\vert _{C^{0}}<\xi $ and $\left\vert
q-p\right\vert _{C^{0}}<\xi ,$ then $\left\vert H_{t}-q\right\vert
_{C^{0}}<\xi .$

\item $|DH_{\left( x,t\right) }\left( 0,\frac{\partial }{\partial t}\right)
|\leq |p-\pi |_{C^{0}}$.

\item If $F$ is a subset of $G$ with $\pi _{k}\circ p|_{F}=\pi _{k}\circ \pi
|_{F},$ then $\pi _{k}\circ H_{t}|_{F}=\pi _{k}\circ p|_{F}=\pi _{k}\circ
\pi |_{F}$ for all $t.$ Here $\pi _{k}:\mathbb{R}^{l}\rightarrow \mathbb{R}%
^{k}$ is projection to the first $k$--factors.
\end{enumerate}
\end{lemma}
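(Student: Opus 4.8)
\textbf{Proof proposal for the Submersion Isotopy Lemma (Lemma \ref{Rank Isotopy Lemma}).}

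The plan is to treat the statement as an elementary exercise in linear algebra and the definition of an $\eta$--almost Riemannian submersion, carried out fiberwise over points of $G$. First I would fix $x\in G$ and work entirely in $T_xM$ and $\mathbb{R}^l$. The hypothesis that $\pi$ is an $\eta$--almost Riemannian submersion means that $D\pi_x$ restricted to the orthogonal complement of its kernel is $(1+\eta)$--bilipschitz onto $\mathbb{R}^l$; in particular the smallest singular value of $D\pi_x$ is bounded below by $(1+\eta)^{-1}$. Since $\left\vert p-\pi\right\vert_{C^1}<\varepsilon$ gives $\left\vert Dp_x - D\pi_x\right\vert<\varepsilon$ in operator norm, and $DH_{t,x} = D\pi_x + t(Dp_x - D\pi_x)$ is a convex combination, we get $\left\vert DH_{t,x} - D\pi_x\right\vert \le \varepsilon$ for all $t\in[0,1]$. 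Choosing $\varepsilon_1$ small enough (depending only on $l$, via the lower bound $(1+\eta_1)^{-1}$ on singular values, which is why we also constrain $\eta_1$) forces $DH_{t,x}$ to still be surjective, giving Part (1); simultaneously this bound is exactly Part (2) for the comparison with $\pi$, and the comparison with $p$ follows because $DH_{t,x} - Dp_x = (1-t)(D\pi_x - Dp_x)$ also has norm $\le\varepsilon$.

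Next I would dispatch the $C^0$ statements, which need no smallness hypothesis at all: $H_t - \pi = t(p-\pi)$, so $\left\vert H_t - \pi\right\vert_{C^0} = t\left\vert p-\pi\right\vert_{C^0}\le\left\vert p-\pi\right\vert_{C^0}$, and likewise $H_t - p = (1-t)(\pi - p)$, giving Part (3). Part (6) is the same computation differentiated in $t$: $DH_{(x,t)}(0,\partial/\partial t) = p(x)-\pi(x)$, whose norm is at most $\left\vert p-\pi\right\vert_{C^0}$. Part (7) is immediate since on $F$ we have $p=\pi$, hence $H_t = \pi + t\cdot 0 = \pi = p$ there for every $t$. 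Parts (4) and (5) are the triangle inequality applied to the convex combination: writing $H_t - q = (H_t - \pi) + (\pi - q)$ is not quite it, so instead I would use $H_t - q = (1-t)(\pi - q) + t(p - q)$, and since both $\left\vert\pi-q\right\vert$ and $\left\vert p-q\right\vert$ are $<\varepsilon$ (resp. $<\xi$) in the relevant norm, the convex combination is too. This uses only that $H_t$ is pointwise a convex combination of $\pi$ and $p$, which is the one structural fact making all seven parts fall out.

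I do not expect a genuine obstacle here; the only point requiring a little care is making the constants $\eta_1$ and $\varepsilon_1$ in Part (1) depend on $l$ alone and not on $M$ or $G$, which works precisely because the lower bound on the singular values of $D\pi_x$ coming from the $\eta$--almost Riemannian submersion property is uniform, and the set of surjective linear maps $\mathbb{R}^n\to\mathbb{R}^l$ with smallest singular value $\ge c$ is open with a margin depending only on $c$ and $l$. So the "hard part", such as it is, is simply bookkeeping: choose $\eta_1$ so that $(1+\eta_1)^{-1} \ge \tfrac12$, say, then choose $\varepsilon_1 = \varepsilon_1(l)$ so that any linear perturbation of size $<\varepsilon_1$ of a map with smallest singular value $\ge\tfrac12$ remains surjective (and the argument is scale-invariant, so no dependence on the manifold enters). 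Everything else is a one-line consequence of $H_t$ being the affine segment from $\pi$ to $p$.
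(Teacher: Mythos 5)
Your proposal is correct and follows essentially the same approach as the paper's proof: establish a uniform $C^1$--neighborhood of $\pi$ within which every map is still a submersion (via the singular-value lower bound from the $\eta$--almost Riemannian condition), then observe that every remaining part follows immediately from writing $H_t$ as the affine segment $(1-t)\pi + tp$ and exploiting convexity, e.g.\ $H_t - q = (1-t)(\pi-q) + t(p-q)$. The only nitpick is that the smallest nontrivial singular value of $D\pi_x$ is bounded below by $1-\eta$ rather than $(1+\eta)^{-1}$ under the paper's definition, but this is inconsequential.
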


\begin{proof}
There is an $\varepsilon _{\mathrm{Riem}}>0$ so that for any Riemannian
submersion $\pi _{\mathrm{Riem}}:G\rightarrow \mathbb{R}^{l},$ any map $%
h:G\rightarrow \mathbb{R}^{l}$ is a submersion, provided 
\begin{equation*}
\left\vert h-\pi _{\mathrm{Riem}}\right\vert _{C^{1}}<\varepsilon _{\mathrm{%
Riem}}.
\end{equation*}

Take $\eta _{1}=\varepsilon _{1}=\frac{\varepsilon _{\mathrm{Riem}}}{2}.$
Then any map $h:G\rightarrow \mathbb{R}^{l}$ is a submersion provided 
\begin{equation*}
\left\vert h-\pi \right\vert _{C^{1}}<\varepsilon _{1}.
\end{equation*}%
Since 
\begin{equation*}
H_{t}\equiv \pi +t(p-\pi ),
\end{equation*}%
Conclusions 2, 3, 4, and 5 follow from convexity of balls in Euclidean
space, and Conclusion 7 follows from the definition of $H.$ Conclusion 1
follows from Conclusion 2 and our choice of $\varepsilon .$ Conclusion 6
follows from 
\begin{equation*}
DH_{\left( x,t\right) }\left( 0,\frac{\partial }{\partial t}\right) =p\left(
x\right) -\pi \left( x\right) .
\end{equation*}
\end{proof}

\begin{lemma}
\label{Urysohn}For $\zeta >0$, let $W\Subset V\Subset G\subset M$ be three
nonempty, open, pre-compact sets that satisfy 
\begin{equation*}
\mathrm{dist(}\overline{W},\overline{G\setminus V}\mathrm{)}>\zeta .
\end{equation*}%
There is a $C^{\infty }$ function $\omega :G\longrightarrow \left[ 0,1\right]
$ that satisfies

\begin{enumerate}
\item 
\begin{equation*}
\omega \left( x\right) =\left\{ 
\begin{array}{cc}
0 & \text{for }x\in W \\ 
1 & \text{for }x\in G\setminus V%
\end{array}%
\right.
\end{equation*}

\item 
\begin{equation*}
\left\vert \nabla \omega \right\vert \leq \frac{2}{\zeta }.
\end{equation*}
\end{enumerate}
\end{lemma}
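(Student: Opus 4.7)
The plan is to build $\omega$ in two stages: first a Lipschitz function with appropriate buffer zones around $W$ and $G\setminus V$, then mollify it at a scale small enough that the smoothing disturbs neither the boundary values nor the Lipschitz constant.

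Because $\mathrm{dist}(\overline{W},\overline{G\setminus V})>\zeta$, I can choose $\rho>0$ so small that
\begin{equation*}
\mathrm{dist}(\overline{W},\overline{G\setminus V})>\zeta+2\rho.
\end{equation*}
Then I define the continuous function $\tilde f:G\to[0,1]$ by
\begin{equation*}
\tilde f(x)=\min\!\left(\frac{\max(\mathrm{dist}(x,\overline{W})-\rho,\,0)}{\zeta},\;1\right).
\end{equation*}
Set-theoretically, $\tilde f\equiv 0$ on the open $\rho$-neighborhood $W_\rho$ of $\overline{W}$ (which contains $\overline W$ and in particular $W$), and $\tilde f\equiv 1$ on the open set $\{x:\mathrm{dist}(x,\overline W)>\zeta+\rho\}$, which by choice of $\rho$ contains an open neighborhood of $\overline{G\setminus V}$. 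As a composition of a 1-Lipschitz function with a $(1/\zeta)$-Lipschitz piecewise-affine truncation, $\tilde f$ is $(1/\zeta)$-Lipschitz with respect to the Riemannian distance on $G$.

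Next I would smooth $\tilde f$ using the Riemannian convolution method of Greene and Shiohama (cited as \cite{GrovShio} in this paper) at a scale $\varepsilon\in(0,\rho)$, producing $\omega\in C^\infty(G,[0,1])$. Two standard properties of that construction suffice: (i) if $\tilde f$ is constant on the geodesic $\varepsilon$-ball around each point of an open set $U$, then $\omega\equiv\tilde f$ on $U$; and (ii) $\|\nabla\omega\|_{L^\infty}\le\mathrm{Lip}(\tilde f)\,(1+o_\varepsilon(1))$, where the $o_\varepsilon(1)$ error depends only on the local geometry of $M$ on the relevant compact sets. Shrinking $\varepsilon$ so the factor in (ii) is at most $2$ forces $|\nabla\omega|\le 2/\zeta$, while the inclusions $W\subset W_{\rho-\varepsilon}$ and $\overline{G\setminus V}\subset\{\mathrm{dist}(\cdot,\overline W)>\zeta+\rho+\varepsilon\}$ (valid for $\varepsilon<\rho$) allow (i) to give $\omega\equiv 0$ on $W$ and $\omega\equiv 1$ on $G\setminus V$.

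The only point needing care is the coordination of the three scales: the buffer width $\rho$ must fit strictly inside the excess separation $\mathrm{dist}(\overline W,\overline{G\setminus V})-\zeta$, and the smoothing scale $\varepsilon$ must be small enough both to preserve the boundary values through property (i) and to keep the Lipschitz-constant inflation in (ii) below a factor of $2$. The choices $\rho<\tfrac12(\mathrm{dist}(\overline W,\overline{G\setminus V})-\zeta)$ and $\varepsilon\ll\rho$ realize both demands simultaneously, completing the construction.
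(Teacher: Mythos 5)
Your construction is correct, but it takes a different route from the paper. The paper builds \emph{two} smoothed distance-type functions: it chooses sublevel sets $C_{1}\supset W$ and $C_{2}\supset G\setminus V$ with $\mathrm{dist}(C_{1},C_{2})>\zeta$, uses the Greene--Wu/Grove--Shiohama convolution to produce smooth $f_{C_{i}}\geq 0$ with $|\nabla f_{C_{i}}|\leq 2$, $f_{C_{i}}|_{C_{i}}\equiv 0$, and $f_{C_{1}}+f_{C_{2}}>\zeta$, and then sets $\omega=f_{C_{1}}/(f_{C_{1}}+f_{C_{2}})$; the bound $|\nabla\omega|\leq 2/\zeta$ comes from the quotient rule, and the boundary values come for free from the vanishing of $f_{C_{1}}$ on $C_{1}$ and of $f_{C_{2}}$ on $C_{2}$. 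You instead write down a single explicit $(1/\zeta)$--Lipschitz truncated-distance cutoff with buffer zones and mollify it once, relying on two standard features of the same convolution: exact preservation of the function on sets where it is constant at the smoothing scale, and inflation of the Lipschitz constant by at most a factor arbitrarily close to $1$ on compact sets. Your scale bookkeeping ($\rho<\tfrac12(\mathrm{dist}(\overline W,\overline{G\setminus V})-\zeta)$, $\varepsilon<\rho$, and $\varepsilon$ small enough that the gradient inflation is $\leq 2$) is consistent and delivers both conclusions; the approach is arguably more direct (no quotient rule, one smoothing instead of two), at the cost of invoking the local-constancy preservation property explicitly, which the ratio construction sidesteps. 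Two small points worth making explicit: your $\tilde f$ should be regarded as defined by the same formula on a neighborhood of $\overline G$ in $M$ (it extends verbatim), so that the convolution is defined at points of $G$ near $\partial G$; and the averaged function automatically stays in $[0,1]$, so no further truncation is needed.
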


\begin{proof}
Approximate $\mathrm{dist}(\overline{W},\cdot )$ and $\mathrm{dist}(%
\overline{G\setminus V},\cdot )$ by smooth functions in the $C^{0}$%
-topology. Choose sublevels $C_{1}$ and $C_{2}$ of these approximations so
that $W\Subset C_{1}$, $G\setminus V\Subset C_{2},$ and $\mathrm{dist}%
(C_{1},C_{2})>\zeta $. Using the techniques of \cite{GrovShio,GrWu2},
approximate $\mathrm{dist}(C_{i},\cdot )$ by smooth functions $f_{C_{i}}$
that satisfy $f_{C_{i}}\geq 0,$ $|\nabla f_{C_{i}}|\leq 2$, and $%
f_{C_{i}}|_{C_{i}}\equiv 0$. Since 
\begin{equation*}
\mathrm{dist}(C_{1},x)+\mathrm{dist}(C_{2},x)\geq \mathrm{dist}%
(C_{1},C_{2})>\zeta ,
\end{equation*}%
and the technique of \cite{GrovShio,GrWu2} allows the approximation to be as
close as we please in the $C^{0}$--topology, we can choose the $f_{C_{i}}$s
so that they also satisfy 
\begin{equation*}
f_{C_{1}}+f_{C_{2}}>\zeta .
\end{equation*}

Then the function 
\begin{equation*}
\omega \equiv \frac{f_{C_{1}}}{f_{C_{1}}+f_{C_{2}}}
\end{equation*}%
satisfies Property 1. Moreover, 
\begin{eqnarray*}
\left\vert \nabla \omega \right\vert &=&\left\vert \frac{\left(
f_{C_{1}}+f_{C_{2}}\right) \nabla f_{C_{1}}-f_{C_{1}}\nabla \left(
f_{C_{1}}+f_{C_{2}}\right) }{\left( f_{C_{1}}+f_{C_{2}}\right) ^{2}}%
\right\vert \\
&=&\left\vert \frac{f_{C_{2}}\nabla f_{C_{1}}-f_{C_{1}}\nabla f_{C_{2}}}{%
\left( f_{C_{1}}+f_{C_{2}}\right) ^{2}}\right\vert \\
&\leq &2\frac{f_{C_{2}}+f_{C_{1}}}{\left( f_{C_{1}}+f_{C_{2}}\right) ^{2}} \\
&\leq &\frac{2}{\zeta },
\end{eqnarray*}%
as claimed.
\end{proof}

\begin{lemma}
\label{const rank def}(Submersion Deformation Lemma) Let $W\Subset V\Subset
G\subset M$ satisfy the hypotheses of Lemma \ref{Urysohn}, and let $\omega
:G\longrightarrow \left[ 0,1\right] $ be as in the conclusion of Lemma \ref%
{Urysohn}. Let $\pi :G\rightarrow \mathbb{R}^{l}$ be an $\eta $--almost
Riemannian submersion, where $\eta $ is as in Lemma \ref{Rank Isotopy Lemma}%
. Let $p:G\rightarrow \mathbb{R}^{l}$ be a submersion satisfying 
\begin{equation*}
\left\vert p-\pi \right\vert _{C^{1}}<\varepsilon \text{ and }\left\vert
p-\pi \right\vert _{C^{0}}<\xi <\varepsilon ,
\end{equation*}%
and let $\varepsilon _{1}$ be as in Lemma \ref{Rank Isotopy Lemma}.

If $0<\varepsilon +\frac{2\left\vert p-\pi \right\vert _{C^{0}}}{\zeta }%
<\varepsilon _{1},$ then the map $\psi :G\rightarrow \mathbb{R}^{l}$%
\begin{equation*}
\psi \left( x\right) =\pi \left( x\right) +\omega (x)\cdot (p-\pi )\left(
x\right)
\end{equation*}%
is a submersion with the following properties.

\begin{enumerate}
\item 
\begin{equation*}
\psi =\left\{ 
\begin{array}{cc}
\pi & \text{on }W \\ 
p & \text{on }G\setminus V%
\end{array}%
\right.
\end{equation*}

\item 
\begin{equation*}
\left\vert \psi -\pi \right\vert _{C^{1}}<\varepsilon +\frac{2\left\vert
p-\pi \right\vert _{C^{0}}}{\zeta }\text{ and }\left\vert \psi -p\right\vert
_{C^{1}}<\varepsilon +\frac{2\left\vert p-\pi \right\vert _{C^{0}}}{\zeta }
\end{equation*}

\item If $U\subset G$ is open and $q:U\rightarrow \mathbb{R}^{l}$ is a
submersion with $\left\vert q-\pi \right\vert _{C^{1}}<\varepsilon $ and $%
\left\vert q-p\right\vert _{C^{1}}<\varepsilon ,$ then $\left\vert \psi
-q\right\vert _{C^{1}}<\varepsilon +\frac{2\left\vert p-\pi \right\vert
_{C^{0}}}{\zeta }.$

\item If $U\subset G$ is open and $q:U\rightarrow \mathbb{R}^{l}$ is a
submersion with $\left\vert q-\pi \right\vert _{C^{0}}<\xi $ and $\left\vert
q-p\right\vert _{C^{0}}<\xi ,$ then $\left\vert \psi -q\right\vert
_{C^{0}}<\xi .$

\item If $F$ is a subset of $G$ with $\pi _{k}\circ p|_{F}=\pi _{k}\circ
\varphi |_{F},$ then $\pi _{k}\circ \psi |_{F}=\pi _{k}\circ p|_{F}=\pi
_{k}\circ \varphi |_{F},$ where $\pi _{k}:\mathbb{R}^{l}\rightarrow \mathbb{R%
}^{k}$ is projection to the first $k$--factors.
\end{enumerate}
\end{lemma}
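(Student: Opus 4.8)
The plan is to verify the five conclusions directly from the explicit formula $\psi(x) = \pi(x) + \omega(x)\cdot(p-\pi)(x)$, using Lemma \ref{Urysohn} for the properties of $\omega$ and Lemma \ref{Rank Isotopy Lemma} to handle the submersion property. First I would observe that $\psi$ can be rewritten as $\psi(x) = H_{\omega(x)}(x)$, where $H_t = \pi + t(p-\pi)$ is the homotopy from Lemma \ref{Rank Isotopy Lemma}; this reduces several of the conclusions to facts already established there. Conclusion 1 is immediate: on $W$ we have $\omega \equiv 0$, so $\psi = \pi$, and on $G\setminus V$ we have $\omega \equiv 1$, so $\psi = p$. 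Conclusion 5 is equally immediate, since on $F$ we have $(p-\pi)|_F = 0$, hence the correction term vanishes identically regardless of $\omega$.

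The substantive estimates are Conclusions 2, 3, and 4. For the $C^1$ bound in Conclusion 2, I would differentiate $\psi = \pi + \omega\cdot(p-\pi)$ by the product rule: $D\psi = D\pi + \omega\, D(p-\pi) + (p-\pi)\otimes d\omega$. Thus $|D\psi - D\pi| \leq \omega\,|D(p-\pi)| + |p-\pi|_{C^0}\,|\nabla\omega| \leq \varepsilon + |p-\pi|_{C^0}\cdot\frac{2}{\zeta}$, using $0 \leq \omega \leq 1$, the hypothesis $|p-\pi|_{C^1} < \varepsilon$, and Property 2 of Lemma \ref{Urysohn}. The bound for $|\psi - p|_{C^1}$ is symmetric: write $\psi - p = (\omega - 1)(p-\pi)$ and differentiate. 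For Conclusion 3 (with $q$ as given), I would write $D\psi - Dq = (D\pi - Dq) + \omega\,D(p-\pi) + (p-\pi)\otimes d\omega$ and estimate $|D\psi - Dq| \leq |D\pi - Dq| + \omega|D(p-\pi)| + |p-\pi|_{C^0}|\nabla\omega|$; but this naive bound gives $2\varepsilon + \frac{2\xi}{\zeta}$, which is too weak. Instead the right move is to bound $|D\psi - Dq|$ pointwise by noting that at each point $D\psi$ is a convex combination $D\pi + \omega(x)(Dp - D\pi)$ perturbed only by the $d\omega$ term: since $|Dp - Dq| < \varepsilon$ and $|D\pi - Dq| < \varepsilon$, the segment from $D\pi$ to $Dp$ stays within $\varepsilon$ of $Dq$ by convexity of balls in the space of linear maps, so $|D\pi + \omega(Dp-D\pi) - Dq| < \varepsilon$, and then adding the $d\omega$ term contributes at most $|p-\pi|_{C^0}\cdot\frac{2}{\zeta}$. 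This parallels exactly how Conclusion 4 of Lemma \ref{Rank Isotopy Lemma} is proved.

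For Conclusion 4, the $C^0$ statement: $\psi - q = (\pi - q) + \omega(p-\pi)$, and at each point this equals a convex combination $\pi + \omega(x)(p-\pi) - q = (1-\omega(x))(\pi - q) + \omega(x)(p - q)$, so $|\psi - q|_{C^0} \leq (1-\omega)|\pi - q|_{C^0} + \omega|p - q|_{C^0} < (1-\omega)\xi + \omega\xi = \xi$. The $C^0$ bound $|\psi - \pi|_{C^0} \leq |p-\pi|_{C^0}$ and $|\psi - p|_{C^0} \leq |p-\pi|_{C^0}$ follow the same way, from $\psi$ being a pointwise convex combination of $\pi$ and $p$. Finally, the submersion property: by hypothesis $\varepsilon + \frac{2|p-\pi|_{C^0}}{\zeta} < \varepsilon_1$, so Conclusion 2 gives $|\psi - \pi|_{C^1} < \varepsilon_1$, and by the choice of $\varepsilon_1 = \frac{\varepsilon_{\mathrm{Riem}}}{2}$ in Lemma \ref{Rank Isotopy Lemma} together with the fact that $\pi$ is $\eta$--almost Riemannian with $\eta < \eta_1$, any map within $\varepsilon_1$ of $\pi$ in $C^1$ is a submersion; hence $\psi$ is a submersion. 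The main obstacle is getting the sharp constant $\varepsilon + \frac{2|p-\pi|_{C^0}}{\zeta}$ in Conclusions 2 and 3 rather than a doubled version — this requires the convexity argument on the linear-map level rather than a crude triangle inequality, exactly as in the proof of Lemma \ref{Rank Isotopy Lemma}.
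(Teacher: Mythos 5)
Your proof is correct and follows essentially the same route as the paper: compute $D\psi$ by the product rule, use $|\nabla\omega|\le 2/\zeta$ from Lemma \ref{Urysohn} for Parts 2--3, observe $\psi(x)=H_{\omega(x)}(x)$ so that Parts 4 and 5 reduce to properties of the homotopy in Lemma \ref{Rank Isotopy Lemma}, and conclude the submersion property from the $C^1$ bound together with $\varepsilon + 2|p-\pi|_{C^0}/\zeta < \varepsilon_1$. The only cosmetic difference is that you unpack the convexity-of-balls argument for Parts 3 and 4 in place, whereas the paper cites Parts 4 and 5 of Lemma \ref{Rank Isotopy Lemma} directly.
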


\begin{proof}
Part 1 is a consequence of the definitions of $\psi $ and $\omega .$

Let $H_{t}:G\rightarrow \mathbb{R}^{l}$ be the isotopy from Lemma \ref{Rank
Isotopy Lemma}. Since $\psi (x)=H_{\omega (x)}(x)$, Parts 4 and 5 follow
from Parts 5 and 7 of Lemma \ref{Rank Isotopy Lemma}.

For any $x\in G$ and any $v\in T_{x}M,$%
\begin{equation}
D\psi _{x}\left( v\right) =D\pi _{x}\left( v\right) +\omega \left( x\right)
D\left( p-\pi \right) _{x}\left( v\right) +\left\langle \nabla \omega
,v\right\rangle \left( p-\pi \right) \left( x\right) .  \label{Dpsi eqn}
\end{equation}%
Since $\left\vert p-\pi \right\vert _{C^{1}}<\varepsilon ,$ $\left\vert
\omega \right\vert \leq 1,$ and $\left\vert \nabla \omega \right\vert \leq 
\frac{2}{\zeta },$ 
\begin{eqnarray*}
\left\vert D\psi _{x}-D\pi _{x}\right\vert &\leq &\varepsilon +\left\vert
\nabla \omega \right\vert |p-\pi |_{C^{0}} \\
&\leq &\varepsilon +\frac{2|p-\pi |_{C^{0}}}{\zeta }.
\end{eqnarray*}%
By rewriting $\psi $ as $\psi =p+\left( 1-\omega \right) \cdot (\pi -p),$ a
similar argument gives 
\begin{equation*}
\left\vert D\psi _{x}-Dp_{x}\right\vert \leq \varepsilon +\frac{2|p-\pi
|_{C^{0}}}{\zeta }.
\end{equation*}%
Combining the previous two displays gives us Part 2.

If $q$ is as in Part 3, then by Part 4 of Lemma \ref{Rank Isotopy Lemma}, 
\begin{equation*}
\left\vert \left( Dq\right) _{x}-\left( D\pi _{x}+\omega \left( x\right)
D\left( p-\pi \right) _{x}\right) \right\vert <\varepsilon .
\end{equation*}%
Combined with Equation \autoref{Dpsi eqn} this gives us Part 3.

Combining Part 2 with our hypothesis that $\varepsilon +\frac{2\left\vert
p-\pi \right\vert _{C^{0}}}{\zeta }<\varepsilon _{1},$ we see that $\psi $
is a submersion.
\end{proof}

\begin{keylemma}
\label{Gluing, Abstract}Let $\tilde{M}$ and $S$ be compact Riemannian
manifolds. Let 
\begin{eqnarray*}
\tilde{W}\Subset \tilde{V}\Subset \tilde{G},\tilde{O} &\subset &\tilde{M}%
\text{ and} \\
G,O &\subset &S
\end{eqnarray*}%
be pre-compact open sets with 
\begin{equation*}
\mathrm{dist(closure}\left( \tilde{W}\right) ,\mathrm{closure}\left( \tilde{G%
}\setminus \tilde{V}\right) \mathrm{)}>\zeta \text{.}
\end{equation*}%
Let $p_{O}:\tilde{O}\longrightarrow O,$ $p_{G}:\tilde{G}\longrightarrow G$
and $\mu :G\longrightarrow \mathbb{R}^{l}$ be $\eta $--almost Riemannian
submersions with $\mu $ a coordinate chart.

Suppose $\tilde{W}\cap \tilde{O}\neq \emptyset $, $p_{G}\left( \tilde{W}%
\right) \cap p_{O}\left( \tilde{O}\right) \neq \emptyset ,$ and the
restrictions of $p_{O}$ and $p_{G}$ to $\tilde{O}\cap \tilde{G}$ satisfy 
\begin{equation}
\left\vert \mu \circ p_{O}-\mu \circ p_{G}\right\vert _{C^{1}}<\varepsilon 
\text{ and\label{pi_l vs P_0}}
\end{equation}%
\begin{equation}
\left\vert \mu \circ p_{O}-\mu \circ p_{G}\right\vert _{C^{0}}<\xi
<\varepsilon ,  \label{pi_i vs P_0--C^0}
\end{equation}%
where $\varepsilon +\frac{2\xi }{\zeta }<\varepsilon _{1},$ and $\varepsilon
_{1}$ is as in Lemma \ref{Rank Isotopy Lemma}.

Then there is a submersion 
\begin{equation*}
P:\tilde{W}\cup \tilde{O}\longrightarrow P\left( \tilde{W}\cup \tilde{O}%
\right) \subset S
\end{equation*}%
so that%
\begin{equation}
P=\left\{ 
\begin{array}{cc}
p_{G} & \mathrm{on}\,\,\,\tilde{W} \\ 
p_{O} & \mathrm{on}\,\,\,\tilde{O}\setminus \tilde{V},%
\end{array}%
\right.  \label{pieces of P eqn}
\end{equation}%
and in addition, the following hold.

\noindent 1. On $\tilde{G}\cap \tilde{O},$%
\begin{equation*}
\left\vert \mu \circ P-\mu \circ p_{G}\right\vert _{C^{1}}<\varepsilon +%
\frac{2\xi }{\zeta }\text{ and }\left\vert \mu \circ P-\mu \circ
p_{O}\right\vert _{C^{1}}<\varepsilon +\frac{2\xi }{\zeta }.
\end{equation*}%
\noindent 2. If $\tilde{U}\subset \tilde{G}\cap \tilde{O}$ is open and $q:%
\tilde{U}\longrightarrow S$ is a submersion with $\left\vert \mu \circ q-\mu
\circ p_{G}\right\vert _{C^{1}}<\varepsilon $ and $\left\vert \mu \circ
q-\mu \circ p_{O}\right\vert _{C^{1}}<\varepsilon ,$ then $\left\vert \mu
\circ P-\mu \circ q\right\vert _{C^{1}}<\varepsilon +\frac{2\xi }{\zeta }.$

\noindent 3. If $\tilde{U}\subset \tilde{G}\cap \tilde{O}$ is open and $q:%
\tilde{U}\longrightarrow S$ is a submersion with $\left\vert \mu \circ q-\mu
\circ p_{G}\right\vert _{C^{0}}<\xi $ and $\left\vert \mu \circ q-\mu \circ
p_{O}\right\vert _{C^{0}}<\xi ,$ then $\left\vert \mu \circ P-\mu \circ
q\right\vert _{C^{0}}<\xi .$

\noindent 4. If $F$ is a subset of $\tilde{O}\cap \tilde{G}$ with $\pi
_{k}\circ \mu \circ p_{O}|_{F}=\pi _{k}\circ \mu \circ p_{G}|_{F},$ then $%
\pi _{k}\circ \mu \circ P|_{F}=\pi _{k}\circ \mu \circ p_{O}|_{F}=\pi
_{k}\circ \mu \circ p_{G}|_{F}.$ Here $\pi _{k}:\mathbb{R}^{l}\rightarrow 
\mathbb{R}^{k}$ is projection to the first $k$--factors.
\end{keylemma}

\begin{proof}
By Lemma \ref{const rank def} there is a submersion $\psi :\tilde{G}\cap 
\tilde{O}\longrightarrow \psi \left( \tilde{G}\cap \tilde{O}\right) \subset 
\mathbb{R}^{l}$ so that%
\begin{equation*}
\psi =\left\{ 
\begin{array}{cc}
\mu \circ p_{G} & \text{on }\tilde{W}\cap \tilde{O} \\ 
\mu \circ p_{O} & \text{on }\left( \tilde{G}\setminus \tilde{V}\right) \cap 
\tilde{O}%
\end{array}%
\right.
\end{equation*}%
and%
\begin{equation*}
\left\vert \psi -\mu \circ p_{G}\right\vert _{C^{1}}<\varepsilon +\frac{2\xi 
}{\zeta }\text{ and }\left\vert \psi -\mu \circ p_{O}\right\vert
_{C^{1}}<\varepsilon +\frac{2\xi }{\zeta }.
\end{equation*}%
Therefore, the map 
\begin{equation*}
P:\tilde{W}\cup \tilde{O}\longrightarrow S
\end{equation*}%
defined by%
\begin{equation*}
P:=\left\{ 
\begin{array}{cc}
p_{G} & \text{on }\,\,\,\tilde{W} \\ 
\mu ^{-1}\circ \psi & \text{on }\,\,\,\tilde{G}\cap \tilde{O} \\ 
p_{O} & \text{on }\,\,\,\tilde{O}\setminus \tilde{V}%
\end{array}%
\right.
\end{equation*}%
is a well defined submersion satisfying Equation \autoref{pieces of P eqn}.
Combining the definition of $P$ with Parts 2, 3, 4, and 5 of the Submersion
Deformation Lemma gives us Parts 1, 2, 3, and 4.
\end{proof}

\addtocounter{algorithm}{1}

\subsection{Stability of Intersection Patterns\label{inter stab subsub}}

\begin{proposition}
\label{cover stab prop}Let $\mathcal{C}$ be an ordered collection of $m$
open subsets of a compact metric space $X.$ Suppose that $\mathcal{C}$ and 
\textrm{cl}$\left( \mathcal{C}\right) $ have the same intersection pattern.
Let $\mathcal{X}$ be the collection of compact subsets of $X$ equipped with
the Hausdorff metric, and let $\mathcal{X}^{m}$ be the $m$--fold product of $%
\mathcal{X}.$

There is a neighborhood $\mathcal{N}$ of \textrm{cl}$\left( \mathcal{C}%
\right) $ in $\mathcal{X}^{m}$ with the following property: If $\mathcal{D}$
is a collection of $m$ open subsets of $X$ with \textrm{cl}$\left( \mathcal{D%
}\right) \in \mathcal{N},$ then $\mathcal{D}$ and $\mathcal{C}$ have the
same intersection pattern.
\end{proposition}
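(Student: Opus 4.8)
The plan is to construct an explicit $\varepsilon>0$ and take $\mathcal{N}$ to be the set of $m$--tuples $(E_{1},\ldots,E_{m})\in\mathcal{X}^{m}$ with $\mathrm{dist}_{\mathrm{Haus}}(E_{i},\bar{C}_{i})<\varepsilon$ for every $i$; this is an open neighborhood of $\mathrm{cl}(\mathcal{C})$ in the product topology on $\mathcal{X}^{m}$. It then suffices to show that if $\mathcal{D}=\{D_{1},\ldots,D_{m}\}$ is a family of open sets with $\mathrm{dist}_{\mathrm{Haus}}(\bar{D}_{i},\bar{C}_{i})<\varepsilon$ for all $i$, then $D_{i}\cap D_{j}\neq\emptyset$ precisely when $C_{i}\cap C_{j}\neq\emptyset$. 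I would split the index pairs into $\mathcal{P}_{-}=\{(i,j):C_{i}\cap C_{j}=\emptyset\}$ and $\mathcal{P}_{+}=\{(i,j):C_{i}\cap C_{j}\neq\emptyset\}$, derive a positive lower bound on the allowed $\varepsilon$ from each family, and let $\varepsilon$ be their minimum (ignoring either bound if the corresponding family is empty).

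For a pair $(i,j)\in\mathcal{P}_{-}$, the hypothesis that $\mathcal{C}$ and $\mathrm{cl}(\mathcal{C})$ have the same intersection pattern is used in an essential way: it gives $\bar{C}_{i}\cap\bar{C}_{j}=\emptyset$, and since $X$ is compact these are disjoint compact sets, so $\delta_{ij}:=\mathrm{dist}(\bar{C}_{i},\bar{C}_{j})>0$. If $\varepsilon<\tfrac13\min_{(i,j)\in\mathcal{P}_{-}}\delta_{ij}$, then $\bar{D}_{i}\subset B(\bar{C}_{i},\varepsilon)$ and $\bar{D}_{j}\subset B(\bar{C}_{j},\varepsilon)$, and these $\varepsilon$--neighborhoods are disjoint because $\mathrm{dist}(\bar{C}_{i},\bar{C}_{j})>2\varepsilon$; hence $\bar{D}_{i}\cap\bar{D}_{j}=\emptyset$, and in particular $D_{i}\cap D_{j}=\emptyset$. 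This settles the implication that no intersection is spuriously created.

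For a pair $(i,j)\in\mathcal{P}_{+}$, I would fix $p_{ij}\in C_{i}\cap C_{j}$ and, using that $C_{i}\cap C_{j}$ is open, a radius $r_{ij}>0$ with $\overline{B(p_{ij},r_{ij})}\subset C_{i}\cap C_{j}$, and require $\varepsilon<\tfrac13\min_{(i,j)\in\mathcal{P}_{+}}r_{ij}$. Since $p_{ij}\in\bar{C}_{i}$ and $\mathrm{dist}_{\mathrm{Haus}}(\bar{D}_{i},\bar{C}_{i})<\varepsilon$, the set $\bar{D}_{i}$ meets $B(p_{ij},\varepsilon)$, and since $\bar{D}_{i}$ is the closure of the open set $D_{i}$ this already forces $D_{i}$ to meet $B(p_{ij},2\varepsilon)\subset C_{i}\cap C_{j}$; the same holds for $D_{j}$. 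The step I expect to be the main obstacle is to pass from "$D_{i}$ and $D_{j}$ each meet the small ball $B(p_{ij},r_{ij})$" to "$D_{i}\cap D_{j}$ meets it", since Hausdorff convergence of the closures need not commute with intersections. In the situation in which this proposition is applied, the members of $\mathcal{D}$ are metric balls of fixed radii with centers within $\varepsilon$ of the centers of the corresponding $C_{i}$, and there a cleaner route is available: choosing $p_{ij}$ and $r_{ij}$ so that $\overline{B(p_{ij},r_{ij})}$ lies well inside both $C_{i}$ and $C_{j}$, the point $p_{ij}$ itself lies in $D_{i}\cap D_{j}$ once $\varepsilon<r_{ij}$, because moving the center of a fixed-radius ball by less than $r_{ij}$ cannot expel a point of depth at least $r_{ij}$. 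Taking $\varepsilon$ to be the minimum of the two bounds above then produces the required $\mathcal{N}$.
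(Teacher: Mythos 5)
Your handling of $\mathcal{P}_{-}$ is correct and agrees with the paper's argument. The obstacle you flag for $\mathcal{P}_{+}$ is a genuine gap, and it is not merely a gap in your own write-up: the paper's proof asserts at this point only that ``it follows that $D_{i}\cap D_{j}\neq\emptyset$,'' and that does not follow from Hausdorff proximity of closures alone. Indeed the proposition fails in the generality in which it is phrased. Take $X=[0,1]$, $m=2$, $C_{1}=C_{2}=(0,1)$, so the hypothesis relating $\mathcal{C}$ to $\mathrm{cl}(\mathcal{C})$ holds trivially, and for large $n$ set
\[
D_{1}=\bigcup_{k=0}^{n-1}\left(\frac{k}{n},\frac{2k+1}{2n}\right),\qquad
D_{2}=\bigcup_{k=0}^{n-1}\left(\frac{2k+1}{2n},\frac{k+1}{n}\right).
\]
These are disjoint open sets, yet both closures lie within Hausdorff distance $\frac{1}{4n}$ of $[0,1]=\bar{C}_{1}=\bar{C}_{2}$, so $\mathrm{cl}(\mathcal{D})$ enters any prescribed neighborhood $\mathcal{N}$ of $\mathrm{cl}(\mathcal{C})$ once $n$ is large. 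As you say, Hausdorff control of closures governs where the $D_{i}$ reach but not where they overlap, and no choice of $\varepsilon$ repairs that.

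What rescues the statement in its actual use, Proposition \ref{Push forw intersection}, is that only one of the two sets being compared moves; the other, $B_{k+1}(\rho)$, is literally the same in both collections. With $D_{j}=C_{j}$ fixed and $x\in C_{i}\cap C_{j}$ with $B(x,r)\subset C_{j}$, the assumption $D_{i}\cap C_{j}=\emptyset$ would force $D_{i}\cap B(x,r)=\emptyset$, hence $\mathrm{dist}(x,\bar{D}_{i})\geq r$, contradicting $\mathrm{dist}_{\mathrm{Haus}}(\bar{D}_{i},\bar{C}_{i})<r$ because $x\in\bar{C}_{i}$. Your suggested remedy via shifted fixed-radius balls is in a similar spirit, but it does not quite match what occurs downstream, where the moving sets are images of balls under almost-Riemannian submersions rather than balls themselves; the ``one side held fixed'' observation is cleaner and covers exactly what the paper needs. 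In short, the proposition as stated needs a stronger hypothesis than Hausdorff proximity of closures, and you were right to flag the missing step rather than paper over it.
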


\begin{proof}
Since $\mathcal{C}$ and \textrm{cl}$\left( \mathcal{C}\right) $ have the
same intersection pattern, there is an $\varepsilon >0$ so that if $%
C_{i},C_{j}\in \mathcal{C}$ are disjoint, then $\mathrm{dist}\left(
c_{i},c_{j}\right) >\varepsilon $ for all $c_{i}\in C_{i}$ and $c_{j}\in
C_{j}.$ It follows that if $D_{i},D_{j}\in \mathcal{D}$ are close enough to $%
C_{i}$ and $C_{j},$ then $D_{i}$ and $D_{j}$ are disjoint.

On the other hand, if $x\in C_{i}\cap C_{j}$, then there is an $\eta >0$ so
that $B\left( x,\eta \right) \subset C_{i}\cap C_{j}.$ It follows that $%
D_{i}\cap D_{j}\neq \emptyset $ if the Hausdorff distances satisfy 
\begin{equation*}
\mathrm{dist}_{Haus}\left( C_{i},D_{i}\right) <\frac{\eta }{10}\text{ and }%
\mathrm{dist}_{Haus}\left( C_{j},D_{j}\right) <\frac{\eta }{10}.
\end{equation*}
\end{proof}

\begin{proposition}
\label{Push forw intersection}Adopt the hypotheses of Theorem \ref%
{submersion gluing}, and let 
\begin{equation*}
P_{k}:\cup _{i=1}^{k}\tilde{B}_{i}\left( \rho \right) \longrightarrow
P_{k}\left( \cup _{i=1}^{k}\tilde{B}_{i}\left( \rho \right) \right) \subset S
\end{equation*}%
be a submersion with 
\begin{equation}
\left\vert \left. P_{k}-\mu _{i}^{-1}\circ \tilde{p}_{i}\right\vert _{\tilde{%
B}_{i}\left( \rho \right) }\right\vert _{C^{0}}<\xi
\label{delta close
ineqaul}
\end{equation}%
for all $i.$ If $\xi $ is sufficiently small, then 
\begin{equation*}
P_{k}\left( \cup _{i=1}^{k}\tilde{B}_{i}\left( \rho \right) \right) \cap
B_{k+1}\left( \rho \right) \neq \emptyset
\end{equation*}%
if and only if 
\begin{equation*}
\cup _{i=1}^{k}B_{i}\left( \rho \right) \cap B_{k+1}\left( \rho \right) \neq
\emptyset .
\end{equation*}
\end{proposition}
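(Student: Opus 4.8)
The plan is to transfer the intersection pattern across the submersion $P_k$ using the $C^0$-closeness hypothesis \autoref{delta close ineqaul} together with the fact that each $\mu_i$ is a $(1+\eta)$-bilipschitz coordinate chart, and then to invoke Hypothesis 5 of Theorem \ref{submersion gluing} — that $\mathcal{C}=\{B_i(\rho)\}$ and $\mathcal{\tilde C}=\{\tilde B_i(3\rho)\}$ (and their closures) have the same intersection pattern — to bridge between the balls $B_i(\rho)$ in $S$ and the balls $\tilde B_i(\rho)$ in $\tilde M$. Concretely, I would first prove the ``only if'' direction. Suppose $P_k\bigl(\cup_{i=1}^k \tilde B_i(\rho)\bigr)\cap B_{k+1}(\rho)\neq\emptyset$, say $P_k(x)\in B_{k+1}(\rho)$ for some $x\in\tilde B_i(\rho)$ with $i\le k$. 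By \autoref{delta close ineqaul}, $\mathrm{dist}\bigl(P_k(x),\mu_i^{-1}\circ\tilde p_i(x)\bigr)<\xi$, and since $\mu_i$ is $(1+\eta)$-bilipschitz, the point $y\equiv\mu_i^{-1}\circ\tilde p_i(x)$ lies within $(1+\eta)\xi$ of $B_{k+1}(\rho)$; also $y\in B_i(\rho)$ because $\tilde p_i(\tilde B_i(\rho))$ and $\mu_i(B_i(\rho))$ are Hausdorff-close (Inequality \autoref{Huas inequl}) and $\mu_i$ is bilipschitz. Hence, for $\xi$ small, $B_i(\rho)$ and a slight enlargement of $B_{k+1}(\rho)$ meet, and using that $\mathcal{C}$ and $\mathrm{cl}(\mathcal{C})$ share an intersection pattern (the $\varepsilon$-separation of disjoint members, exactly as in the proof of Proposition \ref{cover stab prop}), this forces $B_i(\rho)\cap B_{k+1}(\rho)\neq\emptyset$, so $\cup_{i=1}^k B_i(\rho)\cap B_{k+1}(\rho)\neq\emptyset$.

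For the ``if'' direction, suppose $\cup_{i=1}^k B_i(\rho)\cap B_{k+1}(\rho)\neq\emptyset$, say $z\in B_i(\rho)\cap B_{k+1}(\rho)$ with $i\le k$. By Hypothesis 5 of Theorem \ref{submersion gluing} and the shared intersection pattern with the closures, $\tilde B_i(\rho)\cap\tilde B_{k+1}(\rho)$ is also nonempty (and in fact $\tilde B_i(\rho)\cap\tilde B_{k+1}(\rho)$ contains an open ball, since $\mathrm{cl}(\mathcal{\tilde C})$ has the same pattern). Pick $x$ in this intersection. Then $\tilde p_i(x)$ and $\tilde p_{k+1}(x)$ are defined; by the $C^0$-closeness \autoref{C-0 close inequal} of the transition data, $\mu_i^{-1}\circ\tilde p_i(x)$ lies within $\tau(\xi)$ of $\mu_{k+1}^{-1}\circ\tilde p_{k+1}(x)$, and by \autoref{delta close ineqaul} the point $P_k(x)$ lies within $\xi$ of $\mu_i^{-1}\circ\tilde p_i(x)$. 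Combining these, $P_k(x)$ lies within $\xi+\tau(\xi)$ of $\mu_{k+1}^{-1}\circ\tilde p_{k+1}(x)$, which is in $B_{k+1}(\rho)$ up to the Hausdorff error in \autoref{Huas inequl}. Shrinking $\xi$ (and using an interior point $z$, so that $B_{k+1}(\rho)$ can be replaced by a slightly smaller concentric ball and still meet $B_i(\rho)$, absorbing all the errors), we conclude $P_k(x)\in B_{k+1}(\rho)$, hence $P_k\bigl(\cup_{i=1}^k\tilde B_i(\rho)\bigr)\cap B_{k+1}(\rho)\neq\emptyset$.

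The main obstacle, and the point requiring care, is that the various error terms — the $C^0$-defect $\xi$ of $P_k$, the Hausdorff defect $\xi_0$ between $\tilde p_i(\tilde B_i(\rho))$ and $\mu_i(B_i(\rho))$, and the bilipschitz constant $1+\eta$ of $\mu_i$ — must all be controlled against a \emph{fixed} geometric quantity intrinsic to the cover $\mathcal{C}$: namely the separation $\varepsilon$ between disjoint members and the ``interior room'' $\eta>0$ such that $B(z,\eta)\subset B_i(\rho)\cap B_{k+1}(\rho)$ when these overlap. Since $\mathcal{C}$ and $\mathrm{cl}(\mathcal{C})$ have the same intersection pattern by hypothesis, such an $\varepsilon$ and such $\eta$'s exist (this is precisely the content extracted in the proof of Proposition \ref{cover stab prop}), and there are only finitely many balls, so a single threshold on $\xi$ works for all pairs. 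I would state the required smallness of $\xi$ in terms of $\min\{\varepsilon,\eta\}$ and the uniform bilipschitz bound, and note that this is compatible with the choice of $\xi_0$ in Theorem \ref{submersion gluing}. One subtlety worth flagging: the statement quantifies only over the $P_k$ produced inductively in the proof of Theorem \ref{submersion gluing}, so I may freely assume $P_k$ also satisfies the auxiliary closeness estimates already established at that stage of the induction, which is what lets me use \autoref{C-0 close inequal} for the transition maps rather than re-deriving it.
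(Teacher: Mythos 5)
Your overall strategy---transfer the intersection pattern via the $C^0$-closeness of $P_k$ to the local charts, the Hausdorff bound \autoref{Huas inequl}, and the bilipschitz control on the $\mu_i$s---matches the paper's. The paper's argument, however, is a one-liner: Inequalities \autoref{delta close ineqaul} and \autoref{Huas inequl} show that the collection $\left\{ P_{k}\left( \tilde{B}_{1}\left( \rho \right) \right),\ldots ,P_{k}\left( \tilde{B}_{k}\left( \rho \right) \right),B_{k+1}\left( \rho \right) \right\} $ is pointwise Hausdorff close to $\left\{ B_{1}\left( \rho \right) ,\ldots ,B_{k+1}\left( \rho \right) \right\} ,$ and then Proposition \ref{cover stab prop} finishes. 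Your ``only if'' direction is essentially a correct unwinding of one half of Proposition \ref{cover stab prop} (modulo a small imprecision: $y=\mu _{i}^{-1}\circ \tilde{p}_{i}\left( x\right) $ need not lie in $B_{i}\left( \rho \right) $, only within $\left( 1+\eta \right) \xi _{0}$ of it, but the $\varepsilon $-separation from the proof of Proposition \ref{cover stab prop} still yields $B_{i}\left( \rho \right) \cap B_{k+1}\left( \rho \right) \neq \emptyset $).

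The ``if'' direction has a genuine gap. You fix a single $x\in \tilde{B}_{i}\left( \rho \right) \cap \tilde{B}_{k+1}\left( \rho \right) $ and try to force $P_{k}\left( x\right) \in B_{k+1}\left( \rho \right) .$ But the chain of estimates you assemble only shows that $P_{k}\left( x\right) $ lies within $\xi +\tau \left( \xi \right) +\tau \left( \xi _{0}\right) $ of $\mu _{k+1}^{-1}\circ \tilde{p}_{k+1}\left( x\right) ,$ and \autoref{Huas inequl} only places $\mu _{k+1}^{-1}\circ \tilde{p}_{k+1}\left( x\right) $ \emph{near} $B_{k+1}\left( \rho \right) ,$ not \emph{inside} it; for an arbitrary $x$ it can sit on the wrong side of the boundary. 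Shrinking $\xi $ does not fix this, and the ``interior point $z$'' does not help: $z$ lives in $S$, and you have no mechanism to choose $x\in \tilde{M}$ so that $\tilde{p}_{k+1}\left( x\right) $ lands deep inside $\mu _{k+1}\left( B_{k+1}\left( \rho \right) \right) .$ The clean repair is not to route through $\tilde{M}$ at all: you have already established that $P_{k}\left( \tilde{B}_{i}\left( \rho \right) \right) $ and $B_{i}\left( \rho \right) $ are Hausdorff close (with defect $\leq \xi +\left( 1+\eta \right) \xi _{0}$); since $B_{i}\left( \rho \right) \cap B_{k+1}\left( \rho \right) $ is open and nonempty it contains some ball $B\left( z,\eta ^{\prime }\right) ,$ and once $\xi +\left( 1+\eta \right) \xi _{0}<\eta ^{\prime }$, Hausdorff closeness produces a point of $P_{k}\left( \tilde{B}_{i}\left( \rho \right) \right) $ in $B\left( z,\eta ^{\prime }\right) \subset B_{k+1}\left( \rho \right) .$ This is exactly the other half of Proposition \ref{cover stab prop}, which is why the paper cites it directly.
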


\begin{proof}
We have $P_{k}\left( \cup _{i=1}^{k}\tilde{B}_{i}\left( \rho \right) \right)
=\cup _{i=1}^{k}P_{k}\left( \tilde{B}_{i}\left( \rho \right) \right) ,$ and
Inequalities \autoref{delta close ineqaul} and \autoref{Huas inequl} give us
that $\cup _{i=1}^{k}P_{k}\left( \tilde{B}_{i}\left( \rho \right) \right) $
is Hausdorff close to $\cup _{i=1}^{k}B_{i}\left( \rho \right) .$ So by
Proposition \ref{cover stab prop}, 
\begin{equation*}
\cup _{i=1}^{k}B_{i}\left( \rho \right) \dbigcap B_{k+1}\left( \rho \right)
\neq \emptyset
\end{equation*}%
if and only if%
\begin{equation*}
P_{k}\left( \cup _{i=1}^{k}\tilde{B}_{i}\left( \rho \right) \right) \dbigcap
B_{k+1}\left( \rho \right) \neq \emptyset .
\end{equation*}
\end{proof}

\addtocounter{algorithm}{1}

\subsection{Proofs of Theorem \protect\ref{submersion gluing}, Corollary 
\protect\ref{respt other cor}, and Corollary \protect\ref{gluing addendum}}

\begin{proof}[Proof of Theorem \protect\ref{submersion gluing}]
Choose $\varepsilon _{0}>0$ so that 
\begin{equation*}
\varepsilon _{0}<\frac{\varepsilon _{1}}{2},
\end{equation*}%
where $\varepsilon _{1}$ is as in Lemma \ref{Rank Isotopy Lemma}. Choose $%
\xi _{0},\eta >0$ so that the conclusion of Proposition \ref{Push forw
intersection} holds with $\xi =\xi _{0}$ and so that%
\begin{equation*}
\left( 1+\eta \right) ^{2\left( \mathfrak{o}-1\right) }\varepsilon _{0}+%
\frac{2}{\rho }\xi _{0}\left( \mathfrak{o}-1\right) \left( 1+\eta \right)
^{2\left( \mathfrak{o}-1\right) }<\frac{\varepsilon _{1}}{2}.
\end{equation*}

Next we partition $\left\{ \tilde{B}_{i}\left( 3\rho \right) \right\}
_{i=1}^{m_{l}}$ into $\mathfrak{o}$ subcollections of pairwise disjoint
balls, where $\mathfrak{o}$ is the first order of $\left\{ \tilde{B}%
_{i}\left( 3\rho \right) \right\} _{i=1}^{m_{l}}.$ To begin, we take $%
\mathcal{\tilde{B}}_{1}\left( 3\rho \right) $ to be a maximal subcollection
of $\left\{ \tilde{B}_{i}\left( 3\rho \right) \right\} _{i=1}^{m_{l}}$ that
is pairwise disjoint, and in general, for $j\in \left\{ 2,\ldots ,\mathfrak{o%
}\right\} ,$ we take $\mathcal{\tilde{B}}_{j}\left( 3\rho \right) $ to be a
maximal pairwise disjoint subcollection of $\left\{ \tilde{B}_{i}\left(
3\rho \right) \right\} _{i=1}^{m_{l}}\setminus \left\{ \mathcal{\tilde{B}}%
_{1}\left( 3\rho \right) \cup \cdots \cup \mathcal{\tilde{B}}_{j-1}\left(
3\rho \right) \right\} .$ Then every element of $\mathcal{\tilde{B}}%
_{j}\left( 3\rho \right) $ intersects at least one element from each of $%
\mathcal{\tilde{B}}_{1}\left( 3\rho \right) ,\cdots ,\mathcal{\tilde{B}}%
_{j-1}\left( 3\rho \right) ,$ so the first order of the collection $\mathcal{%
\tilde{B}}_{1}\left( 3\rho \right) \cup \cdots \cup \mathcal{\tilde{B}}%
_{j}\left( 3\rho \right) $ is at least $j.$ Therefore for $j\geq \mathfrak{o}%
+1,$ $\mathcal{\tilde{B}}_{j}\left( 3\rho \right) =\emptyset ,$ and $%
\mathcal{\tilde{B}}_{1}\left( 3\rho \right) \cup \cdots \cup \mathcal{\tilde{%
B}}_{\mathfrak{o}}\left( 3\rho \right) =\left\{ \tilde{B}_{i}\left( 3\rho
\right) \right\} _{i=1}^{m_{l}}.$

We let $\mathcal{\tilde{B}}_{j}\left( \rho \right) $ be the $\rho $--balls
that have the same centers as the $\mathcal{\tilde{B}}_{j}\left( 3\rho
\right) $s, and we let $\mathcal{B}_{j}\left( 3\rho \right) $ and $\mathcal{B%
}_{j}\left( \rho \right) $ be the corresponding subcollections of $\left\{
B_{j}\left( 3\rho \right) \right\} _{j=1}^{m_{l}}$ and $\left\{ B_{j}\left(
\rho \right) \right\} _{j=1}^{m_{l}}.$ We use the superscript $^{u}$ to
denote the union of one of these subcollections. Thus for example, $\mathcal{%
\tilde{B}}_{1}^{u}\left( 3\rho \right) $ is the subset of $M$ obtained by
taking the union of each ball in $\mathcal{\tilde{B}}_{1}\left( 3\rho
\right) .$

For each $j\in \left\{ 1,2,\ldots ,\mathfrak{o}\right\} $ and each $i$ with $%
\tilde{B}_{i}\left( 3\rho \right) \in \mathcal{\tilde{B}}_{j}\left( 3\rho
\right) ,$ we let 
\begin{equation*}
\hat{p}_{j}:\mathcal{\tilde{B}}_{j}^{u}\left( 3\rho \right) \longrightarrow 
\mathbb{R}^{l}\text{ }
\end{equation*}%
be given by 
\begin{equation*}
\hat{p}_{j}|_{B_{i}\left( 3\rho \right) }=\tilde{p}_{i},
\end{equation*}%
and 
\begin{equation*}
\hat{\mu}_{j}:\mathcal{B}_{j}^{u}\left( 3\rho \right) \longrightarrow 
\mathbb{R}^{l}
\end{equation*}%
be given by%
\begin{equation*}
\hat{\mu}_{j}|_{B_{i}\left( 3\rho \right) }=\mu _{i}.
\end{equation*}

The proof is by induction on the index $j$ of the $\mathcal{\tilde{B}}%
_{j}\left( 3\rho \right) $s. To formulate our induction statement for $k\in
\left\{ 1,\ldots ,\mathfrak{o}\right\} ,$ we set 
\begin{equation}
\mathcal{E}_{k}=\left( 1+\eta \right) ^{2\left( k-1\right) }\varepsilon +%
\frac{2}{\rho }\xi \left( k-1\right) \left( 1+\eta \right) ^{2\left(
k-1\right) }.  \label{dfn of E_k}
\end{equation}

Our $k^{th}$ statement asserts the existence of a submersion 
\begin{equation*}
P_{k}:\cup _{j=1}^{k}\mathcal{\tilde{B}}_{j}^{u}\left( \rho \right)
\longrightarrow P_{k}\left( \cup _{j=1}^{k}\mathcal{\tilde{B}}_{j}^{u}\left(
\rho \right) \right) \subset S
\end{equation*}%
so that for all $s\in \left\{ 1,2,\ldots ,\mathfrak{o}\right\} $ on $\cup
_{j=1}^{k}\mathcal{\tilde{B}}_{j}^{u}\left( \rho \right) \cap \mathcal{%
\tilde{B}}_{s}^{u}\left( 3\rho \right) ,$%
\begin{equation}
\left\vert \hat{\mu}_{k}\circ P_{k}-\hat{\mu}_{k}\circ \hat{\mu}%
_{s}^{-1}\circ \hat{p}_{s}\right\vert _{C^{0}}<\left( 1+\eta \right)
^{2k}\xi \text{ and}  \label{C-0 induct
hyp}
\end{equation}%
\begin{equation}
\left\vert \hat{\mu}_{k}\circ P_{k}-\hat{\mu}_{k}\circ \hat{\mu}%
_{s}^{-1}\circ \hat{p}_{s}\right\vert _{C^{1}}<\mathcal{E}_{k}.
\label{C^1 Ind hyp
Inequal}
\end{equation}

Setting $P_{1}=\hat{\mu}_{1}^{-1}\circ \hat{p}_{1}$ and appealing to
Equations \autoref{C-0 close inequal} and \autoref{C-1 close subm inequality}
anchors the induction.

Since the collection $\left\{ \mathcal{\tilde{B}}_{j}^{u}\left( \rho \right)
\right\} _{j=1}^{\mathfrak{o}}$ has first order $\mathfrak{o,}\left( \cup
_{j=1}^{k}\mathcal{\tilde{B}}_{j}^{u}\left( \rho \right) \right) \cap 
\mathcal{\tilde{B}}_{k+1}^{u}\left( \rho \right) \neq \emptyset .$ Combining
this with $\mathcal{E}_{k}<\mathcal{E}_{\mathfrak{o}}<\varepsilon _{1}$
allows us to apply Key Lemma \ref{Gluing, Abstract} with $p_{O}=P_{k}$ and $%
p_{G}=\hat{\mu}_{k+1}^{-1}\circ \hat{p}_{k+1}$ to get a new submersion 
\begin{equation*}
P_{k+1}:\cup _{j=1}^{k+1}\mathcal{\tilde{B}}_{j}^{u}\left( \rho \right)
\longrightarrow P_{k+1}\left( \cup _{j=1}^{k+1}\mathcal{\tilde{B}}%
_{j}^{u}\left( \rho \right) \right) \subset S.
\end{equation*}

It remains to verify hypotheses $(\ref{C-0 induct hyp})_{k+1}$ and $\left( %
\ref{C^1 Ind hyp Inequal}\right) _{k+1}$. The induction hypothesis, $(\ref%
{C-0 induct hyp})_{k}$, combined with our hypothesis that the differentials
of the $\hat{\mu}_{i}$s are $\left( 1+\eta \right) $--bi-lipshitz gives%
\begin{eqnarray*}
\left\vert \hat{\mu}_{k+1}\circ P_{k}-\hat{\mu}_{k+1}\circ \hat{\mu}%
_{s}^{-1}\circ \hat{p}_{s}\right\vert _{C^{0}} &=&\left\vert \left( \hat{\mu}%
_{k+1}\circ \hat{\mu}_{k}^{-1}\right) \circ \left( \hat{\mu}_{k}\circ P_{k}-%
\hat{\mu}_{k}\circ \hat{\mu}_{s}^{-1}\circ \hat{p}_{s}\right) \right\vert
_{C^{0}} \\
&<&\left( 1+\eta \right) ^{2}\left( 1+\eta \right) ^{2k}\xi \\
&=&\left( 1+\eta \right) ^{2\left( k+1\right) }\xi .
\end{eqnarray*}%
So by Part 3 of the Key Lemma \ref{Gluing, Abstract},%
\begin{equation*}
\left\vert \hat{\mu}_{k+1}\circ P_{k+1}-\hat{\mu}_{k+1}\circ \left( \hat{\mu}%
_{s}^{-1}\circ \hat{p}_{s}\right) \right\vert _{C^{0}}<\left( 1+\eta \right)
^{2\left( k+1\right) }\xi ,
\end{equation*}%
and $\left( \ref{C-0 induct hyp}\right) _{k+1}$ holds.

Combining $\left( \ref{C^1 Ind hyp Inequal}\right) _{k}$ with the fact that
the differentials of the $\hat{\mu}_{i}$s are $\left( 1+\eta \right) $%
--bi-lipshitz gives%
\begin{eqnarray*}
\left\vert \hat{\mu}_{k+1}\circ P_{k}-\hat{\mu}_{k+1}\circ \hat{\mu}%
_{s}^{-1}\circ \hat{p}_{s}\right\vert _{C^{1}} &=&\left\vert \hat{\mu}%
_{k+1}\circ \hat{\mu}_{k}^{-1}\circ \left( \hat{\mu}_{k}\circ P_{k}-\hat{\mu}%
_{k}\circ \hat{\mu}_{s}^{-1}\circ \hat{p}_{s}\right) \right\vert _{C^{1}} \\
&<&\left( 1+\eta \right) ^{2}\left( \mathcal{E}_{k}\right) .
\end{eqnarray*}

So by Part 2 of Key Lemma \ref{Gluing, Abstract} and $\left( \ref{C-0 induct
hyp}\right) _{k},$ 
\begin{eqnarray*}
\left\vert \hat{\mu}_{k+1}\circ \hat{P}_{k+1}-\hat{\mu}_{k+1}\circ \hat{\mu}%
_{s}^{-1}\circ \hat{p}_{s}\right\vert _{C^{1}} &<&\left( 1+\eta \right)
^{2}\left( \mathcal{E}_{k}\right) +\frac{2}{\rho }\left( 1+\eta \right)
^{2k}\xi \\
&=&\left( 1+\eta \right) ^{2}\left( \left( 1+\eta \right) ^{2\left(
k-1\right) }\varepsilon +\frac{2}{\rho }\xi \left( k-1\right) \left( 1+\eta
\right) ^{2\left( k-1\right) }\right) \\
&&+\frac{2}{\rho }\left( 1+\eta \right) ^{2k}\xi \\
&=&\left( 1+\eta \right) ^{2k}\varepsilon +\frac{2}{\rho }\xi k\left( 1+\eta
\right) ^{2k} \\
&=&\mathcal{E}_{k+1}.
\end{eqnarray*}

To complete the proof, we need to establish Equation \autoref{last ball eq}.
To do so, we re-index so that $\tilde{B}_{m_{l}}\left( \rho \right) \subset $
$\mathcal{\tilde{B}}_{\mathfrak{o}}^{u}\left( 3\rho \right) $ and notice
that 
\begin{equation*}
P|_{\mathcal{\tilde{B}}_{\mathfrak{o}}^{u}\left( \rho \right) }=P_{\mathfrak{%
o}}|_{\tilde{B}_{m_{l}}\left( \rho \right) }=\hat{\mu}_{\mathfrak{o}%
}^{-1}\circ \hat{p}_{\mathfrak{o}}
\end{equation*}%
by Equation \autoref{pieces of P eqn}.
\end{proof}

\begin{proof}[Proof of Corollary \protect\ref{respt other cor}]
This is a consequence of Part 4 of Key Lemma \ref{Gluing, Abstract} and the
observation that at the $k^{th}$--stage of the induction, we glue $%
p_{O}=P_{k}$ to $p_{G}=\hat{\mu}_{k+1}^{-1}\circ \hat{p}_{k+1}.$
\end{proof}

\begin{proof}[Proof of Corollary \protect\ref{gluing addendum}]
First apply Theorem \ref{submersion gluing} to construct a submersion $%
\tilde{P}:\cup _{i=1}^{m_{l}}\tilde{B}_{i}\left( \rho \right)
\longrightarrow S$ that is close to the $\tilde{p}_{i}$s in the sense that
Inequalities \autoref{still C-0 close} and \autoref{geom inequl} hold.

Since the first order of $\left\{ B_{i}\left( 3\rho _{R}\right) \right\}
_{i\in I_{R}}$ is $\mathfrak{o},$ as in the proof of Theorem \ref{submersion
gluing}, for each $j\in \left\{ 1,2,\ldots ,\mathfrak{o}\right\} ,$ we
construct a subcollection $\mathcal{B}_{j}\left( 3\rho _{R}\right) $ of $%
\left\{ B_{i}\left( 3\rho _{R}\right) \right\} _{i\in I_{R}}$ so that the
balls of $\mathcal{B}_{j}\left( 3\rho _{R}\right) $ are pairwise disjoint,
and the collection $\mathcal{B}_{1}\left( 3\rho _{R}\right) \cup \cdots \cup 
\mathcal{B}_{j}\left( 3\rho _{R}\right) $ has first order at least $j.$

For each $j\in \left\{ 1,2,\ldots ,\mathfrak{o}\right\} ,$ we set 
\begin{equation*}
p_{j}\equiv \mu _{j}\circ Q\circ R:\mathcal{B}_{j}^{u}\left( 3\rho
_{R}\right) \longrightarrow \mathbb{R}^{l},
\end{equation*}%
and note that since the $p_{j}$s are all coordinate representations of the
same submersion, $Q\circ R,$ 
\begin{equation}
\text{Inequalities }\autoref{C-0 close inequal}\text{ and }\autoref{C-1
close subm inequality}\text{ hold with }\xi =\varepsilon =0
\label{C0 and C1
hold}
\end{equation}%
and the $p_{i}$s playing the role of the $\tilde{p}_{i}$s. Using this, for
each $j\in \left\{ 1,2,\ldots ,\mathfrak{o}\right\} ,$ we successively apply
the proof of Theorem \ref{submersion gluing} to deform $\tilde{P}$ on each $%
\mathcal{B}_{j}\left( 3\rho _{R}\right) $ so that it ultimately equals $%
Q\circ R$ on $\cup _{j=1}^{\mathfrak{o}}\mathcal{B}_{i}^{u}\left( \rho
_{R}\right) .$ For the first deformation, this is possible because
Inequalities (\ref{resp c-0 close inequal}), (\ref{resp c-1 close ineqaul}),
and (\ref{C0 and C1 hold}) tell us that the $p_{j}$s are close to the $%
\tilde{p}_{j}$s. Via (\ref{still C-0 close}) and (\ref{geom inequl}) it
follows that the $p_{j}$s are close to local representations of $\tilde{P}.$
In other words, we have that Inequalities (\ref{pi_l vs P_0}) and (\ref{pi_i
vs P_0--C^0}) hold with $p_{O}=\tilde{P}$ and $p_{G}=Q\circ R.$ This
continues to be possible for subsequent deformations because Parts 2 and 3
of Key Lemma \ref{Gluing, Abstract} tell us our deformations preserve
Inequalities (\ref{pi_l vs P_0}) and (\ref{pi_i vs P_0--C^0}), provided $\xi 
$ and $\varepsilon $ are sufficiently small.

To explain why $P=Q\circ R$ on $\cup _{j=1}^{\mathfrak{o}}\mathcal{B}%
_{j}^{u}\left( \rho _{R}\right) $, we let $\tilde{P}_{0},$ $\tilde{P}%
_{1},\ldots ,\tilde{P}_{\mathfrak{o}}$ be the deformations of $\tilde{P}=%
\tilde{P}_{0}.$ By combining Equation \autoref{pieces of P eqn} with the
fact that $p_{1}=\mu _{1}\circ Q\circ R,$ it follows that 
\begin{equation*}
\tilde{P}_{1}\equiv Q\circ R
\end{equation*}%
on $\mathcal{B}_{1}^{u}\left( \rho _{R}\right) .$ By the same reasoning, we
have 
\begin{equation*}
\tilde{P}_{k}\equiv Q\circ R
\end{equation*}%
on $\mathcal{B}_{k}^{u}\left( \rho _{R}\right) $, and Part 4 of Lemma \ref%
{Gluing, Abstract} gives, via induction, that after the $k^{th}$
deformation, we have 
\begin{equation*}
\tilde{P}_{k}\equiv Q\circ R
\end{equation*}%
on $\cup _{j=1}^{k}\mathcal{B}_{j}^{u}\left( \rho _{R}\right) .$ So setting $%
P\equiv \tilde{P}_{\mathfrak{o}},$ we see that $P=Q\circ R$ on $\cup _{j=1}^{%
\mathfrak{o}}\mathcal{B}_{j}^{u}\left( \rho _{R}\right) $.
\end{proof}

\section{Appendix B: Conventions and Notations\label{Not and con}}

We assume throughout that all metric spaces are complete, and the reader has
a basic familiarity with Alexandrov spaces, including but not limited to the
seminal paper by Burago, Gromov, and Perelman (\cite{BGP}). Let $X$, $%
\mathcal{S}=\left\{ S_{i}\right\} _{i\in I},$ $\mathcal{N},$ and $\mathcal{K}
$ be as in Theorem \ref{dif stab- dim 4}, and let $p,x,$ and $y$ be points
of $X.$

\begin{enumerate}
\item We call minimal geodesics in $X$ \emph{segments}.

\item We denote comparison angles with $\tilde{\sphericalangle}.$

\item We let $\Sigma _{p}X$ and $T_{p}X$ denote the space of directions and
tangent cone at $p$, respectively, and we let $\ast $ denote the cone point.

\item For a geodesic direction $v\in T_{p}X,$ we let $\gamma _{v}$ be the
segment whose initial direction is $v.$

\item Following \cite{Petr}, given a subset $A\subset X$, $\Uparrow
_{x}^{A}\subset \Sigma _{x}$ denotes the set of directions of segments from $%
x$ to $A,$ and $\uparrow _{x}^{A}\in $ $\Uparrow _{x}^{A}$ denotes the
direction of a single segment from $x$ to $A.$ For $x\in S_{i}\subset X$ and 
$A\subset S_{i},$ we write $\left( \uparrow _{x}^{A}\right) _{S_{i}}$ or $%
\left( \Uparrow _{x}^{A}\right) _{S_{i}}$ if we are referring to intrinsic
segments of $S$ and $\left( \uparrow _{x}^{A}\right) _{X}$ or $\left(
\Uparrow _{x}^{A}\right) _{X}$ if we are referring to extrinsic segments of $%
X.$

\item For a differentiable map $\Phi $ we write $D\Phi $ for the
differential of $\Phi .$ If $\Phi $ is real valued, we write $D_{v}\left(
\Phi \right) $ for the derivative of $\Phi $ in the $v$ direction.

\item Given a subset $A\subset X,$ we say that $\mathrm{dist}_{A}\left(
\cdot \right) $ is $\left( 1-\varepsilon \right) $--regular at $x$ if there
is a $v\in \Sigma _{x}$ so that the derivative of $\mathrm{dist}_{A}\left(
\cdot \right) $ in the direction $v$ satisfies 
\begin{equation*}
D_{v}\mathrm{dist}_{A}>1-\varepsilon .
\end{equation*}

\item We let $px$ denote a segment from $p$ to $x.$

\item We let $\sphericalangle (x,p,y)$ denote the angle of a hinge formed by
segments $px$ and $py$ and $\tilde{\sphericalangle}(x,p,y)$ denote the
corresponding comparison angle.

\item Following \cite{OSY}, we let $\tau :\mathbb{R}^{k}\rightarrow \mathbb{R%
}_{+}$ be any function that satisfies 
\begin{equation*}
\lim_{x_{1},\ldots ,x_{k}\rightarrow 0}\tau \left( x_{1},\ldots
,x_{k}\right) =0,
\end{equation*}%
and, abusing notation, we let $\tau :\mathbb{R}^{k}\times \mathbb{R}%
^{n}\rightarrow \mathbb{R}$ be any function that satisfies 
\begin{equation*}
\lim_{x_{1},\ldots ,x_{k}\rightarrow 0}\tau \left( x_{1},\ldots
,x_{k}|y_{1},\ldots ,y_{n}\right) =0,
\end{equation*}%
provided $y_{1},\ldots ,y_{n}$ remain fixed. When making an estimate with a
function $\tau ,$ we implicitly assert the existence of such a function for
which the estimate holds. $\tau $ often depends on the limit space $X$
and/or its dimension, but we make no other mention of this.

\item We identify $\mathbb{R}^{l}$ with $\mathbb{R}^{l}\times \left\{
0\right\} ,$ and we let $\pi _{l}:\mathbb{R}^{l}\times \mathbb{R}%
^{n-l}\longrightarrow \mathbb{R}^{l}$ be orthogonal projection to the first $%
l$ factors of $\mathbb{R}^{n}.$

\item For $\lambda \in \mathbb{R},$ we call a function $f:\mathbb{R}%
\longrightarrow \mathbb{R}$ (strictly) $\lambda $--concave if and only if
the function $g(t)=f(t)-\lambda t^{2}/2$ is (strictly) concave.

\item If $U$ is an open subset of an Alexandrov space $X,$ we call $%
f:U\longrightarrow \mathbb{R}$, (strictly) $\lambda $--concave if and only
if its restriction to every geodesic is (strictly) $\lambda $--concave$.$

\item We abbreviate the statement \textquotedblleft $\left\{ M_{\alpha
}\right\} _{\alpha =1}^{\infty }$ converges to $X$ in the Gromov--Hausdorff
topology\textquotedblright\ with the symbols, $M_{\alpha }\overset{GH}{%
\longrightarrow }X.$ Similarly, if $f_{\alpha }:M\longrightarrow \mathbb{R}$
and $f:X\longrightarrow \mathbb{R},$ we abbreviate \textquotedblleft $%
\left\{ f_{\alpha }\right\} _{\alpha =1}^{\infty }$ converges to $f$ in the
Gromov--Hausdorff topology\textquotedblright\ with the symbols,write $%
f_{\alpha }\overset{GH}{\longrightarrow }f.$

\item Let $V$ and $W$ be normed vector spaces. For a linear map $%
L:V\longrightarrow W,$ we set $\left\vert L\right\vert =\max \left\{ \left.
\left\vert L\left( \frac{v}{\left\vert v\right\vert }\right) \right\vert 
\text{ }\right\vert \text{ }v\in V\setminus \left\{ 0\right\} \right\} .$

\item Let $U\subset M$ be open and $\Phi :U\longrightarrow \mathbb{R}^{n}$
be $C^{1}$. We write 
\begin{eqnarray*}
\left\vert \Phi \right\vert _{C^{0}} &\equiv &\sup_{x\in U}\left\{
\left\vert \Phi (x)\right\vert \right\} \text{ and } \\
\left\vert \Phi \right\vert _{C^{1}} &\equiv &\max \left\{ \left\vert \Phi
\right\vert _{C^{0}},\sup_{x\in U}\left\{ \left\vert D\Phi _{x}\right\vert
\right\} \right\}
\end{eqnarray*}

\item We call a submersion, $\pi ,$ $\eta $--almost Riemannian if and only
if for all unit horizontal vectors,%
\begin{equation*}
\left\vert D\pi \left( v\right) -1\right\vert <\eta .
\end{equation*}

\item An $\eta $--embedding ($\eta $--homeomorphism) is an embedding
(homeomorphism) that is also an $\eta $--Gromov-Hausdorff approximation.

\item Volume of subsets of Alexandrov spaces means rough volume as defined
in \cite{BGP}.

\item For $\lambda >0,$ we write 
\begin{equation*}
\lambda X
\end{equation*}%
for the metric spaces obtained from $X$ by rescaling all distances by $%
\lambda .$

\item We write $N$ or $N_{i}$ for an element of $\mathcal{N}$; $K$ or $K_{i}$
for an element of $\mathcal{K};$ and $S$ or $S_{i}$ for an element of $%
\mathcal{S}.$ Thus we redundantly write 
\begin{eqnarray*}
\mathcal{S} &=&\left\{ S_{i}\right\} _{i} \\
&=&\left\{ \mathcal{K}_{k}\right\} _{k}\cup \left\{ N_{n}\right\} _{n}.
\end{eqnarray*}

\item We set 
\begin{equation*}
\mathcal{S}^{\mathrm{ext}}\equiv \mathcal{S}\cup \left( X\setminus \cup
_{S\in \mathcal{S}}S\right) .
\end{equation*}

\item We use superscripts to denote components of vectors in subspaces. So,
for example, if $V$ is a subspace of $W,$ then $U^{V}$ is the component of $%
U $ in $V.$

\item We write $\mathbb{S}^{n}$ for the unit sphere in $\mathbb{R}^{n+1}.$

\item We set 
\begin{equation*}
B\left( p,r\right) \equiv \left\{ \left. x\in X\text{ }\right\vert \text{ 
\textrm{dist}}\left( x,p\right) <r\right\} .
\end{equation*}

\item We use $A\Subset B$ to mean that the closure of $A$ is contained in
the interior of $B.$

\item We say that a collection of sets $\mathcal{C}$ has first order $\leq 
\mathfrak{o}$ if and only if each $C\in \mathcal{C}$ intersects no more than 
$\mathfrak{o}-1$ other members of $\mathcal{C}.$
\end{enumerate}

\end{document}